\newcommand{\nospacepunct}[1]{\makebox[0pt][l]{\,#1}}
\setlist[enumerate]{nosep}
\newcommand\redsout{\bgroup\markoverwith{\textcolor{red}{\rule[0.5ex]{2pt}{0.8pt}}}\ULon}
\newtheorem{theorem}{Theorem}[section]
\newtheorem*{theorem*}{Theorem}
\newtheorem{lemma}[theorem]{Lemma}
\newtheorem{corollary}[theorem]{Corollary}
\newtheorem{proposition}[theorem]{Proposition}
\newtheorem{remark}[theorem]{Remark}
\newtheorem{definition}[theorem]{Definition}
\newtheorem*{question*}{Question}
\newtheorem*{questions*}{Questions}
\newcommand{\cE}{\mathcal E}
\newcommand{\cF}{\mathcal F}
\newcommand{\cG}{\mathcal G}
\newcommand{\cH}{\mathcal H}
\newcommand{\cN}{\mathcal N}
\newcommand{\cO}{\mathcal O}
\newcommand{\cP}{\mathcal P}
\newcommand{\cQ}{\mathcal Q}
\newcommand{\cR}{\mathcal R}
\newcommand{\cT}{\mathcal T}
\newcommand{\cU}{\mathcal U}
\newcommand{\cX}{\mathcal X}
\newcommand{\cZ}{\mathcal Z}
\def\Az{\mathbb{A}}
\def\Bz{\mathbb{B}}
\def\Cz{\mathbb{C}}
\def\Kz{\mathbb{K}}
\def\Nz{\mathbb{N}}
\def\Iz{\mathbb{I}}
\def\Oz{\mathbb{O}}
\def\Qz{\mathbb{Q}}
\def\Rz{\mathbb{R}}
\def\Tz{\mathbb{T}}
\def\Zz{\mathbb{Z}}
\def\1z{\mathbb{1}}
\newcommand{\fA}{\mathfrak A}
\newcommand{\fB}{\mathfrak B}
\newcommand{\fC}{\mathfrak C}
\newcommand{\fR}{\mathfrak R}
\newcommand{\fS}{\mathfrak S}
\def\SEMI{\mbox{$\times\kern-2pt\vrule height5pt width.6pt \kern3pt $}}
\newcommand{\Hom}{\textup{Hom}}
\newcommand{\Aut}{\textup{Aut}}
\newcommand{\Coker}{{\rm Coker}\,}
\newcommand{\id}{{\rm id}}
\newcommand{\Ind}{\mathrm{ Ind}\,}
\newcommand{\ev}{\operatorname{ev}} 
\newcommand{\fa}{\text{ for all }} 
\newcommand{\p}{\mathfrak{p}}
\newcommand{\q}{\mathfrak{q}}
\newcommand{\Cl}{\textup{Cl}}
\newcommand{\Gal}{\textup{Gal}}
\newcommand{\Prim}{\textup{Prim}}
\newcommand{\acts}{\curvearrowright} 
\newcommand{\fg}[1]{[\![#1]\!]}
\newcommand{\spn}{\textup{span}}
\renewcommand{\phi}{\varphi}
\renewcommand{\O}{\mathcal{O}}
\newcommand{\ol}{\overline}
\newcommand{\tors}{\textup{tors}}
\newcommand{\Ext}{\textup{Ext}}
\newcommand{\lwedge}{{\textstyle\bigwedge}} 
\newcommand{\val}{\mathrm{val}}
\newcommand{\fval}{\mathfrak{val}}
\DeclareMathOperator{\rank}{rank}
\newcommand{\ho}[1]{\,\hat{\otimes}_{#1}}
\newcommand{\cExt}{\mathbf{Ext}}
\newcommand{\KK}{\textup{KK}}
\newcommand{\K}{\textup{K}}
\newcommand{\Arr}{\textup{Arr}}
\newcommand{\bs}[1]{\boldsymbol{#1}}
\DeclareMathOperator{\Interior}{Int}
\DeclareMathOperator{\Ker}{Ker}
\renewcommand{\Im}{\textup{Im}\,}
\DeclareMathOperator{\Homeo}{Homeo}
\begin{document}

\title[Constructing number field isomorphisms from C*-algebras]{Constructing number field isomorphisms from *-isomorphisms of certain crossed product C*-algebras}

\thispagestyle{fancy}

\author{Chris Bruce}\thanks{C. Bruce is supported by a Banting Fellowship administered by the Natural Sciences and Engineering Research Council of Canada (NSERC). This project has received funding from the European Research Council (ERC) under the European
Union’s Horizon 2020 research and innovation programme (grant agreement No. 817597).}

\address[Chris Bruce]{School of Mathematical Sciences, Queen Mary University of London, Mile End Road, E1 4NS London, United Kingdom, and
	School of Mathematics and Statistics, University of Glasgow, University Place, Glasgow G12 8QQ, United Kingdom}
\email[Bruce]{Chris.Bruce@glasgow.ac.uk}

\author{Takuya Takeishi}\thanks{T. Takeishi is supported by JSPS KAKENHI Grant Number JP19K14551.}

\address[Takuya Takeishi]{
	Faculty of Arts and Sciences\\
	Kyoto Institute of Technology\\
	Matsugasaki, Sakyo-ku, Kyoto\\
	Japan}
\email[Takeishi]{takeishi@kit.ac.jp}

\date{\today}

\subjclass[2020]{Primary 46L05, 46L80; Secondary 11R04, 11R56, 46L35.}

\maketitle

\begin{abstract}
We prove that the class of crossed product C*-algebras associated with the action of the multiplicative group of a number field on its ring of finite adeles is rigid in the following explicit sense: Given any *-isomorphism between two such C*-algebras, we construct an isomorphism between the underlying number fields.
As an application, we prove an analogue of the Neukirch--Uchida theorem using topological full groups, which gives a new class of discrete groups associated with number fields whose abstract isomorphism class completely characterises the number field.
\end{abstract}

\setlength{\parindent}{0cm} \setlength{\parskip}{0.5cm}

\section{Introduction} 

\subsection{Context} 
The study of C*-algebras of number-theoretic origin was initiated by Bost and Connes three decades ago \cite{BC}.
The Bost--Connes C*-algebra for $\Qz$ carries a canonical time evolution, and it was shown in \cite{BC} that the associated C*-dynamical system exhibits several remarkable properties related to the explicit class field theory of $\Qz$. With Hilbert's 12th problem in mind, the initial objective was to find an appropriate notion of Bost--Connes C*-algebra for a general number field. Such a C*-algebra should carry a canonical time evolution, and the associated C*-dynamical system should reflect the class field theory of the number field (see \cite[Problem~1.1]{CMR} for the precise formulation of this problem). Several C*-algebras were proposed early on \cite{ALR,Cohen,HL,LF}. However, the C*-dynamical systems from these constructions did not exhibit the desired phase-transition phenomena in general. A construction was given by Connes, Marcolli, and Ramachandran \cite{CMR} for imaginary quadratic fields that did satisfy all the desired criteria, and then a construction of a Bost--Connes type C*-algebra associated with a general number field was given by Ha and Paugam \cite{HaPaugam} (in greater generality) and by Laca, Larsen, and Neshveyev \cite{LLN}. The C*-algebras defined in \cite{LLN} are now considered the standard Bost--Connes C*-algebras. The KMS-structure was computed in \cite{LLN}, but the existence of an arithmetic subalgebra---the remaining property from \cite[Problem~1.1]{CMR}---was only proven later by Yalkinoglu \cite{Yalk} (since the construction in \cite{Yalk} relies on results from class field theory, it did not shed light on Hilbert's 12th problem). The study of Bost--Connes C*-algebras has led to several recent purely number-theoretic results \cite{CdSLMS,CLMS,Smit}.

Another avenue for constructing C*-algebras from number fields is via ring C*-algebras. They were first defined by Cuntz for the ring $\Zz$ \cite{Cun} and then for general rings of integers in number fields by Cuntz and Li \cite{CL:10} (see also \cite{Li:ring}). Inspired by \cite{Cun}, Laca and Raeburn considered a semigroup C*-algebra over $\Zz$ \cite{LR:10}, and a modification of this construction was introduced shortly after for general rings of integers by Cuntz, Deninger, and Laca \cite{CDL}: Given a ring of integers in a number field, one considers the semigroup C*-algebra of the $ax+b$-semigroup over the ring, which is a natural extension of the ring C*-algebra. Such semigroup C*-algebras have received a great deal of attention over the last decade: They provided a fundamental example class for the development of Li's theory of semigroup C*-algebras \cite{Li:semigp1,Li:semigp2}; the problem of computing their \K-theory was a driver for general results on \K-theory by Cuntz, Echterhoff, and Li \cite{CEL1,CEL2} and Li \cite{Li:CMP}; and the study of their KMS-structre led to general results by Neshveyev on KMS states \cite{Nesh}.

Initially, the focus in each of these avenues of investigation was on internal structure and KMS states of the C*-algebra in question for a fixed number field. The problem of rigidity---that is, of comparing the C*-algebraic data arising from two different number fields---was first
considered by Cornelissen and Marcolli who considered the Bost--Connes C*-dynamical systems \cite{CM2}.
Not long after, it was proven by Li and L\"{u}ck \cite{LiLuck}---building on earlier work by Cuntz and Li \cite{CL:11}---that the ring C*-algebras associated with rings of integers have no rigidity in the sense that the ring C*-algebra of a ring of integers does not depend, up to isomorphism, on the ring of integers.
Li then established, under some technical assumptions, that the C*-algebra of an $ax+b$-semigroup over a ring of integers remembers the Dedekind zeta function of the number field \cite{Li:Kt1}; Li also showed that if one keeps track of the canonical Cartan subalgebra, then the ideal class group of the number field can be recovered \cite{Li:Kt2}. The technical assumption in \cite{Li:Kt1} was removed later in work by the first-named author and Li \cite{BruceLi1}. As a consequence of these results, one obtains a rigidity theorem for number fields that are Galois over $\Qz$: If the semigroup C*-algebras of the $ax+b$-semigroups from two rings of integers are isomorphic, and one of the number fields is Galois, then the fields must be isomorphic. However, it is not clear how such an isomorphism of fields is related to the initial *-isomorphism of C*-algebras.
Inspired by Li's rigidity results for $ax+b$-semigroup C*-algebras, the  problem of rigidity phenomena for the Bost--Connes C*-algebras---without any additional data such as time evolutions---was considered by the second-named author \cite{Tak1,Tak2}, where it was proven that the Bost--Connes C*-algebra remembers both the Dedekind zeta function and the narrow class number of the underlying number field; as in the case of semigroup C*-algebras, this implies rigidity if one of the number fields is Galois. A complete solution to the rigidity problem was later obtained by the second-named author and Kubota \cite{KT}: Any *-isomorphism between the Bost--Connes C*-algebras associated with two number fields gives rise to a conjugacy between the Bost--Connes semigroup dynamical systems. Moreover, this conjugacy is constructed from \K-theoretic invariants of the C*-algebras. The Bost--Connes semigroup dynamical system from \cite{LLN} is constructed from a number field using the action of the ideal semigroup on the balanced product of the integral adeles and the Galois group of the maximal abelian extension of the field, and combining the aforementioned reconstruction theorem with the dynamical characterisation of number fields obtained by combining \cite[Thoerem~3.1]{CdSLMS} and \cite[Theorem~3.1]{CLMS}, one obtains a complete solution to the rigidity problem for Bost--Connes C*-algebras. However, this is an abstract rigidity result in the sense that a *-isomorphism of Bost--Connes C*-algebras implies the underlying number fields are isomorphic, but it is not clear how this isomorphism of fields is related to the initial *-isomorphism.

The construction of Bost--Connes C*-algebras in \cite{LLN} is ad hoc: It is designed in such a way that the associated C*-dynamical system has the desired KMS-structure and symmetry group. The Hecke C*-algebras constructed from totally positive $ax+b$-groups in \cite{LNT} are less ad hoc. These are full corners in the Bost--Connes C*-algebra by \cite{LLN3}, so the results of the second-named author and Kubota in \cite{KT} yield a rigidity theorem for such Hecke C*-algebras. However, from the perspective of rigidity for C*-algebras constructed from number fields, there are other C*-algebras that are more natural from a number-theoretic or dynamical view point.

\subsection{Main result} 
Let $K$ be a number field with ring of integers $\O_K$. We consider the crossed product C*-algebra $\fA_K:=C_0(\Az_{K,f})\rtimes K^*$, where the multiplicative group $K^*$ of $K$ acts on the ring  of finite adeles $\Az_{K,f}$ through the embedding $K\subseteq \Az_{K,f}$. The canonical unital full corner $1_{\ol{\O}_K}\fA_K1_{\ol{\O}_K}$, where $\ol{\O}_K$ is the compact ring of integral adeles, is isomorphic to the semigroup crossed product $C^*(K/\O_K)\rtimes \O_K^\times$ from \cite{ALR}, where $\O_K$ is the ring of integers in $K$ and $\O_K^\times:=\O_K\setminus\{0\}$ its multiplicative monoid. This construction of C*-algebras from number fields is one of the easiest and most natural.
Our main result says that the rigidity phenomenon from \cite[Corollary~1.2]{KT} holds even in this setting:

\begin{theorem}\label{thm:mainA}
Let $K$ and $L$ be number fields. Then, the following are equivalent: 
\begin{enumerate}[\upshape(i)]
    \item The fields $K$ and $L$ are isomorphic. 
    \item There exist full projections $p \in M(\fA_K)$ and $q \in M(\fA_L)$ such that $p\fA_K p$ and $q \fA_Lq$ are *-isomorphic. 
\end{enumerate}
In particular, $K\cong L$ if and only if $\fA_K\cong \fA_L$.
\end{theorem}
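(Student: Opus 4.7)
The implication (i) $\Rightarrow$ (ii) is routine: a field isomorphism $\sigma \colon K \xrightarrow{\sim} L$ induces a topological ring isomorphism $\Az_{K,f} \cong \Az_{L,f}$ that is equivariant for the multiplicative group actions, hence a *-isomorphism $\fA_K \cong \fA_L$, and choosing $p = 1_{\ol{\O}_K}$, $q = 1_{\ol{\O}_L}$ (full projections in the respective multiplier algebras, being the units of canonical full corners) gives (ii).

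For the substantive direction (ii) $\Rightarrow$ (i), my plan is to adapt the broad strategy of Kubota and the second-named author \cite{KT} from Bost--Connes C*-algebras to this smaller and more geometric setting.

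\emph{Step 1 (Reduction to canonical corners).} Because $p$ and $q$ are full, $p\fA_K p$ is strongly Morita equivalent to $\fA_K$, and similarly for $L$, so the hypothesis yields a Morita equivalence $\fA_K \sim_{M} \fA_L$. I would then reduce to a *-isomorphism of the canonical unital full corners $1_{\ol{\O}_K}\fA_K 1_{\ol{\O}_K} \cong C(\ol{\O}_K) \rtimes \O_K^\times$ and the corresponding object for $L$, using that $1_{\ol{\O}_K}$ is distinguished among full projections in $M(\fA_K)$ by intrinsic $K$-theoretic and structural invariants (in particular its role as the unit of a distinguished commutative subalgebra).

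\emph{Step 2 (Cartan rigidity).} The diagonal $C(\ol{\O}_K)$ is a Cartan subalgebra of the canonical corner. The crucial step is to show that, possibly after composing with a suitable inner automorphism, any *-isomorphism between the corners carries this Cartan onto its counterpart for $L$. I anticipate arguments in the spirit of \cite{Li:Kt2} and \cite{KT}: combining $K$-theoretic invariants (the ideal class group and the semilattice structure of projections in the diagonal), analysis of the primitive ideal space of the crossed product, and interaction with natural inner and dual automorphisms, one should be able to isolate $C(\ol{\O}_K)$ as a distinguished MASA.

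\emph{Step 3 (From Cartan pair to dynamical system).} With the Cartan preserved, the general theory of Cartan pairs gives an isomorphism of the underlying \'etale groupoids, which amounts to a conjugacy of the semigroup dynamical systems $\O_K^\times \curvearrowright \ol{\O}_K$ and $\O_L^\times \curvearrowright \ol{\O}_L$, and hence of the ambient group actions $K^* \curvearrowright \Az_{K,f}$ and $L^* \curvearrowright \Az_{L,f}$.

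\emph{Step 4 (Arithmetic rigidity).} Finally, I would invoke the dynamical characterisation of number fields obtained by combining \cite{CdSLMS} and \cite{CLMS}: a conjugacy of the multiplicative actions on the finite adeles forces $K \cong L$, and unwinding the constructions of the previous steps yields an explicit field isomorphism from the initial *-isomorphism, as promised by the abstract.

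The main obstacle will be Step 2. In \cite{KT} the Bost--Connes algebra carries the extra structure of the full Galois group, which was decisive for Cartan rigidity; for $\fA_K$ that layer is absent, so the same rigidity must be extracted from the bare multiplicative-on-finite-adeles action. I expect this to be the bulk of the work, requiring a careful study of projections, their traces and $K$-theoretic classes, and possibly a bootstrap argument that first handles a smaller distinguished subalgebra (for instance the fixed-point algebra under the natural dual circle actions) before promoting rigidity to the full crossed product.
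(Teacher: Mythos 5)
Your Step 2 is a genuine gap, and it is not a technical one that more care would fill: you are assuming that any *-isomorphism of the corners can be corrected by an inner automorphism so as to carry the diagonal $C(\ol{\O}_K)$ onto $C(\ol{\O}_L)$. No such Cartan rigidity result is known for these algebras, and the paper deliberately avoids it. Indeed, the known result of Li (\cite{Li:Kt2}) recovers the ideal class group only from the semigroup C*-algebra \emph{together with} its canonical Cartan as extra data, precisely because one cannot locate the Cartan from the C*-algebra alone; and Cartan rigidity would immediately give continuous orbit equivalence rigidity, which the paper obtains only as a \emph{corollary} of the main theorem (\S~\ref{sec:remark}), not as an ingredient. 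The tools you list for Step 2 (traces and $\K$-classes of projections, the primitive ideal space, dual automorphisms) are exactly the ones the paper uses, but it uses them differently: it never pins down the Cartan or the groupoid. Instead it shows $\fA_K$ is a C*-algebra over $2^{\cP_K}$, observes that any *-isomorphism of full corners is automatically $2^{\cP_K}$-equivariant (Lemma~\ref{lem:over2Sigma}), and extracts only the induced order isomorphisms of $\K$-groups of the composition factors together with the boundary maps and the unitary group of the zeroth factor; the conjugacy of semi-local systems modulo roots of unity is then reconstructed from this $\K$-theoretic data via \cite[Corollary~3.18]{KT}. Your Step 1 has a related unsubstantiated claim: that $1_{\ol{\O}_K}$ is distinguished among full projections. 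The paper neither proves nor needs this; it works with arbitrary full projections throughout.

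Two further points. First, even if you had a conjugacy of $K^*\acts\Az_{K,f}$ and $L^*\acts\Az_{L,f}$, the citation in Step 4 is wrong: the combination of \cite{CdSLMS} and \cite{CLMS} characterises number fields via the Bost--Connes semigroup dynamical system (the ideal semigroup acting on the balanced product with the abelianised Galois group), which is different data from the multiplicative action on finite adeles. The correct tool is Hoshi's theorem \cite{Hoshi}, packaged as Proposition~\ref{prop:CdSLMS}; with a genuine conjugacy in hand this step would go through, but the paper in fact only ever obtains semi-local data \emph{modulo roots of unity}, and a substantial part of \S~\ref{sec:NT} is devoted to lifting that loss of torsion information, a difficulty your outline does not address at all. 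Second, your direction (i)~$\Rightarrow$~(ii) is fine and matches the paper.
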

Theorem~\ref{thm:mainA} will be derived from Theorem~\ref{thm:main2}, which is a slightly stronger, but more technical result. 
Given any *-isomorphism $p\fA_Kp\cong q\fA_Lq$ as in (ii), our proof explicitly constructs a field isomorphism $K\cong L$ from the *-isomorphism $p\fA_Kp\cong q\fA_Lq$. 
Such an explicit rigidity result is a new phenomenon in the setting of C*-algebras from number theory; the main theorem of \cite{KT} reconstructs the Bost--Connes semigroup dynamical system, but does not provide an explicit isomorphism between number fields.
Condition (ii) is a priori stronger than having a Morita equivalence between $\fA_K$ and $\fA_L$. We leave it as an open problem to determine if a Morita equivalence (or more generally an ordered $\KK$-equivalence over the power set of primes) implies that $K$ and $L$ are isomorphic.

We now explain an application of our rigidity theorem to topological full groups of certain groupoids.
Topological full groups were introduced by Giordano, Putnam, and Skau for Cantor minimal systems \cite{GPS} and then for \'{e}tale groupoids over a Cantor set by Matui \cite{Matui:PLMS}: To each such groupoid $\cG$, one associates a discrete group $\fg{\cG}$ of certain homeomorphisms of the unit space $\cG^{(0)}$ of $\cG$. Such full groups often exhibit surprising rigidity phenomena in that the abstract isomorphism class of the group characterises the groupoid up to isomorphism, see \cite{Matui:Crelle} and \cite{Rub}. Recently, several of these rigidity results were extended by Nyland and Ortega \cite{NO} to the case of certain non-minimal \'{e}tale groupoids whose unit space is a locally compact Cantor space, which is pertinent to our work.  

The C*-algebra $\fA_K$ has a canonical groupoid model: $\fA_K\cong C_r^*(\cG_K)$, where $\cG_K:= K^*\ltimes \Az_{K,f}$ is the transformation groupoid associated with the action $K^*\acts\Az_{K,f}$. Theorem~\ref{thm:mainA} implies that the topological full group of the stabilisation of $\cG_K$ is a complete invariant of $K$:

\begin{corollary}\label{cor:fullgroup}
Let $K$ and $L$ be number fields, and let $\cR$ denote the full equivalence relation over a countably infinite set. Then, the following are equivalent:
\begin{enumerate}[\upshape(i)]
    \item The fields $K$ and $L$ are isomorphic.
    \item The topological full groups $\fg{\cG_K \times \cR}$ and $\fg{\cG_L \times \cR}$ are isomorphic as discrete groups.
\end{enumerate} 
\end{corollary}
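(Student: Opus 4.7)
The direction (i) $\Rightarrow$ (ii) is routine: a field isomorphism $K \cong L$ extends to an equivariant homeomorphism $\Az_{K,f} \cong \Az_{L,f}$, inducing groupoid isomorphisms $\cG_K \cong \cG_L$ and therefore $\cG_K \times \cR \cong \cG_L \times \cR$, which passes to an isomorphism of the topological full groups. For the converse, the plan is to combine a reconstruction theorem for topological full groups with Theorem~\ref{thm:mainA}.

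First, I would verify that $\cG_K \times \cR$ is a Hausdorff, second-countable, effective \'etale groupoid with locally compact Cantor unit space $\Az_{K,f} \times \Nz$: effectiveness holds because $K^*$ acts freely on $\Az_{K,f} \setminus \{0\}$, with $0$ as the unique fixed point, so the only closed invariant subsets are $\emptyset$, $\{0\} \times \Nz$, and $\Az_{K,f} \times \Nz$. This places the groupoid within the scope of the Nyland--Ortega reconstruction theorem \cite{NO}, which extends Matui's rigidity results to non-minimal groupoids over locally compact Cantor spaces, so that any abstract group isomorphism $\fg{\cG_K \times \cR} \cong \fg{\cG_L \times \cR}$ lifts to an isomorphism of topological groupoids $\Phi \colon \cG_K \times \cR \to \cG_L \times \cR$.

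The main work, and expected main obstacle, is then to extract from $\Phi$ a corner isomorphism of the form required by condition (ii) of Theorem~\ref{thm:mainA}. The groupoid isomorphism $\Phi$ induces a diagonal-preserving *-isomorphism $\Psi \colon \fA_K \otimes \cK \to \fA_L \otimes \cK$, sending the full projection $1_{\ol{\O}_K} \otimes e_{11}$ to $1_V$ for some compact open $V \subset \Az_{L,f} \times \Nz$ that meets $\{0\} \times \Nz$ (since $\Phi$ must preserve the closed invariant fixed-point set). The idea is to precompose or postcompose $\Phi$ with a suitably chosen element of $\fg{\cG_L \times \cR}$---combining permutations along the $\cR$-factor with local $\cG_L$-bisections to refine the decomposition $V = \bigsqcup_{k} V_k \times \{k\}$---so that the transported compact open set lies at a single level of $\Nz$, yielding a compact open $V' \subset \Az_{L,f}$ with $0 \in V'$ and
\[
1_{\ol{\O}_K}\,\fA_K\,1_{\ol{\O}_K} \;\cong\; 1_{V'}\,\fA_L\,1_{V'},
\]
where both projections are full in the respective multiplier algebras. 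Theorem~\ref{thm:mainA} then yields $K \cong L$. The delicate point is that while the infinite $\cR$-factor affords ample room for such rearrangements, one must track the fixed point carefully to ensure $0 \in V'$, which is what makes $1_{V'}$ full in $M(\fA_L)$.
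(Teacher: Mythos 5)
Your overall strategy matches the paper's: check the hypotheses of the Nyland--Ortega reconstruction theorem, lift the abstract isomorphism of topological full groups to a groupoid isomorphism $\cG_K\times\cR\cong\cG_L\times\cR$, extract isomorphic full corners of $\fA_K$ and $\fA_L$, and invoke Theorem~\ref{thm:mainA}. The one place you genuinely diverge is the passage from the groupoid isomorphism to the corner isomorphism: the paper simply quotes \cite[Theorem~3.2]{CRS}, by which $\cG_K\times\cR\cong\cG_L\times\cR$ forces $\cG_K$ and $\cG_L$ to be Kakutani equivalent, so that full corners of $C^*(\cG_K)=\fA_K$ and $C^*(\cG_L)=\fA_L$ are isomorphic; you instead try to reprove this by hand. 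Your instinct not to settle for Morita equivalence (which the paper explicitly does not know to suffice) but to land on an honest corner $1_{V'}\fA_L 1_{V'}$ is correct, and your worry about the fixed point resolves favourably: a groupoid isomorphism preserves isotropy, the points $(0,k)$ are the only units of $\cG_L\times\cR$ with nontrivial isotropy, so $V$ meets $\{0\}\times\cR^{(0)}$ in exactly one point and the resulting $V'$ can be arranged to contain $0$, which is exactly what makes $1_{V'}$ full.

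Two steps remain genuinely incomplete. First, \cite[Theorem~7.10]{NO} requires, besides effectiveness, that the groupoids be non-wandering and have infinite orbits; you do not verify non-wandering, and it is not free --- the paper's Lemma~\ref{lem:nonwand} proves it via a strong-approximation argument producing $g\in\cO_K\setminus\{0,1\}$ with $x$ and $gx$ in a prescribed basic compact open set. Your stated justification of effectiveness is also off: it is not true that the only closed invariant subsets of $\Az_{K,f}$ are $\emptyset$, $\{0\}$ and the whole space (Lemma~\ref{lem:quasi-orbits} exhibits a whole lattice of orbit closures), and such minimality-type statements are not what effectiveness means anyway; the relevant point is that $0$ is the unique unit with nontrivial isotropy and $\{0\}\times\cR^{(0)}$ has empty interior. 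Second, collapsing $V=\bigsqcup_k V_k\times\{k\}$ to a single level requires compact open bisections of $\cG_L$ carrying the finitely many $V_k$ with $k\neq k_0$ onto pairwise disjoint compact open sets that also avoid $V_{k_0}$; since the $\cG_L$-action is multiplicative, sets containing common orbits cannot simply be translated apart, and this disjointification is precisely the nontrivial content of the Kakutani-equivalence theorem. As written your sketch asserts rather than performs it; either carry it out using the non-wandering property, or cite \cite[Theorem~3.2]{CRS} as the paper does.
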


We view Corollary~\ref{cor:fullgroup} as an analogue of the Neukirch--Uchida theorem \cite{Neu:69,Uchida} which says that the absolute Galois group of a number field characterises the number field: If $K$ and $L$ are number fields with absolute Galois groups $G_K$ and $G_L$, respectively, then $K\cong L$ if and only if $G_K\cong G_L$. We point out that $\fg{\cG_K\times\cR}$ is a countable discrete group, whereas the absolute Galois group $G_K$ is a profinite group. 

There are other classes of topological full groups that are complete invaraints of number fields. Indeed, following our approach in \S~\ref{sec:fullgroups} we can show that topological full group of the stabilised Bost--Connes groupoid is a complete invariant of the underlying number field (Remark~\ref{rmk:BC}).
In addition, in work of the first named-author and Li \cite{BruceLi2}, it is proven that the topological full group of the groupoid underlying the ring C*-algebra of a ring of integers is a complete invariant of the associated number field. The proof of this result is groupoid-theoretic, and thus quite different from the results in this paper.

The investigation of topological full groups has led to the resolution of several open problems in group theory, see \cite{JM,JNdelaS,Nek}, so it is a natural problem to study group-theoretic properties of $\fg{\cG_K\times\cR}$. This problem is left for future work and is not considered in this article.

Our results have several other interesting consequences, which are presented in \S~\ref{sec:explicit} and \S~\ref{sec:remark}. For instance, given a number field $K$, the splitting numbers of rational primes, the ideal class group of $K$, and the automorphism group $\Aut(K)$ are each given explicit C*-algebraic descriptions.

In number theory, it is more natural to consider the full adele ring $\Az_K$ rather than the finite adele ring $\Az_{K,f}$. The crossed product C*-algebra associated with the action $K^*\acts \Az_K$ is precisely Connes' noncommutative adele class space from \cite{C}.
In \cite{BT3}, we give a systematic study of such C*-algebras and use Theorem~\ref{thm:mainA} to deduce a rigidity result in that setting also. Here, we only point out that because $\Az_K$ has a large connected component, the techniques of the present paper do not apply directly when the full adele ring is considered.

\subsection{Outline of the proof}
Our proof of Theorem~\ref{thm:mainA} comprises two parts, the first C*-algebraic and second number-theoretic. Let us first explain the strategy of the C*-algebraic part and compare it to the proof of second-named author and Kubota in \cite{KT}. Note that in \cite{KT}, the number-theoretic results needed were established separately in \cite{CdSLMS} and \cite{CLMS}, whereas our paper includes both the C*-algebraic and the number-theoretic arguments.

We first study the primitive ideal space of $\fA_K$, and prove that $\fA_K$ is a C*-algebra over the power set of nonzero prime ideals of $\O_K$. This requires a computation of the quasi-orbit space for $K^*\acts\Az_{K,f}$, which extends the work \cite{LR:00} of Laca and Raeburn from the case of the rational numbers to general number fields.
Using general observations from \cite{KT} on C*-algebras over power sets, we then obtain subquotients of $\fA_K$---which are called composition factors---parametrised by the finite subsets of primes. We describe these subquotients explicitly as crossed product C*-algebras for certain semi-local dynamical systems---that is, systems from the action of $K^*$ on finite products of local fields. The composition factor at the zeroth level is the group C*-algebra $C^*(K^*)$. 
We follow the approach in \cite{KT} to analyse composition factors, using a valuation C*-algebra which plays a role similar to that in \cite{KT} (Definition~\ref{def:BKandBval}). 

The first technical problem in our situation is the presence of torsion in $K^*$: Abstractly, we have $K^*\cong \mu_K\times\Gamma_K$, where $\mu_K=\tors(K^*)$ is the finite cyclic group of roots of unity in $K$, and $\Gamma_K=K^*/\mu_K$. 
Essentially, for all levels except the zeroth, \K-theory forgets all information about torsion in the sense that the composition factor is Morita equivalent to the crossed product for the underlying semi-local dynamical system modulo roots of unity. Therefore, we can at best only recover the underlying dynamical system modulo roots of unity. The number-theoretic part of our proof is mostly devoted to overcoming this apparent loss of information.

We introduce the auxiliary C*-algebra $\fB_K:=C_0(\Az_{K,f}/\mu_K)\rtimes\Gamma_K$ built from the dynamical system $\Gamma_K\acts\Az_{K,f}/\mu_K$, see Definition~\ref{def:BKandBval}. We show that $\fB_K$ is also a C*-algebra over the power set of primes, and we then prove a reduction result from $\fA_K$ to $\fB_K$ (Proposition~\ref{prop:commutativeKL}).
This brings us closer to a situation where we can apply the general reconstruction result \cite[Corollary~3.18]{KT}. However, we encounter two technical problems at this step. The first is caused by the unit group $\cO_K^*$ of the ring of integers: We cannot recover $\Gamma_K$ from the primitive ideal space of $\fB_K$.  
In \cite{KT}, the isomorphism of ideal semigroups that induces the conjugacy of Bost--Connes semigroup dynamical systems is obtained from the homeomorphism of primitive ideal spaces. However, we cannot adopt that strategy here, since our group $\Gamma_K$ is not directly related to the primitive ideal space. Instead, we obtain the isomorphism $\gamma \colon \Gamma_K \to \Gamma_L$ from the unitary groups of the zeroth level composition factors (Lemma~\ref{lem:existgamma}). This strategy requires an extra argument to show that $\gamma$ is valuation-preserving (Proposition~\ref{prop:propertygamma}). This technology is one of the biggest difference between our strategy and \cite{KT}. In addition, it is the reason why the condition (2) of Theorem~\ref{thm:mainA} is difficult to weaken to Morita equivalence. 
The second problem is caused by the ideal class group $\Cl_K$ of $K$: We cannot find a canonical basis for $\Gamma_K$, so we cannot use basis-fixed arguments from \cite{KT} in their original form. Hence, we establish basis-free versions of these arguments (Lemma~\ref{lem:E_val} and Lemma~\ref{lem:calcboundary}). 
These technical results allow us to replace other basis-fixed arguments by arguments using only ``partial bases'' of $\Gamma_K$, that are taken depending on the situation (Lemma~\ref{lem:DFsquare}). Consequently, we have succeeded to make all necessary arguments basis-free.
We point out that it is common to encounter difficulties caused by $\mu_K$, $\cO_K^*$, and $\Cl_K$ when we construct C*-algebras from number fields using any method. The Bost--Connes case is exceptional---such difficulties disappear since the action of the ideal semigroup is considered. For the C*-algebras considered in this paper, we completely solve these technical difficulties.

With a bit more technical work, we can apply \cite[Corollary~3.18]{KT} to (the unital part of) our composition factors to reconstruct a family of conjugacies between the semi-local dynamical systems modulo roots of unity (Proposition~\ref{prop:semi-localdata}).
We then move to the number-theoretic part or our proof: We construct an isomorphism of number fields from the semi-local data (\S~\ref{sec:NT}). The role of this part is similar to the work of Cornelissen, de Smit, Li, Marcolli, and Smit \cite{CdSLMS} in the Bost--Connes case; however the semi-local number-theoretic data arising from our C*-algebra is completely different from the number-theoretic data arising from the Bost--Connes C*-algebras, so we have to establish some novel lifting technologies in order to be in a situation where ideas from \cite{CdSLMS} can be utilised. It is interesting that the part of the strategy from \cite{CdSLMS} that is useful for us is closer to the function field case in \cite{CdSLMS}, rather than the number field case. We rely on a reformulation of Hoshi's theorem from \cite{Hoshi} (Proposition~\ref{prop:CdSLMS}), which is essentially due to \cite{CdSLMS}. In our situation, the key technical difficulty is caused by $\mu_K$ as mentioned before: We have an isomorphism only of the multiplicative groups modulo roots of unity. We lift the isomorphism of the semi-local data modulo roots of unity at the first level (Proposition~\ref{prop:nfchar}). Interestingly, the second and third levels are also involved.

\subsection{Structure of the paper}
\S~\ref{sec:prelim} contains background on $\KK$-theory and number theory, and the definition of $\fA_K$.  
\S~\ref{sec:C*/finite} contains general results on crossed product C*-algebras for actions of certain abelian groups.
In \S~\ref{sec:subquotient/prims}, we first study the structure of general C*-algebras over a power set. Then, we introduce the auxiliary C*-algebras $\fB_K$ and $\fB_\val$, and apply the general results to $\fA_K$ and these C*-algebras, giving a description of their composition factors.
Our main results are contained in \S~\ref{sec:recon}: We state the technical version of our main theorem, and prove it except for the number-theoretic step. 
The number-theoretic part of our proof is contained in \S~\ref{sec:NT}.
Our application to topological full groups is contained in \S~\ref{sec:fullgroups}. In \S~\ref{sec:explicit}, we prove that several classical number-theoretic invariants can be given explicitly in terms of C*-algebraic data (\S~\ref{sec:explicit}). Finally, in \S~\ref{sec:remark}, we make several remarks and discuss other applications.

\subsection*{Acknowledgements}Part of this research was carried out during visits of C. Bruce to the Kyoto Institute of Technology and the University of Victoria; he would like to thank the respective mathematics departments for their hospitality and support.
We thank Yosuke Kubota and Xin Li for helpful comments.

\section{Preliminaries}
\label{sec:prelim}
\subsection{Notation and terminology}
\label{sec:notation}
We let $\Kz$ denote the C*-algebra of compact operators on a separable infinite dimensional Hilbert space. Given a C*-algebra $A$, we let $M(A)$ denote the multiplier algebra of $A$. 
For C*-algebras $A$ and $B$, a *-homomorphism $\alpha \colon A \to B$ is said to be non-degenerate if (the closure of) the subspace $\alpha(A)B$ is equal to $B$. A non-degenerate *-homomorphism $\alpha \colon A \to B$ lifts to a *-homomorphism $M(A) \to M(B)$, which is still denoted by $\alpha$.
We let $\Prim(A)$ denote the primitive ideal space of $A$. For background on multiplier algebras and primitive ideal spaces, see \cite{RW:book}.

For an abelian group $G$, its dual group is denoted by $\widehat{G}$. 
If an abelian group $G$ acts on a locally compact Hausdorff space $X$ by homeomorphisms, then we let $C_0(X)\rtimes G$ denote the associated crossed product C*-algebra. 
In addition, for $g \in G$, let $u_g \in M(C_0(X) \rtimes G)$ denote the unitary corresponding to $g$. 
We refer the reader to \cite{Will:book} for background on crossed products. Since $G$ is amenable, there is no distinction between the full and reduced crossed product here.

Given a subset $Y$ of a topological space $X$, we let $\ol{Y}$ and $\Interior(Y)$ denote the closure and interior of $Y$, respectively.

Let $\tilde{\Zz}:=\Zz\cup\{\infty\}$ and $\tilde{\Nz}:=\Nz\cup\{\infty\}$, both equipped with their usual topologies.

\subsection{Ext-groups and KK-theory}
\label{sec:Ext/KK}
We collect some basic results on Ext-groups and \KK-theory for C*-algebras; most of this material originated in \cite{Kas}. We refer the reader also to \cite{B} and \cite{D} for background.
Let $A$ be a separable nuclear C*-algebra, and let B be a $\sigma$-unital stable C*-algebra. 
Let $\cExt(A,B)$ be the semigroup of (strong) unitary equivalence classes of extensions of $A$ by $B$, that is, exact sequences of the form 
\[
    \cE \colon 0 \to B \to E \to A \to 0.
\]
For an extension $\cE$, its unitary equivalence class is also denoted by $\cE$. We always consider extensions up to unitary equivalence. 
Let $\Ext(A,B)$ be the quotient of $\cExt(A,B)$ by subsemigroup of trivial extensions. Then, $\Ext(A,B)$ is a group. For $\cE \in \cExt(A,B)$, its class in $\Ext(A,B)$ is denoted by $[\cE]_{\Ext}$. 
For an extension $\cE \in \cExt(A,B)$, the Busby invariant of $\cE$ is the *-homomorphism 
\[ \tau_\cE \colon A \cong E/B \to M(B)/B  =: Q(B). \] 
As usual, we define $\K_*(A)=\K_0(A) \oplus \K_1(A)$ and consider it as a $\Zz/2\Zz$-graded abelian group. 
There is a canonical homomorphism
\[ \Ext(A,B) \to \Hom(\K_*(A),\K_{*+1}(B)),\ [\cE]_{\Ext} \mapsto \partial_\cE := \partial_0 \oplus \partial_1, \]
where $\partial_0 \colon \K_0(A) \to \K_1(B)$ and $\partial_1 \colon \K_1(A) \to \K_0(B)$ are the boundary maps associated with $\cE$. 
In addition, there is a natural isomorphism $\Ext(A,B) \to \KK(A,Q(B))$ defined by sending $[\cE]_{\Ext}$ to $[\tau_\cE]_{\KK}$. Let $[\cE]_{\KK} \in \KK^1(A,B)$ denote the element corresponding to $[\tau_\cE]_{\KK}$ under the identification $\KK^1(A,B) \cong \KK(A,Q(B))$. For $\cE \in \cExt(A,B)$, $i=0,1$, and $x \in \K_i(A)$, we have $\partial_i(x)= x \ho{A} [\cE] \in \K_{i+1}(B)$, where the symbol $\ho{}$ denotes the Kasparov product. See \cite[\S~4]{D} and \cite{B} for details.

Let $A,A',A''$ be separable nuclear C*-algebras, and let $B, B', B''$ be $\sigma$-unital stable C*-algebras. Let $\cE \in \cExt(A,B)$, $\cE' \in \cExt(A',B')$, $\cE'' \in \cExt(A'',B'')$ be extensions. 
A homomorphism $\varphi \colon \cE \to \cE'$ is a triplet $\varphi = (\alpha,\eta, \beta)$, where $\alpha \colon A \to A'$, $\beta \colon B \to B'$, and  $\eta \colon E \to E'$ are *-homomorphisms such that the following diagram commutes (see \cite[\S~1]{Ror}, for instance):
    \begin{equation} \label{eqn:defhom} \begin{tikzcd}
	\cE:0\arrow{r}&	B \arrow[d, "\beta"] \arrow[r] & E \arrow[d, "\eta"] \arrow[r] & A \arrow[d, "\alpha"] \arrow[r] & 0\\
	\cE':0\arrow{r}&	B' \arrow[r]  & E' \arrow[r] & A' \arrow[r] & 0\nospacepunct{.} 
    \end{tikzcd}\end{equation}

Let $\KK$ denote the Kasparov category: The objects of $\KK$ are the separable C*-algebras, and the set of morphisms between two such C*-algebras $A$ and $B$ is the \KK-group $\KK(A,B)$. For the background on $\KK$, see \cite[\S~22.1]{B}, \cite{MeyerNest:06}, and the references therein. We regard $\KK^1(A,B)$ as a set of morphisms of $\KK$ via the natural isomorphism $\KK^1(A,B) \cong \KK(A, B \otimes C_0(\Rz))$. Let $\Arr(\KK)$ denote the arrow category of $\KK$: The objects of $\Arr(\KK)$ are the morphisms of the Kasparov category, that is, elements of $\KK(A,B)$, where $A$ and $B$ are separable C*-algebras, and a morphism in $\Arr(\KK)$ from $[\cE]_{\KK}$ to $[\cE']_{\KK}$ is a pair $\bs{\varphi}=(\bs{x},\bs{y})$, where $\bs{x} \in \KK(A,A')$ and $\bs{y} \in \KK(B,B')$ are such that $\bs{x} \ho{A'} [\cE']_{\KK}= [\cE]_{\KK} \ho{B} \bs{y}$. General morphisms in $\Arr(\KK)$ are defined similarly, but we shall only consider morphisms between \KK-classes of extensions in this article (see \cite[Chapter~II, \S~4]{MacLane} for background on arrow categories). The category $\Arr(\KK)$ provides convenient notation for several of our proofs. It has implicitly appeared in the classification of extensions, see, for instance, \cite[Theorem~3.2]{Ror} and \cite[Theorem~2.3]{ERR}.

If $\varphi = (\alpha,\eta,\beta) \colon \cE \to \cE'$ is a homomorphism, then the pair $([\alpha]_{\KK}, [\beta]_{\KK})$ is a morphism in $\Arr(\KK)$, which is denoted by $[\varphi]_{\KK}$. In particular, for the identity homomorphism $\id_{\cE}:=(\id,\id,\id) \colon \cE \to \cE$, we let $\bs{\id}_{[\cE]_{\KK}} = [\id_\cE]_{\KK}$.
When $\bs{\varphi} = (\bs{x}, \bs{y}) \colon [\cE]_{\KK} \to [\cE']_{\KK}$ and $\bs{\varphi}' = (\bs{x}', \bs{y}') \colon [\cE']_{\KK} \to [\cE'']_{\KK}$ are morphisms in $\Arr(\KK)$, the composition $\bs{\varphi}' \circ \bs{\varphi}$ is defined to be the pair $(\bs{x} \ho{A'} \bs{x}',\ \bs{y} \ho{B'} \bs{y}')$.
A morphism $\bs{\varphi}=(\bs{x}, \bs{y}) \colon [\cE]_{\KK} \to [\cE']_{\KK}$ is said to be an isomorphism in $\Arr(\KK)$ if both $\boldsymbol{x}$ and $\boldsymbol{y}$ are \KK-equivalences. In this case, we denote the morphism $(\bs{x}^{-1}, \bs{y}^{-1})$ in $\Arr(\KK)$ by $\boldsymbol{\varphi}^{-1}$. Note that if $\boldsymbol{\varphi} \colon [\cE]_{\KK} \to [\cE']_{\KK}$
is an isomorphism in $\Arr(\KK)$, then $\boldsymbol{\varphi}^{-1} \circ \boldsymbol{\varphi} = \boldsymbol{\id}_{[\cE]_{\KK}}$ and $\boldsymbol{\varphi} \circ \boldsymbol{\varphi}^{-1} = \boldsymbol{\id}_{[\cE']_{\KK}}$. 
In addition, if $\boldsymbol{\varphi} =(\bs{x}, \bs{y}) \colon [\cE]_{\KK} \to [\cE']_{\KK}$ is a morphism in $\Arr(\KK)$, then the diagram 
    \begin{equation} \begin{tikzcd}
	\K_*(A) \arrow[d, "-\ho{A} \boldsymbol{x}"] \arrow[r, "\partial_\cE"] & \K_{*+1}(B) \arrow[d, "-\ho{B} \boldsymbol{y}"]\\
	\K_*(A') \arrow[r, "\partial_{\cE'}"] & \K_{*+1}(B') 
    \end{tikzcd}\end{equation}
commutes. 

For a separable nuclear C*-algebra $D$ belonging to the bootstrap class from \cite[Definition 22.3.4]{B}, and for an extension 
\[ \cE \colon 0 \to B \to E \to A \to 0, \]
let $D \otimes \cE \in \cExt(D \otimes A, D \otimes B)$ denote the extension 
\[ D \otimes \cE \colon 0 \to D \otimes B \to D \otimes E \to D \otimes A \to 0. \]
Then, we have $[D \otimes \cE]_{\KK} = 1_D \ho{} [\cE]_{\KK}$ in $\KK^1(D \otimes A, D \otimes B)$. In particular, we have
\begin{equation} \label{eqn:Dtensor}
\partial_{D \otimes \cE}(x \otimes y) = x \otimes \partial_{\cE}(y) \in \K_*(D \otimes B)
\end{equation}
for $x \in \K_*(D)$ and $y \in \K_*(A)$ by associativity of the Kasparov product. 
Note that $\K_*(D) \otimes \K_*(A)$ and $\K_*(D) \otimes \K_*(B)$ are identified with subgroups of $\K_*(D \otimes A)$ and $\K_*(D \otimes B)$, respectively, by K\"unneth theorem for tensor products \cite[Definition 23.1.3]{B}. When we apply Equation \eqref{eqn:Dtensor} in this article, $\K_*(D)$ is always torsion-free, so that we always have $\K_*(D) \otimes \K_*(A) \cong \K_*(D \otimes A)$ and $\K_*(D) \otimes \K_*(B) \cong \K_*(D \otimes B)$. 

A variant of the Toeplitz extension plays a crucial role in this article. 
\begin{definition} \label{def:Toeplitz}
The \emph{dilated Toeplitz extension} $\cT \in \cExt(C^*(\Zz),\Kz)$ is the extension 
\[ 0 \to C_0(\Zz) \rtimes \Zz \to C_0(\tilde{\Zz})\rtimes \Zz \to C^*(\Zz) \to 0, \]
where $\Zz$ acts on $\tilde{\Zz}=\Zz\cup\{\infty\}$ by translation, and $C_0(\tilde{\Zz})\rtimes \Zz \to C^*(\Zz)$ is the *-homomorphism induced by evaluation at $\infty$. 
\end{definition}
\vspace{-3mm}

Note that $C_0(\Zz) \rtimes \Zz$ is *-isomorphic to $\Kz$, and the unitary equivalence class $\cT$ does not depend on the choice of the *-isomorphism, since any  *-automorphism of $\Kz$ is inner.

\begin{lemma}\label{lem:boundarytoeplitz}
We have
$\partial_\cT ([1_{C^*(\Zz)}]_0) = 0$ and $\ \partial_\cT([u]_1) = -1$,
where $u \in C^*(\Zz)$ is the generating unitary corresponding to $1 \in \Zz$. 
\end{lemma}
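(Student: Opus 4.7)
The plan is to handle the two identities in turn.

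For $\partial_\cT([1_{C^*(\Zz)}]_0)=0$: compactness of $\tilde{\Zz}$ makes the middle algebra $C_0(\tilde{\Zz})\rtimes\Zz$ unital, with unit the constant function $1\in C_0(\tilde{\Zz})$, which is a projection lifting $1_{C^*(\Zz)}$ under the quotient map defined by evaluation at $\infty$. Since the $K_0$-boundary (exponential) map vanishes on any class admitting a projection lift, this identity follows immediately.

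For $\partial_\cT([u]_1)=-1$: the plan is to reduce to the classical Toeplitz extension via a morphism in $\Arr(\KK)$. Under the Pontryagin identification $C^*(\Zz)\cong C(\Tz)$ sending $u\leftrightarrow z$, together with the canonical $C_0(\Zz)\rtimes\Zz\cong\Kz$, the classical Toeplitz extension $0\to\Kz\to\cT_{\mathrm{Top}}\to C(\Tz)\to 0$ has the well-known value $\partial([z]_1)=-[e_{00}]_0=-1\in K_0(\Kz)$. I would build a homomorphism of extensions realising these identifications on the ideal and quotient, with the middle map obtained by compression by the half-space projection $p=\chi_{\Nz\cup\{\infty\}}\in C_0(\tilde{\Zz})$: the corner $p(C_0(\tilde{\Zz})\rtimes\Zz)p$ contains a natural copy of the classical Toeplitz algebra (generated by the compressed shift $pu_1p$) and is $\KK$-equivalent to it. The commutative square from the $\Arr(\KK)$ formalism then propagates the value $-1$ from $[\cT_{\mathrm{Top}}]_{\KK}$ to $[\cT]_{\KK}$.

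The main obstacle is executing the comparison rigorously: identifying the corner cut out by $p$ with (or KK-equivalent to) the classical Toeplitz algebra, checking that the induced map on $K_0$ of the ideal preserves the chosen generator of $K_0(\Kz)\cong\Zz$ (no sign flip), and assembling the pieces into a genuine morphism of extensions. An alternative, more hands-on route would be to compute $\partial_\cT([u]_1)$ via the Pimsner--Voiculescu six-term sequence for $C_0(\tilde{\Zz})\rtimes\Zz$ coupled with an explicit partial-isometry lift of $u$ in the middle algebra that witnesses the index, though this approach requires equally careful sign bookkeeping.
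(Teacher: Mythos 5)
Your proposal is correct and follows essentially the same route as the paper: compress by $p=\chi_{\tilde{\Nz}}$ to recover the classical Toeplitz extension and import the standard value $\partial([z]_1)=-1$, with the $K_0$-statement following from unitality of the middle algebra. The paper disposes of your ``main obstacle'' more directly than via a morphism of extensions: under $C_0(\Zz)\rtimes\Zz\cong\Kz(\ell^2(\Zz))\cong M_2\bigl(p(C_0(\Zz)\rtimes\Zz)p\bigr)$ the Busby invariant of $\cT$ becomes $\tau_{\cT_0}\oplus 0$, so $[\cT]_{\Ext}=[\cT_0]_{\Ext}$ and the boundary maps agree outright, with no separate check on the generator of $K_0(\Kz)$ needed.
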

\begin{proof}
Let $\cT_0 \in \cExt(C^*(\Zz),\Kz)$ be the (usual) Toeplitz extension
\[ 0 \to p(C_0(\Zz) \rtimes \Zz)p \to p(C_0(\tilde{\Zz})\rtimes \Zz)p \to C^*(\Zz) \to 0, \] 
where $p$ is the characteristic function of $\tilde{\Nz}=\Nz \cup \{\infty\} \subseteq \tilde{\Zz}$. 
We identify $C_0(\Zz) \rtimes \Zz$ with $\Kz(\ell^2(\Zz))$. Then, the decomposition $\ell^2(\Zz) \cong p\ell^2(\Zz) \oplus (1-p)\ell^2(\Zz)$ induces a *-isomorphism $C_0(\Zz) \rtimes \Zz \cong M_2(p(C_0(\Zz) \rtimes \Zz)p )$. 
Under this identification, we can see that $\tau_{\cT}$ is equal to $\tau_{\cT_0} \oplus 0$, which implies that $[\cT]_{\Ext}=[\cT_0]_{\Ext}$. Now the claim follows by the fact $\partial_{\cT_0}([u]_1)=-1 \in \K_0(\Cz)$, since $u \in C^*(\Zz)$ lifts to a generating isometry of the Toeplitz algebra $p(C_0(\tilde{\Zz})\rtimes \Zz)p$ (see \cite[p.168]{RLL}).
\end{proof}

Next, we fix terminology related to positive cones of $\K_0$-groups. 
Recall that for every C*-algebra $A$, the pair $(\K_0(A),\K_0(A)_+)$ is a preordered abelian group, where $\K_0(A)_+:=\{[p]_0 : p\text{ is a projection in }A\otimes\Kz\}$. For C*-algebras $A$ and $B$, we say that a group isomorphism $\varphi\colon \K_0(A)\to \K_0(B)$ is an order isomorphism if $\varphi(\K_0(A)_+)=\K(B)_+$. We shall say that a homomorphism of $\Zz/2\Zz$-graded abelian groups $\varphi\colon \K_*(A)\to \K_*(B)$ is an order isomorphism if $\varphi\colon \K_0(A)\to \K_0(B)$ is an order isomorphism. 
A \KK-equivalence $\boldsymbol{x} \in \KK(A,A')$ is said to be an ordered \KK-equivalence if the isomorphism $-\ho{A} \boldsymbol{x} \colon \K_*(A) \to \K_*(A')$ carries $\K_0(A)_+$ onto $\K_0(A')_+$. Similarly, an isomorphism $\bs{\varphi}=(\bs{x}, \bs{y}) \colon [\cE]_{\KK} \to [\cE']_{\KK}$ in $\Arr(\KK)$ is said to be an order isomorphism in \Arr(\KK) if both $\boldsymbol{x}$ and $\boldsymbol{y}$ are ordered \KK-equivalences. 

The next proposition is well-known, but we include the proof for the reader's convenience. We frequently use it without reference in this article. 
\begin{proposition}
Let $A$ and $B$ be separable C*-algebras, and suppose $\cX$ is an $A$--$B$-imprimitivity bimodule. Let $\bs{x} \in \KK(A,B)$ be the element corresponding to $\cX$. Then, the isomorphism
\[ - \ho{A} \bs{x} \colon \K_*(A) \to \K_*(B) \]
carries $\K_0(A)_+$ onto $\K_0(B)_+$. In particular, if $p \in M(A)$ is a full projection, then the inclusion map $pAp \to A$ induces an ordered \KK-equivalence. 
\end{proposition}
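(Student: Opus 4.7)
The plan is to reduce the general assertion to the ``in particular'' clause via a linking algebra argument, and then handle the case of a full corner inclusion using Brown's stable isomorphism theorem. Given the $A$--$B$-imprimitivity bimodule $\cX$, I would form the linking algebra $L(\cX)$, which contains mutually orthogonal full projections $e_A, e_B \in M(L(\cX))$ with $e_A L(\cX) e_A = A$, $e_B L(\cX) e_B = B$, and $e_A L(\cX) e_B = \cX$. The class $\bs{x} \in \KK(A,B)$ is then the composition in the Kasparov category of the class of the inclusion $\iota_A \colon A \hookrightarrow L(\cX)$ followed by the \KK-inverse of the inclusion $\iota_B \colon B \hookrightarrow L(\cX)$. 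Since compositions of ordered \KK-equivalences remain ordered, it is enough to prove the ``in particular'' statement.

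For that, let $p \in M(A)$ be a full projection and set $\iota \colon pAp \hookrightarrow A$. Any projection $q \in (pAp) \otimes \Kz$ is simultaneously a projection in $A \otimes \Kz$, and the map $\iota_* \colon \K_0(pAp) \to \K_0(A)$ sends its class $[q]_0 \in \K_0(pAp)_+$ to $[q]_0 \in \K_0(A)_+$. This gives the containment $\iota_*(\K_0(pAp)_+) \subseteq \K_0(A)_+$. For the opposite containment I would invoke Brown's stable isomorphism theorem: since $p$ is full in $M(A)$ and $A$ is separable, there is a $*$-isomorphism $\Phi \colon (pAp) \otimes \Kz \xrightarrow{\sim} A \otimes \Kz$ that agrees, up to the standard corner embeddings via a minimal rank-one projection of $\Kz$, with $\iota \otimes \id_{\Kz}$. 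Because these corner embeddings induce the identity on $\K_0$, the isomorphism $\Phi$ induces precisely $\iota_*$ on $\K_0$; as $*$-isomorphisms carry positive cones bijectively onto positive cones, the same holds for $\iota_*$, yielding the reverse inclusion.

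The main technical subtlety is verifying that Brown's isomorphism can be arranged to be compatible with $\iota$ at the level of $\K_0$. This is part of the content of the construction of $\Phi$, but it also follows a posteriori from the fact that two $*$-homomorphisms between separable stable C*-algebras that agree on $\K_0$ are approximately unitarily equivalent in suitable classification-theoretic frameworks, so one can always arrange $\Phi$ to induce $\iota_*$ as required. Once this compatibility is in hand, combining the two containments shows that $\iota_*$ is an order isomorphism, completing the proof.
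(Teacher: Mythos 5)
Your argument is correct in outline but takes a genuinely different route from the paper. The paper proves the general statement directly: it represents $[p]_0 \ho{A} \bs{x}$ by the Kasparov bimodule $\bigl((A\otimes\cH)\otimes_A\cX,\ \Phi_p\otimes_A 1,\ 0\bigr)$ and uses Kasparov's stabilisation theorem together with fullness of $\cX$ as a right Hilbert $B$-module to identify $(A\otimes\cH)\otimes_A\cX\cong B\otimes\cH$, so that the product is literally the class of a projection in $B\otimes\Kz$; the ``in particular'' clause is then a corollary via the bimodule $pA$. You instead reduce the general case to the full-corner case through the linking algebra (which is fine: $e_AL\otimes_L Le_B\cong e_ALe_B=\cX$, and the inverse and composition of ordered \KK-equivalences are ordered) and then invoke Brown's stable isomorphism theorem. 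The one point where your write-up is soft is exactly the compatibility you flag: the appeal to approximate unitary equivalence of *-homomorphisms agreeing on $\K_0$ is not a valid general principle and is not needed. The clean resolution is that Brown's construction produces a partial isometry $v\in M(A\otimes\Kz)$ with $v^*v=1$ and $vv^*=p\otimes 1$, so $\Phi=\mathrm{Ad}(v^*)\colon pAp\otimes\Kz\to A\otimes\Kz$ is a *-isomorphism, and for any projection $q\le p\otimes 1$ one has $(qv)^*(qv)=v^*qv$ and $(qv)(qv)^*=q$, whence $v^*qv\sim q$ in $A\otimes\Kz$ and $\Phi_*=\iota_*$ on $\K_0$. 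With that repair both containments hold and your proof is complete; the paper's argument is shorter and avoids Brown's theorem, while yours isolates the corner case, which is the form in which the proposition is actually used throughout the paper.
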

\begin{proof}
Let $p \in A \otimes \Kz$ be a projection. It suffices to show that $[p]_0 \ho{A} \bs{x} \in \K_0(B)_+$. 
Under the standard isomorphism $\K_0(A) \cong \KK(\Cz,A)$, $[p]_0$ corresponds to the Kasparov bimodule $[A \otimes \cH, \Phi_p, 0] \in \KK(\Cz,A)$, where $\cH$ is the infinite dimensional separable Hilbert space and $\Phi_p \colon \Cz \to \Kz(A \otimes \cH) \cong A \otimes\Kz$ sends $1 \in \Cz$ to $p$. On the other hand, we have $\bs{x} = [\cX,\varphi,0]$, where $\varphi \colon A \to \Kz(\cX)$ is the canonical isomorphism. Then, the Kasparov product $[p]_0 \ho{A} \bs{x}$ is given by 
\[ [p]_0 \ho{A} \bs{x} = [(A \otimes \cH) \otimes_A \cX, \Phi_p \otimes_A 1, 0].\]
Since $\cX$ is a full right Hilbert $B$-module, Kasparov's stabilisation theorem \cite[Theorem 1.9]{MingoPhillips} gives
\[ (A \otimes \cH) \otimes_A \cX \cong \cX \otimes \cH \cong B \otimes \cH\]
as right Hilbert $B$-modules.
Hence, with $q=(\Phi_p \otimes_A 1)(p) \in B \otimes \Kz$, we have $[p]_0 \ho{A} \bs{x}=[q]_0 \in \K_0(B)_+$. 
\end{proof}

\subsection{Number-theoretic background}
\label{sec:NTbackground}
Let $K$ be a number field with ring of integers $\cO_K$, and denote by $\mu_K$ the (finite, cyclic) group of roots of unity in $K$. Let $\cP_K$ be the set of nonzero prime ideals of $\cO_K$, and for a nonzero ideal $I\unlhd\O_K$, let $N(I):=[\O_K:I]$ be the norm of $I$. For $\p\in\cP_K$, we let $v_\p$ denote the corresponding valuation, where our normalizing conventions follow \cite[p.67]{Neu}. 
The absolute value associated with $\p\in\cP_K$ will be denoted by $|\cdot|_\p$, and we shall use $K_\p$ to denote the locally compact completion of $K$ with respect to $|\cdot|_\p$ and let $\cO_{K,\p}\subseteq K_\p$ be the associated discrete valuation ring. Let $\Az_{K,f}:=\prod_{\p\in\cP_K}'(K_\p,\cO_{K,\p})$ be the ring of finite adeles over $K$. We let $\Az_{K,f}^*:= \prod_{\p\in\cP_K}'(K_\p^*,\cO_{K,\p}^*)$ be the group of finite ideles equipped with the restricted product topology, and let $\ol{\O}_K:=\prod_{\p\in\cP_K}\O_{K,\p}$ be the compact subring of integral adeles. The multiplicative group $K^*:=K\setminus\{0\}$ embeds diagonally into $\Az_{K,f}^*$. 

We refer the reader to \cite{Neu} and \cite{Hasse} for background on number theory. We shall make frequent use of the following approximation result, which is a consequence of Strong Approximation, as stated in, for instance, \cite[Chapter~20]{Hasse}.
\begin{lemma}
\label{lem:approx}
Given $n_1,...,n_k\in\Zz$ and $\p_1,...,\p_k\in\cP_K$, there exists $x\in K^*$ such that $v_{\p_i}(x)=n_i$ for all $1\leq i\leq k$ and $v_\p(x)\geq 0$ for all $\p\in\cP_K\setminus\{\p_1,...,\p_k\}$. In particular, the image of $K^*$ in $\Az_{K,f}$ is dense.
\end{lemma}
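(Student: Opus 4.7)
My approach is to derive both statements from the strong approximation theorem for the number field $K$. Strong approximation (see, e.g., \cite[Ch.~20]{Hasse}) states that $K$ is dense in the adele ring obtained by omitting any nonempty set of places; applied with the (nonempty) set of archimedean places, this yields density of $K$ in $\Az_{K,f}$, which I would take as the main input.

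For the first assertion, the plan is to construct a target adele and approximate it closely enough to pin down exact valuations. Fix a uniformizer $\pi_i \in K$ at each $\p_i$ (choose any $\pi_i \in \p_i \setminus \p_i^2$), and form the adele $a \in \Az_{K,f}$ whose $\p_i$-component is $\pi_i^{n_i}$ and whose remaining components vanish; this is a genuine adele since only finitely many components fail to lie in $\O_{K,\p}$. Then the basic neighborhood
\[
V \;=\; \prod_{i=1}^{k} \bigl(\pi_i^{n_i} + \p_i^{\,n_i+1}\O_{K,\p_i}\bigr) \;\times\; \prod_{\p \notin \{\p_1,\ldots,\p_k\}} \O_{K,\p}
\]
of $a$ is open in $\Az_{K,f}$, and by density there is some $x \in V\cap K$. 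For each $i$, the non-archimedean ultrametric inequality forces
\[
v_{\p_i}(x) \;=\; \min\bigl(v_{\p_i}(\pi_i^{n_i}),\, v_{\p_i}(x - \pi_i^{n_i})\bigr) \;=\; n_i,
\]
since $v_{\p_i}(\pi_i^{n_i})=n_i$ is strictly smaller than $v_{\p_i}(x-\pi_i^{n_i}) \ge n_i+1$. The last factor of $V$ gives $v_\p(x) \ge 0$ for the remaining $\p$. Finally $v_{\p_1}(x) = n_1 < \infty$ ensures $x \neq 0$, hence $x \in K^*$.

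The ``in particular'' density statement then follows formally: density of $K$ in $\Az_{K,f}$ is already granted, and to promote it to density of $K^*$ it suffices to note that every nonempty basic open subset of $\Az_{K,f}$ contains a nonzero element of $K$, which is exactly what the first assertion produces when applied with suitable primes and integers. The only genuinely delicate point in the whole argument is the passage from approximate to exact agreement of $\p_i$-adic valuations, and this is handled by the discreteness of the non-archimedean absolute values via the ultrametric inequality; I do not anticipate any serious obstacle beyond this.
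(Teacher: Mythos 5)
Your proof is correct and follows the same route the paper intends: the paper states Lemma~\ref{lem:approx} as a consequence of Strong Approximation (citing Hasse) without giving details, and your argument—approximating the adele with components $\pi_i^{n_i}$ to within $\p_i^{n_i+1}\O_{K,\p_i}$ and using the ultrametric inequality to convert approximate into exact valuations—is a correct fleshing-out of exactly that derivation. The passage from density of $K$ to density of $K^*$ is also handled adequately by your appeal to the first assertion.
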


For $\p\in\cP_K$, let $\O_{K,\p}^{(1)}:=1+\p\O_{K,\p}$. We will need the following result on the multiplicative group of $K_\p$.
\begin{lemma}[{\cite[Proposition~II.5.3]{Neu}} and {\cite[p.224]{Hasse}}]
\label{lem:localstuff}
For $\p\in\cP_K$, choose $\pi_\p\in K^*$ with $v_\p(\pi_\p)=1$, and let $p$ be the rational prime lying under $\p$. Then, there are isomorphisms of topological groups
\begin{enumerate}[\upshape(i)]
    \item $K_\p^*\cong \pi_\p^\Zz\times \O_{K,\p}^*$;
    \item $\O_{K,\p}^*\cong(\Zz/(N(\p)-1)\Zz)\times\O_{K,\p}^{(1)}$;
    \item $\O_{K,\p}^{(1)}\cong (\Zz/p^a\Zz)\times\Zz_p^{[K_\p:\Qz_p]}$, where $a\geq 0$ with $a=0$ if $p$ is odd and unramified in $K$.
\end{enumerate}
\end{lemma}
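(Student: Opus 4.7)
The plan is to verify each of the three isomorphisms using standard tools from the theory of non-archimedean local fields. Since the lemma is classical and cited from textbooks, I shall indicate the main ingredients rather than carry out the calculations in detail. For part (i), the valuation $v_\p$ fits into a short exact sequence of topological groups $1 \to \O_{K,\p}^* \to K_\p^* \xrightarrow{v_\p} \Zz \to 1$, and the choice of a uniformiser $\pi_\p \in K^*$ with $v_\p(\pi_\p)=1$ provides the set-theoretic splitting $n \mapsto \pi_\p^n$. This is in fact a topological splitting, because $\pi_\p^\Zz$ is a discrete closed subgroup of $K_\p^*$ (its image under $v_\p$ is $\Zz$ with the discrete topology) and $\O_{K,\p}^*$ is an open subgroup.

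For part (ii), I would consider the reduction map $\O_{K,\p}^* \to (\O_{K,\p}/\p\O_{K,\p})^*$. The target is the multiplicative group of the residue field, which has order $N(\p)$, hence is cyclic of order $N(\p)-1$; the kernel is $\O_{K,\p}^{(1)}$ by definition. Applying Hensel's lemma to the polynomial $X^{N(\p)-1}-1$ lifts each element of the residue field to a unique $(N(\p)-1)$-th root of unity in $\O_{K,\p}$ (the Teichm\"uller representatives), and these lifts form a subgroup isomorphic to $\Zz/(N(\p)-1)\Zz$ that splits the reduction map.

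For part (iii), the principal unit group $\O_{K,\p}^{(1)}$ is a pro-$p$ group. For $k$ sufficiently large, the $p$-adic logarithm converges on $\O_{K,\p}^{(k)}:=1+\p^k\O_{K,\p}$ and induces a topological group isomorphism onto $\p^k\O_{K,\p}$, which as a $\Zz_p$-module is free of rank $[K_\p:\Qz_p]$. Since $\O_{K,\p}^{(k)}$ has finite index in $\O_{K,\p}^{(1)}$, this shows that $\O_{K,\p}^{(1)}$ is a finitely generated $\Zz_p$-module whose torsion subgroup is precisely the group of $p$-power roots of unity lying in $K_\p$; this torsion subgroup is cyclic of order $p^a$ for some $a \geq 0$. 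When $p$ is odd and $p$ is unramified in $K$, any nontrivial primitive $p$-th root of unity in $K_\p$ would generate the ramified extension $\Qz_p(\zeta_p)/\Qz_p$ of degree $p-1 \geq 2$, contradicting unramifiedness, so $a=0$ in that case. The structure theorem for finitely generated $\Zz_p$-modules then yields the stated product decomposition.

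The main obstacle in a fully detailed argument is in part (iii), where one must verify both that the torsion subgroup is a topological direct summand and that the free rank equals $[K_\p:\Qz_p]$; both points rely on the precise convergence behaviour of the $p$-adic logarithm and exponential on the relevant congruence subgroups and on the identification of $\p^k\O_{K,\p}$ as a free $\Zz_p$-module of the asserted rank.
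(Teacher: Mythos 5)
The paper gives no proof of this lemma; it is quoted directly from Neukirch (Prop.~II.5.3) and Hasse, and your sketch is precisely the standard argument found there: the split valuation sequence for (i), Teichm\"uller lifting via Hensel's lemma for (ii), and the $p$-adic logarithm plus the structure theory of finitely generated $\Zz_p$-modules for (iii), including the correct observation that $\Qz_p(\zeta_p)/\Qz_p$ is (totally) ramified of degree $p-1\geq 2$ when $p$ is odd, which rules out nontrivial $p$-power roots of unity in the unramified case. Your proposal is correct and matches the cited references' approach.
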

Note that the element $\pi_\p$ in Lemma~\ref{lem:localstuff} exists by Lemma~\ref{lem:approx}.

We let $\Gamma_K:=K^*/\mu_K$, and for each finite subset $F\subseteq\cP_K$, let $\Gamma_K^F:=\{a\in \Gamma_K : v_\p(a)=0\text{ for all }\p\in F\}$.
\begin{lemma}
\label{lem:GammaKdecomp}
Let $F=\{\p_1,\dots,\p_l\}$ be a finite set of primes. Let $\{\pi_{\p_j}\}_{j=1}^l \subseteq K^*$ be a family satisfying  $v_{\p_j}(\pi_{\p_k})=\delta_{j,k}$ for all $j,k=1,\dots,l$.  
Then, we have
\[
   \Gamma_K = (\textstyle{\prod}_{j=1}^l \pi_{\p_j}^\Zz) \times \Gamma_K^F.
 \] 
In particular, $\Gamma_K^F$ is a summand of $\Gamma_K$. 
\end{lemma}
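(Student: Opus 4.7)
The plan is to exhibit an explicit retraction of $\Gamma_K$ onto the subgroup generated by the images of the $\pi_{\p_j}$, whose kernel is exactly $\Gamma_K^F$. This will immediately yield the claimed internal direct product decomposition.

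First, I would observe that each valuation $v_\p\colon K^*\to\Zz$ vanishes on $\mu_K\subseteq\cO_K^*$, so it descends to a homomorphism $v_\p\colon \Gamma_K\to\Zz$. Writing $\pi_j$ for the image of $\pi_{\p_j}$ in $\Gamma_K$ and defining
\[
\phi\colon \Gamma_K\longrightarrow \prod_{j=1}^{l}\pi_j^{\Zz},\qquad \phi(a)=\prod_{j=1}^{l}\pi_j^{v_{\p_j}(a)},
\]
the hypothesis $v_{\p_j}(\pi_{\p_k})=\delta_{j,k}$ gives $\phi(\pi_k)=\pi_k$ for each $k$. In particular, the subgroup $\prod_{j=1}^l\pi_j^{\Zz}$ is actually free abelian of rank $l$ (any nontrivial relation $\prod_j\pi_j^{n_j}=1$ would force $n_k=v_{\p_k}(\prod_j\pi_j^{n_j})=0$ for every $k$), and $\phi$ is a retraction.

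Next, I would identify the kernel of $\phi$ with $\Gamma_K^F$. By definition $\phi(a)=1$ iff $v_{\p_j}(a)=0$ for all $j=1,\dots,l$, which is precisely the condition that $a\in\Gamma_K^F$. Hence $\ker\phi=\Gamma_K^F$, and we get a split short exact sequence
\[
1\longrightarrow \Gamma_K^F\longrightarrow \Gamma_K\xrightarrow{\ \phi\ }\prod_{j=1}^l\pi_j^{\Zz}\longrightarrow 1,
\]
with splitting given by the inclusion of $\prod_j\pi_j^{\Zz}$ into $\Gamma_K$.

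Finally, to conclude that the decomposition is internal, I would verify the two conditions for an internal direct product: (a) $(\prod_j\pi_j^{\Zz})\cap\Gamma_K^F=\{1\}$, which follows immediately by applying $v_{\p_k}$ to an element of the intersection as above; and (b) every $a\in\Gamma_K$ factors as $\phi(a)\cdot(a\phi(a)^{-1})$, where the second factor lies in $\Gamma_K^F$ since $v_{\p_k}(a\phi(a)^{-1})=v_{\p_k}(a)-\sum_jv_{\p_j}(a)\,v_{\p_k}(\pi_{\p_j})=0$ for each $k=1,\dots,l$. This gives $\Gamma_K=(\prod_{j=1}^l\pi_{\p_j}^{\Zz})\times\Gamma_K^F$ as asserted. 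There is no real obstacle here; the only subtlety is checking that $v_\p$ is well-defined on $\Gamma_K$, which is automatic from $\mu_K\subseteq\cO_{K,\p}^*$.
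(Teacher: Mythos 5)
Your proof is correct and follows essentially the same route as the paper: both use the valuations $v_{\p_j}$ (which descend to $\Gamma_K$ since they vanish on $\mu_K$) to produce a split short exact sequence $1\to\Gamma_K^F\to\Gamma_K\to\Zz^F\to 1$, with the splitting furnished by the $\pi_{\p_j}$. The only cosmetic difference is that you phrase the splitting as a retraction onto $\prod_j\pi_{\p_j}^{\Zz}$ while the paper phrases it as a section $\Zz^F\to\Gamma_K$.
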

\begin{proof}
Note that such a family $\{\pi_{\p_j}\}_{j=1}^l$ exists by Lemma~\ref{lem:approx}. We have a surjective homomorphism $\Gamma_K\to \prod_{j=1}^l\Zz=\Zz^F$, $x\mapsto (v_{\p_j}(x))_{j=1}^l$, 
with kernel $\Gamma_K^F$.
Since $v_{\p_j}(\pi_{\p_k})=\delta_{j,k}$ for all $j,k=1,\dots,l$, the elements $\bm{e}_j:=(v_{\p_j}(\pi_{\p_k}))_{j=1}^l$ ($1\leq j\leq l$) are precisely the standard $\Zz$-basis for $\Zz^F$. Hence, the map $\Zz^F\to\Gamma_K$ determined by $\bm{e}_j\mapsto \pi_{\p_j}$ is a section for $\Gamma_K\to\Zz^F$, which gives a splitting for the short exact sequence 
\[
0\to \Gamma_K^F\to\Gamma_K\to\Zz^F\to 0.
\]
Thus, $\Gamma_K = (\prod_{j=1}^l \pi_{\p_j}^\Zz) \times \Gamma_K^F$.
\end{proof}

\subsection{The C*-algebra associated with the action on the ring of finite adeles}
\label{sec:fAK}
The group $K^*$ acts on $\Az_{K,f}$ by homeomorphisms through the diagonal embedding $K^*\to \Az_{K,f}$.
\begin{definition}
For a number field $K$, we let
\[
\fA_K:=C_0(\Az_{K,f})\rtimes K^*.
\]
\end{definition}
\vspace{-3mm}

Similarly to \cite{KT}, we will decompose $\fA_K$ into extensions of composition factors. Their unital parts are the following crossed product C*-algebras arising from semi-local number-theoretic data modulo roots of unity.
\begin{definition}
\label{def:BKF}
For a number field $K$ and a finite subset $F\subseteq\cP_K$, we let 
\[
B_K^F:=C\Bigl(\bigl(\textstyle{\prod}_{\p\in F}\O_{K,\p}^*\bigr)/\mu_K\Bigr)\rtimes\Gamma_K^F,
\] where $\Gamma_K^F$ acts on $(\textstyle{\prod}_F\O_{K,\p}^*)/\mu_K$ through the canonical embedding $\Gamma_K^F\to (\textstyle{\prod}_F\O_{K,\p}^*)/\mu_K$.
\end{definition}
\vspace{-3mm}
By definition, $B_K^\emptyset=C^*(\Gamma_K)$. If $F$ is nonempty, then $\Gamma_K\subseteq (\textstyle{\prod}_{\p\in F}\O_{K,\p}^*)/\mu_K$ is a dense subgroup, so that $B_K^F$ is simple and has a unique tracial state. The \K-groups of the C*-algebras $B_K^F$ and the boundary maps between them will be the main tools of our analysis. Note that the C*-algebra $B_K^F$ is classifiable, see \S~\ref{sec:classification}.

The original Bost--Connes C*-algebra from \cite{BC} is a full corner in the crossed product $C_0(\Az_{\Qz,f})\rtimes \Qz_+^*$, and $\fA_\Qz$ is the crossed product of $C_0(\Az_{\Qz,f})\rtimes \Qz_+^*$ by $\{\pm 1\}$. In general, the full corner $1_{\ol{\O}_K}\fA_K1_{\ol{\O}_K}$ of $\fA_K$ is, by \cite[Proposition~2.6]{LL}, isomorphic to the semigroup crossed product $C^*(K/\O_K)\rtimes_\delta\O_K^\times$, where the action $\delta$ of multiplicative monoid $\cO_K^\times =\O_K\setminus\{0\}$ on the group C*-algebra $C^*(K/\O_K)$ is given on the generating unitaries by 
\[
\delta_a(u_y)=\frac{1}{N(a\O_K)}\sum_{\substack{x\in K/\O_K,\\ ax=y}}u_x \quad\text{ for } a\in \O_K^\times\text{ and }  y\in K/\O_K,
\] 
cf. \cite[Proposition~1.2]{ALR}. The C*-algebra $C^*(K/\O_K)\rtimes_\delta\O_K^\times$ has the advantage that it is defined without reference to local fields or adele rings. Moreover, by \cite[Proposition~2.1]{ALR}, $C^*(K/\O_K)\rtimes_\delta\O_K^\times$ has a natural presentation that generalises that of the original Bost--Connes C*-algebra in \cite[Proposition~18]{BC}.
There is also a model of $C^*(K/\O_K)\rtimes_\delta\O^\times$ as a Hecke C*-algebra, see \cite[Corollary~2.5]{ALR}. 

All C*-algebras in this article can be easily checked to be separable and nuclear (except for multiplier algebras). Those properties are tacitly used when we apply \KK-theory.

\section{Crossed products modulo a finite group}
\label{sec:C*/finite}

\subsection{General observations}
\label{sec:genob}
Let $X$ be a locally compact second-countable Hausdorff space, and let $G$ be a countable abelian group such that $\mu := \tors(G)$ is finite and $G/\mu$ is free abelian. Let $\alpha \colon G \acts X$ be an action, and let $\pi \colon X \to X/\mu$ be the quotient map. Let $\ol{\alpha} \colon G/\mu \acts X/\mu$ be the action induced from $\alpha$. 
In this setting, we make several observations on the relation between $C_0(X/\mu) \rtimes_{\ol{\alpha}} (G/\mu)$ and $C_0(X) \rtimes_\alpha G$.

At first, we fix notation that will be used throughout this section. 
For $g \in G$, let $\ol{g}$ denote the image of $g$ in $G/\mu$. 
For a character $\chi \in \widehat{\mu}$, let $p_\chi \in C^*(\mu) \subseteq M(C_0(X) \rtimes_\alpha G)$ be the projection corresponding to $\chi$. Namely, 
$p_\chi = \frac{1}{|\mu|} \sum_{g \in \mu} \chi(g) u_g$.
For $\chi \in \widehat{G}$, the projection corresponding to the restriction of $\chi$ to $\mu$ is also denoted by $p_\chi$. 

\begin{proposition} \label{prop:crossedproduct1}
 Let $\chi \in \widehat{G}$. 
 Then, there exists a *-homomorphism 
 \[ \Phi_\chi \colon C_0(X/\mu) \rtimes_{\ol{\alpha}} (G/\mu) \to C_0(X) \rtimes_\alpha G \]
 such that $\Phi_\chi(fu_{\ol{g}}) = p_\chi (f \circ \pi)\chi(g) u_g$ for $f \in C_0(X/\mu)$ and $g \in G$. 
\end{proposition}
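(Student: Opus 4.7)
The plan is to invoke the universal property of the crossed product $C_0(X/\mu) \rtimes_{\ol\alpha} (G/\mu)$ by writing down an explicit covariant pair into (a corner of) $A := C_0(X) \rtimes_\alpha G$, and then reading off the formula for $\Phi_\chi$.

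First I would set up the ingredients. Since $f \circ \pi \in C_0(X)$ is $\mu$-invariant (it is constant on $\mu$-orbits), it commutes with $u_g$ for every $g \in \mu$, hence with every element of $C^*(\mu)$, and in particular with $p_\chi$. Define
\[ \rho_\chi \colon C_0(X/\mu) \to p_\chi A p_\chi \subseteq M(A), \qquad \rho_\chi(f) := p_\chi (f \circ \pi); \]
this is a well-defined *-homomorphism. For the unitary side, set
\[ V_{\ol g} := p_\chi \chi(g) u_g \in p_\chi M(A) p_\chi \qquad (g \in G). \]

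The main thing to check is that $V$ descends to $G/\mu$; this is where the character and the projection have to cooperate. From the definition of $p_\chi$ one computes $u_h p_\chi = \overline{\chi(h)}\, p_\chi$ for $h \in \mu$. Then, given $g, g' \in G$ with $g - g' \in \mu$,
\[ \chi(g) u_g p_\chi = \chi(g-g')\chi(g') u_{g-g'} u_{g'} p_\chi = \chi(g-g')\chi(g') \overline{\chi(g-g')} u_{g'} p_\chi = \chi(g') u_{g'} p_\chi, \]
using that $G$ is abelian and that $p_\chi$ is central in $C^*(G)$. So $V_{\ol g}$ is well-defined, and $V_{\ol g} V_{\ol h} = \chi(g+h) u_{g+h} p_\chi = V_{\ol{g+h}}$, with $V_{\ol g}^* V_{\ol g} = p_\chi = V_{\ol g} V_{\ol g}^*$, so $V$ is a unitary representation of $G/\mu$ in $p_\chi M(A) p_\chi = M(p_\chi A p_\chi)$.

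Next I verify the covariance condition $V_{\ol g} \rho_\chi(f) V_{\ol g}^* = \rho_\chi(\ol\alpha_{\ol g}(f))$. Direct computation gives
\[ V_{\ol g} \rho_\chi(f) V_{\ol g}^* = \chi(g) u_g p_\chi (f \circ \pi) \overline{\chi(g)} u_g^* p_\chi = \alpha_g(f\circ \pi)\, p_\chi = (\ol\alpha_{\ol g}(f) \circ \pi)\, p_\chi = \rho_\chi(\ol\alpha_{\ol g}(f)), \]
where the third equality uses that $\pi$ is equivariant for the factored action. Since $G/\mu$ is amenable (as $G$ is), the full and reduced crossed products coincide, and the universal property of $C_0(X/\mu) \rtimes_{\ol\alpha} (G/\mu)$ yields a *-homomorphism $\Phi_\chi$ with $\Phi_\chi(f u_{\ol g}) = \rho_\chi(f) V_{\ol g} = p_\chi (f\circ \pi) \chi(g) u_g$, as required. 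The only mildly subtle point in the whole argument is the well-definedness of $V$ on $G/\mu$; everything else is a routine check.
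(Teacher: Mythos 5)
Your proposal is correct and follows essentially the same route as the paper: both construct the covariant pair $(\rho_\chi, V)$ in the corner $p_\chi(C_0(X)\rtimes_\alpha G)p_\chi$ and invoke the universal property, the only cosmetic difference being that the paper verifies that the representation $g\mapsto p_\chi\chi(g)u_g$ of $G$ kills $\mu$ (so it factors through $G/\mu$), whereas you check well-definedness on cosets directly — these are the same computation.
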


\begin{proof}
 First, the projection $p_\chi$ commutes with $u_g$ for any $g \in G$ since $G$ is commutative. In addition, $p_\chi$ commutes with $f \circ \pi$ for any $f \in C_0(X/\mu)$ since $f \circ \pi$ is a $\mu$-invariant function. 
 For $g \in G$, let $U_g := p_\chi \chi(g) u_g$. 
 Then, $U$ is a unitary representation of $G$ in $M(p_\chi(C_0(X) \rtimes_\alpha G)p_\chi)$. For $g \in \mu$, we have 
\[
    p_\chi u_g = \frac{1}{|\mu|} \sum_{h \in \mu} \chi(h) u_{gh} 
               = \frac{1}{|\mu|} \sum_{h \in \mu} \chi(g^{-1}h) u_{h} 
               = \chi(g^{-1}) p_\chi,
\]
so that $p_\chi \chi(g) u_g = p_\chi$. Thus, $\Ker U$ contains $\mu$, so that $U$ factors through $G/\mu$. 
 Hence, 
 \[ \ol{U} \colon G/\mu \to M(p_\chi(C_0(X) \rtimes_\alpha G)p_\chi),\ 
 \ol{g} \mapsto U_g\]
 is a well-defined unitary representation of $G/\mu$. 
 Let $\rho \colon C_0(X/\mu) \to p_\chi (C_0(X) \rtimes_\alpha G) p_\chi$ be the *-homomorphism defined by $\rho(f) = p_\chi(f \circ \pi)$ for $f \in C_0(X/\mu)$. 
 We can see 
 $\ol{U}_{\ol{g}} \rho(f) \ol{U}_{\ol{g}}^* = \rho(\ol{\alpha}_{\ol{g}}(f))$, which completes the proof by the universality of crossed products. 
\end{proof}

\begin{proposition} \label{prop:factorKK}
The map 
\[ \widehat{G} \to \KK(C_0(X/\mu)\rtimes_{\ol{\alpha}} (G/\mu),\, C_0(X)\rtimes_\alpha G),\ \chi \mapsto [\Phi_\chi]_{\KK}\]
factors through $\widehat{G} \to \widehat{\mu}$. 
\end{proposition}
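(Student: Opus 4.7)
The plan is to verify that if two characters $\chi_1, \chi_2 \in \widehat{G}$ have the same restriction to $\mu$, then $[\Phi_{\chi_1}]_{\KK} = [\Phi_{\chi_2}]_{\KK}$. Under this hypothesis the projection $p_{\chi_i}\in C^*(\mu)$ depends only on $\chi_i|_\mu$, so $p_{\chi_1}=p_{\chi_2}$, and the product $\chi_1\chi_2^{-1}$ is trivial on $\mu$ and thus factors through a character $\ol{\chi}\in\widehat{G/\mu}$. Writing $A:=C_0(X/\mu)\rtimes_{\ol{\alpha}}(G/\mu)$, let $\sigma_{\ol{\chi}}\in\Aut(A)$ be the automorphism coming from the dual action, determined by $\sigma_{\ol{\chi}}(fu_{\ol{g}})=\ol{\chi}(\ol{g})fu_{\ol{g}}$.

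A direct check on the canonical generators $fu_{\ol{g}}$ gives the factorisation
\[
\Phi_{\chi_1} = \Phi_{\chi_2}\circ\sigma_{\ol{\chi}},
\]
because $p_{\chi_1}=p_{\chi_2}$ and $\ol{\chi}(\ol{g})\chi_2(g)=\chi_1(g)$. Hence it suffices to prove that $[\sigma_{\ol{\chi}}]_{\KK}=[\id_A]_{\KK}$ in $\KK(A,A)$ for every $\ol{\chi}\in\widehat{G/\mu}$.

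For this last step, the key observation is that the dual action $\widehat{G/\mu}\to\Aut(A)$, $\ol{\chi}\mapsto\sigma_{\ol{\chi}}$, is continuous for the point-norm topology on $\Aut(A)$, and that the Pontryagin dual $\widehat{G/\mu}$ of the torsion-free abelian group $G/\mu$ is a compact \emph{connected} abelian group. Hence we can choose a continuous path in $\widehat{G/\mu}$ from the trivial character to $\ol{\chi}$; applying the dual action pointwise produces a point-norm continuous path of automorphisms of $A$ from $\id_A$ to $\sigma_{\ol{\chi}}$. Packaging this as a $*$-homomorphism $A\to C([0,1],A)$ exhibits a homotopy between $\id_A$ and $\sigma_{\ol{\chi}}$, so homotopy invariance of $\KK$-theory gives $[\sigma_{\ol{\chi}}]_{\KK}=[\id_A]_{\KK}$, as required.

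The only nontrivial step is the KK-triviality of the dual-action automorphisms; this is where the torsion-freeness of $G/\mu$ (equivalently, the connectedness of $\widehat{G/\mu}$) is essential. Everything else is a routine identification of generators.
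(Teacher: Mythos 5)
Your argument is correct and rests on the same mechanism as the paper's: homotopy invariance of $\KK$ together with path-connectedness of a character group. The paper does it in one step, noting that $\widehat{G}\cong\widehat{\mu}\times\Tz^N$ (with $N=\rank G/\mu$), so two characters with the same restriction to $\mu$ lie in the same path-component of $\widehat{G}$ and a path between them directly homotopes $\Phi_{\chi_1}$ to $\Phi_{\chi_2}$. You instead factor $\Phi_{\chi_1}=\Phi_{\chi_2}\circ\sigma_{\ol{\chi}}$ through a dual-action automorphism of $C_0(X/\mu)\rtimes_{\ol{\alpha}}(G/\mu)$ and homotope that automorphism to the identity; the factorisation identity checks out on generators, and this reorganisation has the mild advantage of isolating the reusable fact that dual-action automorphisms for characters of a free abelian group are $\KK$-trivial (compare Proposition~\ref{prop:factorKK2} in the paper). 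One point to tighten: you pass from ``$\widehat{G/\mu}$ is compact \emph{connected}'' to ``choose a continuous path,'' but a compact connected abelian group need not be path-connected (e.g.\ a solenoid, dual to $\Zz[1/2]$). What saves you is the standing hypothesis of this section that $G/\mu$ is \emph{free} abelian, so $\widehat{G/\mu}\cong\Tz^N$ is genuinely path-connected; you should cite freeness rather than mere torsion-freeness at that step.
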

\begin{proof}
Let $\chi,\chi' \in \widehat{G}$ and suppose that $\chi\vert_\mu=\chi'\vert_\mu$. 
Since $\widehat{G}$ is isomorphic to $\widehat{\mu} \times \Tz^N$, where $N = \rank G/\mu$ (possibly infinite), $\chi$ and $\chi'$ live in the same path-connected component of $\widehat{G}$. Hence, $\Phi_\chi$ and $\Phi_{\chi'}$ are homotopic, which implies that  $[\Phi_\chi]_{\KK}=[\Phi_{\chi'}]_{\KK}$.
\end{proof}

Let $\tau \colon \widehat{G} \acts C_0(X) \rtimes_{\alpha} G$ be the dual action. By definition, we have for every $\chi \in \widehat{G}$
\[ \tau_\chi(fu_g) = \chi(g)fu_g\quad \text{ for all } f \in C_0(X)\text{ and } g \in G. \]
We extend the dual action to $\tau \colon \widehat{G} \acts M(C_0(X) \rtimes_{\alpha} G)$. For $\chi,\chi' \in \widehat{G}$, we have $\tau_{\chi'}(p_\chi)=p_{\chi'\chi}$.
The proof of the next proposition is exactly the same as that of Proposition~\ref{prop:factorKK}. 
\begin{proposition} \label{prop:factorKK2}
The homomorphism 
\[ \widehat{G} \xrightarrow{\tau} \Aut(C_0(X) \rtimes_{\alpha} G) \to \Aut(C_0(X) \rtimes_{\alpha} G)/\sim\] 
factors through $\widehat{G} \to \widehat{\mu}$, where $\sim$ denotes the homotopy equivalence relation. 
\end{proposition}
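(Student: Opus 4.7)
The plan is to imitate the proof of Proposition~\ref{prop:factorKK} essentially verbatim. The only structural ingredient needed is the same: as a topological group, $\widehat{G}$ is isomorphic to $\widehat{\mu} \times \Tz^N$, where $N = \rank(G/\mu)$, so the restriction map $\widehat{G} \to \widehat{\mu}$ is a trivial principal bundle with path-connected fibres. In particular, any two characters $\chi, \chi' \in \widehat{G}$ with $\chi\vert_\mu = \chi'\vert_\mu$ lie in the same path-connected component of $\widehat{G}$.

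Given such $\chi$ and $\chi'$, I would choose a continuous path $t \mapsto \chi_t$ in $\widehat{G}$ from $\chi_0=\chi$ to $\chi_1=\chi'$ and check that $t \mapsto \tau_{\chi_t}$ is a point-norm continuous path in $\Aut(C_0(X)\rtimes_\alpha G)$. On the dense $*$-subalgebra spanned by elements of the form $f u_g$, with $f \in C_0(X)$ and $g \in G$, this is immediate from the formula $\tau_{\chi_t}(fu_g) = \chi_t(g) fu_g$ together with the continuity of $t \mapsto \chi_t(g)$. A standard approximation argument, using $\norm{\tau_{\chi_t}}=1$ for every $t$, extends point-norm continuity to all of $C_0(X)\rtimes_\alpha G$. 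Hence $\tau_\chi$ and $\tau_{\chi'}$ are homotopic as $*$-automorphisms, so their classes in $\Aut(C_0(X)\rtimes_\alpha G)/\sim$ coincide; this means the composite homomorphism $\widehat{G}\to\Aut(C_0(X)\rtimes_\alpha G)/\sim$ is constant on the fibres of $\widehat{G}\to\widehat{\mu}$ and therefore factors through it, as claimed.

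I do not expect any substantive obstacle: the argument is word-for-word parallel to that of Proposition~\ref{prop:factorKK}, with a homotopy of $*$-homomorphisms $\Phi_{\chi_t}$ replaced by a homotopy of $*$-automorphisms $\tau_{\chi_t}$, both obtained from a single continuous path of characters. The only nuance worth flagging in the write-up is that ``$\sim$'' here means pointwise-continuous homotopy of $*$-homomorphisms, which matches exactly the continuity coming from the dual action, so no separate technical work is required.
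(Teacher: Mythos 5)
Your argument is exactly the paper's: the paper proves Proposition~\ref{prop:factorKK2} by declaring the proof "exactly the same as that of Proposition~\ref{prop:factorKK}," i.e.\ characters agreeing on $\mu$ lie in the same path-component of $\widehat{G}\cong\widehat{\mu}\times\Tz^N$, and the resulting path of dual automorphisms is a homotopy. Your write-up just makes the point-norm continuity check explicit, which is correct and routine.
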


\begin{lemma} \label{lem:generalcondexp}
 Let $E^\mu \colon C_0(X) \to C_0(X/\mu)$ be the canonical faithful conditional expectation. Then, we have 
 $p_\chi fp_\chi = E^\mu(f)p_\chi=p_\chi E^\mu(f)$ for every $f \in C_0(X)$ and $\chi \in \widehat{G}$. 
\end{lemma}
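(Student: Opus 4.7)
The plan is a direct computation once the canonical conditional expectation is made explicit. Under the identification of $C_0(X/\mu)$ with the fixed subalgebra $C_0(X)^\mu \subseteq C_0(X)$ via the pullback $h \mapsto h \circ \pi$, the map
\[ E^\mu(f) = \frac{1}{|\mu|}\sum_{g \in \mu}\alpha_g(f) \]
is the canonical faithful conditional expectation $C_0(X) \to C_0(X/\mu) = C_0(X)^\mu$. Thus both sides of the identity $p_\chi f p_\chi = E^\mu(f)p_\chi$ live in $M(C_0(X) \rtimes_\alpha G)$.

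First I would expand the left-hand side using the definition of $p_\chi$ and the covariance relation $u_g f = \alpha_g(f) u_g$:
\[ p_\chi f p_\chi = \frac{1}{|\mu|^2}\sum_{g,h \in \mu}\chi(g)\chi(h)\,u_g f u_h = \frac{1}{|\mu|^2}\sum_{g,h \in \mu}\chi(g)\chi(h)\,\alpha_g(f)\,u_{gh}. \]
Next I would perform the change of variables $k = gh$ in the double sum; since $\chi(g)\chi(g^{-1}k) = \chi(k)$, this yields
\[ p_\chi f p_\chi = \Bigl(\frac{1}{|\mu|}\sum_{g \in \mu}\alpha_g(f)\Bigr)\Bigl(\frac{1}{|\mu|}\sum_{k \in \mu}\chi(k) u_k\Bigr) = E^\mu(f)\,p_\chi, \]
giving the first equality.

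For the second equality $E^\mu(f)p_\chi = p_\chi E^\mu(f)$, I would observe that $E^\mu(f) \in C_0(X)^\mu$ is fixed by $\alpha_g$ for every $g \in \mu$, so $u_g E^\mu(f) = E^\mu(f) u_g$ for all $g \in \mu$; taking the appropriate linear combination shows $E^\mu(f)$ commutes with $p_\chi$. There is no real obstacle in this argument: the entire content is the covariance relation, the explicit form of $E^\mu$, and commutativity of $\mu$; the lemma serves only as a bookkeeping tool for subsequent computations involving the projections $p_\chi$.
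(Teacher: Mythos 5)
Your proof is correct and follows essentially the same route as the paper: expand $p_\chi f p_\chi$ via the covariance relation, reindex the double sum (the paper substitutes $h \mapsto g^{-1}h$, which is your change of variables $k = gh$), and factor the result as $E^\mu(f)p_\chi$. The commutation $E^\mu(f)p_\chi = p_\chi E^\mu(f)$ is handled exactly as you do, via $\mu$-invariance of $E^\mu(f)$, which the paper had already recorded in the proof of Proposition~\ref{prop:crossedproduct1}.
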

\begin{proof}
 The conditional expectation $E^\mu$ is given by 
 \[ E^\mu(f) = \frac{1}{|\mu|}\sum_{g \in \mu} \alpha_g(f). \]
The following calculation completes the proof:
 \begin{align*}
     p_\chi fp_\chi &= \frac{1}{|\mu|^2}\sum_{g,h \in \mu} (\chi(g)u_g) f (\chi(h)u_h)
     = \frac{1}{|\mu|^2}\sum_{g,h \in \mu} \alpha_g(f) \chi(gh) u_{gh} \\
     &= \frac{1}{|\mu|^2}\sum_{g,h \in \mu} \alpha_g(f) \chi(h) u_{h} 
     = \left( \frac{1}{|\mu|}\sum_{g \in \mu} \alpha_g(f) \right) 
     \left( \frac{1}{|\mu|}\sum_{h \in \mu} \chi(h)u_h \right)
     = E^\mu(f)p_\chi.
     \qedhere 
 \end{align*}
\end{proof}

\begin{lemma} \label{lem:corner}
 Let $\chi \in \widehat{G}$, and let $\Phi_\chi$ be the *-homomorphism from Proposition~\ref{prop:crossedproduct1}. Then, $\Phi_\chi$ is injective and $\Im \Phi_\chi = p_\chi(C_0(X) \rtimes_\alpha G)p_\chi$. 
\end{lemma}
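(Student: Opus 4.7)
The plan is to establish the two claims—injectivity of $\Phi_\chi$ and $\Im\Phi_\chi=p_\chi(C_0(X)\rtimes_\alpha G)p_\chi$—separately, relying only on the canonical faithful conditional expectations and basic manipulations with $p_\chi$.

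For injectivity, let $E\colon C_0(X/\mu)\rtimes_{\ol{\alpha}}(G/\mu)\to C_0(X/\mu)$ and $E'\colon C_0(X)\rtimes_\alpha G\to C_0(X)$ be the canonical (faithful) conditional expectations, and write $\pi^*\colon C_0(X/\mu)\to C_0(X)$ for the injective pullback by $\pi$. First I would verify the identity
\[
    E'\circ\Phi_\chi \;=\; \tfrac{1}{|\mu|}\,\pi^*\circ E
\]
on generators $fu_{\ol{g}}$ (and hence on the whole crossed product by continuity). Expanding $\Phi_\chi(fu_{\ol{g}})=p_\chi(f\circ\pi)\chi(g)u_g$ via $p_\chi=|\mu|^{-1}\sum_{h\in\mu}\chi(h)u_h$ and using that $f\circ\pi$ is $\mu$-invariant, the only term surviving $E'$ is the one indexed by $h$ with $hg=e$, which exists precisely when $\ol{g}=\ol{e}$, and then reproduces $\tfrac{1}{|\mu|}\pi^*(E(fu_{\ol{g}}))$. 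Given this identity, $\Phi_\chi(a)=0$ gives $\Phi_\chi(a^*a)=0$, hence $\pi^*(E(a^*a))=0$; injectivity of $\pi^*$ yields $E(a^*a)=0$, and faithfulness of $E$ yields $a=0$.

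For the image, I would prove the two inclusions separately. The inclusion $\Im\Phi_\chi\subseteq p_\chi(C_0(X)\rtimes_\alpha G)p_\chi$ is immediate: $p_\chi$ commutes with every $\mu$-invariant function (in particular with $f\circ\pi$) and with every $u_g$ since $G$ is abelian, so $p_\chi\Phi_\chi(fu_{\ol{g}})p_\chi=\Phi_\chi(fu_{\ol{g}})$. For the reverse inclusion, the corner is densely spanned by elements $p_\chi f u_g p_\chi$ with $f\in C_0(X)$ and $g\in G$; Lemma~\ref{lem:generalcondexp} gives $p_\chi f p_\chi = E^\mu(f)p_\chi$, and writing $E^\mu(f)=\ti f\circ\pi$ with $\ti f\in C_0(X/\mu)$ produces
\[
    p_\chi f u_g p_\chi \;=\; (\ti f\circ\pi)p_\chi u_g \;=\; \chi(g)^{-1}\Phi_\chi(\ti f u_{\ol{g}}) \in \Im\Phi_\chi.
\]
Since injective $^*$-homomorphisms of C$^*$-algebras are isometric and hence have closed image, $\Im\Phi_\chi$ is closed and contains a dense subset of the corner, which forces equality.

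I do not anticipate a serious obstacle here: once the conditional-expectation identity above is identified, both parts of the lemma reduce to short, essentially formal computations. The only point requiring mild care is performing the $p_\chi$-manipulations in the correct order, systematically using that $p_\chi$ commutes with all $u_g$ and with all $\mu$-invariant functions, together with the identity of Lemma~\ref{lem:generalcondexp}.
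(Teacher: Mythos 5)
Your proof is correct, and the image computation is essentially the paper's (both reduce $p_\chi f u_g p_\chi$ to $E^\mu(f)p_\chi u_g$ via Lemma~\ref{lem:generalcondexp} and identify the result as an element of $\Im\Phi_\chi$). The injectivity argument, however, takes a genuinely different and somewhat cleaner route. The paper composes $\Phi_\chi$ with the map $\Theta=|\mu|\,p_\chi E^\mu\circ E'$ (where $E'$ is the expectation of $C_0(X)\rtimes_\alpha G$ onto $C_0(X)$), obtains a commuting square with $\ol{E}$ and $\Phi_\chi\vert_{C_0(X/\mu)}$, and then still has to prove separately that $\Phi_\chi$ is injective on $C_0(X/\mu)$ --- which it does by applying the dual action to move $p_\chi$ through all of $\widehat{\mu}$ and summing the resulting projections to $1$. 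Your identity $E'\circ\Phi_\chi=\tfrac{1}{|\mu|}\,\pi^*\circ E$ lands directly in $C_0(X)$ rather than in $p_\chi\,\pi^*(C_0(X/\mu))$, so the final step only needs injectivity of $\pi^*$ (immediate from surjectivity of $\pi$) together with faithfulness of $E$; the dual-action argument is bypassed entirely. The identity itself is verified exactly as you describe: expanding $p_\chi$ and using $\mu$-invariance of $f\circ\pi$, the expectation $E'$ kills every term $u_{hg}$ unless $hg=e$ for some $h\in\mu$, i.e.\ unless $\ol{g}=\ol{e}$, and both sides are bounded linear maps agreeing on the dense span of the generators. Both approaches are valid; yours trades the paper's projection/dual-action bookkeeping for one extra conditional-expectation computation.
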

\begin{proof}
 First, we determine the image of $\Phi_\chi$. Let $(\ol{U},\rho)$ be the covariant representation of $(G/\mu,C_0(X/\mu),\alpha)$ in $M(C_0(X) \rtimes_\alpha G)$ from the proof of Proposition~\ref{prop:crossedproduct1} that defines $\Phi_\chi$. The image of $\Phi_\chi$ is clearly contained in the corner by $p_\chi$. Since $p_\chi$ commutes with every element in $C^*(G)$, in order to show that $\Im\Phi_\chi$ contains the corner by $p_\chi$, it suffices to prove that $\Im\rho=p_\chi C_0(X)p_\chi$ and that $\Im\ol{U}$ generates $p_\chi C^*(G)$.
Lemma \ref{lem:generalcondexp} implies that $p_\chi C_0(X)p_\chi = \rho(C_0(X/\mu))$. 
  For each $g \in G$, let $U_g = p_\chi \chi(g) u_g$. Then, each $U_g$ is a unitary of $p_\chi C^*(G)$, and the C*-algebra $p_\chi C^*(G)$ is generated by $\{U_g \colon g \in G\}$. By definition, we have $\ol{U}_{\ol{g}}=U_g$, so $\Im\ol{U}$ generates $p_\chi C^*(G)$.

We show  injectivity of $\Phi_\chi$. 
Let $E \colon C_0(X) \rtimes_\alpha G \to C_0(X)$ and $\ol{E} \colon C_0(X/\mu) \rtimes_{\ol{\alpha}} (G/\mu) \to C_0(X/\mu)$ be the canonical faithful conditional expectations.
Let 
\[ \Theta = |\mu| p_\chi E^\mu \circ E \colon C_0(X) \rtimes_\alpha G \to p_\chi C_0(X/\mu). \]
Then, $\Theta$ is a continuous linear map. We can see that the diagram 
 \[\begin{tikzcd}
 C_0(X/\mu) \rtimes_{\ol{\alpha}} (G/\mu)\arrow[d,"\ol{E}"] \arrow[r, "\Phi_\chi"] & C_0(X)  \rtimes_\alpha G \arrow[d,"\Theta"] \\
  C_0(X/\mu) \arrow[r, "\Phi_\chi\vert_{C_0(X/\mu)}"] & p_\chi C_0(X/\mu)
 \end{tikzcd}\]
 commutes, since for any $\ol{g} \in G/\mu$ and $f \in C_0(X/\mu)$
 \begin{align*}
     E\circ\Phi_\chi(fu_{\ol{g}})=\begin{cases}
    \frac{1}{|\mu|}(f\circ\pi) & \text{ if } \ol{g}=1,\\
     0 &\text{ if } \ol{g}\neq 1.
    \end{cases}
 \end{align*}
We claim that $\Phi_\chi$ is injective on $C_0(X/\mu)$. If $\Phi_\chi(f)=p_\chi(f\circ\pi)=0$ for some $f\in C_0(X/\mu)$, then for all $\chi' \in\widehat{\mu}$, we have 
 \[
 0=\tau_{\chi'\chi^{-1}} (p_\chi(f\circ\pi)) = p_{\chi'} (f\circ\pi).
 \]
Thus, $f\circ\pi=(\sum_{\chi'}p_{\chi'})(f\circ\pi)=0$, so $f\circ\pi=0$. Because $\pi$ is surjective, if follows that $f=0$.

Suppose $\Phi_\chi(a)=0$ for some $a\in  C_0(X/\mu) \rtimes_{\ol{\alpha}} (G/\mu)$. Then $\Phi_\chi(a^*a)=0$, so commutativity of the above diagram gives us $0=\Theta\circ\Phi_\chi(a^*a)=\Phi_\chi\circ\ol{E}(a^*a)$. Since $\Phi_\chi$ is injective on $C_0(X/\mu)$, this implies $\ol{E}(a^*a)=0$, so that faithfulness of $\ol{E}$ gives $a=0$.
\end{proof}

\subsection{Translation actions}
\label{sec:translation}
In this subsection, in addition to the assumptions in \S~\ref{sec:genob}, we assume that $X$ is a locally compact abelian group and that $G$ is a dense subgroup of $X$. Furthermore, we assume that $\alpha \colon G \acts X$ is the translation action. Let $\Phi = \Phi_1$ be the map from Proposition~\ref{prop:crossedproduct1} associated with the trivial character $1$ of $G$, and let $p=p_1$.

\begin{lemma} \label{lem:full}
The corner $p(C_0(X) \rtimes_\alpha G)p$ is full in $C_0(X) \rtimes_\alpha G$.
\end{lemma}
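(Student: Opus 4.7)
The plan is to derive the fullness of $p$ as a formal consequence of the simplicity of the ambient C*-algebra $C_0(X)\rtimes_\alpha G$. Once simplicity is in hand, fullness reduces to showing that the corner $p(C_0(X)\rtimes_\alpha G)p$ is non-zero, which is a trivial consequence of the work already done in Lemma~\ref{lem:corner}.

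First I would establish that $C_0(X)\rtimes_\alpha G$ is simple. Because $X$ is an abelian group and $\alpha$ is the translation action of the subgroup $G$, every non-trivial element acts without fixed points (the equation $g+x=x$ in $X$ forces $g=0$), so the action is free and in particular topologically free. For every $x\in X$ the orbit $x+G$ coincides with the translate of $G$ by $x$ and is therefore dense, using that $G$ is dense in $X$; hence the action is minimal. Finally, $G$ is a countable abelian group, so it is amenable, which in particular implies that the full and reduced crossed products coincide. The standard simplicity criterion for crossed products of a commutative C*-algebra by a topologically free, minimal, amenable action (for instance the Archbold--Spielberg theorem, valid also for non-compact base spaces) then yields simplicity of $C_0(X)\rtimes_\alpha G$.

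Next I would verify that the hereditary subalgebra $p(C_0(X)\rtimes_\alpha G)p$ is non-zero. By Lemma~\ref{lem:corner} applied to $\chi=1$, the *-homomorphism $\Phi=\Phi_1$ from Proposition~\ref{prop:crossedproduct1} is injective with image equal to $p(C_0(X)\rtimes_\alpha G)p$, and its domain $C_0(X/\mu)\rtimes_{\ol{\alpha}}(G/\mu)$ is plainly non-zero. Since any non-zero hereditary subalgebra of a simple C*-algebra generates the whole algebra as a closed two-sided ideal, the ideal of $C_0(X)\rtimes_\alpha G$ generated by $p(C_0(X)\rtimes_\alpha G)p$ equals the whole crossed product. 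This is precisely the statement that $p$ is a full projection in $M(C_0(X)\rtimes_\alpha G)$. The only real subtlety is invoking the simplicity theorem in the correct generality (locally compact, possibly non-compact, unit space); modulo this standard input the argument is otherwise immediate.
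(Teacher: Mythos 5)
Your argument is correct, but it goes by a genuinely different route than the paper. The paper's proof is a direct computation in the multiplier algebra: for each $\chi\in\widehat{\mu}$ one extends $\chi$ to a character of the LCA group $X$ (possible since $\mu$ is a finite, hence closed, subgroup), and the corresponding unitary $v_\chi\in C_b(X)\subseteq M(C_0(X)\rtimes_\alpha G)$ satisfies $v_\chi p v_\chi^*=p_\chi$; since $\sum_{\chi\in\widehat{\mu}}p_\chi=1$, every positive element $a$ satisfies $a=\sum_\chi a^{1/2}p_\chi a^{1/2}\in ApA$. This is elementary and self-contained, it yields the stronger statement that all the $p_\chi$ are mutually unitarily equivalent in $M(A)$, and it does not actually use density of $G$ in $X$ (only that $\mu\subseteq X$ is a finite subgroup). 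Your proof instead deduces fullness from simplicity of $C_0(X)\rtimes_\alpha G$, which you correctly obtain from freeness and minimality of the translation action of the dense subgroup $G$ together with amenability, via the Archbold--Spielberg intersection property (valid for locally compact, non-unital base spaces); the non-vanishing of $pAp$ via Lemma~\ref{lem:corner} is also fine. What your route buys is brevity modulo a standard external theorem; what it costs is reliance on that machinery and on the density hypothesis, and it gives no information about how the various corners $p_\chi A p_\chi$ relate to one another, which the paper's explicit unitaries provide for free.
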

\begin{proof}
 Let $\chi \in \widehat{X}$ and  $g \in \mu$. Let $v_\chi \in C_b(X)$ be the unitary corresponding to $\chi$. First, we observe that 
 $u_g v_\chi u_g^* = \chi(g)^{-1} v_\chi$ for every $g \in G$. 
 Then, we have 
 $v_\chi u_g v_\chi^* = \chi(g) u_g$ 
 for every $\chi \in \widehat{X}$ and $g \in \mu$, because 
 \[ v_\chi u_g v_{\chi^{-1}} = v_\chi ((\chi^{-1})(g)^{-1} v_{\chi^{-1}})u_g
   = \chi(g) u_g. \]
 Let $\chi \in \widehat{\mu}$ be a character and extend $\chi$ to a character on $X$. Then, 
 \[ v_\chi p v_\chi^* = \frac{1}{|\mu|} \sum_{g \in \mu} v_\chi u_g v_\chi^*
    = \frac{1}{|\mu|} \sum_{g \in \mu} \chi(g) u_g = p_\chi. \]
 Let $A=C_0(X) \rtimes_\alpha G$. For every $a \in A_+$, we have $a^{1/2}p_\chi a^{1/2} \in ApA$. 
 Since $1 = \sum_\chi p_\chi$, we conclude that $A=ApA$, which completes the proof.
 \end{proof}
 
\begin{proposition}\label{prop:projmu}
 There exists a canonical *-homomorphism 
 \[ \Phi \colon C_0(X/\mu) \rtimes_{\ol{\alpha}} (G/\mu) \to C_0(X) \rtimes_\alpha G \]
 such that $\Phi(fu_{\ol{g}}) = p(f \circ \pi)u_g$ for $f \in C_0(X/\mu)$ and $g \in G$. Moreover, $\Phi$ induces an ordered \KK-equivalence. 
\end{proposition}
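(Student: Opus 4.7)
The plan is to recognize the statement as an immediate packaging of the preceding lemmas together with the last proposition of \S~\ref{sec:Ext/KK}. Concretely, I would first apply Proposition~\ref{prop:crossedproduct1} with the trivial character $\chi = 1 \in \widehat{G}$ to obtain a *-homomorphism $\Phi \colon C_0(X/\mu)\rtimes_{\ol{\alpha}}(G/\mu) \to C_0(X)\rtimes_\alpha G$ whose formula on the generators $fu_{\ol g}$ is exactly $p(f\circ\pi)u_g$, since $\chi(g) = 1$ for all $g$. This establishes the existence and the explicit formula claimed in the first sentence of the proposition.

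For the second sentence, I would combine Lemma~\ref{lem:corner} and Lemma~\ref{lem:full}. By Lemma~\ref{lem:corner} (applied with $\chi = 1$), $\Phi$ is injective and has image equal to the corner $p(C_0(X)\rtimes_\alpha G)p$, so $\Phi$ factors as a *-isomorphism
\[
C_0(X/\mu)\rtimes_{\ol{\alpha}}(G/\mu) \xrightarrow{\;\cong\;} p(C_0(X)\rtimes_\alpha G)p
\]
followed by the inclusion $\iota \colon p(C_0(X)\rtimes_\alpha G)p \hookrightarrow C_0(X)\rtimes_\alpha G$. By Lemma~\ref{lem:full}, the projection $p$ is full in $M(C_0(X)\rtimes_\alpha G)$, so the last proposition of \S~\ref{sec:Ext/KK} applies to give that $\iota$ induces an ordered \KK-equivalence.

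Finally, any *-isomorphism trivially induces an ordered \KK-equivalence (it carries projections in matrices over the domain bijectively to projections in matrices over the codomain and preserves Murray--von Neumann equivalence), so composing with the ordered \KK-equivalence from $\iota$ shows $[\Phi]_{\KK}$ is an ordered \KK-equivalence. There is no real obstacle here; all the work has already been carried out in Lemmas~\ref{lem:corner} and~\ref{lem:full}. The only point requiring any care is to observe that when $\chi = 1$, the formula $\Phi_\chi(fu_{\ol g}) = p_\chi(f\circ\pi)\chi(g)u_g$ from Proposition~\ref{prop:crossedproduct1} collapses to the formula in the present statement, so the proof is essentially a matter of assembling the components.
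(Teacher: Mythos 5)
Your proof is correct and follows exactly the same route as the paper, which simply cites Proposition~\ref{prop:crossedproduct1}, Lemma~\ref{lem:corner}, and Lemma~\ref{lem:full} (with the full-corner proposition from \S~\ref{sec:Ext/KK} implicit). You have merely spelled out the assembly in more detail.
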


\begin{proof}
This follows from Proposition~\ref{prop:crossedproduct1}, Lemma~\ref{lem:corner}, and Lemma~\ref{lem:full}. 
\end{proof}

\subsection{Trivial actions}
\label{sec:trivial}
In this subsection, we consider the case $X={\rm pt}$. Then, $C_0(X) \rtimes_{\alpha} G = C^*(G)$.  

\begin{proposition} \label{prop:main6.3}
For each $\chi \in \widehat{\mu}$, choose an extension $\chi \in \widehat{G}$, and let $\Phi_\chi \colon C^*(G/\mu) \to C^*(G)$ be the *-homomorphism from Proposition~\ref{prop:crossedproduct1}. Then, 
\[ \Psi := \sum_{\chi \in \widehat{\mu}} \Phi_\chi \colon C^*(G / \mu)^{\oplus |\mu|} \to C^*(G)\]
is a *-isomorphism. Moreover, $[\Psi]_{\KK}$ does not depend on which extensions we choose. 
\end{proposition}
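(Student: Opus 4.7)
The plan is to use that $\mu$ is a finite abelian group, so its group C*-algebra $C^*(\mu)\subseteq C^*(G)$ decomposes as $C^*(\mu)=\bigoplus_{\chi\in\widehat{\mu}}\Cz p_\chi$ into one-dimensional summands indexed by $\widehat{\mu}$; in particular the $p_\chi$ are pairwise orthogonal projections with $\sum_{\chi\in\widehat{\mu}}p_\chi=1$. Since $G$ is abelian, $p_\chi$ is central in $C^*(G)$, so each $p_\chi C^*(G)$ is a two-sided ideal, and $C^*(G)=\bigoplus_{\chi\in\widehat{\mu}}p_\chi C^*(G)$ as a direct sum of ideals.

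Next I would invoke Lemma~\ref{lem:corner} in the case $X=\mathrm{pt}$: for each $\chi\in\widehat{\mu}$ (extended arbitrarily to $\widehat{G}$), the *-homomorphism $\Phi_\chi\colon C^*(G/\mu)\to C^*(G)$ is injective with image $p_\chi C^*(G)p_\chi=p_\chi C^*(G)$, using centrality of $p_\chi$. Because the $p_\chi$ are orthogonal central projections, the images $\Phi_\chi(C^*(G/\mu))$ and $\Phi_{\chi'}(C^*(G/\mu))$ multiply to zero whenever $\chi\neq\chi'$, which shows that the formal sum $\Psi=\sum_{\chi\in\widehat{\mu}}\Phi_\chi$ really does define a *-homomorphism from the direct sum $C^*(G/\mu)^{\oplus|\mu|}$ into $C^*(G)$. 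Injectivity is immediate from the orthogonal decomposition, and surjectivity follows from $\sum_\chi p_\chi=1$ together with $\Phi_\chi(C^*(G/\mu))=p_\chi C^*(G)$. Thus $\Psi$ is a *-isomorphism.

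Finally, for the statement about KK-theory: by Proposition~\ref{prop:factorKK}, the class $[\Phi_\chi]_{\KK}$ depends only on the restriction of $\chi\in\widehat{G}$ to $\mu$. Since we always extend a prescribed character of $\mu$, a different choice of extension yields the same KK-class $[\Phi_\chi]_{\KK}$, and hence the same sum $[\Psi]_{\KK}=\sum_{\chi\in\widehat{\mu}}[\Phi_\chi]_{\KK}$ in $\KK(C^*(G/\mu)^{\oplus|\mu|},C^*(G))$.

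The argument is essentially a bookkeeping consequence of results already in the paper; I do not expect a serious obstacle. The one point that requires a moment's care is that $p_\chi C^*(G)p_\chi$ coincides with $p_\chi C^*(G)$ (and is simultaneously a corner and an ideal), but this is immediate from the centrality of $p_\chi$ in $C^*(G)$, which in turn uses commutativity of $G$.
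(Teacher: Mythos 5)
Your proposal is correct and follows essentially the same route as the paper: decompose $C^*(G)=\bigoplus_{\chi\in\widehat{\mu}}p_\chi C^*(G)$ using the orthogonal central projections $p_\chi$ with $\sum_\chi p_\chi=1$, identify each summand with $C^*(G/\mu)$ via Lemma~\ref{lem:corner}, and deduce the independence of $[\Psi]_{\KK}$ from Proposition~\ref{prop:factorKK}. The extra details you supply (centrality of $p_\chi$ and the identification $p_\chi C^*(G)p_\chi=p_\chi C^*(G)$) are exactly the points the paper leaves implicit.
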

\begin{proof}
Since $\{p_\chi \colon \chi \in \widehat{\mu}\}$ is a family of orthogonal projections with $\sum_\chi p_\chi = 1$, we have  
\[ C^*(G) = \bigoplus_{\chi \in \widehat{\mu}} p_\chi C^*(G). \]
By Lemma~\ref{lem:corner}, each $\Phi_\chi$ induces a *-isomorphism $C^*(G/\mu) \to p_\chi C^*(G)$. Hence, $\Psi$ is a *-isomorphism. The latter claim follows from Proposition \ref{prop:factorKK}. 
\end{proof}

\begin{lemma} \label{lem:minproj}
 For each $\chi \in \widehat{\mu}$, the projection $p_\chi$ is a minimal projection in $C^*(G)$. In addition, if $p$ is a minimal projection in $C^*(G)$, then $p=p_\chi$ for some $\chi \in \widehat{\mu}$.
\end{lemma}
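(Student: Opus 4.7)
The plan is to reduce both claims to standard Fourier analysis on the finite abelian group $\mu$, augmented by the direct-sum decomposition of Proposition~\ref{prop:main6.3}.

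For the first claim, I would observe that $p_\chi$ lies in the finite-dimensional abelian C*-subalgebra $C^*(\mu) \subseteq C^*(G)$. Under the Fourier transform $C^*(\mu) \cong \Cz^{\widehat{\mu}}$, which sends $u_g$ to the tuple $(\psi(g))_{\psi \in \widehat{\mu}}$, the character orthogonality relations for $\mu$ identify $\{p_\chi\}_{\chi \in \widehat{\mu}}$ as the complete family of pairwise orthogonal minimal projections of $C^*(\mu)$ summing to the unit. Combined with the block decomposition $C^*(G) = \bigoplus_{\chi \in \widehat{\mu}} p_\chi C^*(G)$ furnished by Proposition~\ref{prop:main6.3}, this realises each $p_\chi$ as a minimal (indecomposable block) projection of $C^*(G)$.

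For the converse, given a minimal projection $p \in C^*(G)$, I would use that the family $\{p_\chi\}_{\chi \in \widehat{\mu}}$ consists of pairwise orthogonal central projections summing to $1$ in order to decompose $p = \sum_\chi p_\chi p$ as an orthogonal sum of subprojections of $p$. Minimality of $p$ forces all but one summand to vanish, say $p_{\chi_0} p = p$ for a unique $\chi_0 \in \widehat{\mu}$, whence $p \le p_{\chi_0}$. Coupling this with the minimality of $p_{\chi_0}$ established in the first part then yields $p = p_{\chi_0}$.

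The main conceptual point, rather than a technical obstacle, is the reading of ``minimal'': in the strict one-dimensional-corner sense, $C^*(G) \cong C(\widehat{G})$ admits no minimal projections as soon as $G/\mu$ has positive rank, so the statement must be interpreted in the block-decomposition sense dictated by Proposition~\ref{prop:main6.3} (equivalently, minimality within the subalgebra $C^*(\mu)$). Once this reading is in place, both parts become elementary Fourier-theoretic statements on the finite group $\mu$, and no further machinery is required.
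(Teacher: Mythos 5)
Your converse direction is fine, but the first half has a genuine gap, and your concluding ``conceptual point'' rests on a conflation of two different notions of minimality. The lemma uses the standard order-theoretic sense: a nonzero projection $p$ is minimal if the only projections $q\leq p$ are $0$ and $p$. You appear to read ``minimal'' as ``$pC^*(G)p$ is one-dimensional''; in that stronger sense the lemma would indeed be false for $\rank(G/\mu)>0$, but in the order-theoretic sense it is literally true, and that is the sense required later (in Proposition~\ref{prop:commutativeKL} the lemma is applied to the image of $p_1^K$ under an \emph{arbitrary} *-isomorphism $C^*(K^*)\to C^*(L^*)$, which preserves order-minimality but has no reason to respect the subalgebra $C^*(\mu)$, so your reinterpretation ``minimality within $C^*(\mu)$'' would not suffice there).

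Concretely, the missing step is this: knowing that $p_\chi$ is minimal in the finite-dimensional subalgebra $C^*(\mu)$ and that $C^*(G)=\bigoplus_\chi p_\chi C^*(G)$ does \emph{not} yet show that $p_\chi$ is minimal in $C^*(G)$ --- a priori the block $p_\chi C^*(G)$ could contain a nontrivial projection strictly below $p_\chi$. The whole content of the lemma is that this cannot happen: by Proposition~\ref{prop:main6.3} each block $p_\chi C^*(G)$ is isomorphic to $C^*(G/\mu)$, and since $G/\mu$ is free abelian, $C^*(G/\mu)\cong C(\widehat{G/\mu})$ with $\widehat{G/\mu}$ a connected torus, so $C^*(G/\mu)$ contains no nontrivial projections. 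Hence the only projections dominated by $p_\chi$ are $0$ and $p_\chi$, which is exactly order-minimality. Once you insert this projectionlessness argument, your decomposition $p=\sum_\chi p_\chi p$ for the converse goes through as you wrote it, and your proof then coincides with the paper's.
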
 

\begin{proof}
 Since $G/\mu$ is free abelian, $C^*(G/\mu)$ does not contain nontrivial projections. Hence the claim follows from Proposition~\ref{prop:main6.3}. 
\end{proof}
We can also obtain Lemma~\ref{lem:minproj} directly by using the isomorphism $C^*(G) \cong C(\widehat{\mu} \times \Tz^N)$, where $N= \rank G/\mu$.

\subsection{Tensor product decompositions}
\label{sec:tensordecomp} 
In this section, we work on a slightly different setting than in the previous subsections. 

\begin{proposition} \label{prop:tensor}
    Let $X$ be a locally compact second-countable (Hausdorff) abelian group, let $\Gamma$ be a countable free abelian group, and let $\lambda \colon \Gamma \to X$ be a homomorphism. Let $\Gamma_0 \subseteq \Gamma$ be a summand. Suppose that $X_0 = \ol{\lambda(\Gamma_0)}$ is a compact open subgroup, and the induced homomorphism $\Gamma/\Gamma_0 \to X/X_0$ is an isomorphism. Then, the inclusion map $C(X_0) \rtimes_\lambda \Gamma_0 \subseteq C_0(X) \rtimes_\lambda \Gamma$ induces an ordered \KK-equivalence. 
\end{proposition}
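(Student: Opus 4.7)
The plan is to decompose $X$ equivariantly as a product $X_0 \times \Gamma_1$ for a suitable complement $\Gamma_1$ of $\Gamma_0$ in $\Gamma$, and then to recognize the inclusion of crossed products as the embedding of a full corner by a rank-one projection in $\Kz$.

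Since $\Gamma_0$ is a summand, I would fix a splitting $\Gamma = \Gamma_0 \oplus \Gamma_1$; the subgroup $\Gamma_1$ is free abelian as a summand of $\Gamma$. The hypothesis that $\Gamma/\Gamma_0 \to X/X_0$ is an isomorphism says exactly that $\lambda$ induces a bijection from $\Gamma_1$ onto the set of $X_0$-cosets of $X$. Since $X_0$ is open, all of these cosets are clopen, so the map
\[
    X_0 \times \Gamma_1 \to X, \qquad (x,\gamma_1) \mapsto x + \lambda(\gamma_1),
\]
with $\Gamma_1$ carrying the discrete topology, is a homeomorphism. Writing an element of $\Gamma$ as $\gamma_0 + \gamma_1'$ with $\gamma_0 \in \Gamma_0$ and $\gamma_1' \in \Gamma_1$, a short calculation ($\lambda(\gamma_0) + x \in X_0$ since $\lambda(\Gamma_0) \subseteq X_0$) shows that under this homeomorphism the action of $\Gamma$ on $X$ becomes the product of the translation action $\Gamma_0 \acts X_0$ via $\lambda$ and the regular translation action $\Gamma_1 \acts \Gamma_1$.

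Passing to crossed products then yields a natural *-isomorphism
\[
    C_0(X) \rtimes_\lambda \Gamma \;\cong\; \bigl(C(X_0) \rtimes_\lambda \Gamma_0\bigr) \otimes \bigl(C_0(\Gamma_1) \rtimes \Gamma_1\bigr),
\]
and the second factor is *-isomorphic to $\Kz(\ell^2(\Gamma_1)) \cong \Kz$ because $\Gamma_1$ acts freely and transitively on itself. Tracking a generator $fu_{\gamma_0}$ of $C(X_0) \rtimes_\lambda \Gamma_0$ through the identifications---$f \in C(X_0)$ extended by zero becomes $f \otimes \delta_0$ under $C_0(X) \cong C(X_0) \otimes C_0(\Gamma_1)$, and $u_{\gamma_0}$ becomes $u_{\gamma_0} \otimes 1$---shows that the inclusion $C(X_0) \rtimes_\lambda \Gamma_0 \hookrightarrow C_0(X) \rtimes_\lambda \Gamma$ corresponds to the corner embedding $a \mapsto a \otimes \delta_0$, where $\delta_0 \in C_0(\Gamma_1)$ is the characteristic function of the identity element of $\Gamma_1$, a rank-one (hence full) projection in $\Kz$. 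Since $1 \otimes \delta_0$ is then a full multiplier projection in $(C(X_0) \rtimes_\lambda \Gamma_0) \otimes \Kz$, the full-corner proposition from \S~\ref{sec:Ext/KK} gives the required ordered \KK-equivalence.

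The only non-routine bookkeeping is the $\Gamma$-equivariance of the product decomposition of $X$ and the identification of the inclusion as $a \mapsto a \otimes \delta_0$; both reduce to unwinding the definitions and I do not expect a serious obstacle.
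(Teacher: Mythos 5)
Your argument is correct and is essentially the proof given in the paper: both choose a free complement $\Gamma_1$ of $\Gamma_0$, use the hypothesis to decompose $X$ equivariantly as $X_0\times\Gamma_1$ (the paper phrases this via $X_1=\lambda(\Gamma_1)\cong X/X_0$), obtain $C_0(X)\rtimes\Gamma\cong(C(X_0)\rtimes\Gamma_0)\otimes\Kz$, and identify the inclusion with the corner embedding by the characteristic function of the identity. No substantive differences.
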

\begin{proof}
    First, since $X_0$ is compact open in $X$, we can identify $C(X_0) \rtimes_\lambda \Gamma_0$ with a C*-subalgebra of $C_0(X) \rtimes_\lambda \Gamma$.

    Since $\Gamma/\Gamma_0$ is free abelian, we can choose a section $s\colon \Gamma/\Gamma_0 \to \Gamma$ of the quotient map. Let $\Gamma_1 = s(\Gamma/\Gamma_0)$ and $X_1=\lambda(\Gamma_1)$. 
    We have the following commutative diagram
    \[\begin{tikzcd}
    \Gamma_1 \arrow[d,"s^{-1}"]\arrow[r,"\lambda\vert_{\Gamma_1}"]& X_1 \arrow[d,"\rm{quot}"]\\
    \Gamma/\Gamma_0 \arrow[r,"\ol{\lambda}"]& X/X_0\nospacepunct{,}
    \end{tikzcd}\]
    where $\ol{\lambda}$ is the isomorphism induced by $\Gamma\to X/X_0$, and the right vertical arrow is the restriction of the quotient map $X\to X/X_0$ to $X_1$. Since $s^{-1}$ and $\ol{\lambda}$ are isomorphisms, the restriction of $\lambda$ to $\Gamma_1$ is an isomorphism onto $X_1$, and the quotient map $X\to X/X_0$ restricts to an isomorphism from $X_1$ onto $X/X_0$. The inverse of the isomorphism $X_1\cong X/X_0$ gives a decomposition $X=X_1\times X_0$ compatible with the decomposition $\Gamma=\Gamma_1\times\Gamma_0$, so the action $\Gamma\acts X$ is the product of the actions $\Gamma_1\acts X_1$ and $\Gamma_0\acts X_0$.
    We have 
    \[ C_0(X) \rtimes \Gamma 
    \cong (C_0(X_1) \rtimes \Gamma_1) \otimes (C(X_0) \rtimes \Gamma_0)
    \cong \Kz \otimes (C(X_0) \rtimes \Gamma_0). \]
    Let $e \in C_0(X_1)$ be the characteristic function of $\{1\} \subseteq X_1$. Then, under the above decomposition, the inclusion map $C(X_0) \rtimes \Gamma_0 \to C_0(X) \rtimes \Gamma$ is equal to $x \mapsto e \otimes x$. Hence, it induces an ordered \KK-equivalence.
\end{proof}

Contrary to arguments in \cite{KT}, we cannot start from a basis of $\Gamma$. One of the main tasks in this article is to replace arguments of \cite{KT} with basis-free arguments. In the proof of Proposition \ref{prop:tensor}, we have obtained the tensor product decomposition $C_0(X) \rtimes \Gamma \cong \Kz \otimes (C(X_0) \rtimes \Gamma_0)$, which is similar to the decomposition used in \cite{KT}. However, this decomposition depends on the choice of section $s$. 
The point is that the isomorphism in \K-theory from Proposition~\ref{prop:tensor} does not depend on the choice of section $s \colon \Gamma/\Gamma_0 \to \Gamma$.

\section{Subquotients, primitive ideals, and auxiliary C*-algebras}
\label{sec:subquotient/prims}

\subsection{C*-algebras over topological spaces}
\label{sec:C*overX}

We recall some basics on C*-algebras over topological spaces from \cite{MeyerNest:09} and \cite{Kirchberg}. 
Let $X$ be any topological space. We let $\Oz(X)$ denote the lattice of open subsets of $X$, ordered by inclusion. 
If $A$ is a C*-algebra, then we let $\Iz(A)$ be the lattice of (closed, two-sided) ideals of $A$, also ordered by inclusion. It is well-known that there is a lattice isomorphism
\[
\Oz(\Prim(A))\cong \Iz(A), \quad U\mapsto \textstyle{\bigcap}_{P\in U^c}P.
\]
A C*-algebra over $X$ is a C*-algebra $A$ together with a continuous map $\psi\colon \Prim(A)\to X$.
Let $(A,\psi)$ be a C*-algebra over $X$. Then, we get a map $\Oz(X)\to \Oz(\Prim(A))$ given by $U\mapsto \psi^{-1}(U)$. We let $A(U)$ denote the corresponding ideal of $A$ under $\Oz(\Prim(A))\cong \Iz(A)$.
If $A$ and $B$ are C*-algebras over $X$, then we say that a *-homomorphism $\varphi\colon A\to B$ is $X$-equivariant if $\varphi(A(U))\subseteq B(U)$ for every open set $U\subseteq X$.

A subset $Z\subseteq X$ is locally closed if there exists $U,V\in\Oz(X)$ with $V\subseteq U$ such that $Z=U\setminus V$. Given a locally closed set $Z=U\setminus V$, one obtains a subquotient $A(Z):=A(U)/A(V)$; by \cite[Lemma~2.15]{MeyerNest:09}, $A(Z)$ does not depend on the choice of open sets $U$ and $V$.
If $Z\subseteq X$ is locally closed, and $W\subseteq X$ is open, then we get an extension
\begin{equation}
\label{eqn:WinZ}
   \cE^Z_W(A)\colon 0\to A(Z\cap W)\to A(Z)\to A(Z\setminus W)\to 0.
\end{equation}
If $\varphi\colon A\to B$ is an $X$-equivariant *-homomorphism, then for every locally closed subset $Z\subseteq X$, there exists a *-homomorphism $\varphi_Z\colon A(Z)\to B(Z)$ induced from $\varphi$. 
If additionally $W\subseteq X$ is an open subset, then we get a homomorphism of extensions $\cE^Z_W(A)\to \cE^Z_W(B)$.

\subsection{C*-algebras over a power set}
\label{sec:powersets}

The C*-algebra $\fA_K$ and the auxiliary C*-algebras which will be introduced later are C*-algebras over the power set of the set of primes, equipped with the power cofinite topology. We begin with some generalities. 

Let $\cP$ be a nonempty set (later, $\cP$ will be the set of nonzero prime ideals in a ring of integers). Denote by $2^\cP=\{0,1\}^\cP$ the power set of $\cP$. The power-cofinite topology on $2^\cP$ is the product topology on $\{0,1\}^\cP$ with respect to the topology on $\{0,1\}$ given by $\Oz(\{0,1\})=\{\emptyset,\{0\},\{0,1\}\}$; the basic open sets for the power-cofinite topology are given by
\[
U_F:=2^{\cP\setminus F}=\{T\subseteq \cP : T\cap F=\emptyset\},
\]
where $F$ ranges over the finite subsets of $\cP$. Note that every $U_F$ is compact, but need not be closed. For any $S\subseteq \cP$, we have $\ol{\{S\}}=\{T\subseteq \cP : S\subseteq T\}$. Thus, for any two subsets $S,T \subseteq \cP$, we have $S \subseteq T$ if and only if $\ol{\{S\}} \supseteq \ol{\{T\}}$. Note that $\cP$ is the unique closed point of $2^{\cP}$.

For a finite subset $F\subseteq \cP$ and $\p\in \cP\setminus F$, we let $F_\p:=F\cup\{\p\}$. 
Then $\{F^c\}$ and $\{F^c,F_\p^c\}$ are locally closed subsets of $2^\cP$ (see \cite[\S~2]{KT}).
Composition factors and associated extensions from finite sets of primes were introduced in \cite[\S~2]{KT}:
\begin{definition}
Suppose $A$ is a C*-algebra over $2^{\cP}$ and $F\subseteq \cP$ is a finite subset. We let $A^F:=A(\{F^c\})$ be the subquotient of $A$ corresponding to $\{F^c\}$. For $\p\in F^c$, let $\cE_{A}^{F,\p}$ denote the extension 
\begin{equation}
\label{eqn:Efp}
\cE_A^{F,\p}\colon 0\to A(\{F_\p^c\})\to A(\{F^c,F_\p^c\})\to A(\{F^c\})\to 0
\end{equation}
from \S~\ref{sec:C*overX}.
\end{definition}
\vspace{-3mm}
Note that the extension $\cE_A^{F,\p}$ is obtained by taking $Z=\{F^c,F_\p^c\}$ and $W=\ol{\{F^c\}}^c$ with the notation from \S~\ref{sec:C*overX}. 
The extension $\cE_K^{F,\p}$ is determined from the structure of C*-algebras over $2^{\cP}$. Hence, a $2^{\cP}$-equivariant *-homomorphism induces homomorphisms between extensions.

\begin{definition}
Suppose $A$ and $B$ are C*-algebras over $2^{\cP}$ and $\alpha\colon A\to B$ is a $2^{\cP}$-equivariant *-homomorphism. For every $F \subseteq \cP_K$, let $\alpha^F\colon A^F\to B^F$ denote the *-homomorphism induced by $\alpha$. In addition, for every finite subset $F\subseteq \cP$ and $\p\in \cP \setminus F$, let 
\[
\alpha^{F,\p}:=(\alpha^F,*,\alpha^{F_\p})\colon \cE_A^{F,\p}\to \cE_B^{F,\p}
\]
denote the homomorphism of extensions induced by $\alpha$, where $*$ denotes the *-homomorphism $A(\{F^c\}, \{F_\p^c\})\\\to B(\{F^c\}, \{F_\p^c\})$. 
\end{definition}
\vspace{-3mm}
We do not label the middle *-homomorphism of $\alpha^{F,\p}$. 
Strictly speaking, the definition of $\alpha^{F,\p}$ does not fit to the definition in Section~\ref{sec:Ext/KK}, since composition factors $A^F$ may not be stable. However, in our concrete situation, all of the composition factors $A^F$ are  indeed stable ($A^\emptyset$ is exceptional, but it only appears in the last term of exact sequences).

For two sets $\cP$ and $\cQ$ and a bijection $\Psi \colon \cP \to \cQ$, let $\tilde{\Psi} \colon 2^\cP \to 2^\cQ$ denote the homeomorphism defined by $\tilde{\Psi}(S)=\Psi(S)$ for subsets $S \in 2^\cP$. The topology of $2^\cP$ is closely related to its order structure:

\begin{lemma}
\label{lem:2Sto2T}
 Let $\cP$ and $\cQ$ be nonempty sets, and let $\varphi \colon 2^\cP \to 2^\cQ$ be a bijection. Then, the following are equivalent: 
 \begin{enumerate}[\upshape(1)]
     \item There exists a bijection $\Psi \colon \cP \to \cQ$ such that $\varphi = \tilde{\Psi}$.
     \item The bijection $\varphi$ is a homeomorphism with respect to the power-cofinite topologies. 
     \item The bijection $\varphi$  is order-preserving. 
 \end{enumerate} 
\end{lemma}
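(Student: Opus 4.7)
The plan is to prove the cycle of implications $(1)\Rightarrow(2)\Rightarrow(3)\Rightarrow(1)$. The implication $(1)\Rightarrow(2)$ is routine: identifying $2^{\cP}$ with the product space $\{0,1\}^{\cP}$, the map $\tilde{\Psi}$ is a coordinate permutation, hence a homeomorphism. The implication $(1)\Rightarrow(3)$ is immediate from $\tilde{\Psi}(S)=\Psi(S)$ and the fact that $\Psi$ preserves inclusions of subsets.

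For $(2)\Rightarrow(3)$, I would invoke the specialization preorder. The formula $\ol{\{S\}}=\{T\subseteq\cP:S\subseteq T\}$ recorded just before the statement shows that the specialization order on $2^{\cP}$ coincides with reverse inclusion, so a homeomorphism $\varphi$ automatically preserves the specialization order (since it preserves singleton closures) and hence inclusion.

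The substantive direction is $(3)\Rightarrow(1)$, which I would carry out in three steps. Step 1: because $\emptyset\subseteq S\subseteq\cP$ for every $S\in 2^{\cP}$, one has $\varphi(\emptyset)\subseteq\bigcap_{T\in 2^{\cQ}}T=\emptyset$ and $\varphi(\cP)\supseteq\bigcup_{T\in 2^{\cQ}}T=\cQ$, so $\varphi(\emptyset)=\emptyset$ and $\varphi(\cP)=\cQ$; setting $S':=\varphi^{-1}(\varphi(S)^{c})$, the inclusions $\varphi(S\cup S')\supseteq\cQ$ and $\varphi(S\cap S')\subseteq\emptyset$ force $S\cup S'=\cP$ and $S\cap S'=\emptyset$, so $\varphi(S^{c})=\varphi(S)^{c}$. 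Step 2: I promote order preservation to meet preservation. Suppose for contradiction that $q\in(\varphi(S)\cap\varphi(T))\setminus\varphi(S\cap T)$ and set $A:=\varphi^{-1}(\{q\})$; then $\varphi(A\cap S)\subseteq\{q\}$, and injectivity of $\varphi$ forces $A\cap S\in\{\emptyset,A\}$, and similarly $A\cap T\in\{\emptyset,A\}$. In the case $A\cap S=A\cap T=A$ one has $A\subseteq S\cap T$ and hence $q\in\varphi(S\cap T)$, contradicting the choice of $q$. In the three remaining cases $A$ lies in $S\cap T^{c}$, $S^{c}\cap T$, or $(S\cup T)^{c}$, and complement preservation from Step 1 forces $q\notin\varphi(T)$, $q\notin\varphi(S)$, or $q\notin\varphi(S\cup T)\supseteq\varphi(S)$ respectively, each contradicting the choice of $q$. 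Step 3: meet preservation combined with order preservation yields order reflection ($S\subseteq T\Leftrightarrow \varphi(S)\cap\varphi(T)=\varphi(S)\Leftrightarrow \varphi(S)\subseteq\varphi(T)$), so $\varphi$ is an order isomorphism and sends atoms (singletons) to atoms. Defining $\Psi\colon\cP\to\cQ$ by $\varphi(\{p\})=\{\Psi(p)\}$ gives a bijection, and if $p':=\Psi^{-1}(q)\notin S$ then by meet preservation $\{q\}\cap\varphi(S)=\varphi(\{p'\}\cap S)=\varphi(\emptyset)=\emptyset$, which shows that $\varphi(S)=\Psi(S)$ for every $S$, i.e., $\varphi=\tilde{\Psi}$.

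I expect the main obstacle to be Step 2 of $(3)\Rightarrow(1)$: bootstrapping from order preservation to meet preservation requires the four-case analysis, and the cases in which $A$ avoids $S$ or $T$ are only handled cleanly using the complement preservation established first. Once meet preservation is secured, order reflection and the extraction of $\Psi$ are routine Boolean-algebraic bookkeeping.
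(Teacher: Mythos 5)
Your proof is correct, and for the substantive direction $(3)\Rightarrow(1)$ it takes a genuinely different route from the paper. The paper works with the coatoms $\{\p\}^{c}$ (its ``second-maximal subsets''), asserts $\varphi(S\cap T)=\varphi(S)\cap\varphi(T)$ in one line on the grounds that $S\cap T$ is the largest set below both $S$ and $T$, and then recovers $\varphi$ from its action on coatoms by writing $S=\bigcap_{\p\notin S}\{\p\}^{c}$ and commuting $\varphi$ past an arbitrary intersection. That one-line meet-preservation (and its infinitary version) tacitly uses that $\varphi^{-1}$ is also order-preserving, i.e.\ that $\varphi$ is an order isomorphism; this is harmless in the paper's application, where $\varphi$ comes from a homeomorphism, but it is a genuine gloss if (3) is read as one-directional order preservation. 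Your argument fills exactly that gap: you first establish $\varphi(\emptyset)=\emptyset$, $\varphi(\cP)=\cQ$, and complement preservation, then derive binary meet preservation from one-directional order preservation alone via the four-case analysis on $A=\varphi^{-1}(\{q\})$, obtain order reflection as a corollary, and read off $\Psi$ from the induced bijection on atoms rather than coatoms, avoiding infinite meets entirely. The trade-off is length versus robustness: the paper's proof is shorter under the (implicit) order-isomorphism reading, while yours proves the lemma under the formally weaker hypothesis and is fully self-contained. Your $(2)\Rightarrow(3)$ via singleton closures is the same as the paper's.
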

\begin{proof}
 The claim that (1) implies (2) is clear. We show (2) implies (3). Suppose $\varphi$ is a homeomorphism. Let $S,T \subseteq \cP$ be such that $S \subseteq T$. Then, we have
 \[ \ol{\{ \varphi(S)\}} = \varphi(\ol{\{S\}}) 
 \supseteq \varphi(\ol{\{T\}}) = \ol{\{ \varphi(T)\}} \]
 by assumption, which implies that $\varphi(S) \subseteq \varphi(T)$. Hence, $\varphi$ is order-preserving. 
 
 We show that (3) implies (1). Suppose that $\varphi$ is order-preserving. First, observe that we have $\varphi(S \cap T) = \varphi(S) \cap \varphi(T)$ for any $S,T \subseteq \cP$, since $S \cap T$ is the largest subset of $\cP$ contained in both $S$ and $T$. Next, the sets $\{\p\}^c$ for $\p \in \cP$ are precisely the second-maximal subsets of $\cP$ in the sense that they are the maximal elements of $2^\cP \setminus \{\cP\}$. Since $\varphi$ is order-preserving, $\varphi$ induces a bijection from the family of second-maximal subsets of $\cP$ onto the family of second-maximal subsets of $\cQ$. Let $\Psi \colon \cP \to \cQ$ be the bijection characterised by $\varphi({\{\p\}^c})=\{\Psi(\p)\}^c$ for all $\p \in \cP$. 
 Then, for any $\p \in \cP$, we have $\varphi(\{\p\}^c)=\Psi(\{\p\}^c)$ (the right-hand side denotes the image of a subset by a map, whereas the left-hand side denotes the image of a point by a map). For $S \subseteq \cP$, we have 
 \[ \tilde{\Psi}(S) = \Psi \left(\bigcap_{\p \in \cP \setminus S} \{\p\}^c \right)
 = \bigcap_{\p \in \cP \setminus S} \Psi \left( \{\p\}^c \right) 
 = \bigcap_{\p \in \cP \setminus S} \varphi \left( \{\p\}^c \right) 
 = \varphi \left(\bigcap_{\p \in \cP \setminus S} \{\p\}^c \right)
 = \varphi(S), 
 \]
 which implies that $\varphi=\tilde{\Psi}$. 
\end{proof}

\subsection{A theorem of Williams}
\label{sec:Williams}
Let $G$ be a countable abelian group acting on a locally compact second-countable Hausdorff space $X$ by homeomorphism. 
The quasi-orbit of a point $x\in X$ is the orbit closure $[x]:=\overline{Gx}$. The quasi-orbit space $\cQ(X/G)$ is the quotient of $X$ by the equivalence relation 
\[
x\sim y \quad\text{if}\quad \overline{Gx}=\overline{Gy}.
\]

For $x\in X$, let $G_x:=\{g\in X : gx=x\}$ be the isotropy group of $x$. Because $G$ is abelian, isotropy groups are constant on quasi-orbits, that is, if $\ol{Gx}=\ol{Gy}$, then $G_x=G_y$. 
For each $x\in X$, let $\ev_x$ be the character (that is, nonzero one-dimensional representation) of $C_0(X)$ given by $\ev_x(f)=f(x)$. Then, for every $\chi\in\widehat{G_x}$, the pair $(\ev_x,\chi)$ is a covariant representation of $(C_0(X),G_x)$. Let $\ev_x\rtimes\chi$ denote the corresponding character of $C_0(X)\rtimes G_x$. By \cite[Proposition~8.27]{Will:book}, the induced representation $\Ind_{G_x}^G(\ev_x\rtimes\chi)$ of $C_0(X)\rtimes G$ is irreducible.
We need a slight reformulation of a theorem of Williams from  \cite[Theorem~8.39]{Will:book}, as given in \cite[Theorem~1.1]{LR:00}. Define an equivalence relation on $\cQ(X/G)\times\widehat{G}$ by 
\[
([x],\gamma)\sim ([y],\chi) \quad\text{if}\quad [x]=[y]\text{ and } \gamma\vert_{G_x}=\chi\vert_{G_x}.
\]

\begin{theorem}[{\cite[Theorem~8.39]{Will:book}}]
	\label{thm:Williams}
The map $q\colon ([x],\gamma)\mapsto \Ker \Ind_{G_x}^G(\ev_x\rtimes\gamma)$
is an open surjection that descends to a homeomorphism $(\cQ(X/G)\times\widehat{G})/\sim\xrightarrow{\simeq}\Prim(C_0(X)\rtimes G)$.
\end{theorem}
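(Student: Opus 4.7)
The plan is to derive this statement directly from the version of Williams' theorem recorded in \cite[Theorem~1.1]{LR:00}, which gives a homeomorphism between $\Prim(C_0(X)\rtimes G)$ and an analogous quotient space $\Sigma/\!\approx$, where $\Sigma := \{([x],\chi) : x \in X,\ \chi \in \widehat{G_x}\}$ and $([x],\chi)\approx ([y],\chi')$ iff $[x]=[y]$ (which forces $G_x = G_y$ because $G$ is abelian) and $\chi=\chi'$. The present reformulation is obtained by replacing characters of $G_x$ with characters of $G$ and enlarging the equivalence relation accordingly; everything hinges on the restriction maps $r_x\colon \widehat{G}\to\widehat{G_x}$, $\gamma \mapsto \gamma\vert_{G_x}$.

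First, I would verify that the map
\[
  R\colon \cQ(X/G)\times\widehat{G}\longrightarrow \Sigma,\qquad ([x],\gamma)\longmapsto ([x],\gamma\vert_{G_x}),
\]
is well-defined (since $G_x$ depends only on $[x]$ by abelianness of $G$), continuous (continuity of character restriction), and \emph{surjective}: for any closed subgroup $H$ of an LCA group $G$, Pontryagin duality gives $\widehat{H}\cong \widehat{G}/H^\perp$, so every $\chi\in\widehat{G_x}$ extends to some $\gamma\in\widehat{G}$. Moreover, $r_x$ is an \emph{open} surjection because a continuous surjective homomorphism of LCA groups is automatically open, and its kernel is the annihilator $G_x^\perp$. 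Thus $R$ is a continuous open surjection whose fibres are precisely the equivalence classes under the relation $\sim$ of the theorem. Composing with the homeomorphism $\Sigma/\!\approx\;\xrightarrow{\simeq}\Prim(C_0(X)\rtimes G)$ from \cite[Theorem~1.1]{LR:00} produces the map $q$ and shows that $q$ descends to a continuous bijection $(\cQ(X/G)\times\widehat{G})/\!\sim\;\to\Prim(C_0(X)\rtimes G)$. Well-definedness of the descended map, injectivity modulo $\sim$, and surjectivity are then immediate from the corresponding properties of $R$ and of the LR-homeomorphism.

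The main obstacle is openness of $q$. Since $q = q_{LR}\circ R$ and $q_{LR}$ is a homeomorphism, it suffices to show $R$ is open, and this reduces to checking that the restriction map $\widehat{G}\to\widehat{G_x}\cong \widehat{G}/G_x^\perp$ is open at each quasi-orbit, which is the LCA-group fact mentioned above. The only delicate point is ensuring that the quotient topology on $\Sigma$ induced by $R$ agrees with the topology used in \cite[Theorem~1.1]{LR:00}; this is verified by tracing basic open sets through both descriptions using that characters of $G_x$ extend locally (indeed globally) to characters of $G$, so that neighbourhoods of $([x],\chi)\in\Sigma$ pull back to products of neighbourhoods of $[x]$ in $\cQ(X/G)$ with cosets of $G_x^\perp$ in $\widehat{G}$. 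Combining these ingredients yields the claimed homeomorphism.
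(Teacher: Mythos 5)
Your proposal has a genuine gap at precisely the point that carries all the content of the theorem, namely the openness of $q$. You reduce openness of $q$ to openness of the comparison map $R\colon ([x],\gamma)\mapsto([x],\gamma\vert_{G_x})$, and then claim that this ``reduces to checking that the restriction map $\widehat{G}\to\widehat{G_x}\cong\widehat{G}/G_x^\perp$ is open at each quasi-orbit.'' That reduction is invalid: openness of each fibrewise restriction $r_x$ does not imply openness of $R$ (equivalently, of the quotient map by $\sim$), because the isotropy groups $G_x$, and hence the annihilators $G_x^\perp$, vary with $[x]$. Concretely, the $\sim$-saturation of a basic open set $U\times V$ is $\bigcup_{[x]\in U}\{[x]\}\times V G_x^\perp$, and whether this is open in $\cQ(X/G)\times\widehat{G}$ depends on semicontinuity properties of the isotropy map $[x]\mapsto G_x$, not on any property of a single $r_x$. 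Your ``only delicate point'' --- that the quotient topology induced by $R$ on $\Sigma$ agrees with a pre-existing topology --- is exactly this difficulty restated, and the one-sentence remark about tracing basic open sets does not resolve it. This is why the paper does not attempt an argument here: it cites \cite[Remark~8.40]{Will:book} for openness of $q$ and \cite[Theorem~1.1]{LR:00} for the rest.

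A secondary problem is your premise about what \cite[Theorem~1.1]{LR:00} says. That theorem is already stated in the form appearing here, i.e.\ as a homeomorphism of $(\cQ(X/G)\times\widehat{G})/\sim$ (characters of the whole group $G$, identified when they agree on $G_x$) onto $\Prim(C_0(X)\rtimes G)$; it is not phrased in terms of a bundle $\Sigma$ of dual groups $\widehat{G_x}$ carrying an independently defined topology. So the bulk of your argument builds a bridge to a formulation that is not the one being cited, while the genuinely new assertion of the theorem as stated (openness of $q$) is the part your argument does not establish. If you want a self-contained proof of openness rather than the citation to Williams, you would need to control how $G_x^\perp$ varies along convergent sequences of quasi-orbits; in the application of this paper that is easy (the isotropy is trivial off the single quasi-orbit of $0$), but the statement is a general one.
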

\begin{proof}
For openness of $q$, see \cite[Remark~8.40]{Will:book}. The rest is \cite[Theorem~1.1]{LR:00}.
\end{proof}

We now make several observations in the setting of Theorem~\ref{thm:Williams}.

\begin{remark}[cf. {\cite[Remark~3.5]{Tak2}}]
\label{rmk:primideals}
Let $x \in X$ and $\chi \in \widehat{G_x}$. 
The primitive ideal $\Ker \Ind_{G_x}^G \ev_x\rtimes\chi$ can be described concretely as follows: Extend $\chi$ to a character of $G$, and define a representation 
\[
\pi_{x,\chi}\colon C_0(X)\rtimes G\to\Bz(\ell^2(G/G_x)),\quad \pi_{x,\chi}(fu_g)\delta_{\ol{h}}=f(ghx)\chi(g)\delta_{\ol{gh}},
\]
where $\{\delta_{\ol{h}}: \ol{h}\in G/G_x\}$ is the standard orthonormal basis for $\ell^2(G/G_x)$. Up to unitary equivalence, $\pi_{x,\chi}$ does not depend on the choice of extension, and $\Ker\Ind_{G_x}^G(\ev_x\rtimes\chi)=\Ker\pi_{x,\chi}$.
\end{remark}

Next, we observe that $C_0(X)\rtimes G$ has a canonical structure as a C*-algebra over $\cQ(X/G)$.
\begin{lemma}
\label{lem:Williams}
There is a continuous, open surjective map $\psi\colon \Prim(C_0(X)\rtimes G)\to \cQ(X/G)$ such that the following diagram commutes: 
\begin{equation}
\label{eqn:overQ(X/G)}
    \begin{tikzcd}
    \cQ(X/G)\times\widehat{G}\arrow[d,"q"] \arrow[rr,"{([x],\chi)\mapsto [x]}"] & & \cQ(X/G)\\
    \Prim(C_0(X)\rtimes G)\nospacepunct{.} \arrow[rru,"\psi"]& &
    \end{tikzcd}
\end{equation}
\end{lemma}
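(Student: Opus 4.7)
The plan is to \emph{define} $\psi$ as the map obtained by descending the first projection $\pi_1 \colon \cQ(X/G) \times \widehat{G} \to \cQ(X/G)$ to the quotient by $\sim$ and then transporting it along Williams's homeomorphism from Theorem~\ref{thm:Williams}. Observe that $\pi_1$ is manifestly constant on $\sim$-equivalence classes: by definition of $\sim$, if $([x],\gamma) \sim ([y],\chi)$ then $[x]=[y]$. Hence $\pi_1$ factors through the quotient map $\cQ(X/G) \times \widehat{G} \to (\cQ(X/G) \times \widehat{G})/\sim$, and composing the induced map with the inverse of the homeomorphism from Theorem~\ref{thm:Williams} yields a well-defined map $\psi \colon \Prim(C_0(X) \rtimes G) \to \cQ(X/G)$ satisfying $\psi \circ q = \pi_1$. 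This commutativity is exactly the content of the diagram \eqref{eqn:overQ(X/G)}, so the defining property holds by construction.

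Next I would verify that $\psi$ has the three claimed topological properties. For surjectivity, note that $q$ is surjective by Theorem~\ref{thm:Williams} and $\pi_1$ is surjective, and $\pi_1 = \psi \circ q$ forces $\psi$ to be surjective. For continuity, recall that $q$ is an open surjection, hence a quotient map; therefore $\psi$ is continuous if and only if $\psi \circ q = \pi_1$ is continuous, which is clear. For openness, let $V \subseteq \Prim(C_0(X) \rtimes G)$ be open. Since $q$ is continuous and surjective, $q^{-1}(V)$ is open and $q(q^{-1}(V)) = V$, so
\[
\psi(V) = \psi(q(q^{-1}(V))) = \pi_1(q^{-1}(V)).
\]
The projection $\pi_1$ is open (being a coordinate projection for the product topology on $\cQ(X/G) \times \widehat{G}$), so $\psi(V)$ is open in $\cQ(X/G)$.

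I do not expect any genuine obstacle: the entire argument is a categorical manoeuvre that packages Theorem~\ref{thm:Williams} together with the trivial observation that $\sim$ ignores the $\widehat{G}$-coordinate at the level of quasi-orbits. The only point worth checking carefully is that the openness of $q$ in Theorem~\ref{thm:Williams} (which is noted in the proof of that theorem via \cite[Remark~8.40]{Will:book}) really does give openness of $\psi$, but this is routine once one writes $\psi(V) = \pi_1(q^{-1}(V))$ as above.
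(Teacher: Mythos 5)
Your proposal is correct and takes essentially the same route as the paper, whose proof simply invokes Theorem~\ref{thm:Williams} for the existence of the continuous surjection $\psi$ and the openness of $q$ for the openness of $\psi$; you have just spelled out the routine point-set details (that $\pi_1$ is constant on $\sim$-classes and that $\psi(V)=\pi_1(q^{-1}(V))$).
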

\begin{proof}
Existence of such a continuous, surjective map $\psi$ follows from Theorem~\ref{thm:Williams}. The map $q$ is open by Theorem~\ref{thm:Williams}, so $\psi$ is open.
\end{proof}

We generalise \cite[Lemma~3.6]{Tak2} and \cite[Proposition~3.8]{Tak1}. For $[x]=\ol{Gx}\in\cQ(X/G)$, let
\begin{equation}
\label{eqn:P[x]}
P_{[x]}:=\bigcap_{\chi\in\widehat{G_x}}\Ker\pi_{x,\chi}
\end{equation}
be the intersection over the inverse image of $[x]$ under $\psi \colon \Prim(C_0(X)\rtimes G)\to \cQ(X/G)$.
The proof of \cite[Lemma~3.6]{Tak2} can be adapted to our general situation giving the following result. The details of the proof are left to the reader.
\begin{proposition}
\label{prop:P[x]}
For $[x]\in\cQ(X/G)$, we have $P_{[x]}=C_0(X\setminus [x])\rtimes G$.
\end{proposition}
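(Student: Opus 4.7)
The plan is to prove the two inclusions separately. For the forward inclusion $C_0(X\setminus[x])\rtimes G\subseteq P_{[x]}$, observe that for $f\in C_0(X\setminus[x])$ and $g\in G$, the formula from Remark~\ref{rmk:primideals} gives $\pi_{x,\chi}(fu_g)\delta_{\ol h}=f(ghx)\chi(g)\delta_{\ol{gh}}=0$ because $ghx\in Gx\subseteq [x]$. Hence each $\pi_{x,\chi}$ factors through the quotient map
\[ C_0(X)\rtimes G\twoheadrightarrow (C_0(X)\rtimes G)/(C_0(X\setminus[x])\rtimes G)\cong C_0([x])\rtimes G, \]
giving a representation $\tilde\pi_{x,\chi}$ of $C_0([x])\rtimes G$. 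It therefore suffices to prove that $I:=\bigcap_{\chi\in\widehat{G_x}}\Ker\tilde\pi_{x,\chi}$ is the zero ideal of $C_0([x])\rtimes G$.

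To show this, I would combine two observations. First, $I$ is invariant under the dual action $\tau\colon\widehat{G}\acts C_0([x])\rtimes G$: extending characters of $G_x$ to $G$, a direct computation with the formula in Remark~\ref{rmk:primideals} yields $\tilde\pi_{x,\chi}\circ\tau_{\chi'}=\tilde\pi_{x,(\chi\chi')\vert_{G_x}}$ for every $\chi'\in\widehat{G}$, so the family $\{\Ker\tilde\pi_{x,\chi}\}_{\chi\in\widehat{G_x}}$ is permuted by $\tau$ and its intersection $I$ is preserved. Second, $I\cap C_0([x])=0$: if $f\in C_0([x])\cap I$, then $\tilde\pi_{x,\chi}(f)\delta_{\ol h}=f(hx)\delta_{\ol h}=0$ for every $h\in G$, so $f$ vanishes on $Gx$, which is dense in $[x]$, and thus $f=0$ by continuity.

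To deduce $I=0$ from these two facts, I would use the standard identification of the canonical faithful conditional expectation $\tilde{E}\colon C_0([x])\rtimes G\to C_0([x])$ with averaging along the dual action: $\tilde{E}(a)=\int_{\widehat{G}}\tau_\chi(a)\,d\chi$ with normalised Haar measure on the compact group $\widehat{G}$. This formula is checked on the algebraic crossed product using $\int_{\widehat{G}}\chi(g)\,d\chi=\delta_{g,e}$ and extends to all of $C_0([x])\rtimes G$ by norm continuity of $\chi\mapsto\tau_\chi(a)$. For any $a\in I$, norm-closedness and $\tau$-invariance of $I$ force $\tilde{E}(a)\in I$; since also $\tilde{E}(a)\in C_0([x])$, we get $\tilde{E}(a)\in I\cap C_0([x])=0$. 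Applying this to $a^*a\in I$ and invoking faithfulness of $\tilde{E}$ yields $a^*a=0$, hence $a=0$, so $I=0$ and therefore $P_{[x]}=C_0(X\setminus[x])\rtimes G$. The main technical effort is the verification of the intertwining identity $\tilde\pi_{x,\chi}\circ\tau_{\chi'}=\tilde\pi_{x,(\chi\chi')\vert_{G_x}}$ and the justification of the integral representation of $\tilde{E}$ beyond the algebraic crossed product; everything else is routine.
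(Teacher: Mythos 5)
Your proof is correct. The paper leaves the details to the reader (it only says to adapt \cite[Lemma~3.6]{Tak2}), and your argument --- the easy inclusion by evaluating $\pi_{x,\chi}$ on $fu_g$ with $f$ vanishing on $[x]$, then reducing to showing the intersection of kernels is zero in the quotient $C_0([x])\rtimes G$ by combining invariance under the dual action, the faithful conditional expectation realised as averaging over $\widehat{G}$, and density of $Gx$ in $[x]$ --- is exactly the standard argument that this adaptation requires (with amenability of $G$ quietly justifying both the identification of the quotient with $C_0([x])\rtimes G$ and the faithfulness of $\tilde{E}$, as you implicitly use).
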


We regard $A=C_0(X) \rtimes G$ as a C*-algebra over $\cQ(X/G)$ via $\psi$. 
Let $\xi \colon X \to \cQ(X/G)$ be the quotient map. 
\begin{proposition}
\label{prop:A(Z)}
Let $Z \subseteq \cQ(X/G)$ be a locally closed set. Then, we have 
\[ A(Z) = C_0(\xi^{-1}(Z)) \rtimes G. \]
\end{proposition}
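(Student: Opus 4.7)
The plan is to reduce to the open case and then invoke Williams' theorem together with the bijection between ideals of $A$ and open subsets of $\Prim(A)$. Write $Z = U \setminus V$ with $V \subseteq U$ both open in $\cQ(X/G)$, so that $A(Z) = A(U)/A(V)$ by definition. If I establish the result for open sets, then
\[
A(Z) \cong \bigl(C_0(\xi^{-1}(U)) \rtimes G\bigr) \big/ \bigl(C_0(\xi^{-1}(V)) \rtimes G\bigr) \cong C_0(\xi^{-1}(U) \setminus \xi^{-1}(V)) \rtimes G = C_0(\xi^{-1}(Z)) \rtimes G,
\]
using the standard identification of the quotient of a crossed product by a $G$-invariant open ideal, together with the fact that $\xi^{-1}$ preserves set-theoretic differences.

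It therefore remains to show $A(U) = C_0(\xi^{-1}(U)) \rtimes G$ for open $U \subseteq \cQ(X/G)$. I first observe that $\xi^{-1}(U)$ is open in $X$ (by continuity of $\xi$) and $G$-invariant, since every $\sim$-equivalence class is $G$-invariant (being determined by the orbit closure $\overline{Gx}$); hence $J := C_0(\xi^{-1}(U)) \rtimes G$ is a closed two-sided ideal of $A$ with $A/J \cong C_0(Y) \rtimes G$ for $Y := X \setminus \xi^{-1}(U) = \xi^{-1}(U^c)$.

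To identify $J$ with $A(U) = \bigcap_{P \in \psi^{-1}(U^c)} P$, I would show that $\{P \in \Prim(A) : P \supseteq J\} = \psi^{-1}(U^c)$; the result then follows from $J = \bigcap_{P \supseteq J} P$. The primitive ideals of $A$ containing $J$ pull back bijectively from those of $A/J \cong C_0(Y) \rtimes G$. Applying Williams' theorem (Theorem~\ref{thm:Williams}) to both $A$ and $C_0(Y) \rtimes G$, and using that the quasi-orbits in $Y$ coincide with the quasi-orbits of $X$ contained in $U^c$ (since $Y$ is $G$-invariant, the isotropy groups $G_y$ and orbit closures $\overline{Gy}$ agree whether computed in $X$ or in $Y$), this set of primitive ideals matches $\psi^{-1}(U^c)$.

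I expect the main technical point to be verifying that the Williams parametrisations for $A$ and $A/J$ are compatible under the embedding $\Prim(A/J) \hookrightarrow \Prim(A)$, i.e., that for $y \in Y$ and $\chi \in \widehat{G_y}$ the representation $\pi_{y,\chi}$ of $C_0(Y) \rtimes G$ has the same kernel, after pulling back along $A \to A/J$, as the corresponding representation of $A$. Thanks to the explicit formula for $\pi_{y,\chi}$ in Remark~\ref{rmk:primideals} and the fact that $Y$ is $G$-invariant, this compatibility is essentially formal; once it is in place, the remainder of the argument is bookkeeping with the ideal--open set correspondence.
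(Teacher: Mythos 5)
Your argument is correct, but the way you handle the open case differs from the paper. Both proofs reduce to the case of an open set $U$ and then pass to quotients exactly as you do. For the open case, the paper computes $A(U)=\bigcap_{[x]\in U^c}P_{[x]}$, invokes Proposition~\ref{prop:P[x]} to write each $P_{[x]}$ as $C_0(X\setminus[x])\rtimes G$, and then appeals to a result of Sierakowski to identify the intersection of such induced ideals with $C_0\bigl(\Interior\bigl(\bigcap_{[x]\in U^c}(X\setminus[x])\bigr)\bigr)\rtimes G$; a short set-theoretic computation shows this interior is $\xi^{-1}(U)$. You instead start from the candidate ideal $J=C_0(\xi^{-1}(U))\rtimes G$, compute its hull by applying Williams' theorem to $A/J\cong C_0(Y)\rtimes G$ with $Y=\xi^{-1}(U^c)$, and conclude via $J=\bigcap_{P\supseteq J}P$. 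Your route bypasses Proposition~\ref{prop:P[x]} and the external intersection lemma, at the cost of using the exactness of $0\to C_0(W)\rtimes G\to C_0(X)\rtimes G\to C_0(X\setminus W)\rtimes G\to 0$ for $G$-invariant open $W$ (unproblematic here since $G$ is amenable) and the compatibility of the Williams parametrisations under restriction to the invariant closed set $Y$; the latter is indeed immediate from the explicit formula for $\pi_{y,\chi}$ in Remark~\ref{rmk:primideals}, since orbit closures and isotropy groups computed in $Y$ agree with those computed in $X$. Both arguments are sound; the paper's leans on previously established ideal computations, while yours is a self-contained hull--kernel verification.
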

\begin{proof}
First, we consider the case that $Z$ is an open set. 
Then, we have 
\[ A(Z) = \bigcap_{ P \in \psi^{-1}(Z^c)} P = \bigcap_{[x] \in Z^c} \bigcap_{\psi(P) = [x]} P = \bigcap_{[x] \in Z^c} P_{[x]}
 = C_0\left(\Interior\Biggl( \bigcap_{[x] \in Z^c} (X \setminus [x]) \Biggr) \right) \rtimes G, \]
where the last equality uses Proposition~\ref{prop:P[x]} and \cite[Proposition~1.7]{Sie}. In addition, we have, 
\[ \bigcap_{[x] \in Z^c} (X \setminus [x])  = X \setminus \bigcup_{[x] \in Z^c} [x] = X \setminus \xi^{-1}(Z^c) = \xi^{-1}(Z). \]
Since $\xi^{-1}(Z)$ is open, the claim holds in this case. For a general locally closed set $Z$, let $U,V \subseteq \cQ(X/G)$ be open sets with $V \subseteq U$ and $Z= U \setminus V$. Then, 
\[ A(Z) = A(U)/A(V) = C_0(\xi^{-1}(U) \setminus \xi^{-1}(V)) \rtimes G = C_0(\xi^{-1}(Z)) \rtimes G.\qedhere \]
\end{proof}
Note that $\xi^{-1}(Z)$ in Proposition~\ref{prop:A(Z)} is always locally compact and Hausdorff. 

\subsection{Auxiliary C*-algebras and primitive ideals}
\label{sec:aux}

There is a canonical action $\Gamma_K=K^*/\mu_K\acts \Az_{K,f}/\mu_K$, where we identify $\mu_K$ with a subgroup of $\Az_{K,f}$ via the diagonal embedding. Similarly, we have a canonical action $\Gamma_K\acts \Az_{K,f}/\ol{\O}_K^*$. The following C*-algebras play an important role in this article.

\begin{definition}
\label{def:BKandBval}
We let
\[
\fB_K:=C_0(\Az_{K,f}/\mu_K)\rtimes\Gamma_K
\]
be the \emph{C*-algebra modulo roots of unity}, and we let
\[
\fB_{\val}:=C_0(\Az_{K,f}/\ol{\O}_K^*)\rtimes\Gamma_K
\]
be the \emph{valuation C*-algebra}.
\end{definition}

We now describe the quasi-orbit spaces for the actions $K^*\acts\Az_{K,f}$, $\Gamma_K\acts \Az_{K,f}/\mu_K$, and $\Gamma_K\acts \Az_{K,f}/\ol{\O}_K^*$. When it is necessary to make a distinction, given $a\in \Az_{K,f}$, we shall let $\dot{a}$ and $\ol{a}$ denote the images of $a$ in $\Az_{K,f}/\mu_K$ and $\Az_{K,f}/\ol{\O}_K^*$, respectively.
For $a=(a_\p)_\p\in\Az_{K,f}$, let $\cZ(a):=\{\p\in\cP_ K: a_\p=0\}$. Note that $\cZ(a)$ only depends on the $\ol{\O}_K^*$-orbit of $a$, so it makes sense to define $\cZ(\ol{a}):=\cZ(a)$ and $\cZ(\dot{a}):=\cZ(a)$.
Part (i) from the following result is known for the case $K=\Qz$, see \cite[Lemma~3.2]{LR:00}.

\begin{lemma}
\label{lem:quasi-orbits}
Let $a=(a_\p)_\p\in\Az_{K,f}$. Then, 
\begin{enumerate}[\upshape(i)]
    \item $\overline{K^*a}=\{b\in\Az_{K,f} : \cZ(a)\subseteq \cZ(b) \}$;
    \item $\overline{\Gamma_K\dot{a}}=\{\dot{b}\in\Az_{K,f}/\mu_K : \cZ(a)\subseteq \cZ(b) \}$;
    \item $\overline{\Gamma_K\ol{a}}=\{\ol{b}\in\Az_{K,f}/\ol{\O}_K^* : \cZ(a)\subseteq \cZ(b) \}$.
\end{enumerate}
\end{lemma}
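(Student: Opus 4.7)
My plan is to prove (i) first and then derive (ii) and (iii) from it by pushing forward along the quotient maps.

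The forward inclusion $\overline{K^*a} \subseteq \{b : \cZ(a) \subseteq \cZ(b)\}$ in (i) is immediate: the right-hand side is the intersection $\bigcap_{\p \in \cZ(a)}\{b \in \Az_{K,f} : b_\p = 0\}$, which is closed because the coordinate projections $b \mapsto b_\p$ are continuous, and it contains $K^*a$ since multiplication by any $x \in K^*$ leaves zero coordinates unchanged.

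For the reverse inclusion, I would fix $b$ with $\cZ(a) \subseteq \cZ(b)$ and show that every basic open neighborhood of $b$ meets $K^*a$. Choose a finite $F \subseteq \cP_K$ containing every prime at which either $a$ or $b$ fails to be integral, and open sets $W_\p \ni b_\p$ for $\p \in F$, so that $V := \prod_{\p \in F} W_\p \times \prod_{\p \notin F}\O_{K,\p}$ is a neighborhood of $b$. Define an auxiliary $y \in \Az_{K,f}$ by $y_\p := b_\p/a_\p$ for $\p \in F \setminus \cZ(a)$ and $y_\p := 0$ otherwise; this lies in $\Az_{K,f}$ since only the finitely many coordinates indexed by $F$ can fail to be integral. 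By construction $(ya)_\p = b_\p$ for each $\p \in F$ (the case $\p \in F \cap \cZ(a)$ uses $\cZ(a) \subseteq \cZ(b)$ so that both sides vanish), and $(ya)_\p = 0 \in \O_{K,\p}$ for $\p \notin F$; hence $ya \in V$. Because $\Az_{K,f}$ is a topological ring, multiplication by $a$ is continuous, so $U := \{z \in \Az_{K,f} : za \in V\}$ is open, and it is nonempty since it contains $y$. The density of $K^*$ in $\Az_{K,f}$ guaranteed by Lemma~\ref{lem:approx} then produces an $x \in K^* \cap U$, which gives $xa \in V \cap K^*a$, completing (i).

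For (ii) and (iii), I would use that the quotient maps $\pi \colon \Az_{K,f} \to \Az_{K,f}/\mu_K$ and $\varpi \colon \Az_{K,f} \to \Az_{K,f}/\ol{\O}_K^*$ are continuous and closed: $\mu_K$ is a finite group acting by homeomorphisms (so $\pi^{-1}(\pi(S)) = \bigcup_{u \in \mu_K} uS$ is closed for any closed $S$), and $\ol{\O}_K^*$ is a compact group acting by homeomorphisms (so $\ol{\O}_K^* \cdot S$ is closed for any closed $S$ by the usual compactness argument). For continuous closed surjections, image and closure commute: $\pi(\overline{S}) = \overline{\pi(S)}$, and similarly for $\varpi$. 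Applying this to $S = K^*a$, observing that $\pi(K^*a) = \Gamma_K \dot{a}$ and $\varpi(K^*a) = \Gamma_K \ol{a}$ by definition of $\Gamma_K = K^*/\mu_K$, and using that $\cZ$ descends through both quotients (since $\mu_K$ and $\ol{\O}_K^*$ act by componentwise units and hence preserve zero sets), yields (ii) and (iii) from (i).

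The main obstacle is the strong-approximation step in (i): one must package the data of $b$ and $a$ into a single element $y \in \Az_{K,f}$ whose product with $a$ exactly hits $V$ on $F$ and lies in the integral part off $F$, and then invoke density of $K^*$ to replace $y$ by a global element. Everything else is a routine topological consequence of the closedness of the quotient maps by $\mu_K$ and $\ol{\O}_K^*$.
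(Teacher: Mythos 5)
Your proof is correct, and the overall skeleton — the easy inclusion by continuity of the coordinate projections, the hard inclusion in (i) via density of $K^*$, and the reduction of (ii) and (iii) to (i) through the quotient maps — matches the paper's. Two steps are executed differently, though. For (i), the paper works in the restricted product over $S=\cP_K\setminus\cZ(a)$ and invokes density of $K^*$ there to pick $k\in\prod_{\p\in F}a_\p^{-1}U_\p\times\prod_{\p\in S\setminus F}\O_{K,\p}$; you instead build an adelic witness $y$ with $ya\in V$ and pull $V$ back under the continuous multiplication-by-$a$ map, so that only density of $K^*$ in $\Az_{K,f}$ itself — exactly the clause recorded in Lemma~\ref{lem:approx} — is needed. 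That is a small but genuine economy, since density of $K^*$ in $\prod'_{\p\in S}(K_\p,\O_{K,\p})$ for a proper subset $S$ of the finite places is a slightly stronger form of strong approximation than the one the paper's preliminary lemma states. For (ii) and (iii), the paper transfers the density argument downstairs using openness of the quotient maps together with the fact that the $\mu_K$- and $\ol{\O}_K^*$-actions commute with that of $K^*$; you instead observe that these quotient maps are closed (finite, respectively compact, group actions), so image and closure commute and the orbit closures downstairs are simply the images of the one upstairs. Both routes are valid and of comparable length; yours has the mild advantage of quoting only stated results, the paper's of avoiding the closed-map discussion.
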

\begin{proof}
In all cases, the inclusion ``$\subseteq$'' is easy to see. We now prove that ``$\supseteq$'' holds in statement (i). Let $b\in\Az_{K,f}$ be such that $b_\p=0$ if $a_\p=0$, and let $U$ be an open set containing $b$; we may assume $U$ is of the form $U=\prod_{\p\in \cP_K}U_\p$, where $U_\p\subseteq K_\p$ is open and $U_\p=\cO_{K,\p}$ for all but finitely many $\p$. 
Then,
\[
F = \{ \p \in \cP_K \colon a_\p \not\in \cO_{K,\p}\}\cup\{\p \in \cP_K\setminus\cZ(a)\colon  U_\p\neq \O_{K,\p}\}\subseteq\cP_K\setminus\cZ(a)
\]
is finite, and we let $S= \cP_K \setminus \cZ(a)$. 
By Lemma~\ref{lem:approx}, $K^*$ is dense in $\prod'_{\p \in S} (K_\p,\cO_{K,\p})$, so we can take $k \in K^* \cap \prod_F a_\p^{-1} U_\p \times \prod_{S \setminus F} \cO_{K,\p}$. 
Then, we have $k\in a_\p^{-1}U_\p$ for all $\p \in S$. For $\p\in \cZ(a)$, we have $ka_\p=0=a_\p=b_\p\in U_\p$ for all $k\in K^*$. Thus, $ka_\p\in U_\p$ for all $\p\in \cZ(a)$, so that we have $ka\in U$.

Since the quotient maps $\Az_{K,f}\to \Az_{K,f}/\mu_K$ and $\Az_{K,f}\to \Az_{K,f}/\ol{\O}_K^*$ are open and the actions of $\mu_K$ and $\ol{\O}_K^*$ commute with the action of $K^*$, the argument above implies that ``$\supseteq$'' holds in statements (ii) and (iii) also.
\end{proof}

The description of the quasi-orbit spaces is now obtained in a similar fashion to \cite[Proposition~2.4]{LR:00}, which combined with Lemma~\ref{lem:Williams} gives the following. 

\begin{proposition}
\label{prop:quasi-orbits}
The map $\Az_{K,f}\to 2^{\cP_K}$ given by $a\mapsto\cZ(a)$ descends to give homeomorphisms from each of $\cQ(\Az_{K,f}/K^*)$, $\cQ((\Az_{K,f}/\mu_K)/\Gamma_K)$, and $\cQ((\Az_{K,f}/\ol{\O}_K^*)/\Gamma_K)$ onto $2^{\cP_K}$.
Therefore, each of $\fA_K$, $\fB_K$, and $\fB_\val$ is a C*-algebra over $2^{\cP_K}$.
\end{proposition}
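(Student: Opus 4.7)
The plan is to prove the first homeomorphism in detail; the other two follow by the same argument, replacing Lemma~\ref{lem:quasi-orbits}(i) by Lemma~\ref{lem:quasi-orbits}(ii) or (iii), respectively. Write $\phi\colon \Az_{K,f} \to 2^{\cP_K}$ for the map $a \mapsto \cZ(a)$. It is continuous because a subbasis of open sets of $2^{\cP_K}$ is given by the sets $\{T \subseteq \cP_K : \p \notin T\}$ as $\p$ ranges over $\cP_K$, whose preimages under $\phi$ are the open sets $\{a \in \Az_{K,f} : a_\p \neq 0\}$. The map $\phi$ is moreover $K^*$-invariant (in fact, $\Az_{K,f}^*$-invariant), since multiplication by a unit in $K_\p$ preserves nonvanishing of the $\p$-component.

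For bijectivity of the induced map $\bar{\phi}\colon \cQ(\Az_{K,f}/K^*) \to 2^{\cP_K}$: well-definedness and injectivity follow directly from Lemma~\ref{lem:quasi-orbits}(i), which gives $\ol{K^* a} = \ol{K^* b}$ if and only if $\cZ(a) = \cZ(b)$. For surjectivity, given $S \subseteq \cP_K$, the adele $a_S$ defined by $(a_S)_\p = 0$ for $\p \in S$ and $(a_S)_\p = 1$ for $\p \notin S$ clearly satisfies $\cZ(a_S) = S$. Continuity of $\bar{\phi}$ follows from continuity of $\phi$ and the universal property of the quotient topology on $\cQ(\Az_{K,f}/K^*)$.

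The heart of the proof is openness of $\bar{\phi}$; I would establish the stronger assertion that $\phi$ itself is a quotient map, i.e.\ if $W \subseteq 2^{\cP_K}$ is any subset with $\phi^{-1}(W)$ open in $\Az_{K,f}$, then $W$ is open in $2^{\cP_K}$. Fix $S \in W$ and use the adele $a_S$ above. Since $\phi^{-1}(W)$ is open, there is a basic open set $V = \prod_{\p \in F} U_\p \times \prod_{\p \notin F} \cO_{K,\p}$ with $a_S \in V \subseteq \phi^{-1}(W)$, where $F \subseteq \cP_K$ is a finite set and each $U_\p \subseteq K_\p$ is open. Set $F_0 := \{\p \in F : 0 \in U_\p\}$; the containment $F \cap S \subseteq F_0$ holds because $a_S \in V$. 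A direct analysis in each local factor, using that $U_\p \setminus \{0\}$ is nonempty for any nonempty open $U_\p$ in the nondiscrete local field $K_\p$, shows
\[
 \phi(V) \;=\; \{T \subseteq \cP_K : T \cap (F \setminus F_0) = \emptyset\} \;=\; U_{F \setminus F_0},
\]
which is a basic open subset of $2^{\cP_K}$ containing $S$ and contained in $W$. This exhibits $W$ as open at $S$ and completes the proof.

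The final assertion that $\fA_K$, $\fB_K$, and $\fB_\val$ are each C*-algebras over $2^{\cP_K}$ is then a formal consequence: Lemma~\ref{lem:Williams} endows each of these crossed products with the structure of a C*-algebra over its own quasi-orbit space, and this structure transports along the homeomorphisms just established. I expect the only genuine obstacle to be the openness step; once the explicit computation $\phi(V) = U_{F \setminus F_0}$ is in hand, the remainder is bookkeeping.
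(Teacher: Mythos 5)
Your proposal is correct and takes essentially the approach the paper intends: the paper's own proof simply defers to the argument of Laca--Raeburn \cite[Proposition~2.4]{LR:00}, and your write-up supplies exactly those details, with the key input being the orbit-closure computation of Lemma~\ref{lem:quasi-orbits} and the openness computation $\phi(V)=U_{F\setminus F_0}$. The reduction of the two quotient cases to the adelic one is also fine, since the quotient maps $\Az_{K,f}\to\Az_{K,f}/\mu_K$ and $\Az_{K,f}\to\Az_{K,f}/\ol{\O}_K^*$ are open surjections.
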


Let $\psi_K$ be the composition $\Prim(\fA_K)\to \cQ(\Az_{K,f}/K^*)\simeq 2^{\cP_K}$, where the first map is from Lemma~\ref{lem:Williams} and the second is from Proposition~\ref{prop:quasi-orbits}.
For a locally closed set $Z \subseteq 2^{\cP_K}$, the C*-algebra $\fA_K(Z)$ is a crossed product, and its diagonal is the space of adeles whose zero sets are elements of $Z$ by Proposition~\ref{prop:A(Z)}. Similar results also hold for $\fB_K$ and $\fB_\val$.

\begin{definition}
\label{def:ideals}
For each subset $S\subseteq\cP_K$, let $P_S$ denote the ideal of $\fA_K$ associated via Equation~\eqref{eqn:P[x]} to the quasi-orbits corresponding to $S$ under the homeomorphisms from Proposition~\ref{prop:quasi-orbits}.
\end{definition} 
\vspace{-3mm}
The following result is the generalisation of \cite[Proposition~2.5]{LR:00} to arbitrary number fields, formulated in a slightly different manner. 

\begin{proposition}
\label{prop:primideals}
For each $S \in 2^{\cP_K}$, $\psi_K^{-1}(S)=\{P_S\}$ if $S \neq \cP_K$, and $\psi_K^{-1}(\cP_K)$ is homeomorphic to $\widehat{K^*}$. In paricular, we have a set-theoretic decomposition $\Prim(\fA_K) \cong \left(2^{\cP_K}\setminus\{\cP_K\}\right)\sqcup\widehat{K^*}$. 
Moreover, 
a point in $\Prim(\fA_K)$ is closed if and only if it lies in $\widehat{K^*}$.
\end{proposition}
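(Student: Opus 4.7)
The plan is to apply Williams' theorem (Theorem~\ref{thm:Williams}) together with Proposition~\ref{prop:quasi-orbits}, reducing everything to a computation of isotropy groups for the action $K^*\acts\Az_{K,f}$. For any $a=(a_\p)_\p\in\Az_{K,f}\setminus\{0\}$, pick $\p\in\cP_K$ with $a_\p\neq 0$; then $ka=a$ implies $ka_\p=a_\p$ in the field $K_\p$, forcing $k=1$. Hence $K^*_a=\{1\}$ for $a\neq 0$, while $K^*_0=K^*$. For $S\in 2^{\cP_K}\setminus\{\cP_K\}$, every adele in the corresponding quasi-orbit has trivial isotropy, so the equivalence relation in Theorem~\ref{thm:Williams} collapses $\{S\}\times\widehat{K^*}$ to a single point; by Definition~\ref{def:ideals} and Proposition~\ref{prop:P[x]}, this point is $P_S$. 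For $S=\cP_K$ the isotropy is all of $K^*$, so the equivalence relation on $\{\cP_K\}\times\widehat{K^*}$ is trivial, and the map $\chi\mapsto\ker\pi_{0,\chi}$ of Remark~\ref{rmk:primideals} gives a set-theoretic bijection $\widehat{K^*}\to\psi_K^{-1}(\cP_K)$.

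To upgrade the bijection of the previous paragraph to a homeomorphism, I will use that $\cP_K$ is the unique closed point of $2^{\cP_K}$, so $\psi_K^{-1}(\cP_K)$ is closed in $\Prim(\fA_K)$ by continuity of $\psi_K$. By Proposition~\ref{prop:P[x]}, $P_{\cP_K}=C_0(\Az_{K,f}\setminus\{0\})\rtimes K^*$, and evaluation at $0\in\Az_{K,f}$ yields the exact sequence
\[
0\to P_{\cP_K}\to\fA_K\to C^*(K^*)\to 0.
\]
The standard identification of closed subsets of a primitive ideal space with primitive ideal spaces of quotients then produces a homeomorphism $\psi_K^{-1}(\cP_K)\cong\Prim(C^*(K^*))\cong\widehat{K^*}$, where the second homeomorphism uses that $K^*$ is abelian.

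For the characterisation of closed points, I will use that a primitive ideal is closed in $\Prim$ if and only if it is maximal as a two-sided ideal. Each $\chi\in\widehat{K^*}$ gives the one-dimensional character $fu_k\mapsto f(0)\chi(k)$ of $\fA_K$, so $\fA_K/\ker\pi_{0,\chi}\cong\Cz$ is simple and $\ker\pi_{0,\chi}$ is closed. Conversely, for $S\in 2^{\cP_K}\setminus\{\cP_K\}$ and any representative $a_S$ with $\cZ(a_S)=S$, Lemma~\ref{lem:quasi-orbits} gives $0\in\overline{K^*a_S}$, and Proposition~\ref{prop:P[x]} yields
\[
P_S=C_0(\Az_{K,f}\setminus\overline{K^*a_S})\rtimes K^*\subseteq C_0(\Az_{K,f}\setminus\{0\})\rtimes K^*=P_{\cP_K}\subseteq\ker\pi_{0,\chi}
\]
for every $\chi\in\widehat{K^*}$. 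The inclusion is strict because $\fA_K/P_S$ is infinite-dimensional (the irreducible representation $\pi_{a_S}$ acts on $\ell^2(K^*)$ and sends the distinct unitaries $u_k$ to linearly independent operators), whereas $\fA_K/\ker\pi_{0,\chi}\cong\Cz$. Hence $P_S$ is not maximal and therefore not closed. The main technical subtlety lies in step two, where one must pass from a set-theoretic bijection to a homeomorphism; this is handled cleanly by the explicit description of $P_{\cP_K}$ from Proposition~\ref{prop:P[x]} together with the exact sequence above.
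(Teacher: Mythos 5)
Your proof is correct, and its first half --- computing the isotropy groups ($K^*_a=\{1\}$ for $a\neq 0$, $K^*_0=K^*$) and feeding them into Theorem~\ref{thm:Williams} via Proposition~\ref{prop:quasi-orbits} --- is exactly the paper's argument, just with the isotropy computation written out rather than asserted. Where you diverge is in the two remaining points. For the homeomorphism $\psi_K^{-1}(\cP_K)\cong\widehat{K^*}$ the paper simply reads it off from Williams' theorem, whereas you make it explicit through the exact sequence $0\to P_{\cP_K}\to\fA_K\to C^*(K^*)\to 0$ and the identification of the closed subset $\psi_K^{-1}(\cP_K)$ with $\Prim(C^*(K^*))$; this is a clean and arguably more transparent justification. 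For the characterisation of closed points the paper argues topologically: if $P$ is closed with $\psi_K(P)=S\neq\cP_K$, then $\{S\}^c=\psi_K(\{P_S\}^c)$ is open because $\psi_K$ is open, forcing $S$ to be a closed point of $2^{\cP_K}$ and hence $S=\cP_K$, a contradiction. You instead argue ideal-theoretically: the points of $\widehat{K^*}$ are kernels of one-dimensional characters, hence maximal and closed, while $P_S\subsetneq P_{\cP_K}\subseteq\ker\pi_{0,\chi}$ (the strictness coming from the infinite-dimensionality of $\fA_K/P_S$) exhibits a larger primitive ideal in $\overline{\{P_S\}}$. Both arguments are complete; the paper's exploits the openness of $\psi_K$ and the uniqueness of the closed point of $2^{\cP_K}$ already established, while yours is self-contained at the level of ideals and makes the containment structure of the primitive ideals explicit, which is perhaps more informative.
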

\begin{proof}
Let $\rho \colon \cQ(\Az_{K,f}/K^*) \to 2^{\cP_K}$ be the homeomorphism from Proposition~\ref{prop:quasi-orbits}. For any $x \in \Az_{K,f}$, the isotropy group of $x$ is trivial if $\rho([x]) \neq \cP_K$, and is $K^*$ if $\rho([x]) = \cP_K$. Hence, the first claim follows by Theorem~\ref{thm:Williams} and the definition of $P_S$. We prove the second claim. It is easy to see that every point in $\widehat{K^*}$ is closed in $\Prim(\fA_K)$. Conversely, let $P \in \Prim(\fA_K)$ be a closed point and suppose $\psi_K(P)=S \neq \cP_K$. Then, we have $P=P_S$ and $\psi_K^{-1}(S)=\{P_S\}$ by the first assertion. Then, $\{S\}^c=\psi_K(\{P_S\}^c)$ is open by assumption and openness of $\psi_K$, $S$ is a closed point in $2^{\cP_K}$. Hence, we have $S=\cP_K$, which is a contradiction.
Hence, the second claim holds.
\end{proof}

\begin{remark}
As in \cite[Proposition~2.5]{LR:00} for the case $K=\Qz$, it is not difficult to describe the topology on the parameter space $\left(2^{\cP_K}\setminus\{\cP_K\}\right)\sqcup\widehat{K^*}$ explicitly using Proposition~\ref{prop:primideals} and the fact that $\psi_K$ is continuous and open.
\end{remark}

We now give a generalisation of the analogue of \cite[Lemma~2.10]{KT} for our situation.

\begin{lemma}
\label{lem:over2Sigma}
Let $K$ and $L$ be number fields, and suppose $\varphi\colon\Prim(\fA_K)\to \Prim(\fA_L)$ is a homeomorphism. Then, there exists a bijection $\theta\colon \cP_K\to \cP_L$ such that the following diagram commutes:
\[  \begin{tikzcd}
  \Prim(\fA_K)\arrow[d,"\psi_K"]\arrow[rr,"\varphi"] & & \Prim(\fA_L)\arrow[d,"\psi_L"]\\
  2^{\cP_K} \arrow[rr,"\tilde{\theta}"] & & 2^{\cP_L}\nospacepunct{,}
  \end{tikzcd}\]
  where $\tilde{\theta}$ denotes the homeomorphism $2^{\cP_K}\simeq 2^{\cP_L}$ induced by $\theta$. In particular, any *-isomorphism $\fA_K\xrightarrow{\cong}\fA_L$ is $2^{\cP_K}$-equivariant after identifying $2^{\cP_K}$ and $2^{\cP_L}$.
\end{lemma}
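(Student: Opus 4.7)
The plan is to descend the homeomorphism $\varphi$ to a homeomorphism of the base spaces $2^{\cP_K}\to 2^{\cP_L}$, and then invoke Lemma~\ref{lem:2Sto2T} to extract the desired bijection of primes. The key structural fact I would exploit is Proposition~\ref{prop:primideals}: the map $\psi_K$ is continuous and open, its fibres $\psi_K^{-1}(S)=\{P_S\}$ are singletons for $S\neq\cP_K$, and the fibre $\psi_K^{-1}(\cP_K)=\widehat{K^*}$ consists precisely of the closed points of $\Prim(\fA_K)$ (and similarly for $L$).

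First, I would define the candidate map $\overline{\varphi}\colon 2^{\cP_K}\to 2^{\cP_L}$ by $\overline{\varphi}(S):=\psi_L(\varphi(P_S))$ for $S\neq\cP_K$, and $\overline{\varphi}(\cP_K):=\cP_L$. Well-definedness in the first clause requires $\overline{\varphi}(S)\neq\cP_L$, which holds because $P_S$ is not a closed point of $\Prim(\fA_K)$ (again by Proposition~\ref{prop:primideals}), hence its image under the homeomorphism $\varphi$ is not closed, hence does not lie in $\psi_L^{-1}(\cP_L)=\widehat{L^*}$. Running the same argument for $\varphi^{-1}$ shows that $\overline{\varphi}$ is a bijection; here one also uses that $\varphi$ restricts to a bijection $\widehat{K^*}\leftrightarrow\widehat{L^*}$ between the sets of closed points.

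To see that $\overline{\varphi}$ is a homeomorphism, I would verify the identity
\[ \overline{\varphi}^{-1}(U)\;=\;\psi_K\bigl(\varphi^{-1}(\psi_L^{-1}(U))\bigr) \]
for every open $U\subseteq 2^{\cP_L}$, by splitting on whether $\cP_K$ lies in $\overline{\varphi}^{-1}(U)$: the case $S\neq\cP_K$ is immediate from the definition and the singleton fibres, while the case $S=\cP_K$ reduces to the equivalence $\cP_L\in U\iff\widehat{L^*}\cap\psi_L^{-1}(U)\neq\emptyset\iff\widehat{K^*}\cap\varphi^{-1}(\psi_L^{-1}(U))\neq\emptyset$, the last step using $\varphi(\widehat{K^*})=\widehat{L^*}$. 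Since $\psi_K$ is open, $\psi_L$ continuous, and $\varphi$ a homeomorphism, the right-hand side is open in $2^{\cP_K}$, so $\overline{\varphi}$ is continuous; the symmetric argument for $\varphi^{-1}$ gives continuity of $\overline{\varphi}^{-1}$.

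Finally, Lemma~\ref{lem:2Sto2T} produces a bijection $\theta\colon\cP_K\to\cP_L$ with $\overline{\varphi}=\tilde{\theta}$, and commutativity of the displayed diagram is built into the definition of $\overline{\varphi}$. The last sentence of the lemma follows because any *-isomorphism $\fA_K\xrightarrow{\cong}\fA_L$ induces a homeomorphism $\Prim(\fA_K)\to\Prim(\fA_L)$ and hence a $\theta$ as above, after which $2^{\cP_K}$-equivariance is immediate. The only delicate point in the whole argument is the case analysis at the non-Hausdorff fibre over $\cP_K$; once that is handled via the closed-point characterisation, the rest is formal.
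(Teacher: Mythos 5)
Your proof is correct and follows essentially the same route as the paper: the paper simply observes, via Proposition~\ref{prop:primideals}, that $\psi_K$ realises $2^{\cP_K}$ as the quotient of $\Prim(\fA_K)$ obtained by collapsing the closed points, so that $\varphi$ descends, and then invokes Lemma~\ref{lem:2Sto2T}. Your argument unpacks that quotient step by hand (explicit definition of $\overline{\varphi}$, the fibre case analysis, and the openness of $\psi_K$), which is exactly the content hidden in the paper's shorter formulation.
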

\begin{proof}

By Proposition~\ref{prop:primideals}, $2^{\cP_K}$ is the quotient space of $\Prim(\fA_K)$ obtained by identifying all closed points.
Under this identification, $\psi_K$ coincides with the quotient map. Therefore, there exists a unique homeomorphism $\tilde{\theta} \colon 2^{\cP_K} \to 2^{\cP_L}$ which makes the diagram commute. Moreover, $\tilde{\theta}$ comes from a bijection $\cP_K \to \cP_L$ by Lemma~\ref{lem:2Sto2T}. 
\end{proof}

\begin{definition} \label{def:phiK}
Let $\phi_K \colon \fB_K \to \fA_K$ be the *-homomorphism from Proposition~\ref{prop:crossedproduct1} associated with the trivial character of $K^*$ (in the notation of the proposition, we take $X=\Az_{K,f}$, $G=K^*$, and $\mu=\mu_K$). 
Let $\fval_K \colon \fB_\val \to \fB_K$ be the *-homomorphism induced from the canonical $\Gamma_K$-equivariant inclusion $C_0(\Az_{K,f}/\ol{\O}_K^*)\to C_0(\Az_{K,f}/\mu_K)$. 
\end{definition}

\begin{proposition}
The *-homomorphisms $\phi_K \colon \fB_K \to \fA_K$ and $\fval_K \colon \fB_\val \to \fB_K$ are $2^{\cP_K}$-equivariant.
\end{proposition}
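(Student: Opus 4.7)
The plan is to unwind the definitions using Proposition~\ref{prop:A(Z)}, which gives an explicit description of the ideals associated to open subsets $U \subseteq 2^{\cP_K}$. Write $\xi_A \colon \Az_{K,f} \to 2^{\cP_K}$, $\xi_B \colon \Az_{K,f}/\mu_K \to 2^{\cP_K}$, and $\xi_{\val} \colon \Az_{K,f}/\ol{\O}_K^* \to 2^{\cP_K}$ for the respective maps $a \mapsto \cZ(a)$ from Proposition~\ref{prop:quasi-orbits}. By Proposition~\ref{prop:A(Z)}, for an open $U \subseteq 2^{\cP_K}$ we have $\fA_K(U) = C_0(\xi_A^{-1}(U)) \rtimes K^*$ and analogous formulas for $\fB_K(U)$ and $\fB_\val(U)$. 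So the task reduces to verifying that $\phi_K$ and $\fval_K$ send these specific ideals into one another.

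The crucial compatibility to record first is that the quotient maps $\pi \colon \Az_{K,f} \to \Az_{K,f}/\mu_K$ and $\sigma \colon \Az_{K,f}/\mu_K \to \Az_{K,f}/\ol{\O}_K^*$ preserve the zero-set invariant $\cZ$, i.e.\ $\cZ(a) = \cZ(\pi(a))$ and $\cZ(\dot a) = \cZ(\sigma(\dot a))$, since multiplying a coordinate by a unit of $\O_{K,\p}$ does not change whether it vanishes. Hence $\pi^{-1}(\xi_B^{-1}(U)) = \xi_A^{-1}(U)$ and $\sigma^{-1}(\xi_{\val}^{-1}(U)) = \xi_B^{-1}(U)$, so pulling back functions along $\pi$ and $\sigma$ preserves the support condition determined by $U$.

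For $\fval_K$, recall that the inclusion $\iota \colon C_0(\Az_{K,f}/\ol{\O}_K^*) \to C_0(\Az_{K,f}/\mu_K)$ is $f \mapsto f \circ \sigma$ (which lands in $C_0$ because the fibres $\ol{\O}_K^*/\mu_K$ are compact). If $f \in C_0(\xi_{\val}^{-1}(U)) \subseteq \fB_\val(U)$, then by the previous paragraph $f \circ \sigma \in C_0(\xi_B^{-1}(U))$, and since $u_g$ for $g \in \Gamma_K$ lies in the multiplier algebra of $C_0(\xi_B^{-1}(U)) \rtimes \Gamma_K$, we obtain $\fval_K(f u_g) \in \fB_K(U)$. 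For $\phi_K$, the formula $\phi_K(f u_{\ol{g}}) = p (f \circ \pi) u_g$ of Proposition~\ref{prop:crossedproduct1} shows that if $f \in C_0(\xi_B^{-1}(U))$ and $g \in K^*$, then $f \circ \pi \in C_0(\xi_A^{-1}(U))$ and $p \in C^*(\mu_K) \subseteq M(\fA_K)$, hence $\phi_K(f u_{\ol{g}}) \in C_0(\xi_A^{-1}(U)) \rtimes K^* = \fA_K(U)$. Since elements of the form $f u_{\ol{g}}$ span a dense subalgebra of $\fB_K(U)$, this gives $\phi_K(\fB_K(U)) \subseteq \fA_K(U)$ and completes the proof.

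There is no real obstacle here: the argument is essentially a bookkeeping exercise built on the invariance of $\cZ$ under $\mu_K$ and $\ol{\O}_K^*$ together with Proposition~\ref{prop:A(Z)}. The only point requiring a moment's thought is checking that the pullback $f \mapsto f \circ \sigma$ actually takes $C_0$ to $C_0$, which is guaranteed by properness of $\sigma$ (compact fibres).
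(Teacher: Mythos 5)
Your proof is correct and follows essentially the same route as the paper's: both reduce to the explicit description of the ideals $\fA_K(U)$, $\fB_K(U)$, $\fB_\val(U)$ from Proposition~\ref{prop:A(Z)}, observe that $\cZ$ is invariant under the $\mu_K$- and $\ol{\O}_K^*$-actions so that pullback of functions preserves the support condition, and then check the generators $fu_{\ol{g}}$ via the formula $\phi_K(fu_{\ol{g}})=p(f\circ\pi)u_g$. The only cosmetic difference is that the paper first invokes \cite[Lemma~2.8]{MeyerNest:09} to reduce to the basic open sets $U_F$, whereas you work with arbitrary open sets directly; both are fine.
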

\begin{proof}
By \cite[Lemma~2.8]{MeyerNest:09}, to show $\varphi_K$ is $2^{\cP_K}$-equivariant, it suffices to show $\varphi_K(\fB_K(U_F))\subseteq \fA_K(U_F)$ for every finite subset $F\subseteq \cP_K$. 
Let $\xi\colon \Az_{K,f}\to 2^{\cP_K}$ and $\dot{\xi}\colon \Az_{K,f}/\mu_K\to 2^{\cP_K}$ be the maps defined by $\xi(a)=\cZ(a)$ and $\dot{\xi}(\dot{a})=\cZ(a)$ for $a \in \Az_{K,f}$. By Proposition~\ref{prop:quasi-orbits}, $\xi$ and $\dot{\xi}$ coincide with the quotient maps $\Az_{K,f} \to \cQ(\Az_{K,f}/K^*)$ and $\Az_{K,f}/\mu_K \to  \cQ((\Az_{K,f}/\mu_K)/\Gamma_K)$, respectively, under the identifications $\cQ(\Az_{K,f}/K^*) \cong \cQ((\Az_{K,f}/\mu_K)/\Gamma_K) \cong 2^{\cP_K}$. 

Let $F\subseteq \cP_K$ be a finite subset.  
By Proposition~\ref{prop:A(Z)}, we have $\fA_K(U_F)=C_0(\xi^{-1}(U_F))\rtimes K^*$ and $\fB_K(U_F)=C_0(\dot{\xi}^{-1}(U_F))\rtimes\Gamma_K$. If $f\in C_0(\dot{\xi}^{-1}(U_F))$ and $\ol{g}\in\Gamma_K$, then $\varphi_K(fu_{\ol{g}})=p(f\circ\pi) u_g$, where $p=\frac{1}{|\mu|}\sum_{h\in\mu_K}u_h$ and $\pi\colon \Az_{K,f}\to\Az_{K,f}/\mu_K$ is the quotient map. Since $p(f\circ\pi) u_g=(f\circ\pi) pu_g$, $f\circ\pi\in C_0(\xi^{-1}(U_F))$, and $pu_g\in\spn\{u_h : h\in K^*\}$, we see that $\varphi_K(\fB_K(U_F))\subseteq \fA_K(U_F)$.

By Proposition~\ref{prop:A(Z)}, it is easy to see that $\fval_K(\fB_\val(U_F))\subseteq \fB_K(U_F)$ for every finite subset $F\subseteq \cP_K$, so, as in the first part of the proof, we see that $\fval_K$ is $2^{\cP_K}$-equivariant.
\end{proof}

\subsection{Subquotients and extensions from finite sets of primes}
\label{sec:subquotients}

As for the case of Bost--Connes C*-algebras from \cite[\S~2.3]{KT}, we explicitly describe subquotients associated with finite sets of primes. 
For every finite subset $F\subseteq \cP_K$ and $\p\in \cP_K \setminus F$, applying Proposition~\ref{prop:A(Z)} to the locally closed sets $\{F^c\}$ and $\{F^c,F_\p^c\}$ gives canonical *-isomorphisms
\vspace{-0.5cm}

\begin{minipage}{.4\linewidth}
\begin{align*}
    & \fA_K^F \cong C_0\left(\textstyle{\prod}_{\q\in F}K_\q^*\right)\rtimes K^*;\\
    &\fB_K^F \cong C_0\left((\textstyle{\prod}_{\q\in F}K_\q^*)/\mu_K\right)\rtimes \Gamma_K;\\
    &\fB_\val^F \cong C_0\left(\textstyle{\prod}_{\q\in F}\q^\Zz\right)\rtimes \Gamma_K;
    \end{align*}
    \end{minipage}
\begin{minipage}{.6\linewidth}
\begin{align*}
    &\fA_K(\{F^c,F_\p^c\})\cong C_0\left(K_\p\times\textstyle{\prod}_{\q\in F}K_\q^*\right)\rtimes K^*;\\
    &\fB_K(\{F^c,F_\p^c\})\cong C_0\left((K_\p\times\textstyle{\prod}_{\q\in F}K_\q^*)/\mu_K\right)\rtimes \Gamma_K;\\
    &\fB_\val(\{F^c,F_\p^c\})\cong C_0\left(\p^{\tilde{\Zz}}\times\textstyle{\prod}_{\q\in F}\q^\Zz\right)\rtimes \Gamma_K.
\end{align*}
\end{minipage}

Here, we use the fact that the map
\[
\Az_{K,f}\to \textstyle{\prod}_{\p\in\cP_K}'(\p^{\tilde{\Zz}},\p^{\tilde{\Nz}}),\quad a\mapsto (\p^{v_\p(a)})_\p
\]
descends to a $\Gamma_K$-equivariant homeomorphism $\Az_{K,f}/\ol{\O}_K^*\simeq \textstyle{\prod}_{\p\in\cP_K}'(\p^{\tilde{\Zz}},\p^{\tilde{\Nz}})$, where $\p^{\tilde{\Zz}}=\{\p^n : n\in\tilde{\Zz}\}\simeq\tilde{\Zz}$. The group $\Gamma_K$ acts on $\p^{\tilde{\Zz}}$ via the homomorphism $v_\p$ and on the restricted product diagonally.

\begin{remark}
\label{rmk:level0}
For $F=\emptyset$, the above gives $\fA_K^\emptyset\cong C^*(K^*)$ and $\fB_K^\emptyset\cong\fB_\val^\emptyset\cong C^*(\Gamma_K)$.
\end{remark}

Let $F\subseteq\cP_K$ be a finite subset. Recall that the C*-algebra $B_K^F$ from Definition~\ref{def:BKF} is the unital part of the composition factor $\fB_K^F$. Note that the notations $\fB_K^F$ and $B_K^F$ are different from those in \cite{KT}. 
Since $(\prod_{\p\in F} \cO_\p^*)/\mu_K$ is a compact open subgroup of $(\prod_{\p\in F} K_\p^*)/\mu_K$, we have a canonical inclusion map $B_K^F\to \fB_K^F$. Similarly, let $B_{\val}^F := C^*(\Gamma_K^F)$. Then, we have a canonical inclusion map $B_\val^F\to \fB_\val^F$. 

Let $F\subseteq\cP_K$ be a non-empty finite subset. Applying Lemma~\ref{lem:approx}, we can see that $\fB_K^F \cong \Kz \otimes B_K^F$ and $\fB_\val^F \cong \Kz \otimes B_\val^F$. In particular, $\fB_K^F$ and $\fB_\val^F$ are stable. This is why we call $B_K^F$ and $B_\val^F$ ``the unital parts''. However, we do not use these *-isomorphisms directly, since they are not canonical. Instead, we rely on Proposition~\ref{prop:tensor}.

\begin{definition}
Let $F$ be a finite subset of $\cP_K$. Define $\bs{\xi}_K^F \in \KK(B_K^F, \fB_K^F)$ and $\bs{\xi}_\val^F \in \KK(B_\val^F, \fB_\val^F)$ to be the elements induced from the inclusion maps $B_K^F\to \fB_K^F$ and $B_\val^F\to \fB_\val^F$, respectively. In addition, define $\xi_K^F \colon \K_*(B_K^F) \to \K_*(\fB_K^F)$ and $\xi_\val^F \colon \K_*(B_\val^F) \to \K_*(\fB_\val^F)$ to be the homomorphisms of \K-groups induced from $\bs{\xi}_K^F$ and $\bs{\xi}_\val^F$, respectively. 
\end{definition}

\begin{lemma}\label{lem:orderisom}
    For every finite subset $F$ of $\cP_K$, the elements $\bs{\xi}_K^F$ and $\bs{\xi}_{\val}^F$ are ordered \KK-equivalences.
\end{lemma}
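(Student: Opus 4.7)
The plan is to reduce both statements to a single application of Proposition~\ref{prop:tensor}. The case $F = \emptyset$ is trivial: $B_K^\emptyset = \fB_K^\emptyset = B_\val^\emptyset = \fB_\val^\emptyset = C^*(\Gamma_K)$, and both inclusions are the identity, so the resulting \KK-elements are trivially ordered \KK-equivalences. So assume $F$ is non-empty.

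For the $\fB_K^F$ statement I would set $X = (\prod_{\p \in F} K_\p^*)/\mu_K$, $\Gamma = \Gamma_K$, $\lambda \colon \Gamma_K \to X$ the composition of the diagonal embedding with the quotient by $\mu_K$, and take $\Gamma_0 = \Gamma_K^F$ and $X_0 = (\prod_{\p \in F} \O_{K,\p}^*)/\mu_K$. Then $X_0$ is manifestly a compact open subgroup of $X$, and $\Gamma_K^F$ is a summand of $\Gamma_K$ by Lemma~\ref{lem:GammaKdecomp}. The identity $\ol{\lambda(\Gamma_K^F)} = X_0$ follows from a strong-approximation argument: given $(y_\p)_{\p \in F} \in \prod_{\p \in F} \O_{K,\p}^*$ and neighbourhoods $U_\p$ of $y_\p$, Lemma~\ref{lem:approx} produces $x \in K^*$ with $v_\p(x) = 0$ and $x \in U_\p$ for each $\p \in F$, so $x \in \Gamma_K^F$ maps close to $(y_\p)_\p$. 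Finally, the diagonal valuation map $\Gamma_K/\Gamma_K^F \to \Zz^F \cong X/X_0$ is an isomorphism by construction. Proposition~\ref{prop:tensor} then shows that the inclusion $C(X_0) \rtimes \Gamma_K^F \hookrightarrow C_0(X) \rtimes \Gamma_K$, which is exactly $B_K^F \hookrightarrow \fB_K^F$, induces an ordered \KK-equivalence.

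For the $\fB_\val^F$ statement I would set $X = \prod_{\p \in F} \p^\Zz$, $\Gamma = \Gamma_K$, and $\lambda \colon \Gamma_K \to X$ the map $x \mapsto (\p^{v_\p(x)})_{\p \in F}$; take $\Gamma_0 = \Gamma_K^F$ and $X_0 = \{(\p^0)_{\p \in F}\}$. Then $X_0$ is trivially compact open, the identity $\ol{\lambda(\Gamma_K^F)} = X_0$ holds by definition of $\Gamma_K^F$ as the kernel of the valuation map, and $\Gamma_K/\Gamma_K^F \to X/X_0 = X \cong \Zz^F$ is the isomorphism from Lemma~\ref{lem:GammaKdecomp}. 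Once again $\Gamma_K^F$ is a summand of $\Gamma_K$, so Proposition~\ref{prop:tensor} applies and identifies the inclusion $C^*(\Gamma_K^F) = C(X_0) \rtimes \Gamma_K^F \hookrightarrow C_0(X) \rtimes \Gamma_K \cong \fB_\val^F$ as an ordered \KK-equivalence.

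The only mildly non-routine step is the density claim in the $\fB_K^F$ case; the rest amounts to packaging the setup so that the hypotheses of Proposition~\ref{prop:tensor} are literally verified. No further obstacle is anticipated.
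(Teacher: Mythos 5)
Your proposal is correct and follows essentially the same route as the paper: both reduce each statement to Proposition~\ref{prop:tensor} with the identical choices $\Gamma_0=\Gamma_K^F$, $X_0=(\prod_F\cO_{K,\p}^*)/\mu_K$ (resp. $X_0=\{1\}$). The only cosmetic difference is in verifying $\ol{\lambda(\Gamma_K^F)}=X_0$: you run the strong-approximation argument directly, while the paper deduces it formally from density of $\lambda(\Gamma_K)$ via $\ol{X_0\cap\lambda(\Gamma_K)}=X_0\cap\ol{\lambda(\Gamma_K)}$, using that $X_0$ is compact open.
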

\begin{proof}
First, $\Gamma_K^F$ is a summand of $\Gamma_K$ by Lemma~\ref{lem:GammaKdecomp}.
Let $\lambda\colon \Gamma_K\to X:=(\prod_{\p\in F} K_\p^*)/\mu_K$ be the canonical homomorphism. By Lemma~\ref{lem:approx}, the image of $\lambda$ is dense. Let $X_0:=(\prod_{\p\in F} \cO_\p^*)/\mu_K$. We have
\[
\ol{\lambda(\Gamma_K^F)}=\ol{X_0\cap\lambda(\Gamma_K)}=X_0\cap\ol{\lambda(\Gamma_K)}=X_0, 
\]
where the second equality uses that $X_0$ is compact open in $X$, and the third equality uses that $\lambda(\Gamma_K)$ is dense in $X$. The claim for $\bs{\xi}_K^F$ now follows from Proposition~\ref{prop:tensor}. Similarly, the claim for $\bs{\xi}_\val^F$ follows from Proposition~\ref{prop:tensor} (in this case, $X_0=\{1\}$).
\end{proof}

\begin{remark}
\label{rmk:varphiKF}
Let $F\subseteq\cP_K$ be a nonempty finite subset. Applying Proposition~\ref{prop:projmu} with $G=K^*$, $X=\prod_{\p\in F}K_\p^*$, and $\mu=\mu_K$ gives a *-homomorphism $\Phi\colon \fB_K^F\to\fA_K^F$. Using the explicit description of our composition factors above, we see that this *-homomorphism coincides with the *-homomorphism $\varphi_K^F\colon \fB_K^F\to\fA_K^F$ induced by $\varphi_K$. In particular, $\varphi_K^F$ induces an ordered \KK-equivalence for every nonempty finite subset $F\subseteq\cP_K$. In fact, $\fA_K^F$ is *-isomorphic to a matrix algebra over $\fB_K^F$ (and is thus also stable).
\end{remark}

We now define three families of extensions that will play a fundamental role in our reconstruction theorem. 
\begin{definition}
For each finite subset $F\subseteq\cP_K$ and $\p\in \cP_K \setminus F$, let $\cF_K^{F,\p}:=\cE_{\fA_K}^{F,\p}$, $\cE_K^{F,\p}:=\cE_{\fB_K}^{F,\p}$, and $\cE_\val^{F,\p}:=\cE_{\fB_\val}^{F,\p}$, where the extensions on the right hand sides are from \eqref{eqn:Efp}.
\end{definition}
\vspace{-3mm}

We close this section with a decomposition of the extension $\cE_\val^F$ in terms of the dilated Toeplitz extension $\cT$ from Definition~\ref{def:Toeplitz}.
Unlike in the case of Bost--Connes C*-algebras in \cite{KT}, the decomposition is not canonical; this is a fundamental technical difference between our work and \cite{KT}.

\begin{lemma}
\label{lem:E_val}
Let $F$ be a finite subset of $\cP_K$, and let $\p \in \cP_K \setminus F$. Fix $\pi_\p \in \Gamma_K^F$ satisfying $v_\p(\pi_\p) = 1$. 
Let $\cE \in \cExt(B_\val^F, \Kz \otimes B_\val^{F_\p})$ be the extension 
    \begin{equation} 
    \cE \colon 0\to (C_0(\p^{\Zz}) \rtimes \pi_\p^\Zz) \otimes C^*(\Gamma_K^{F_\p}) \to (C_0(\p^{\tilde{\Zz}}) \rtimes \pi_\p^\Zz) \otimes C^*(\Gamma_K^{F_\p}) \to C^*(\pi_\p^\Zz) \otimes C^*(\Gamma_K^{F_\p}) \to 0.   
    \end{equation}
Then, $\cE$ is canonically isomorphic to $\cT \otimes C^*(\Gamma_K^{F_\p})$. Moreover, by identifying $(C_0(\p^{\Zz}) \rtimes \pi_\p^\Zz) \otimes C^*(\Gamma_K^{F_\p})$ and $C^*(\Gamma_K^{F_\p})$ in $\KK$, $\bs{\xi} := \bigl( \bs{\xi}_\val^F,\, 
\bs{\xi}_\val^{F_\p}
\bigr) \colon \bigl[ \cE \bigr]_{\KK} \to \bigl[ \cE_\val^{F,\p} \bigr]_{\KK}$ is an isomorphism in $\Arr(\KK)$. 
\end{lemma}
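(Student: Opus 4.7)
For the first assertion, the plan is to identify $\p^{\tilde{\Zz}}$ with $\tilde{\Zz}$ via $\p^n \mapsto n$. Since $v_\p(\pi_\p) = 1$, the multiplicative action of $\pi_\p$ on $\p^{\tilde{\Zz}}$ corresponds to translation by $1$ on $\tilde{\Zz}$, and this yields a canonical $*$-isomorphism between the subextension $0 \to C_0(\p^\Zz) \rtimes \pi_\p^\Zz \to C_0(\p^{\tilde{\Zz}}) \rtimes \pi_\p^\Zz \to C^*(\pi_\p^\Zz) \to 0$ of $\cE$ and the dilated Toeplitz extension $\cT$. Tensoring termwise with $C^*(\Gamma_K^{F_\p})$ then yields the claimed canonical isomorphism $\cE \cong \cT \otimes C^*(\Gamma_K^{F_\p})$.

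For the second assertion, the strategy is to exhibit a concrete homomorphism of extensions $\phi = (\phi_q, \phi_m, \phi_i) \colon \cE \to \cE_\val^{F,\p}$ at the C*-algebra level, and then to verify that its induced class $[\phi]_{\KK}$ in $\Arr(\KK)$ corresponds to $\bs{\xi}$ under the stated identifications. By Lemma \ref{lem:GammaKdecomp} applied to $\Gamma_K^F$ with the single prime $\p$, we have the splitting $\Gamma_K^F = \pi_\p^\Zz \times \Gamma_K^{F_\p}$. Using the description $\fB_\val(\{F^c, F_\p^c\}) \cong C_0(\p^{\tilde{\Zz}} \times \prod_{\q \in F}\q^\Zz) \rtimes \Gamma_K$ from \S\ref{sec:subquotients}, and the fact that $\pi_\p$ acts trivially on $\prod_{\q \in F}\q^\Zz$ (as $\pi_\p \in \Gamma_K^F$) and $\Gamma_K^{F_\p}$ acts trivially on all of $\p^{\tilde{\Zz}} \times \prod_{\q \in F}\q^\Zz$, the formula $(f u_{\pi_\p^n}) \otimes u_\gamma \mapsto (f \otimes e) u_{\pi_\p^n \gamma}$---where $e \in C_0(\prod_{\q \in F}\q^\Zz)$ denotes the characteristic function of the identity element---extends to a $*$-homomorphism $\phi_m$ between the middle terms, whose restrictions yield compatible $\phi_i$ and $\phi_q$. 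Identifying the quotient of $\cE$ with $B_\val^F$ via $C^*(\pi_\p^\Zz) \otimes C^*(\Gamma_K^{F_\p}) = C^*(\Gamma_K^F)$, one sees that $\phi_q$ is precisely the canonical inclusion $\bs{\xi}_\val^F$.

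To identify $\phi_i$ with $\bs{\xi}_\val^{F_\p}$ under the $\KK$-identification in the statement, I would apply Proposition \ref{prop:tensor} with $X = \p^\Zz$, $\Gamma = \pi_\p^\Zz \times \Gamma_K^{F_\p}$, and $\Gamma_0 = \Gamma_K^{F_\p}$ (so that $X_0 = \{1\}$ is compact open and $\Gamma/\Gamma_0 \to X/X_0$ is the obvious isomorphism): this gives that the inclusion $\iota \colon B_\val^{F_\p} \to (C_0(\p^\Zz) \rtimes \pi_\p^\Zz) \otimes C^*(\Gamma_K^{F_\p})$ given by $u_\gamma \mapsto \chi_{\{1\}} \otimes u_\gamma$ is an ordered $\KK$-equivalence, and it is precisely this $\iota$ that realises the $\KK$-identification referenced in the lemma. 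A direct computation then shows that $\phi_i \circ \iota = \bs{\xi}_\val^{F_\p}$, as both send $u_\gamma \in C^*(\Gamma_K^{F_\p})$ to the characteristic function of the identity of $\prod_{\q \in F_\p}\q^\Zz$ multiplied by $u_\gamma$ inside $\fB_\val^{F_\p}$. Combined with Lemma \ref{lem:orderisom}, which provides that both $\bs{\xi}_\val^F$ and $\bs{\xi}_\val^{F_\p}$ are ordered $\KK$-equivalences, and with the functoriality of the $\KK$-class of extensions (so that $[\phi]_\KK$ is automatically a morphism in $\Arr(\KK)$), this finishes the proof that $\bs{\xi}$ is an isomorphism in $\Arr(\KK)$.

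The main care needed is the bookkeeping: the various identifications---$\p^{\tilde{\Zz}} \cong \tilde{\Zz}$, the splitting $\Gamma_K^F = \pi_\p^\Zz \times \Gamma_K^{F_\p}$, and the two applications of Proposition \ref{prop:tensor}---all depend on the choice of $\pi_\p$ but are otherwise canonical, and the crux of the argument is simply checking that they are mutually compatible, so that the concrete morphism $\phi$ of extensions witnesses $\bs{\xi}$ in $\Arr(\KK)$.
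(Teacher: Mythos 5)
Your proposal is correct and follows essentially the same route as the paper: the paper's proof likewise observes that $\cE \cong \cT \otimes C^*(\Gamma_K^{F_\p})$ canonically, exhibits the canonical homomorphism $\cE \to \cE_\val^{F,\p}$ consisting of inclusion maps (which is exactly your $\phi$, using the splitting $\Gamma_K^F = \pi_\p^\Zz \times \Gamma_K^{F_\p}$ and the inclusion of the $\Gamma_K^F$-invariant open set $\p^{\tilde{\Zz}} \times \{1\}$), and then invokes Lemma~\ref{lem:orderisom} (via Proposition~\ref{prop:tensor}) to conclude that both components of $\bs{\xi}$ are ordered \KK-equivalences. Your additional verifications — that $\phi_q = \bs{\xi}_\val^F$ and that $\phi_i \circ \iota = \bs{\xi}_\val^{F_\p}$ under the stability identification — are exactly the bookkeeping the paper leaves implicit.
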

\begin{proof}
First, note that such $\pi_\p$ exists by Lemma~\ref{lem:approx}.
It is clear that $\cE$ is canonically isomorphic to $\cT \otimes C^*(\Gamma_K^{F_\p})$. 
The extension $\cE_\val^F$ is given explicitly by 
\[
\cE_\val^{F,\p}\colon  0\to C_0\left(\p^{\Zz}\times\textstyle{\prod}_{ F}\q^\Zz\right)\rtimes \Gamma_K\to  C_0\left(\p^{\tilde{\Zz}}\times\textstyle{\prod}_{F}\q^\Zz\right)\rtimes \Gamma_K \to C_0\left(\textstyle{\prod}_{F}\q^\Zz\right)\rtimes \Gamma_K \to 0.
\]
In particular, there is a canonical homomorphism $\cE \to \cE_\val^F$ consisting of inclusion maps, which induces the morphism $\bs{\xi}$ in $\Arr(\KK)$. 
Moreover, $\bs{\xi}$ is an isomorphism in $\Arr(\KK)$ by Lemma~\ref{lem:orderisom}.
\end{proof}

\section{Reconstruction of the dynamical systems} \label{sec:recon}

\subsection{The Rieffel correspondence}
\label{sec:Rieffel}
We collect basics on the Rieffel correspondence. Let $A$ and $B$ be Morita equivalent C*-algebras, and let $\cX$ be an $A$--$B$-imprimitivity bimodule.
Then, $\cX$ induces a lattice isomorphism $\fR \colon \Iz(B) \to \Iz(A)$, called the Rieffel correspondence, which restricts to a homeomorphism $\Prim(B) \to \Prim(A)$.
If $I \in \Iz(B)$, then $\fR(I)$ is the closed linear span of $_A\langle xb, y\rangle$, where $x,y \in \cX$ and $b \in I$ (see \cite[Proposition 3.24]{RW:book}). The next lemma easily follows from the definition of the Rieffel correspondence. 
\begin{lemma} \label{lem:Rieffel}
Let $A$ be a C*-algebra, and let $p \in M(A)$ be a full projection. Let $\fR \colon \Iz(A) \to \Iz(pAp)$ be the Rieffel correspondence for the $pAp$--$A$-imprimitivity bimodule $pA$. Then, for each $I \in \Iz(A)$, we have 
\[ \fR(I) = pIp = pAp \cap I.\]
\end{lemma}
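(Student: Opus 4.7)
The plan is to unpack the formula for the Rieffel correspondence in this concrete setting, and then check both equalities directly using the two-sidedness of the ideal $I$.

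First, I would recall the left $pAp$-valued inner product on the imprimitivity bimodule $pA$: for $pa, pb \in pA$,
\[
_{pAp}\langle pa, pb\rangle = (pa)(pb)^* = pab^*p.
\]
By the general description of the Rieffel correspondence recalled in the paragraph preceding the lemma (applied with the roles of $A$ and $B$ suitably swapped, since here $pA$ is a $pAp$--$A$-imprimitivity bimodule), for each $I\in \Iz(A)$ the ideal $\fR(I)\in \Iz(pAp)$ is the closed linear span of elements of the form $_{pAp}\langle (pa)b, pc\rangle = pabc^*p$ with $a,c\in A$ and $b\in I$.

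Next I would prove the auxiliary identity $pIp = pAp\cap I$. The inclusion $pIp\subseteq pAp\cap I$ is immediate since $I$ is a two-sided ideal of $A$. For the reverse inclusion, any $x\in pAp\cap I$ satisfies $x=pxp$ (because $x\in pAp$), and since $x\in I$ we get $x=pxp\in pIp$. In particular, $pIp$ is automatically norm-closed.

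With the formula in hand, the two displayed equalities follow quickly. To see $pIp\subseteq \fR(I)$, for any $b\in I$ take $a=c=1$ (or, strictly speaking, use an approximate identity of $A$ in the defining span and pass to the limit) to obtain $_{pAp}\langle pb, p\rangle = pbp\in \fR(I)$. For the reverse inclusion $\fR(I)\subseteq pIp$, note that each spanning element $pabc^*p$ lies in $pIp$ because $abc^*\in AIA\subseteq I$; taking the closed linear span and using that $pIp$ is already closed yields $\fR(I)\subseteq pIp$. There is no substantive obstacle: the only point requiring a little care is the closedness of $pIp$, which is handled by the auxiliary identity.
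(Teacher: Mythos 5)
Your proof is correct and is exactly the verification the paper has in mind: the paper omits the argument entirely, remarking only that the lemma "easily follows from the definition of the Rieffel correspondence," and your unpacking of the spanning elements $pabc^*p$, the identity $pIp = pAp\cap I$ (which gives closedness), and the approximate-identity step supply precisely the missing details.
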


Now assume that $A$ is a C*-algebra over a topological space $X$, and let $\psi \colon \Prim(A) \to X$ be the associated surjection. 
Let $p \in M(A)$ be a full projection.  We consider $pAp$ as a C*-algebra over $X$ by equipping it with the surjection $pAp \to X$ which makes the following diagram commute: 
\[  \begin{tikzcd}
  \Prim(A)\arrow[rd,"\psi"]\arrow[rr,"\fR"] & & \Prim(pAp)\arrow[ld]\\
   & X\nospacepunct{.}&
  \end{tikzcd}\]

Let $Z \subseteq X$ be a locally closed subset, and let $U,V \subseteq X$ be open sets such that $V \subseteq U$ and $Z=U \setminus V$. 
We denote by $p^Z \in M(A(Z))$ the image of $p$ under the composition of canonical maps $M(A) \to M(A/A(V)) \to M(A(U)/A(V))$. The proof of the next lemma is straightforward by Lemma~\ref{lem:Rieffel}:
\begin{lemma} \label{lem:Rproj}
For every locally closed subset $Z \subseteq X$, $(pAp)(Z)$ is canonically isomorphic to $p^ZA(Z)p^Z$. 
\end{lemma}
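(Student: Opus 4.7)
The plan is to handle the open case of $Z$ first, using the Rieffel correspondence to identify $(pAp)(U)$ explicitly, and then pass to a quotient to treat general locally closed $Z$.

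First, I would verify that $(pAp)(U) = pA(U)p$ for every open $U \subseteq X$. By definition of the $X$-structure on $pAp$, the surjection $\psi' \colon \Prim(pAp) \to X$ factors as $\psi' = \psi \circ \fR^{-1}$, where $\fR \colon \Prim(A) \to \Prim(pAp)$ denotes the homeomorphism induced by the Rieffel correspondence. Hence $\psi'^{-1}(U)^c = \fR(\psi^{-1}(U)^c)$, and since $\fR \colon \Iz(A) \to \Iz(pAp)$ is an order isomorphism of complete lattices and therefore preserves arbitrary intersections,
\[ (pAp)(U) = \bigcap_{Q \in \psi'^{-1}(U)^c} Q = \fR\Bigl(\bigcap_{P \in \psi^{-1}(U)^c} P \Bigr) = \fR(A(U)) = pA(U)p, \]
where the last identification is Lemma~\ref{lem:Rieffel}.

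Next, for a locally closed set $Z = U \setminus V$ with $V \subseteq U$ open, the definition of the subquotient together with the previous step yields $(pAp)(Z) = (pAp)(U)/(pAp)(V) = pA(U)p / pA(V)p$. On the other hand, unpacking the definition of $p^Z \in M(A(Z))$, one has $p^Z \cdot (a + A(V)) \cdot p^Z = pap + A(V)$ for every $a \in A(U)$, so
\[ p^Z A(Z) p^Z = (pA(U)p + A(V))/A(V) \cong pA(U)p / (pA(U)p \cap A(V)). \]
The only point that requires a separate argument is the identity $pA(U)p \cap A(V) = pA(V)p$: the inclusion $\supseteq$ is immediate, and for $\subseteq$, since $A(V)$ is an ideal of $A$ and $p \in M(A)$, every $x = pap$ in $A(V)$ satisfies $x = pxp \in pA(V)p$. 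Combining the two descriptions then gives the desired canonical isomorphism $(pAp)(Z) \cong p^Z A(Z) p^Z$.

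I do not foresee a genuine obstacle here; the argument is essentially a bookkeeping exercise with the Rieffel correspondence. The only two substantive ingredients are that $\fR$ commutes with arbitrary intersections (automatic for any order isomorphism between complete lattices) and that $p$ normalizes each ideal $A(V)$ (immediate from $p \in M(A)$).
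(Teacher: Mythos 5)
Your argument is correct and follows exactly the route the paper indicates: the paper omits the proof, remarking only that it is ``straightforward by Lemma~\ref{lem:Rieffel}'', and your write-up is a sound fleshing-out of that, using $\fR(I)=pIp$ together with the fact that the Rieffel correspondence preserves intersections to get $(pAp)(U)=pA(U)p$ for open $U$, and then the second isomorphism theorem plus $pA(U)p\cap A(V)=pA(V)p$ for the quotient step.
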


\subsection{Reduction from \texorpdfstring{$\fA_K$}{Lg} to \texorpdfstring{$\fB_K$}{Lg}}
\label{sec:AKtoBK}
Let $p \in M(\fA_K)$ be a full projection. 
Fix a finite subset $F$ of $\cP_K$ and $\p \in \cP_K\setminus F$. 
Since the inclusion map $p\fA_Kp \to \fA_K$ is a $2^{\cP_K}$-equivariant *-homomorphism by Lemma~\ref{lem:Rproj}, we have the following commutative diagram with exact rows: 
 \begin{equation} \begin{tikzcd}
	\Kz \otimes p\cF_K^{F,\p}p \colon 0\arrow{r}&	\Kz \otimes (p\fA_Kp)^{F_\p} \arrow[d]\arrow[r]  & \Kz \otimes (p\fA_Kp)(\{F^c, F_\p^c\}) \arrow[d] \arrow[r] & \Kz \otimes (p\fA_Kp)^F\arrow[d] \arrow[r] & 0\\
	\Kz \otimes \cF_K^{F,\p} \colon 0\arrow{r}&	\Kz \otimes \fA_K^{F_\p} \arrow[r]  & \Kz \otimes \fA_K(\{F^c, F_\p^c\})\arrow[r] & \Kz \otimes \fA_K^F\arrow[r] & 0\nospacepunct{.}
 \end{tikzcd}\end{equation}
Here, the vertical maps are the inclusion maps tensored with $\id_\Kz$. 
Let 
\[\bigl[ I_{K,p}^{F,\p} \bigr]_{\KK} \colon \bigl[ \Kz \otimes p\cF_K^{F,\p}p \bigr]_{\KK} \to \bigl[ \Kz \otimes \cF_K^{F,\p} \bigr]_{\KK} \to \bigl[ \cF_K^{F,\p} \bigr]_{\KK}\]
denote the composition of morphisms in $\Arr(\KK)$, where the first morphism is induced from the above diagram and the second morphism is the inverse of the morphism $\bigl[ \cF_K^{F,\p} \bigr]_{\KK} \to \bigl[ \Kz \otimes \cF_K^{F,\p} \bigr]_{\KK}$ in $\Arr(\KK)$ induced by the canonical inclusion $\cF_K^{F,\p} \to \Kz \otimes \cF_K^{F,\p}$.
By Lemma~\ref{lem:Rproj}, $(p\fA_Kp)^F$ is a full corner in $\fA_K^F$, so that $\bigl[ I_{K,p}^{F,\p} \bigr]_{\KK}$ is an order isomorphism in $\Arr(\KK)$.

\begin{lemma}\label{lem:admissible}
Let $P=P_{\cP_K}$, and let $p \in M(\fA_K)$ be a full projection. Then, the inclusion map $p\fA_Kp \to \fA_K$ descends to a *-isomorphism $p\fA_Kp/pPp \to \fA_K/P$. 
\end{lemma}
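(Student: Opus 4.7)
The plan is to verify that the *-homomorphism $p\fA_Kp/pPp \to \fA_K/P$ induced by the inclusion is both injective and surjective. Injectivity will follow directly from the Rieffel correspondence; surjectivity will come from identifying $\fA_K/P$ explicitly as a commutative unital C*-algebra, in which every full projection is automatically the unit.

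For injectivity, the kernel of the composition $p\fA_Kp \hookrightarrow \fA_K \to \fA_K/P$ is $p\fA_Kp \cap P$. Since $p$ is a full projection in $M(\fA_K)$, $p\fA_K$ is a $p\fA_Kp$--$\fA_K$-imprimitivity bimodule, and Lemma~\ref{lem:Rieffel} applied to the ideal $P \in \Iz(\fA_K)$ gives $p\fA_Kp \cap P = pPp$. Hence the induced map $p\fA_Kp/pPp \to \fA_K/P$ is injective.

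For surjectivity, I first identify $\fA_K/P$ explicitly. By Proposition~\ref{prop:quasi-orbits}, the ideal $P = P_{\cP_K}$ corresponds to the quasi-orbit $\overline{K^*\cdot 0} = \{0\} \subseteq \Az_{K,f}$, whose isotropy group is $K^*$. Proposition~\ref{prop:P[x]} then gives $P = C_0(\Az_{K,f} \setminus \{0\}) \rtimes K^*$, and evaluation at $0$ produces a canonical *-isomorphism $\fA_K/P \cong C^*(K^*)$. Since $K^*$ is abelian, this is a commutative unital C*-algebra isomorphic to $C(\widehat{K^*})$, so $M(\fA_K/P) = \fA_K/P$. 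Let $\bar p$ denote the image of $p$ under the canonical unital *-homomorphism $M(\fA_K) \to M(\fA_K/P)$. Fullness of $p$ means $\overline{\fA_K p \fA_K} = \fA_K$, and applying the surjective quotient map yields $\overline{(\fA_K/P)\, \bar p\, (\fA_K/P)} = \fA_K/P$, so $\bar p$ is a full projection in $C(\widehat{K^*})$. Any projection in $C(\widehat{K^*})$ is the characteristic function of a clopen subset of $\widehat{K^*}$, and such a function generates $C(\widehat{K^*})$ as an ideal precisely when that clopen set is all of $\widehat{K^*}$; hence $\bar p = 1$. Consequently, the image of $p\fA_Kp$ in $\fA_K/P$ equals $\bar p (\fA_K/P) \bar p = \fA_K/P$, giving surjectivity. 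The only conceptual step is passing fullness of $p$ through the quotient to obtain $\bar p = 1$ in the commutative unital algebra $\fA_K/P$; I do not anticipate any serious technical obstacle.
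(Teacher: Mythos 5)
Your proof is correct and follows essentially the same route as the paper: the paper's own argument simply notes that $\fA_K/P\cong C^*(K^*)$ is abelian and unital, so its unit is the unique full projection, whence $\pi(p)=1$. You additionally spell out the injectivity step via Lemma~\ref{lem:Rieffel} and justify $\bar p=1$ through the clopen-set description of projections in $C(\widehat{K^*})$; both elaborations are sound and merely make explicit what the paper leaves implicit.
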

\begin{proof}
Since $\fA_K/P$ is unital, the quotient map $\fA_K \to \fA_K/P$ extends to a surjective *-homomorphism 
$\pi \colon M(\fA_K) \to \fA_K/P$. Then, $\pi(p)$ is a full projection. Since $\fA_K/P \cong C^*(K^*)$ is an abelian C*-algebra (see Remark~\ref{rmk:level0}), the unit $1_{C^*(K^*)}$ is the unique full projection of $\fA_K/P$. Hence, $\pi(p)=1$. 
\end{proof}

\begin{proposition}\label{prop:commutativeKL}
Let $K$ and $L$ be number fields, and let $p \in M(\fA_K)$ and $q \in M(\fA_L)$ be full projections. Suppose $p\fA_Kp$ and $q\fA_Lq$ are *-isomorphic. Then, there exists a bijection $\theta \colon \cP_K \to \cP_L$, a *-isomorphism $\alpha^\emptyset \colon \fB_K^\emptyset \to \fB_L^\emptyset$, and a family of ordered \KK-equivalences $\bigl[\alpha^F\bigr]_{\KK} \in \KK\bigl(\fB_K^F,\, \fB_L^{\theta(F)} \bigr)$
for each nonempty finite subset $F \subseteq \cP_K$ such that 
\[ \bs{\alpha}^{F,\p} := (\bigl[\alpha^F\bigr]_{\KK},\, \bigl[\alpha^{F_\p}\bigr]_{\KK}) \colon \bigl[\cE_K^{F,\p} \bigr]_{\KK} \to \bigl[ \cE_L^{\theta(F),\theta(\p)} \bigr]_{\KK}
\]
is an order isomorphism in $\Arr(\KK)$ 
for any (possibly empty) finite subset $F\subseteq \cP_K$ and $\p \in \cP_K \setminus F$. Moreover, we have $|\mu_K|=|\mu_L|$.
\end{proposition}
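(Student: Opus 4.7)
The first task is to produce the bijection $\theta\colon\cP_K\to\cP_L$ and promote the given *-isomorphism to a $2^{\cP_K}$-equivariant one. Let $\Phi\colon p\fA_Kp\to q\fA_Lq$ be the given *-isomorphism. Via the Rieffel correspondence (Lemma~\ref{lem:Rieffel}), the primitive ideal spaces of $p\fA_Kp$ and $\fA_K$ are canonically homeomorphic, and likewise for $L$; thus $\Phi$ induces a homeomorphism $\Prim(\fA_K)\cong\Prim(\fA_L)$. Lemma~\ref{lem:over2Sigma} supplies the required bijection $\theta$ such that, after the identification $2^{\cP_K}\cong 2^{\cP_L}$ via $\tilde\theta$, $\Phi$ becomes $2^{\cP_K}$-equivariant. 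Consequently, for every finite $F\subseteq\cP_K$ and every $\p\in\cP_K\setminus F$, $\Phi$ restricts to a *-isomorphism $\Phi^F\colon(p\fA_Kp)^F\xrightarrow{\cong}(q\fA_Lq)^{\theta(F)}$ and to a homomorphism of extensions $\cE_{p\fA_Kp}^{F,\p}\to\cE_{q\fA_Lq}^{\theta(F),\theta(\p)}$.

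For nonempty $F$, I define $[\alpha^F]_{\KK}$ as a composition of ordered \KK-equivalences. By Lemma~\ref{lem:Rproj}, $(p\fA_Kp)^F=p^F\fA_K^Fp^F$ is a full corner in $\fA_K^F$, so the inclusion is an ordered \KK-equivalence; by Remark~\ref{rmk:varphiKF}, $\varphi_K^F\colon\fB_K^F\to\fA_K^F$ is also an ordered \KK-equivalence. Chaining these with $\Phi^F$ and their $L$-analogues produces an ordered \KK-equivalence $\fB_K^F\to\fA_K^F\to p^F\fA_K^Fp^F\xrightarrow{\Phi^F}q^{\theta(F)}\fA_L^{\theta(F)}q^{\theta(F)}\to\fA_L^{\theta(F)}\to\fB_L^{\theta(F)}$. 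Repeating the construction at the ideal level gives $[\alpha^{F_\p}]$, and since all the maps involved (full-corner inclusions, $\Phi$, and the $\varphi$'s) assemble into homomorphisms of extensions, $\bs{\alpha}^{F,\p}$ is an order isomorphism in $\Arr(\KK)$ whenever both $F$ and $F_\p$ are nonempty.

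For $F=\emptyset$, Lemma~\ref{lem:admissible} forces $p^\emptyset=1$, so $\Phi^\emptyset\colon C^*(K^*)\to C^*(L^*)$ is a *-isomorphism. Lemma~\ref{lem:minproj} identifies minimal projections with elements of $\widehat{\mu_K}$ and $\widehat{\mu_L}$; since *-isomorphisms biject minimal projections, $|\mu_K|=|\mu_L|$ and there is a unique $\chi'\in\widehat{\mu_L}$ with $\Phi^\emptyset(p_1^K)=p_{\chi'}^L$. Fixing extensions of $1\in\widehat{\mu_K}$ and $\chi'\in\widehat{\mu_L}$ to characters of $K^*$ and $L^*$ respectively, Lemma~\ref{lem:corner} produces *-isomorphisms onto corners $\Phi_1^K|^{p_1}\colon\fB_K^\emptyset\xrightarrow{\cong}p_1^KC^*(K^*)$ and $\Phi_{\chi'}^L|^{p_{\chi'}}\colon\fB_L^\emptyset\xrightarrow{\cong}p_{\chi'}^LC^*(L^*)$, and I define $\alpha^\emptyset:=(\Phi_{\chi'}^L|^{p_{\chi'}})^{-1}\circ\Phi^\emptyset|_{p_1^KC^*(K^*)}\circ\Phi_1^K|^{p_1}$, a *-isomorphism $\fB_K^\emptyset\to\fB_L^\emptyset$.

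The remaining task, which is the main obstacle, is to verify that $\bs{\alpha}^{\emptyset,\p}$ is an order isomorphism in $\Arr(\KK)$: the nonempty-$F$ argument breaks down because $\varphi_K^\emptyset=\Phi_1^K$ is merely the embedding into the $p_1$-corner of $C^*(K^*)$ and is not a \KK-equivalence. The strategy is to combine three identities in $\KK^1$: (i) the $\Phi$-compatibility, which after passing through the full-corner inclusions reads $[\Phi^\emptyset]\ho{C^*(L^*)}[\cF_L^{\emptyset,\theta(\p)}]=[\cF_K^{\emptyset,\p}]\ho{\fA_K^{\{\p\}}}[\tilde\Phi^{\{\p\}}]$, where $\tilde\Phi^{\{\p\}}$ denotes the ordered \KK-equivalence $\fA_K^{\{\p\}}\to\fA_L^{\{\theta(\p)\}}$ manufactured as in the previous paragraph; (ii) the extension compatibilities from the $2^{\cP_K}$- and $2^{\cP_L}$-equivariant maps $\Phi_1^K$ and $\Phi_{\chi'}^L$ (both equivariant by the same argument that works for $\varphi_K$); and (iii) the crucial equality $[\varphi_L^{\{\theta(\p)\},\chi'}]_{\KK}=[\varphi_L^{\{\theta(\p)\}}]_{\KK}$. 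Identity (iii) is the principal difficulty, because Proposition~\ref{prop:factorKK} only guarantees dependence on $\chi'|_{\mu_L}$, and different values of that restriction can a priori yield different \KK-classes. I resolve it by observing that $\mu_L$ is a closed subgroup of $K_{\theta(\p)}^*$, so by Pontryagin duality $\chi'$ extends to a continuous character $\tilde\chi'\in\widehat{K_{\theta(\p)}^*}$; the corresponding unitary $v_{\tilde\chi'}\in C_b(K_{\theta(\p)}^*)\subseteq M(\fA_L^{\{\theta(\p)\}})$ then implements the restricted dual automorphism $\tau_{\tilde\chi'}|_{\fA_L^{\{\theta(\p)\}}}$ by conjugation, making it inner and hence \KK-trivial, so (iii) follows from Proposition~\ref{prop:factorKK2}. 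Granting (iii), a direct chase in $\KK^1$ using (i), (ii), and the defining formula for $\alpha^\emptyset$ yields $[\alpha^\emptyset]\ho{\fB_L^\emptyset}[\cE_L^{\emptyset,\theta(\p)}]=[\cE_K^{\emptyset,\p}]\ho{\fB_K^{\{\p\}}}[\alpha^{\{\p\}}]$, closing the proof.
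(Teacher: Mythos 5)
Your proposal is correct and follows the paper's skeleton for most of the argument: the bijection $\theta$ via the Rieffel correspondence and Lemma~\ref{lem:over2Sigma}, the ordered \KK-equivalences at nonempty levels by chaining full-corner inclusions with $\varphi_K^F$, $\tilde\alpha^F$, and $\varphi_L^{\theta(F)}$, and $|\mu_K|=|\mu_L|$ from Lemma~\ref{lem:minproj}. The genuine divergence is in how you handle the compatibility between the zeroth and first levels. The paper inserts the inverse dual automorphism $(\tau_\chi)^{-1}$ of the \emph{whole} algebra $\fA_L$ into the definition of $\bs{\alpha}^{F,\p}$ at every level; since $\tau_\chi$ is a $2^{\cP_L}$-equivariant *-automorphism, all compatibilities in $\Arr(\KK)$ then hold automatically and no further computation is needed. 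You instead leave the nonempty-level classes uncorrected and prove the identity (iii) $[\Phi_{\chi'}^{\{\theta(\p)\}}]_{\KK}=[\Phi_1^{\{\theta(\p)\}}]_{\KK}$, by extending $\chi'\vert_{\mu_L}$ to a continuous character of $L_{\theta(\p)}^*$ (Pontryagin duality for the finite closed subgroup $\mu_L$) so that the restricted dual automorphism becomes $\Ad(v_{\tilde\chi'})$ for a multiplier unitary, hence \KK-trivial, and then invoking Proposition~\ref{prop:factorKK2} for the system $L^*\acts L_{\theta(\p)}^*$ together with $\Phi_{\chi'}=\tau_{\chi'}\circ\Phi_1$. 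This is a valid argument and the ensuing $\KK^1$-chase closes as you claim; what it buys is the (correct and somewhat illuminating) observation that the dual-action twist is invisible in \KK on every nonzero composition factor, so the correction is only ever needed at level zero, at the cost of an extra lemma the paper avoids. Two minor points: $K_{\theta(\p)}^*$ should read $L_{\theta(\p)}^*$ in your step (iii), and the unitary $v_{\tilde\chi'}$ implements $\tau_{\tilde\chi'\vert_{L^*}}$ on the composition factor, which is homotopic to (not literally equal to) $\tau_{\chi'}^{\{\theta(\p)\}}$; neither affects the argument.
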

\begin{proof}
 Let $\tilde{\alpha} \colon p\fA_Kp \to q\fA_Lq$ be a *-isomorphism. By Lemma~\ref{lem:admissible}, we have canonical *-isomorphisms $\fA_K^\emptyset \cong (p\fA_Kp)^\emptyset$ and $\fA_L^\emptyset\cong (q\fA_Lq)^\emptyset$. Let $\delta$ be the composition 
 \[ 
 C^*(K^*)\cong \fA_K^\emptyset \cong (p\fA_Kp)^\emptyset \xrightarrow{\tilde{\alpha}^\emptyset} (q\fA_Lq)^\emptyset \cong \fA_L^\emptyset \cong C^*(L^*),
 \]
 where the first and last *-isomorphisms are from Remark~\ref{rmk:level0}.
 By Lemma \ref{lem:minproj}, there exists $\chi \in \widehat{\mu_L}$ such that $\delta(p_1^K)=p_\chi^L$, where $p_\chi^L \in C^*(L^*)$ denotes the projection corresponding to $\chi$, and $p_1^K \in C^*(K^*)$ is the projection corresponding to the trivial character of $K^*$. Fix an extension of $\chi$ to $L^*$, and let $\tau \colon \widehat{L^*} \acts \fA_L$ be the dual action. Then, we have $\tau^\emptyset_\chi(p_1^L)=p_\chi^L$, where $p_1^L\in C^*(L^*)$ is the projection corresponding to the trivial character of $L^*$. 
 
 Consider the composition $\varphi\colon \Prim(\fA_K)\simeq \Prim(p\fA_Kp) \simeq \Prim(q\fA_Lq)\simeq \Prim(\fA_L)$, where the first and last homeomorphisms are the Rieffel correspondences and the middle homeomorphism is induced by $\tilde{\alpha}$. By applying Lemma~\ref{lem:over2Sigma} to the homeomorphism $\varphi$, there is a unique bijection $\theta\colon \cP_K \to \cP_L$ such that $\tilde{\alpha}$ is $2^{\cP_K}$-equivariant under the identification $2^{\cP_K}$ and $2^{\cP_L}$ via $\theta$.
 
 Let $F$ be a nonempty subset of $\cP_K$ and let $\p \in \cP_K \setminus F$. Let 
 \[\bs{\alpha}^{F,\p} := \bigl[\varphi_L^{\theta(F),\theta(\p)} \bigr]_{\KK}^{-1} \circ \bigl[\tau_\chi^{\theta(F),\theta(\p)} \bigr]_{\KK}^{-1} \circ \bigl[I_{L,q}^{\theta(F),\theta(\p)} \bigr]_{\KK} \circ \bigl[\tilde{\alpha}^{F,\p} \bigr]_{\KK} \circ \bigl[I_{K,p}^{F,\p} \bigr]_{\KK}^{-1} 
 \circ \bigl[\varphi_K^{F,\p} \bigr]_{\KK}. 
 \]
 By Remark~\ref{rmk:varphiKF}, $\bigl[\varphi_K^{F,\p} \bigr]_{\KK}$ and $\bigl[\varphi_L^{\theta(F),\theta(\p)} \bigr]_{\KK}$ are order isomorphisms in \Arr(\KK). Thus, $\bs{\alpha}^{F,\p}$ is an order isomorphism in $\Arr(\KK)$ since it is a composition of order isomorphisms in $\Arr(\KK)$. Let $\bs{\alpha}^{F,\p} := (\boldsymbol{x},\boldsymbol{y})$. We can directly see that $\boldsymbol{x}$ does not depend on $\p$. Let $[\alpha^{F}]_{\KK} :=\boldsymbol{x}$. Then, we can see that $\boldsymbol{y} = [\alpha^{F_\p}]_{\KK}$ by construction. Hence, the claim holds for $F \neq \emptyset$. When $F=\emptyset$, let $\alpha^\emptyset$ be the composition of 
 \[ \fB_K^\emptyset \xrightarrow{\varphi_K^\emptyset} \fA_K^\emptyset = (p\fA_Kp)^\emptyset \xrightarrow{\tilde{\alpha}^\emptyset} (q\fA_Lq)^\emptyset = \fA_L^\emptyset \xrightarrow{(\tau_\chi^\emptyset)^{-1}} \fA_L^\emptyset \xleftarrow{\varphi_L^\emptyset} \fB_L^\emptyset. \]
 Note that using the isomorphism from Proposition~\ref{prop:main6.3}, this composition makes sense since the image of the composition of the first three maps is equal to the image of $\varphi_L^\emptyset$. 
 Then, $\alpha^\emptyset$ is a *-isomorphism, and for every $\p \in \cP_K$, we see that $\bs{\alpha}^{\emptyset,\p} :=\bigl( \bigl[\alpha^{\emptyset}\bigr]_{\KK},\,\bigl[\alpha^{\{\p\}}\bigr]_{\KK} \bigr)$ is an order isomorphism in $\Arr(\KK)$. 

By Lemma~\ref{lem:admissible} and Remark~\ref{rmk:level0}, $(p\fA_Kp)^\emptyset \cong \fA_K^\emptyset \cong C^*(K^*) \cong C(\widehat{\mu_K} \times \Tz^\infty)$, so the last claim follows.
\end{proof}

For a finite subset $F \subseteq \cP_K$ and a prime $\p \in \cP_K \setminus F$, let $\partial_K^{F,\p} \colon \K_*(\fB_K^F) \to \K_*(\fB_K^{F_\p})$ be the boundary map associated with the extension $\cE_K^{F,\p}$.  
For each finite subset $F$ of $\cP_K$, let 
\[ \alpha^F_* := -\ho{} \bigl[\alpha^{F} \bigr]_{\KK} \colon \K_*(\fB_K^F) \to \K_*(\fB_L^{\theta(F)}). \]
Then, each $\alpha^F_* $ is an order isomorphism between \K-groups. For every finite subset $F$ of $\cP_K$ and $\p \in \cP_K \setminus F$, the following diagram commutes:
  \[\begin{tikzcd}
	\K_*(\fB_K^{F}) \arrow[d, "\alpha^F_*"]\arrow[r, "\partial_K^{F,\p}"] & \K_{*+1}(\fB_K^{F_\p})\arrow[d,"\alpha^{F_\p}_*"]\\
	\K_*(\fB_L^{\theta(F)}) \arrow[r, "\partial_L^{F,\p}"] & \K_{*+1}(\fB_L^{\theta(F_\p)})\nospacepunct{.} 
\end{tikzcd}\]

\subsection{The main theorem}
\label{sec:mainresult}
We can now state the detailed version of our main theorem.
\begin{theorem} \label{thm:main2}
Let $K$ and $L$ be number fields with $|\mu_K|=|\mu_K|$, and let $\theta \colon \cP_K \to \cP_L$ be a bijection. Suppose that there exists a *-isomorphism $\alpha^\emptyset \colon \fB_K^\emptyset \to \fB_L^\emptyset$, and a family of isomorphisms between \K-groups $\alpha_*^F \colon \K_*(\fB_K^F) \to \K_*(\fB_L^{\theta(F)})$ for each nonempty finite subset $F \subseteq \cP_K$ with $1 \leq |F| \leq 3$ such that $\alpha_*^F$ is an order isomorphism if $|F|=1$, and the diagram 
  \begin{equation} \label{eqn:main2} \begin{tikzcd}
	\K_*(\fB_K^{F}) \arrow[d, "\alpha^F_*"]\arrow[r, "\partial_K^{F,\p}"] & \K_{*+1}(\fB_K^{F_\p})\arrow[d,"\alpha^{F_\p}_*"]\\
	\K_*(\fB_L^{\theta(F)}) \arrow[r, "\partial_L^{F,\p}"] & \K_{*+1}(\fB_L^{\theta(F_\p)}) 
\end{tikzcd}\end{equation}
commutes for every finite subset $F\subseteq \cP_K$ with $0 \leq |F| \leq 2$ and $\p \in \cP_K \setminus F$. Let $\gamma \colon \Gamma_K \to \Gamma_L$ be the isomorphism characterised by the following equation in $\K_1(\fB_L^\emptyset)$:
\begin{equation} \label{eqn:gammadef}
    \alpha^\emptyset_*([u_x]_1) = [u_{\gamma(x)}]_1 \quad \fa x \in \Gamma_K. 
\end{equation} 
Then, there exists a unique field isomorphism $\sigma \colon K \to L$ such that the following diagram commute: 
  \[\begin{tikzcd}
	K^* \arrow[r, "\sigma"] \arrow[d] & L^* \arrow[d] \\
	\Gamma_K \arrow[r, "\gamma"] & \Gamma_L\nospacepunct{,}
\end{tikzcd}\]
where the vertical maps are the quotient maps. 
\end{theorem}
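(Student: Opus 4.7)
The plan is to deduce the field isomorphism $\sigma$ from the given \K-theoretic data in three stages, relying on a number-theoretic reconstruction result of Hoshi--CdSLMS type (Proposition~\ref{prop:CdSLMS}) and a lifting step (Proposition~\ref{prop:nfchar}) to be established in \S\ref{sec:NT}.

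The first step is to show that $\gamma$ is valuation-preserving, i.e.\ that $v_{\theta(\p)}\circ\gamma = v_\p$ for every $\p\in\cP_K$. Specialising diagram~\eqref{eqn:main2} to $F=\emptyset$ and a single prime $\p$, I would factor the boundary map $\partial_K^{\emptyset,\p}\colon \K_1(C^*(\Gamma_K))\to \K_0(\fB_K^{\{\p\}})$ through the valuation C*-algebra using the map $\fval_K$ of Definition~\ref{def:phiK}. By Lemma~\ref{lem:E_val}, after \KK-equivalence the relevant valuation extension is a dilated Toeplitz extension tensored with $C^*(\Gamma_K^{\{\p\}})$; then Lemma~\ref{lem:boundarytoeplitz} identifies $\partial_K^{\emptyset,\p}([u_x]_1)$ with $-v_\p(x)$ times a distinguished \K-theory class. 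Comparing this with the corresponding expression on the $L$-side and exploiting that $\alpha_*^{\{\p\}}$ is an order isomorphism (so must carry the distinguished positive class on the $K$-side to its counterpart on the $L$-side) forces $v_{\theta(\p)}(\gamma(x)) = v_\p(x)$ for every $x\in\Gamma_K$.

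The second step is to transport the semi-local multiplicative structure through $\gamma$ using the levels $|F|=1,2,3$ of the data. Valuation-preservation from Step~1 implies that $\gamma$ restricts to an isomorphism $\Gamma_K^F\to \Gamma_L^{\theta(F)}$ for every finite $F$, so that the unital parts $B_K^F$ of the composition factors correspond under the induced ordered \KK-equivalences. Basis-free decompositions of the boundary maps at adjacent levels---obtained by iterating the argument of Lemma~\ref{lem:E_val} and its higher-level analogues---package this information into compatible descriptions of the semi-local multiplicative groups $(\prod_{\p\in F}\O_{K,\p}^*)/\mu_K$ modulo roots of unity. The third level enters here: it provides the redundancy needed to detect the $\mu_K$-torsion that is forgotten when passing from $K^*$ to $\Gamma_K$, which is crucial input to the lifting step of \S\ref{sec:NT}.

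The third step applies Proposition~\ref{prop:nfchar} to lift the semi-local data modulo roots of unity to honest semi-local data, producing a multiplicative isomorphism $\tilde{\gamma}\colon K^*\to L^*$ that is valuation-compatible via $\theta$ and reduces to $\gamma$ modulo roots of unity. Then Proposition~\ref{prop:CdSLMS} (the reformulation of Hoshi's theorem) promotes $\tilde\gamma$ to a field isomorphism $\sigma\colon K\to L$; uniqueness of $\sigma$ follows because $K$ is generated as a field by $K^*$, so any two lifts of $\gamma$ through a field isomorphism must agree. The principal obstacle throughout is exactly the loss of roots-of-unity information: the \K-theoretic input yields an isomorphism only of $K^*/\mu_K$, whereas the Hoshi-type reconstruction is naturally phrased in terms of $K^*$ itself, and the novelty of our number-theoretic argument---compared with the function-field strategy of \cite{CdSLMS}---is precisely the method for recovering the missing $\mu_K$-torsion from the higher-level boundary maps at levels two and three.
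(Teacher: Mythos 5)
Your outline follows the paper's proof essentially step for step: valuation-preservation of $\gamma$ via the Toeplitz boundary computation and the order-isomorphism hypothesis at level one (this is Proposition~\ref{prop:propertygamma}), reconstruction of the semi-local isomorphisms $\eta^F$ for $|F|\le 3$ restricting to $\gamma$ and compatible with the valuations (Proposition~\ref{prop:semi-localdata}), and then the lifting and Hoshi arguments of \S~\ref{sec:NT} (Propositions~\ref{prop:nfchar} and~\ref{prop:CdSLMS}), with levels two and three recovering the lost $\mu_K$-information exactly as in Lemmas~\ref{lem:varphip} and~\ref{lem:varphip2}. Up to where you place the special role of the second and third levels (in the paper this only surfaces in the number-theoretic part, while the semi-local reconstruction is uniform in $|F|$), the route is the same.

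The one genuine gap is your uniqueness argument. You assert that uniqueness ``follows because $K$ is generated as a field by $K^*$, so any two lifts of $\gamma$ through a field isomorphism must agree.'' This does not work: two field isomorphisms $\sigma,\tau\colon K\to L$ whose restrictions to $K^*$ both descend to $\gamma$ are only known to agree on $K^*$ \emph{modulo roots of unity}, so the (trivial) fact that a field map is determined by its restriction to $K^*$ gives nothing. The paper needs a real argument here: since $\gamma$ is valuation-preserving, $\rho:=\sigma^{-1}\circ\tau$ fixes every prime ideal of $\cO_K$; one then extends $\rho$ to the Galois closure $N$ of $K$, chooses a rational prime splitting completely in $N$, and uses triviality of the decomposition group of a prime above it to force $\rho=\id_K$. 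Without some argument of this kind (completely split primes plus decomposition groups, or an equivalent Chebotarev-style input), uniqueness is not established.
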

Existence of $\gamma$ is proved shortly in Lemma~\ref{lem:existgamma}.
Sections \ref{sec:Cn}, \ref{sec:reconsemi-local}, and \ref{sec:NT} are devoted to the proof of theorem~\ref{thm:main2}. 
It will be convenient to work with slightly more general hypotheses than those in Theorem~\ref{thm:main2}. With this in mind, we make the following definition.

\begin{definition}
\phantomsection
\label{def:CN}
Let $K$ and $L$ be number fields, and let $N \in \Zz$ with $N \geq 1$. We denote by \hyperref[def:CN]{$(C_N)$} the condition that 
\begin{enumerate}[\upshape(i)]
    \item We have $|\mu_K|=|\mu_L|$ and a bijection $\theta \colon \cP_K \to \cP_L$. 
    \item For each nonempty finite subset $F \subseteq \cP_K$ with $1 \leq |F| \leq N$, there exists an isomorphism $\alpha_*^F \colon \K_*(\fB_K^F) \to \K_*(\fB_L^{\theta(F)})$ such that the Diagram~\eqref{eqn:main2} commutes for every finite subset $F \subseteq \cP_K$ with $1 \leq |F| \leq N-1$ and $\p \in \cP_K \setminus F$. 
    \item There exists a *-isomorphism $\alpha^\emptyset \colon \fB_K^\emptyset \to \fB_L^\emptyset$ such that the Diagram~\eqref{eqn:main2} commutes for $F=\emptyset$ and every $\p \in \cP_K$.
    \item The isomorphism $\alpha_*^F$ is an order isomorphism if $|F|=1$. \label{item:order}
\end{enumerate}

In addition, we say that $K$ and $L$ satisfy condition $(C_\infty)$ if  condition $(C_N)$ is satisfied for every $N$.
\end{definition}
\vspace{-3mm}
The assumption of Theorem~\ref{thm:main2} is equivalent to the condition \hyperref[def:CN]{$(C_3)$}. 
 For number fields $K$ and $L$, if full corners of $\fA_K$ and $\fA_L$ are isomorphic, then the condition \hyperref[def:CN]{$(C_\infty)$} is satisfied by Proposition~\ref{prop:commutativeKL}.  Although it is enough to have the condition \hyperref[def:CN]{$(C_3)$} for the construction of a field isomorphism between $K$ and $L$, we give a direct reconstruction result of the dynamical system $\Gamma_K \acts \Az_{K,f}/\mu_K$ in Section \ref{ssec:recondyn} from the condition \hyperref[def:CN]{$(C_\infty)$}. Therefore, we work with condition \hyperref[def:CN]{$(C_N)$} for general $N$. 

\subsection{Commutative diagrams from condition \texorpdfstring{\hyperref[def:CN]{$(C_N)$}}{Lg}}
\label{sec:Cn}
Fix $N \in \Zz$ with $N \geq 1$. Let us assume that $K$ and $L$ are number fields, and that condition \hyperref[def:CN]{$(C_N)$} is satisfied.
Throughout this subsection, we fix the bijection $\theta \colon \cP_K \to \cP_K$ and  the *-isomorphism $\alpha^\emptyset$ as in Definition \ref{def:CN}, and for every finite subset $F$ of $\cP_K$ with $1 \leq |F| \leq N$ and $\p \in \cP_K \setminus F$, fix the order isomorphism  $\alpha_*^F$ between \K-groups as in Definition \ref{def:CN}. 
We now prepare for the proof of Theorem~\ref{thm:main2} and for the reconstruction of the dynamical system $\Gamma_K \acts \Az_{K,f}/\mu_K$.

\begin{lemma} \label{lem:existgamma}
There exists a unique isomorphism $\gamma \colon \Gamma_K \to \Gamma_L$ satisfying Equation \eqref{eqn:gammadef}. 
\end{lemma}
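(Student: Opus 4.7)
The plan is to work entirely in the Gelfand picture of the zeroth composition factor. Since $\Gamma_K = K^*/\mu_K$ is a countable torsion-free abelian group (all torsion of $K^*$ is absorbed into $\mu_K$), its Pontryagin dual $\widehat{\Gamma_K}$ is a compact connected Hausdorff abelian group, and Pontryagin/Gelfand duality identifies $\fB_K^\emptyset = C^*(\Gamma_K) \cong C(\widehat{\Gamma_K})$ in such a way that the generating unitary $u_x$ corresponds to the evaluation character $\chi \mapsto \chi(x)$; analogous statements hold for $L$. Thus the *-isomorphism $\alpha^\emptyset$ corresponds to a homeomorphism $\phi \colon \widehat{\Gamma_L} \to \widehat{\Gamma_K}$ with $\alpha^\emptyset(f) = f \circ \phi$, and $\alpha^\emptyset(u_x) = u_x \circ \phi$ is a continuous $\Tz$-valued function on $\widehat{\Gamma_L}$ (though generally not itself a character).

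The essential input is the classical identification, valid for any compact connected Hausdorff abelian group $G$, of $\widehat G$ with the homotopy set $[G, \Tz]$ via the inclusion of continuous characters; equivalently, every continuous map $G \to \Tz$ is homotopic to a unique continuous character. (Both sides compute $\check H^1(G; \Zz)$---the right-hand side because $\Tz \simeq K(\Zz,1)$, and the left-hand side by Pontryagin duality together with realising $G$ as an inverse limit of finite-dimensional tori.) Applying this to $G = \widehat{\Gamma_L}$ with $\widehat{\widehat{\Gamma_L}} = \Gamma_L$, for each $x \in \Gamma_K$ there is a unique $\gamma(x) \in \Gamma_L$ with $\alpha^\emptyset(u_x) \simeq u_{\gamma(x)}$ as $\Tz$-valued continuous maps; since homotopic unitaries in $C(\widehat{\Gamma_L})$ share a $\K_1$-class, this is precisely Equation~\eqref{eqn:gammadef}.

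The resulting $\gamma \colon \Gamma_K \to \Gamma_L$ is a group homomorphism because the characterisation of $\gamma(x)$ via the homotopy class of $\alpha^\emptyset(u_x)$ respects multiplication---equivalently, $\gamma$ is the pullback $\phi^*$ under the identifications $\Gamma_K \cong [\widehat{\Gamma_K}, \Tz]$ and $\Gamma_L \cong [\widehat{\Gamma_L}, \Tz]$. It is a bijection because the analogous construction applied to $(\alpha^\emptyset)^{-1}$ produces its inverse. Uniqueness of $\gamma$ satisfying Equation~\eqref{eqn:gammadef} reduces to injectivity of the map $\Gamma_L \to \K_1(C^*(\Gamma_L))$, $y \mapsto [u_y]_1$, which factors as $\Gamma_L \cong [\widehat{\Gamma_L}, \Tz] \hookrightarrow \K_1(C(\widehat{\Gamma_L}))$ via the inclusion $\Tz \hookrightarrow U(\infty)$; the second map is injective because at each finite-dimensional stage $[\Tz^n,\Tz] = \Zz^n$ sits as a summand of $\K_1(C(\Tz^n)) = \Lambda^{\mathrm{odd}}\Zz^n$, and filtered colimits of injections remain injective.

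The only mild subtlety is the infinite-dimensional nature of $\widehat{\Gamma_K}$, which in general is a countable product of circles rather than a finite-dimensional torus, so the classical identification $[G,\Tz] \cong \widehat G$ must be handled via \v{C}ech cohomology or by direct passage to colimits through finite-dimensional approximations. Notably, the boundary-map hypotheses in condition \hyperref[def:CN]{$(C_N)$} play no role here; only that $\alpha^\emptyset$ is a *-isomorphism is used.
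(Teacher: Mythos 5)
Your proposal is correct and follows essentially the same route as the paper: the paper defines $\gamma$ via the identification $\Gamma_K \cong \cU(\fB_K^\emptyset)/\cU_0(\fB_K^\emptyset)$ (citing Hofmann--Morris, Theorem~8.57(ii), which is exactly your statement that every continuous $\Tz$-valued map on a compact connected abelian group is homotopic to a unique character) and deduces uniqueness from the injectivity of $\cU(A)/\cU_0(A)\to\K_1(A)$ for unital abelian $A$ (citing R{\o}rdam--Larsen--Laustsen, Proposition~8.3.1, which you reprove by finite-dimensional torus approximation). You have simply unpacked the two cited facts; the argument is sound.
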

\begin{proof}
For a unital C*-algebra $A$, let $\cU(A)$ denote the unitary group of $A$, and let $\cU_0(A)$ denote the connected component of $\cU(A)$ containing $1_A$.
 Since $\Gamma_K$ is a free abelian group, we have an isomorphism 
 \[ \gamma \colon \Gamma_K \cong \cU(\fB_K^\emptyset)/\cU_0(\fB_K^\emptyset) \xrightarrow{\alpha^\emptyset} \cU(\fB_L^\emptyset)/\cU_0(\fB_L^\emptyset) \cong \Gamma_L \]
 satisfying Equation~\eqref{eqn:gammadef} by \cite[Theorem~8.57(ii)]{HofMor}. 
 Moreover, Equation~\eqref{eqn:gammadef} characterises $\gamma$, since the homomorphism $\cU(A)/\cU_0(A) \to \K_1(A)$ is injective for any unital abelian C*-algebra $A$ by \cite[Proposition~8.3.1]{RLL}.
\end{proof}

\begin{remark} \label{rmk:dual}
We have an alternative description of $\gamma \colon \Gamma_K \to \Gamma_L$ in Theorem~\ref{thm:main2}. 
Let $(\alpha^\emptyset)^*$ denote the homeomorphism $\widehat{\Gamma_L} \to \widehat{\Gamma_K}$ induced by $\alpha^\emptyset$. Since $\widehat{\Gamma_K}$ is a path-connected group, there exists a homeomorphism $\widehat{\Gamma_L} \to \widehat{\Gamma_K}$ which fixes the identity and is homotopic to $(\alpha^\emptyset)^*$. Hence, by \cite[Corollary 2]{Scheffer}, $(\alpha^\emptyset)^*$ is homotopic to a group homomorphism $f \colon \widehat{\Gamma_L} \to \widehat{\Gamma_K}$. Then, $\gamma$ coincides with the dual $\widehat{f} \colon \Gamma_K \to \Gamma_L$ of $f$.
\end{remark}

Let $\Gamma$ be a free abelian group. We introduce notation for elements in $\K_*(C^*(\Gamma))$ based on \cite[\S~3]{KT}. 
For a finite rank oriented summand $\Lambda$ of $\Gamma$, let $\beta_\Lambda := [u_{x_1}]_1 \ho{} \cdots \ho{} [u_{x_n}]_1 \in \K_n(C^*(\Lambda)) \subseteq \K_*(C^*(\Gamma))$, where $(x_1,\dots,x_n)$ is an oriented basis of $\Lambda$. Here, $\K_*(C^*(\Lambda))$ is identified with $\K_*(C^*(\Zz x_1)) \ho{} \cdots \ho{} \K_*(C^*(\Zz x_n))$ via the K\"unneth formula. The element $\beta_\Lambda$ does not depend on the choice of an oriented basis of $\Lambda$, which can be verified via the identification $\K_*(C^*(\Lambda)) \cong \lwedge^* \Lambda$ as Hopf algebras by \cite[Theorem~II.2.1]{Hod}. If $\Lambda$ and $\Lambda'$ are the same finite rank summands equipped with opposite orientations, then $\beta_{\Lambda'}=-\beta_\Lambda$. 
Hence, for an unoriented finite rank summand $\Lambda$ with $\rank \Lambda = n$, we give an orientation of $\Lambda$ by specifying a generator of $\lwedge^n \Lambda \subseteq \K_*(C^*(\Lambda))$. 
Elements of the form $\beta_\Lambda$, where $\Lambda$ runs through finite rank summands with a fixed orientation, generate $\K_*(C^*(\Gamma))$. If $\rank \Lambda = 1$ and $\Lambda$ is generated by $x \in \Lambda$, then either $\beta_\Lambda = [u_x]_1$ or $\beta_\Lambda = - [u_x]_1$, depending on the orientation of $\Lambda$. In this case, $\beta_\Lambda$ is simply denoted by $\beta_x$. If $\rank \Lambda=0$ (that is, $\Lambda$ is a trivial summand), then we always choose an orientation of $\Lambda$ by $\beta_\Lambda = [1]_0$.

\begin{lemma} \label{lem:inclusion}
    For every finite subset $F$ of $\cP_K$, the composition 
    \[ (\ol{\fval}_K^F)_* := (\xi_K^F)^{-1} \circ (\fval_K^F)_* \circ \xi_\val^F \colon \K_*(C^*(\Gamma_K^F)) \to \K_*(B_K^F)\]
    coincides with the homomorphism induced from the canonical inclusion $\iota^F \colon C^*(\Gamma_K^F) \to B_K^F$. In particular, $(\ol{\fval}_K^\emptyset)_*$ is the identity map. 
\end{lemma}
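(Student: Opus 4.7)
The plan is to exhibit a commutative square of $*$-homomorphisms at the C*-algebra level,
\[\begin{tikzcd}
B_\val^F \arrow[d] \arrow[r, "\iota^F"] & B_K^F \arrow[d] \\
\fB_\val^F \arrow[r, "\fval_K^F"] & \fB_K^F\nospacepunct{,}
\end{tikzcd}\]
where the vertical arrows are the inclusions defining $\bs{\xi}_\val^F$ and $\bs{\xi}_K^F$. Applying $\K_*$ and using that $\xi_K^F$ is an isomorphism (Lemma~\ref{lem:orderisom}) then gives $(\ol{\fval}_K^F)_* = \iota^F_*$, which is the claim.

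First, I would use the explicit descriptions of the composition factors from \S~\ref{sec:subquotients} to identify $\fB_\val^F \cong C_0(\prod_{\q\in F}\q^\Zz) \rtimes \Gamma_K$ and $\fB_K^F \cong C_0((\prod_{\q\in F}K_\q^*)/\mu_K) \rtimes \Gamma_K$. Under these identifications, $\fval_K^F$ is the $*$-homomorphism induced by pullback along the canonical quotient map $\pi \colon (\prod_{\q\in F}K_\q^*)/\mu_K \to \prod_{\q\in F}\q^\Zz$, $(a_\q)_\q \mapsto (\q^{v_\q(a_\q)})_\q$; this is routine once one traces through Proposition~\ref{prop:A(Z)} together with the definition of $\fval_K$ as being induced by the $\Gamma_K$-equivariant inclusion $C_0(\Az_{K,f}/\ol{\O}_K^*) \hookrightarrow C_0(\Az_{K,f}/\mu_K)$.

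Next, the inclusions $B_K^F \hookrightarrow \fB_K^F$ and $B_\val^F \hookrightarrow \fB_\val^F$ used to define $\xi_K^F$ and $\xi_\val^F$ coincide with those featured in Proposition~\ref{prop:tensor} for $\Gamma_0 = \Gamma_K^F \subseteq \Gamma_K$. Concretely, setting $X_0 := (\prod_{\p\in F}\O_{K,\p}^*)/\mu_K \subseteq (\prod K_\q^*)/\mu_K$, the characteristic function $\chi_{X_0}$ is a projection in $\fB_K^F$, and a direct calculation shows $\chi_{X_0}\fB_K^F\chi_{X_0} = B_K^F$ (using that $g X_0 \cap X_0 = \emptyset$ for $g \in \Gamma_K \setminus \Gamma_K^F$); similarly, $B_\val^F = \chi_{\{e\}}\fB_\val^F\chi_{\{e\}}$ where $e = (\q^0)_{\q\in F} \in \prod_{\q\in F}\q^\Zz$.

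The key observation is the set-theoretic equality $\pi^{-1}(\{e\}) = X_0$, which forces $\fval_K^F(\chi_{\{e\}}) = \chi_{X_0}$. Consequently, $\fval_K^F$ restricts to a $*$-homomorphism between the two corners, and on generating unitaries $u_g$ with $g \in \Gamma_K^F$ this restriction acts by $u_g \mapsto 1\cdot u_g$, which is precisely $\iota^F$. The case $F = \emptyset$ is immediate via Remark~\ref{rmk:level0}, as all the maps involved become the identity on $C^*(\Gamma_K)$. I do not foresee a serious obstacle; the only care required is in tracking the canonical identifications between the subquotients $\fB_\val^F$, $\fB_K^F$ and their crossed-product models.
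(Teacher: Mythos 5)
Your proposal is correct and follows essentially the same route as the paper: the paper's proof consists precisely of exhibiting the commutative square of inclusion maps and $\fval_K^F$, then passing to $\K$-groups to get $\xi_K^F\circ\iota^F_* = (\fval_K^F)_*\circ\xi_\val^F$. Your additional verification that the square commutes (via $\pi^{-1}(\{e\}) = X_0$ and the corner descriptions) is accurate detail that the paper leaves implicit.
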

\begin{proof}
        We have the following commutative diagram: 
    \begin{equation} \label{eqn:diagramBB} \begin{tikzcd}
      B_\val^F \arrow[d]\arrow[r, "\iota^F"] & B_K^F \arrow[d]\\
      \fB_\val^F \arrow[r, "\fval_K^F"] & \fB_K^F\nospacepunct{.} 
    \end{tikzcd} \end{equation}
    Here, the vertical maps denote the inclusion maps. Taking \K-groups of this diagram, we have $\xi_K^F \circ \iota^F_* = (\fval_K^F)_* \circ \xi_\val^F$, and hence $\iota^F_* = (\ol{\fval}_K^F)_*$. 
\end{proof}

For each finite subset $F\subseteq\cP_K$ with $|F| \leq N-1$ and $\p \in \cP_K \setminus F$, let $\partial_K^{F,\p}$ and $\partial_{\val}^{F,\p}$ denote the boundary maps associated with the extensions $\cE_K^{F,\p}$ and $\cE_\val^{F,\p}$, respectively, and let 
\[ \ol{\partial}_{\val}^{F,\p} := (\xi_\val^{F_\p})^{-1} \circ \partial_{\val}^{F,\p} \circ \xi_\val^F \colon \K_*(C^*(\Gamma_K^F)) \to \K_*(C^*(\Gamma_K^{F_\p})) .\]

\begin{lemma}[{cf.~\cite[Lemma 4.1]{KT}}]
\label{lem:calcboundary}
    Let $F$ be a finite subset of $\cP_K$ with $|F| \leq N-1$, and let $\p \in \cP_K \setminus F$. Fix $\pi_\p \in \Gamma_K^F$ satisfying $v_\p(\pi_\p) = 1$. Fix an orientation of $\pi_\p^\Zz$ by letting $\beta_{\pi_\p}=\bigl[ u_{\pi_\p} \bigr]_1$.  
    Let $\Lambda \subseteq \Gamma_K^{F_\p}$ be a finite rank summand, and fix an orientation of $\Lambda$. Then, 
    \begin{equation}
    \label{eqn:partiall_val}
    \ol{\partial}_\val^{F,\p}(\beta_\Lambda) = 0 \quad\text{and}\quad 
    \ol{\partial}_\val^{F,\p}(\beta_\Lambda \hat{\otimes} \beta_{\pi_{\p}})
    = - \beta_{\Lambda}. 
    \end{equation}
    Moreover, Equation~\eqref{eqn:partiall_val}, as $\Lambda$ runs through the finite rank summands of $\Gamma_K^{F_\p}$, characterises $\ol{\partial}_\val^{F,\p}$.
\end{lemma}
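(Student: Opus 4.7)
The plan is to use Lemma~\ref{lem:E_val} to reduce the computation of $\ol{\partial}_\val^{F,\p}$ to that of the boundary of the Toeplitz-type extension $\cT \otimes C^*(\Gamma_K^{F_\p})$, and then to apply the naturality formula~\eqref{eqn:Dtensor} together with Lemma~\ref{lem:boundarytoeplitz}.

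First, I would invoke the splitting $\Gamma_K^F = \pi_\p^\Zz \times \Gamma_K^{F_\p}$ from Lemma~\ref{lem:GammaKdecomp} and the K\"unneth formula to identify
\[
\K_*(C^*(\Gamma_K^F)) \cong \K_*(C^*(\pi_\p^\Zz)) \otimes \K_*(C^*(\Gamma_K^{F_\p})).
\]
Under this identification and the fixed orientation convention, $\beta_\Lambda$ with $\Lambda \subseteq \Gamma_K^{F_\p}$ corresponds to the pure tensor $[1]_0 \otimes \beta_\Lambda$; and, up to a sign coming from the graded commutativity of the Kasparov product, $\beta_\Lambda \hat{\otimes} \beta_{\pi_\p}$ corresponds to $[u_{\pi_\p}]_1 \otimes \beta_\Lambda$.

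Next, Lemma~\ref{lem:E_val} identifies the boundary map of $\cE_\val^{F,\p}$ (up to the ordered \KK-equivalences $\bs{\xi}_\val^F$ and $\bs{\xi}_\val^{F_\p}$) with the boundary of the extension $\cT \otimes C^*(\Gamma_K^{F_\p})$. Since $\K_*$ of $C^*(\Gamma_K^{F_\p})$ is torsion-free (so the K\"unneth comparison is an isomorphism), Equation~\eqref{eqn:Dtensor} shows that this boundary acts as $\partial_\cT \otimes \id$ on pure tensors. Combined with $\partial_\cT([1]_0) = 0$ and $\partial_\cT([u_{\pi_\p}]_1) = -1$ from Lemma~\ref{lem:boundarytoeplitz}, this yields the two displayed formulas after a careful bookkeeping of grading signs.

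For the characterisation, I would note that $\{[1]_0,\, [u_{\pi_\p}]_1\}$ is a $\Zz$-basis of $\K_*(C^*(\pi_\p^\Zz))$, and that the elements $\beta_\Lambda$, as $\Lambda$ ranges over finite rank summands of $\Gamma_K^{F_\p}$, span $\K_*(C^*(\Gamma_K^{F_\p}))$ via the Hopf-algebra identification $\K_*(C^*(\Gamma_K^{F_\p})) \cong \lwedge^* \Gamma_K^{F_\p}$ recalled in \S~\ref{sec:Cn}. Through the K\"unneth decomposition, the two families $\beta_\Lambda$ and $\beta_\Lambda \hat{\otimes} \beta_{\pi_\p}$ thus generate $\K_*(C^*(\Gamma_K^F))$, so $\ol{\partial}_\val^{F,\p}$ is determined by its values on these elements. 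The most delicate step will be matching the grading conventions between the Kasparov product expressions $\beta_\Lambda \hat{\otimes} \beta_{\pi_\p}$ and their pure-tensor representatives in the K\"unneth decomposition to produce the precise sign $-\beta_\Lambda$ independently of $\rank \Lambda$; everything else is a direct application of Lemmas~\ref{lem:E_val} and~\ref{lem:boundarytoeplitz}.
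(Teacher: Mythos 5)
Your proposal is correct and follows essentially the same route as the paper: both reduce via Lemma~\ref{lem:E_val} to the extension $\cT \otimes C^*(\Gamma_K^{F_\p})$, apply Equation~\eqref{eqn:Dtensor} (in the paper written as $\ol{\partial}_\val^{F,\p} = \id \otimes \partial_\cT$) together with Lemma~\ref{lem:boundarytoeplitz}, and derive the characterisation from the generation of $\K_*(C^*(\Gamma_K^F))$ by the elements $\beta_\Lambda$ and $\beta_\Lambda \hat{\otimes} \beta_{\pi_\p}$ coming from the splitting in Lemma~\ref{lem:GammaKdecomp}. The sign bookkeeping you flag as delicate is taken care of by the fixed ordering convention in \eqref{eqn:Dtensor}, with the $C^*(\Gamma_K^{F_\p})$-factor kept on the left throughout.
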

\begin{proof}
    Let $\cE \in \cExt(B_\val^F, \Kz \otimes B_\val^{F_\p})$ be the extension from Lemma~\ref{lem:E_val}. Then, by Lemma~\ref{lem:E_val}, we have 
    $\ol{\partial}_\val^{F,\p} = \partial_\cE = \id \otimes \partial_{\cT}$. 
    Hence, by Lemma~\ref{lem:boundarytoeplitz}, we have 
    \[ \ol{\partial}_\val^{F,\p}(\beta_\Lambda \hat{\otimes} \beta_{\pi_{\p}}) = \beta_\Lambda \ho{} \partial_\cT(\beta_{\pi_\p}) = -\beta_\Lambda. 
    \]
    Since $\beta_\Lambda$ is identified with $\beta_\Lambda \hat{\otimes} \bigl[ 1_{C^*(\pi_\p^\Zz)} \bigr]_0$,  we have, by Lemma~\ref{lem:boundarytoeplitz},
    \[ \ol{\partial}_\val^{F,\p}(\beta_\Lambda) = \ol{\partial}_\val^{F,\p}(\beta_\Lambda \ho{} \bigl[ 1_{C^*(\pi_\p^\Zz)} \bigr]_0) = \beta_\Lambda \ho{} \partial_\cT(\bigl[ 1_{C^*(\pi_\p^\Zz)} \bigr]_0) = 0. 
    \]

By the proof of Lemma~\ref{lem:GammaKdecomp}, the group $\K_*(C^*(\Gamma_K^F))$ is generated by elements of the form $\beta_\Lambda$ and $\beta_\Lambda \ho{} \beta_{\pi_\p}$ for finite rank oriented summands $\Lambda \subseteq \Gamma_K^{F_\p}$. Hence, the second claim follows.
\end{proof}

Fix a total order of $\cP_K$, and equip $\cP_L$ with the total order induced by $\theta$. For a finite set $F=\{\p_1,\dots,\p_l\}$ with $l \leq N$ and $\p_1 < \p_2 < \dots < \p_l$, let 
\[D_K^F := (\xi_K^F)^{-1} \circ \partial_K^{F_{l-1},\p_1} \circ \cdots \circ\partial_K^{F_1,\p_{l-1}} \circ \partial_K^{\emptyset, \p_l} \colon \K_*(C^*(\Gamma_K)) \to \K_{*+l}(B_K^F), \]
where $F_0 = \emptyset$ and $F_i=\{\p_{l-i+1},\dots,\p_l\}$ for $1 \leq i \leq l$. 
We define $D_\val^F \colon \K_*(C^*(\Gamma_K)) \to \K_*(C^*(\Gamma_K^F))$ similarly as follows:
\[D_\val^F := (\xi_\val^F)^{-1} \circ \partial_\val^{F_{l-1},\p_1} \circ \cdots \circ\partial_\val^{F_1,\p_{l-1}} \circ \partial_\val^{\emptyset, \p_l}
= \ol{\partial}_\val^{F_{l-1},\p_1} \circ \cdots \circ \ol{\partial}_\val^{F_1,\p_{l-1}} \circ \ol{\partial}_\val^{\emptyset, \p_l}.\]
Note that $C^*(\Gamma_K)$, $B_K^\emptyset$, and $B_\val^\emptyset$ are all identified. 

Let $F$ be a finite subset of $\cP_K$ with $|F| \leq N$, and let $\p \in \cP_K$. 
Then, the fact that $\fval_K^{F',\p} \colon \cE_\val^{F',\p} \to \cE_K^{F',\p}$ is a homomorphism of extensions for every $|F'| \leq |F|$ implies that the following diagram commutes:
\begin{equation} \label{eqn:diagramval} \begin{tikzcd}
    \K_*(C^*(\Gamma_K)) \arrow[rd, "D_K^F"']\arrow[r, "D_\val^F"] & \K_{*+l}(C^*(\Gamma_K^F))\arrow[d,"(\ol{\fval}_K^F)_*"]\\
     & \K_{*+l}(B_K^F)\nospacepunct{.} 
\end{tikzcd} \end{equation}
Here, we used the fact that $C^*(\Gamma_K)=B_K^\emptyset$ and $(\ol{\fval}_K^\emptyset)_*\colon \K_*(C^*(\Gamma_K))\to \K_*(B_K^\emptyset)$ is the identity map by Lemma~\ref{lem:inclusion}.
Note that $D_\val^F$ is surjective 
by Lemma~\ref{lem:calcboundary}.  
In addition, the diagram 
\begin{equation} \label{eqn:diagramKL} \begin{tikzcd}
    \K_*(C^*(\Gamma_K)) \arrow[d, "\ol{\alpha}^\emptyset_*"]\arrow[r, "D_K^F"] & \K_{*+l}(B_K^{F})\arrow[d,"\ol{\alpha}^{F}_*"]\\
	\K_*(C^*(\Gamma_L)) \arrow[r, "D_L^{\theta(F)}"] & \K_{*+l}(B_L^{\theta(F)}) 
\end{tikzcd} \end{equation}
commutes by the commutativity of \eqref{eqn:main2}, where $\ol{\alpha}^F_* := (\xi_L^{\theta(F)})^{-1} \circ \alpha^F_* \circ \xi_K^F$. Since $\xi_K^\emptyset$ and $\xi_L^\emptyset$ are the identity maps, $\ol{\alpha}^\emptyset_*$ comes from the *-isomorphism  $\alpha^\emptyset$. On the other hand, $\ol{\alpha}_*^F$ is an order isomorphism between \K-groups, since it is a composition of order isomorphisms between  \K-groups.

The next proposition is the key result in this section. It is the only place where  \ref{item:order} in \hyperref[def:CN]{$(C_N)$} is used.
\begin{proposition} \label{prop:propertygamma}
    For each $\p \in \cP_K$ and $x \in \Gamma_K$, we have $v_{\theta(\p)}(\gamma(x))=v_\p(x)$. Consequently, we have $\gamma(\cO_K^\times/\mu_K)=\cO_L^\times/\mu_L$ and $\gamma(\Gamma_K^F) = \Gamma_L^{\theta(F)}$ for every finite subset $F \subseteq \cP_K$. 
\end{proposition}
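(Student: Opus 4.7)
The plan is to evaluate the map $D_K^{\{\p\}}$ on $[u_x]_1$ for $x \in \Gamma_K$, read off the analogous formula over $L$, and then equate the two via the commutative square~\eqref{eqn:diagramKL} at $F=\{\p\}$. I would begin by choosing $\pi_\p \in \Gamma_K$ with $v_\p(\pi_\p)=1$ and orienting $\pi_\p^\Zz$ by $\beta_{\pi_\p}=[u_{\pi_\p}]_1$. Lemma~\ref{lem:GammaKdecomp} lets me write $x = \pi_\p^{v_\p(x)} y$ with $y \in \Gamma_K^{\{\p\}}$, so $[u_x]_1 = v_\p(x)[u_{\pi_\p}]_1 + [u_y]_1$ in $\K_1(C^*(\Gamma_K))$. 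Applying Lemma~\ref{lem:calcboundary} at $F=\emptyset$ (with $\Lambda=0$ and with $\Lambda \subseteq \Gamma_K^{\{\p\}}$ arbitrary) gives $\ol{\partial}_\val^{\emptyset,\p}([u_{\pi_\p}]_1) = -[1]_0$ and $\ol{\partial}_\val^{\emptyset,\p}([u_y]_1) = 0$. Combining this with triangle~\eqref{eqn:diagramval} and Lemma~\ref{lem:inclusion}, which sends $[1]_0$ to $[1_{B_K^{\{\p\}}}]_0$, produces
\[ D_K^{\{\p\}}([u_x]_1) \;=\; -v_\p(x)\,[1_{B_K^{\{\p\}}}]_0. \]

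Running the same computation over $L$ and using $\ol{\alpha}^\emptyset_*([u_x]_1)=[u_{\gamma(x)}]_1$ from~\eqref{eqn:gammadef}, the commutativity of~\eqref{eqn:diagramKL} at $F=\{\p\}$ yields
\[ v_{\theta(\p)}(\gamma(x))\,[1_{B_L^{\theta(\{\p\})}}]_0 \;=\; v_\p(x)\,\ol{\alpha}^{\{\p\}}_*([1_{B_K^{\{\p\}}}]_0) \quad \text{in } \K_0(B_L^{\theta(\{\p\})}) \]
for every $x \in \Gamma_K$. Substituting $x=\pi_\p$ identifies $\ol{\alpha}^{\{\p\}}_*([1_{B_K^{\{\p\}}}]_0) = m_\p\,[1_{B_L^{\theta(\{\p\})}}]_0$ with $m_\p := v_{\theta(\p)}(\gamma(\pi_\p)) \in \Zz$. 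Since $B_L^{\theta(\{\p\})}$ is simple with a unique, hence faithful, tracial state, $[1_{B_L^{\theta(\{\p\})}}]_0$ is torsion-free in $\K_0$, so cancellation yields $v_{\theta(\p)}(\gamma(x)) = m_\p\,v_\p(x)$ for every $x \in \Gamma_K$.

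The remaining task is to show $m_\p = 1$. Since $\gamma$ is a group isomorphism and both $v_\p$ and $v_{\theta(\p)}$ are surjective onto $\Zz$, the identity just obtained gives $\Zz = v_{\theta(\p)}(\gamma(\Gamma_K)) = m_\p \cdot v_\p(\Gamma_K) = m_\p\Zz$, forcing $m_\p \in \{\pm 1\}$. To rule out $m_\p=-1$ I would invoke hypothesis~\ref{item:order} of \hyperref[def:CN]{$(C_N)$}: because $\ol{\alpha}^{\{\p\}}_*$ is an order isomorphism, $m_\p[1_{B_L^{\theta(\{\p\})}}]_0$ lies in the positive cone $\K_0(B_L^{\theta(\{\p\})})_+$, and evaluating the faithful tracial state (which is non-negative on projection classes) forces $m_\p \geq 0$, hence $m_\p=1$. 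The two consequences follow at once from the intersections $\cO_K^\times/\mu_K = \bigcap_{\p \in \cP_K}\ker v_\p$ and $\Gamma_K^F = \bigcap_{\p \in F}\ker v_\p$. The delicate step---matching the remark that~\ref{item:order} is used only here---is precisely this sign argument, where order-preservation is indispensable for excluding $m_\p=-1$.
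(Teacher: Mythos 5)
Your proposal is correct and follows essentially the same route as the paper: the same computation $D_K^{\{\p\}}([u_x]_1)=-v_\p(x)[1]_0$ via Lemmas~\ref{lem:inclusion} and~\ref{lem:calcboundary} and the diagrams~\eqref{eqn:diagramval} and~\eqref{eqn:diagramKL}, followed by the same use of hypothesis~\ref{item:order} to pin down $\ol{\alpha}^{\{\p\}}_*([1]_0)=[1]_0$. Your packaging of the last step (deriving $m_\p\in\{\pm1\}$ from surjectivity of the valuations and excluding $m_\p=-1$ by positivity of the trace) is just a minor rephrasing of the paper's characterisation of $[1]_0$ as the unique generator of $D_K^{\{\p\}}(\Gamma_K)$ lying in the positive cone.
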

\begin{proof}
    First, we show that for any $\p \in \cP_K$ and $x \in \Gamma_K$, we have $D_K^{\{\p\}}([u_x]_1)=(-v_{\p}(x))[1]_0$, where $[1]_0 \in \K_0(B_K^{\{\p\}})$ is the $\K_0$-class of the unit of $B_K^{\{\p\}}$. 
    Fix $\p \in \cP_K$ and let $n=v_\p(x)$. By Lemma~\ref{lem:approx}, we can take $\pi_\p \in \Gamma_K$ with $v_\p(\pi_\p)=1$. Let $\beta_{\pi_\p}=[u_{\pi_\p}]_1$. Then, we have $x= \pi_\p^n y$ for some $y \in \Gamma_K^{\{\p\}}$, and hence $[u_x]_1 = n\beta_{\pi_{\p}}+[u_y]_1$.
    By commutativity of Diagram~\eqref{eqn:diagramval}, Lemma~\ref{lem:inclusion}, and Lemma~\ref{lem:calcboundary}, we have 
    \begin{equation}
\label{eqn:DpK}
    D_K^{\{\p\}}([u_x]_1)= (\ol{\fval}_K^{\{\p\}})_* \circ D_\val^{\{\p\}} ([u_x]_1)=(\ol{\fval}_K^{\{\p\}})_* \circ D_\val^{\{\p\}} (n\beta_{\pi_{\p}}+[u_y]_1)
    = (\ol{\fval}_K^{\{\p\}})_*(-n[1]_0)
    =-n[1]_0.
    \end{equation}
    Similarly, we have $D_L^{\{\theta(\p)\}}([u_{\gamma(x)}]_1)=(-v_{\theta(\p)}(\gamma(x)))[1]_0$.
    Hence, we have  
    \[(-v_{\theta(\p)}(\gamma(x)))[1]_0 = D_L^{\{\theta(\p)\}}([u_{\gamma(x)}]_1) = D_L^{\{\theta(\p)\}} \circ \ol{\alpha}^\emptyset_*([u_x]_1)
    = \ol{\alpha}^{\{\p\}}_* \circ D_K^{\{\p\}} ([u_x]_1)
    =(-v_{\p}(x))\ol{\alpha}^{\{\p\}}_*([1]_0)  \]
    by Lemma~\ref{lem:existgamma} and commutativity of Diagram~\eqref{eqn:diagramKL}. In particular, $v_{\p}(x)=0$ if and only if $v_{\theta(\p)}(\gamma(x)))=0$. 
    Since $B_L^{\{\theta(\p)\}}$ admits a faithful tracial state, $[1]_0$ generates a copy of $\Zz$ in $\K_0(B_L^{\{\theta(\p)\}})$. Hence,
    in order to show the claim, it suffices to show $\ol{\alpha}^{\{\p\}}_*([1]_0)=[1]_0$. Since $D_K^{\{\p\}}([u_x]_1)=(-v_\p(x))[1]_0$ for any $x \in \Gamma_K$, we have $D_K^{\{\p\}}(\Gamma_K) = \Zz [1]_0$. Here, $\Gamma_K$ is identified with $\cU(C^*(\Gamma_K))/\cU(C^*(\Gamma_K))_0 \subseteq \K_1(C^*(\Gamma_K))$. Hence, the element  $[1]_0 \in \K_0(B_K^{\{\p\}})$ is characterised by the property that it is the unique generator of $D_K^{\{\p\}}(\Gamma_K)$ which belongs to $\K_0(B_K^{\{\p\}})_+$. Since $\ol{\alpha}^{\{\p\}}_*$ is an order isomorphism, commutativity of Diagram~\eqref{eqn:diagramKL} implies $\ol{\alpha}^{\{\p\}}_*([1]_0)=[1]_0$.
\end{proof}

For each finite subset $F$ of $\cP_K$, using Proposition~\ref{prop:propertygamma}, we let $\gamma^F \colon C^*(\Gamma_K^F) \to C^*(\Gamma_L^{\theta(F)})$ be the isomorphism induced from $\gamma$. By definition, we have $\gamma^\emptyset = \alpha^\emptyset$. 

\begin{lemma}
\label{lem:DFsquare}
    For every finite subset $F\subseteq\cP_K$ with $|F| \leq N$, the following diagram commutes:
    \begin{equation} \label{eqn:diagramvalKL} \begin{tikzcd}
      \K_*(C^*(\Gamma_K)) \arrow[d, "\gamma^\emptyset_*"]\arrow[r, "D_\val^F"] & \K_{*+l}(C^*(\Gamma_K^{F}))\arrow[d,"\gamma^F_*"]\\
	  \K_*(C^*(\Gamma_L)) \arrow[r, "D_\val^{\theta(F)}"] & \K_{*+l}(C^*(\Gamma_L^{\theta(F)}))\nospacepunct{.} 
    \end{tikzcd} \end{equation}
\end{lemma}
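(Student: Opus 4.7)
The plan is to argue by induction on $l := |F|$. For $F = \emptyset$ the claim is tautological, since $D_\val^\emptyset$ is the identity map on $\K_*(C^*(\Gamma_K))$ and $\gamma^\emptyset = \alpha^\emptyset$. For the inductive step, order $F = \{\p_1 < \cdots < \p_l\}$ and peel off the smallest prime: by the very definition of $D_\val^F$ we have
\[
  D_\val^F \;=\; \ol{\partial}_\val^{F \setminus \{\p_1\},\, \p_1} \circ D_\val^{F \setminus \{\p_1\}}.
\]
Applying the inductive hypothesis to $F \setminus \{\p_1\}$ reduces the whole lemma to the one-step compatibility
\[
  \gamma^{F'_\p}_* \circ \ol{\partial}_\val^{F',\, \p} \;=\; \ol{\partial}_\val^{\theta(F'),\, \theta(\p)} \circ \gamma^{F'}_*
\]
for every $F' \subseteq \cP_K$ with $|F'| \leq N-1$ and every $\p \in \cP_K \setminus F'$.

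To establish this one-step claim I would use Proposition~\ref{prop:propertygamma} in two places. First, pick $\pi_\p \in \Gamma_K^{F'}$ with $v_\p(\pi_\p)=1$ via Lemma~\ref{lem:approx}; the proposition then yields $\gamma(\pi_\p) \in \Gamma_L^{\theta(F')}$ with $v_{\theta(\p)}(\gamma(\pi_\p))=1$, so that $\gamma(\pi_\p)$ is an admissible uniformizer for Lemma~\ref{lem:calcboundary} applied on the $L$-side. Second, for every finite-rank oriented summand $\Lambda \subseteq \Gamma_K^{F'_\p}$, the image $\gamma(\Lambda)$ is a finite-rank oriented summand of $\Gamma_L^{\theta(F'_\p)}$, and since $\gamma^{F'}_*$ and $\gamma^{F'_\p}_*$ are induced from the group isomorphism $\gamma$ on the relevant subgroups, they send $\beta_\Lambda \mapsto \beta_{\gamma(\Lambda)}$ and $\beta_\Lambda \ho{} \beta_{\pi_\p} \mapsto \beta_{\gamma(\Lambda)} \ho{} \beta_{\gamma(\pi_\p)}$, with orientations transported in the natural way.

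Evaluating both sides of the putative square on this generating set is then a mechanical check via Lemma~\ref{lem:calcboundary}: on $\beta_\Lambda$ both compositions vanish, and on $\beta_\Lambda \ho{} \beta_{\pi_\p}$ both yield $-\beta_{\gamma(\Lambda)}$, using $\gamma(\pi_\p)$ as the uniformizer on the $L$-side. Since the characterisation part of Lemma~\ref{lem:calcboundary} asserts that such classes determine any homomorphism out of $\K_*(C^*(\Gamma_K^{F'}))$, the two homomorphisms coincide and the induction closes. The principal difficulty is matching uniformizers and the splitting $\Gamma_K^{F'} = \pi_\p^\Zz \oplus \Gamma_K^{F'_\p}$ across $\gamma$: without the identity $v_{\theta(\p)} \circ \gamma = v_\p$, one could neither use $\gamma(\pi_\p)$ as the $L$-side uniformizer nor transport the summand decomposition to its $L$-counterpart. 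This is precisely the content of Proposition~\ref{prop:propertygamma}, and once valuation-preservation is in hand no further conceptual ingredients are needed.
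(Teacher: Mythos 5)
Your proof is correct and uses the same essential ingredients as the paper's: Lemma~\ref{lem:calcboundary}, Proposition~\ref{prop:propertygamma} (to transport uniformizers and summands to the $L$-side), and the generating sets coming from Lemma~\ref{lem:GammaKdecomp}, with orientations fixed by $\beta_{\gamma(\Lambda)}=\gamma^F_*(\beta_\Lambda)$. The only difference is organizational: you induct on $|F|$ via a one-step compatibility for $\ol{\partial}_\val^{F',\p}$, whereas the paper evaluates the full iterated boundary $D_\val^F$ directly on the generators $\beta_\Lambda \hot \beta_{\pi_{\p_{j_1}}} \hot \cdots \hot \beta_{\pi_{\p_{j_k}}}$ of $\K_*(C^*(\Gamma_K))$ — the same computation, sliced differently.
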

\begin{proof}
    Let $F=\{\p_1,\dots,\p_l\}$ with $\p_1 < \dots < \p_l$. 
    Using Lemma~\ref{lem:approx}, we fix a family $\{\pi_{\p_j}\}_{j=1}^l \subseteq \Gamma_K$ such that $v_{\p_j}(\pi_{\p_k})=\delta_{j,k}$ for every $j,k=1,\dots,l$. Choose an orientation of $\pi_{\p_i}^\Zz$ and $\gamma(\pi_{\p_i})^\Zz$ by $\beta_{\pi_{\p_i}}=[u_{\pi_{\p_i}}]_1$ and $\beta_{\gamma(\pi_{\p_i})} = [u_{\gamma(\pi_{\p_i})}]_1$ for $i=1,\dots,l$, respectively. For each finite rank summand $\Lambda \subseteq \Gamma_K^F$, we fix an orientation of $\Lambda$, and choose an orientation of $\gamma(\Lambda) \subseteq \Gamma_L^{\theta(F)}$ by $\beta_{\gamma(\Lambda)}=\gamma_*^F(\beta_{\Lambda})$.  
    By Lemma~\ref{lem:GammaKdecomp}, the \K-group $\K_*(C^*(\Gamma_K))$ is generated by elements of the form $x=\beta_\Lambda \hat{\otimes} \beta_{\pi_{\p_{j_1}}} \hat{\otimes} \cdots \hat{\otimes} \beta_{\pi_{\p_{j_k}}}$, where $\Lambda \subseteq \Gamma_K^F$ is a finite rank summand, and $1 \leq j_1 < \dots < j_k \leq l$ is an increasing sequence of natural numbers. Hence, it suffices to show that 
    \begin{equation} \label{eqn:proofcommutative}
        \gamma^F_* \circ D_\val^F (x) = D_\val^{\theta(F)} \circ \gamma^\emptyset_*(x) 
    \end{equation}
    for such $x$.  Fix such a summand and increasing sequence. Then, Lemma~\ref{lem:calcboundary} implies that $D_\val^F(x)=0$ unless $k=l$ (which is equivalent to the condition that all primes in $F$ appear in $x$). We have 
    \[ \gamma_*^\emptyset(x) = \beta_{\gamma(\Lambda)} \ho{} \beta_{\gamma(\pi_{\p_{j_1}})} \ho{} \cdots \ho{} \beta_{\gamma(\pi_{\p_{j_k}})}. \]
    By Proposition~\ref{prop:propertygamma}, we have $v_{\theta(\p_j)}(\gamma(\pi_{\p_k}))=\delta_{j,k}$, so that $D_\val^{\theta(F)}(\gamma_*(x))=0$ unless $k = l$ by Lemma~\ref{lem:calcboundary}. Hence, Equation~\eqref{eqn:proofcommutative} holds when $k \neq l$. Suppose $k=l$. Then, $D_\val^F(x) = (-1)^l \beta_\Lambda$, and $D_\val^{\theta(F)}(\gamma_*^\emptyset(x))=(-1)^l \beta_{\gamma(\Lambda)}$. Hence, Equation ~\eqref{eqn:proofcommutative} holds when $k=l$. 
\end{proof}

\subsection{Reconstruction at semi-local levels}
\label{sec:reconsemi-local}
Based on the arguments in the last subsection, we now give a reconstruction result for the dynamical system $\Gamma_K \acts \Az_{K,f}/\mu_K$ at semi-local levels. 

\begin{lemma}  \label{lem:conjugacy1}
Let $K$ and $L$ be number fields, and let $N \in \Zz$ with $N \geq 1$. Assume that condition \hyperref[def:CN]{$(C_N)$} is satisfied. Then, for every finite subset $F\subseteq\cP_K$ with $|F| \leq N$, there exists an isomorphism of compact groups 
\[ \tilde{\eta}^F\colon \left(\prod_{\p \in F} \cO_{K,\p}^*\right)/\mu_K\to \left(\prod_{\q \in \theta(F)} \cO_{L,\q}^*\right)/\mu_L\] 
such that the restriction of $\tilde{\eta}^F$ to $\Gamma_K^F$ coincides with $\gamma^F$.
\end{lemma}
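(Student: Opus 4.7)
The plan is to upgrade the abstract group isomorphism $\gamma^F := \gamma\vert_{\Gamma_K^F}\colon \Gamma_K^F \to \Gamma_L^{\theta(F)}$, which is well-defined by Proposition~\ref{prop:propertygamma}, to a topological group isomorphism of the compact completions. Let $U_K^F := (\prod_{\p \in F} \cO_{K,\p}^*)/\mu_K$ and $U_L^{\theta(F)}$ denote the ambient compact groups. By Lemma~\ref{lem:localstuff} together with finiteness of $\mu_K$, $U_K^F$ is profinite, and by Strong Approximation (Lemma~\ref{lem:approx}) $\Gamma_K^F$ is dense in $U_K^F$; the analogous statements hold on the $L$-side. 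Since $\gamma^F$ is a group homomorphism into a compact Hausdorff group, it extends uniquely to a continuous homomorphism $\tilde{\eta}^F\colon U_K^F \to U_L^{\theta(F)}$ provided we verify continuity at $1\in\Gamma_K^F$ with respect to the subspace topology; running the same argument for $(\gamma^F)^{-1}$ then promotes $\tilde\eta^F$ to a topological isomorphism.

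Continuity at $1$ is equivalent to the statement that for every open (equivalently, closed finite-index) subgroup $V \leq U_L^{\theta(F)}$, the preimage $(\gamma^F)^{-1}(V \cap \Gamma_L^{\theta(F)})$ contains $W \cap \Gamma_K^F$ for some open subgroup $W \leq U_K^F$. A cofinal family of such $V$ is provided by the filtration by higher principal units from Lemma~\ref{lem:localstuff}(iii): for each tuple of integers $(n_\q)_{\q \in \theta(F)}$ with $n_\q$ large, the subgroups $\prod_\q \O_{L,\q}^{(n_\q)}$ project to open subgroups of $U_L^{\theta(F)}$ (and trivially intersect $\mu_L$ for large $n_\q$). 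So the goal is to show that each such traced-back subgroup is open in $\Gamma_K^F$.

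The main obstacle is to characterize intrinsically, in terms of the K-theoretic data recorded by \hyperref[def:CN]{$(C_N)$}, which finite-index subgroups of $\Gamma_K^F$ correspond to traces of open subgroups of $U_K^F$. Contrary to the $F=\emptyset$ case (handled by Proposition~\ref{prop:propertygamma}), where the valuation structure suffices, at the semi-local levels we need to detect the pro-$p$ filtration at each prime in $F$. My plan is to proceed by induction on $|F|\leq N$: for each $\p\in F$ and $n\geq 1$, realize the finite quotient $\cO_{K,\p}^*/\cO_{K,\p}^{(n)}$ through an appropriate finite-dimensional subquotient or character of $B_K^F$, and match it across $K$ and $L$ using Lemma~\ref{lem:DFsquare} and Lemma~\ref{lem:inclusion}. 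Crucially, the unique tracial state on the simple C*-algebras $B_K^F$ (when $F\neq\emptyset$) together with the order-isomorphism hypothesis at $|F|=1$ (item~\ref{item:order} of \hyperref[def:CN]{$(C_N)$}) should pin down the normalization needed to identify these finite quotients on both sides.

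The hard part will be this last step: translating the profinite filtration of $U_K^F$ into K-theoretic language that is automatically respected by the data $(\theta,\alpha^\emptyset,\{\alpha^F_*\})$ of $(C_N)$. Once this translation is in place, the commutativity of the squares in Definition~\ref{def:CN}, combined with Proposition~\ref{prop:propertygamma} to handle behavior at primes in and out of $F$, should yield that $\gamma^F$ maps each open-subgroup trace to an open-subgroup trace, establishing the required continuity and hence the lemma.
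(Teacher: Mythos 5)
Your overall strategy---extend $\gamma^F$ to the compact completions by continuity, using density of $\Gamma_K^F$ in $(\prod_F\cO_{K,\p}^*)/\mu_K$ and the profiniteness coming from Lemma~\ref{lem:localstuff}---is the right shape, and you correctly identify Lemma~\ref{lem:inclusion} and Lemma~\ref{lem:DFsquare} as relevant inputs. But the proposal stops exactly where the proof has to begin. The entire content of the lemma is the step you defer (``once this translation is in place\dots''): showing that the \K-theoretic data of \hyperref[def:CN]{$(C_N)$} forces $\gamma^F$ to carry traces of open subgroups to traces of open subgroups. The paper does not reprove this from scratch; it first assembles the commutative diagrams \eqref{eqn:diagramval}, \eqref{eqn:diagramKL}, and \eqref{eqn:diagramvalKL} into the large diagram \eqref{eqn:bigdiagram} and uses the \emph{surjectivity} of $D_\val^F$ (from Lemma~\ref{lem:calcboundary}) to conclude that the outer square
\[
(\ol{\fval}_L^{\theta(F)})_* \circ \gamma^F_* \;=\; \ol{\alpha}^F_* \circ (\ol{\fval}_K^F)_*
\]
commutes, and then invokes the general reconstruction result \cite[Corollary~3.18]{KT} for finitely generated pro-$\cN$ completions, with $(\ol{\fval}_K^F)_*$ identified (via Lemma~\ref{lem:inclusion}) as the map induced by the inclusion $C^*(\Gamma_K^F)\to B_K^F$. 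Neither the diagram chase nor a substitute for \cite[Corollary~3.18]{KT} appears in your write-up.

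Moreover, the substitute you sketch is unlikely to work as stated: for nonempty $F$ the algebra $B_K^F$ is \emph{simple}, so it has no finite-dimensional subquotients through which you could ``realize the finite quotient $\cO_{K,\p}^*/\cO_{K,\p}^{(n)}$''; the profinite filtration is instead encoded in the image of $\K_*(C^*(\Gamma_K^F))$ inside $\K_*(B_K^F)$ (e.g.\ in divisibility properties of $\K_0$ detected by the trace), and extracting the filtration from that is precisely the nontrivial content of \cite[Corollary~3.18]{KT}. Note also that the order-isomorphism hypothesis at $|F|=1$ is consumed earlier, in Proposition~\ref{prop:propertygamma}, to normalise $\ol{\alpha}_*^{\{\p\}}([1]_0)=[1]_0$; it is not what drives the continuity argument here. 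To repair the proposal you should either carry out the diagram chase and cite \cite[Corollary~3.18]{KT}, or supply a complete self-contained argument recovering the pro-$\cN$ topology from the ordered $\K$-theoretic data of the inclusions $C^*(\Gamma_K^F)\to B_K^F$.
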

\begin{proof}
We use a similar diagram chase as in \cite[Proof of Theorem 1.1]{KT}. By commutativity of the diagrams \eqref{eqn:diagramval} \eqref{eqn:diagramKL} \eqref{eqn:diagramvalKL}, the small squares on the left and right and the triangles on the top and bottom in the following diagram commute: 
\begin{equation} \label{eqn:bigdiagram} \begin{tikzcd}
  \K_{*+l}(C^*(\Gamma_K^F)) \arrow[ddd, "\gamma^F_*"] \arrow[rr, "(\ol{\fval}_K^F)_*"] & & \K_{*+l}(B_K^F) \arrow[ddd, "\ol{\alpha}^F_*"]\\ 
  & \K_*(C^*(\Gamma_K)) \arrow[lu, "D_\val^F"'] \arrow[ru, "D_K^F"] \arrow[d, "\gamma_*^\emptyset"]& \\
  & \K_{*}(C^*(\Gamma_L)) \arrow[ld, "D_\val^{\theta(F)}"'] \arrow[rd, "D_L^{\theta(F)}"]& \\
  \K_{*+l}(C^*(\Gamma_L^{\theta(F)})) \arrow[rr, "(\ol{\fval}_L^{\theta(F)})_*"] & & \K_{*+l}(B_L^{\theta(F)})\nospacepunct{.}
\end{tikzcd} \end{equation}

Hence, by the surjectivity of $D_\val^F$, the large outer square in Diagram~\eqref{eqn:bigdiagram}
commutes. 
By Lemma~\ref{lem:localstuff}, $\cO_{K,\p}^*\cong \Zz/(N(\p)-1)\Zz\times \cO_{K,\p}^{(1)}$, and $\cO_{K,\p}^{(1)}$ has finite $\Zz_p$-rank, where $p$ is the rational prime lying under $\p$.
Thus, $\left(\prod_F \cO_{K,\p}^*\right)/\mu_K$ and $\left(\prod_{\theta(F)} \cO_{L,\q}^*\right)/\mu_L$ finitely generated pro-$\cN$ completions (as defined in \cite[\S~3]{KT}) of $\Gamma_K^F$ and $\Gamma_L^{\theta(F)}$, respectively, for an appropriate set of rational primes $\cN$. Now the result follows by \cite[Corollary~3.18]{KT}, since $(\ol{\fval}_K^F)_*$ and  $(\ol{\fval}_K^{\theta(F)})_*$ coincide with the maps induced by the inclusion maps by Lemma~\ref{lem:inclusion}.
\end{proof}

For each finite subset $F\subseteq\cP_K$, let $V^F \colon \Zz^F \to \Zz^{\theta(F)}$ be the isomorphism induced by $\theta \colon \cP_K \to \cP_L$.

\begin{proposition}
\label{prop:semi-localdata}
    Let $K$ and $L$ be number fields, and let $N \in \Zz$ with $N \geq 1$. Assume that condition \hyperref[def:CN]{$(C_N)$} is satisfied. Then, for every finite subset $F\subseteq\cP_K$ with $|F| \leq N$, 
    there exists a unique isomorphism 
    \[ \eta^F \colon \left(\prod_{\p \in F} K_\p^* \right)/\mu_K \to \left(\prod_{\q \in \theta(F)} L_\q^*\right)/\mu_L \] 
    which satisfies $\eta^F|_{\Gamma_K}=\gamma$ and makes the following diagram commute: 
\begin{equation} \label{eqn:diagramextension} \begin{tikzcd}
    0 \arrow[r] & (\textstyle{\prod}_F \cO_{K,\p}^*)/\mu_K \arrow[r] \arrow[d, "\tilde{\eta}^F"] & (\textstyle{\prod}_F K_\p^*)/\mu_K \arrow[r, "\prod_F v_\p"] \arrow[d, "\eta^F"] & \Zz^F \arrow[r] \arrow[d, "V^F"] & 0\\
    0 \arrow[r] & (\textstyle{\prod}_{\theta(F)} \cO_{L,\q}^*)/\mu_L \arrow[r] & (\textstyle{\prod}_{\theta(F)} L_\q^*)/\mu_L \arrow[r, "\prod_{\theta(F)} v_\q"] & \Zz^{\theta(F)} \arrow[r] & 0\nospacepunct{.}
\end{tikzcd} \end{equation}
\end{proposition}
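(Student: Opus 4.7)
The plan is to use a splitting of the top row of~\eqref{eqn:diagramextension} coming from \emph{global} uniformizers in $K^*$ (rather than local ones), so that the splitting subgroup sits inside $\Gamma_K$ and can therefore be transported via $\gamma$. By Lemma~\ref{lem:approx}, I would fix $\pi_\p \in K^*$ for each $\p \in F$ with $v_{\p'}(\pi_\p) = \delta_{\p,\p'}$ for all $\p' \in F$. The diagonal embedding of the free abelian subgroup $\Lambda := \langle \pi_\p : \p \in F\rangle \subseteq \Gamma_K$ into $(\prod_F K_\p^*)/\mu_K$ is a splitting of $\prod v_\p$, producing an internal direct product decomposition
\[
    \bigl(\textstyle\prod_F K_\p^*\bigr)/\mu_K \;=\; \bigl(\textstyle\prod_F \cO_{K,\p}^*\bigr)/\mu_K \;\times\; \Lambda.
\]

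I would then simply define $\eta^F(u, g) := \tilde{\eta}^F(u) \cdot \gamma(g)$ for $u \in (\textstyle\prod_F \cO_{K,\p}^*)/\mu_K$ and $g \in \Lambda$, which is a well-defined homomorphism into $(\textstyle\prod_{\theta(F)} L_\q^*)/\mu_L$. That $\eta^F$ is an isomorphism onto its codomain and that the right square of~\eqref{eqn:diagramextension} commutes both follow from Proposition~\ref{prop:propertygamma}, which guarantees that $\gamma(\pi_\p)$ has valuation vector $e_{\theta(\p)}$; consequently $\gamma(\Lambda)$ is a complement to the subgroup of integral units modulo $\mu_L$ on the $L$-side, and $(\prod v_\q) \circ \eta^F = V^F \circ (\prod v_\p)$. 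The left square is tautological from the definition.

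To verify $\eta^F|_{\Gamma_K} = \gamma$, I would decompose $\Gamma_K = \Lambda \times \Gamma_K^F$ using Lemma~\ref{lem:GammaKdecomp}, noting $\Gamma_K^F \subseteq (\textstyle\prod_F \cO_{K,\p}^*)/\mu_K$; on $\Lambda$ the identity is tautological by construction, while on $\Gamma_K^F$ it reduces to $\tilde{\eta}^F|_{\Gamma_K^F} = \gamma|_{\Gamma_K^F}$, which is exactly the content of Lemma~\ref{lem:conjugacy1}. Uniqueness is then immediate: any $\eta^F$ satisfying the hypotheses must coincide with $\tilde{\eta}^F$ on $(\textstyle\prod_F \cO_{K,\p}^*)/\mu_K$ (by the left square) and with $\gamma$ on $\Lambda \subseteq \Gamma_K$ (by the constraint $\eta^F|_{\Gamma_K} = \gamma$), and these values determine $\eta^F$ on the direct product by multiplicativity. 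I do not expect any serious obstacle at this step; the only point requiring care is the choice of splitting via global rather than local uniformizers, so that $\Lambda$ lives in $\Gamma_K$ and the transport by $\gamma$ makes sense. The proposition essentially packages the semi-local isomorphism $\tilde{\eta}^F$ from Lemma~\ref{lem:conjugacy1} together with the valuation-preservation of $\gamma$ from Proposition~\ref{prop:propertygamma} into a single isomorphism of semi-local dynamical systems modulo roots of unity.
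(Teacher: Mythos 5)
Your proposal is correct and follows essentially the same route as the paper: both fix global uniformizers $\pi_\p\in K^*$ via Lemma~\ref{lem:approx}, split $(\prod_F K_\p^*)/\mu_K$ as $\Lambda\times(\prod_F\cO_{K,\p}^*)/\mu_K$ with $\Lambda\subseteq\Gamma_K$, and set $\eta^F=\gamma|_\Lambda\times\tilde\eta^F$, invoking Proposition~\ref{prop:propertygamma} for the $L$-side decomposition and Lemma~\ref{lem:conjugacy1} for agreement with $\gamma$ on $\Gamma_K^F$. The only (harmless) divergence is in uniqueness, where the paper appeals to density of $\Gamma_K$ in $(\prod_F K_\p^*)/\mu_K$ while you argue algebraically from the two factors of the splitting; both are valid.
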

\begin{proof}
    Uniqueness follows from the fact that $\Gamma_K$ is dense in $(\prod_F K_\p^*)/\mu_K$ by Lemma~\ref{lem:approx}. We now prove existence. 
    Let $F=\{\p_1,\dots,\p_l\}$, and by Lemma~\ref{lem:approx}, fix a family $\{\pi_{\p_j}\}_{j=1}^l \subseteq K^*$ such that $v_{\p_j}(\pi_{\p_k})=\delta_{j,k}$ for all $j,k=1,\dots,l$. Let $\Lambda = \prod_{j=1}^l \pi_{\p_j}^\Zz$. 
    Let $\tilde{\eta}^F \colon (\prod_F \cO_{K,\p}^*)/\mu_K \to (\prod_{\theta(F)} \cO_{L,\q}^*)/\mu_L$ be the isomorphism of compact groups from Lemma~\ref{lem:conjugacy1}. By Lemma~\ref{lem:GammaKdecomp}, we have $\Gamma_K = \Lambda \times \Gamma_K^F$. Thus, by Proposition~\ref{prop:propertygamma}, we have $\Gamma_L=\gamma(\Lambda) \times \Gamma_L^{\theta(F)}$. Moreover, the surjective group homomorphism 
    \[
    (\textstyle{\prod}_F K_\p^*)/\mu_K\to \Lambda,\quad (x_{\p_j})_{\p_j}\mapsto (\pi_{\p_j}^{v_{\p_j}(x_{\p_j})})_j
    \]
    splits and has kernel $(\prod_F \cO_{K,\p}^*)/\mu_K$, so that $(\prod_F K_\p^*)/\mu_K = \Lambda \times (\prod_F \cO_{K,\p}^*)/\mu_K$. Similarly, $(\prod_{\theta(F)} L_\p^*)/\mu_L\\=\gamma(\Lambda) \times (\prod_{\theta(F)} \cO_{L,\q}^*)/\mu_L$.
    Since $\tilde{\eta}^F|_{\Gamma_K^F}=\gamma^F$, the isomorphism $\eta^F := \gamma|_\Lambda \times \tilde{\eta}^F$ has the desired property. 
\end{proof}
 
\subsection{Reconstruction of the dynamical system} \label{ssec:recondyn}
Throughout this subsection, we let $K$ and $L$ be number fields, and we assume that condition \hyperref[def:CN]{$(C_\infty)$} is satisfied. Following the strategy from \cite[\S~2]{KT}, we give a direct reconstruction of the dynamical system $\Gamma_K \acts \Az_{K,f}/\mu_K$. Results in this subsection are not used in the proof of Theorem~\ref{thm:main2}.

For each nonempty finite subset $F \subseteq \cP_K$, we let $X_K^F:=(\prod_F K_\p^*)/\mu_K$, $\ol{X_K^F}:=(\prod_F K_\p)/\mu_K$, and $\ol{Y_K^F}:=(\prod_F \cO_{K,\p})/\mu_K$. Put $X_K^\emptyset = \ol{X_K^\emptyset} = \ol{Y_K^\emptyset} = \{0\}$.
Then, for a finite subset $F \subseteq \cP_K$, we have $\ol{X_K^F} = \bigsqcup_{E \subseteq F} X_K^E$, under the identification 
\[ X_K^E = \left( \prod_{\p \in F \setminus E} \{0\} \times \prod_{\p \in F} K_\p^* \right)/\mu_K \subseteq \ol{X_K^F} . \]

For each finite set $F \subseteq \cP_K$, let $\eta^F \colon X_K^F \to X_L^{\theta(F)}$ be the valuation-preserving isomorphism from Proposition~\ref{prop:semi-localdata}.
We define $\ol{\eta^F} \colon \ol{X_K^F} \to \ol{X_L^{\theta(F)}}$ by $\ol{\eta^F} := \bigsqcup_{E\subseteq F}\eta^E$. 
For all finite subsets $F,F' \subseteq \cP_K$ with $F \subseteq F'$, the diagram 
\begin{equation} \label{eqn:commutativeol}  \begin{tikzcd}
    \ol{X_K^{F'}} \arrow[r, "\ol{\eta^{F'}}"] \arrow[d] & \ol{X_L^{\theta(F')}} \arrow[d] \\
    \ol{X_K^{F}} \arrow[r, "\ol{\eta^F}"] & \ol{X_L^{\theta(F)}}
\end{tikzcd} \end{equation}
commutes by definition, where the vertical maps are canonical projections, and we have
\begin{equation} \label{eqn:commvalol}
(\textstyle{\prod}_F v_{\theta(\p)}) \circ \ol{\eta^F} = \textstyle{\prod}_F v_\p. 
\end{equation}

Commutativity of Diagram \eqref{eqn:commutativeol} and Equation~\eqref{eqn:commvalol} essentially characterise the topology of $\ol{X_K^F}$:
\begin{lemma} \label{lem:homeo}
    For every finite subset $F \subseteq \cP_K$, the map $\ol{\eta^F}$ is a homeomorphism, and the dynamical systems $\Gamma_K \acts \ol{X_K^F}$ and $\Gamma_L \acts \ol{X_L^{\theta(F)}}$ are conjugate via $(\ol{\eta^F}, \gamma)$.
\end{lemma}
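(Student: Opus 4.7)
The plan is to argue stratum-by-stratum, using the compatibility encoded in diagram \eqref{eqn:commutativeol} and the valuation-preservation \eqref{eqn:commvalol} from Proposition~\ref{prop:semi-localdata} as the essential technical input. To begin, I would note that each individual $\eta^E$ (for $E\subseteq F$) is already a homeomorphism of locally compact groups: by the construction in Proposition~\ref{prop:semi-localdata}, $\eta^E=\gamma|_{\Lambda_E}\times\tilde\eta^E$ under a splitting $X_K^E=\Lambda_E\times (\prod_E\O_{K,\p}^*)/\mu_K$, where $\gamma|_{\Lambda_E}$ is an isomorphism of discrete free abelian groups and $\tilde\eta^E$ is the isomorphism of compact groups from Lemma~\ref{lem:conjugacy1}. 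Hence $\ol{\eta^F}=\bigsqcup_E\eta^E$ is automatically a bijection between the stratified sets $\ol{X_K^F}$ and $\ol{X_L^{\theta(F)}}$. The remaining work is (a) continuity across the strata and (b) $\Gamma_K$-equivariance.

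For continuity, I would fix a point $x\in X_K^E\subseteq\ol{X_K^F}$ and exhibit a basic neighborhood argument. A fundamental system of neighborhoods of $x$ is given by images of sets $V'=\prod_{\p\in E}V_\p\times\prod_{\p\in F\setminus E}\pi_\p^M\O_{K,\p}$ modulo $\mu_K$, where $V_\p\subseteq K_\p^*$ is open and $M$ is large. For $y\in V'$, let $E'=\{\p\in F:y_\p\neq 0\}$, so $E\subseteq E'\subseteq F$; then $\ol{\eta^F}(y)=\eta^{E'}(y)\in X_L^{\theta(E')}$. The $\theta(\p)$-coordinate for $\p\in E'\setminus E$ has valuation $\geq M$ by \eqref{eqn:commvalol} and thus sits in $\pi_{\theta(\p)}^M\O_{L,\theta(\p)}$; the $\theta(E)$-coordinates of $\eta^{E'}(y)$, on the other hand, agree (after projection) with $\eta^E$ applied to the $E$-projection $y|_E\in X_K^E$ by the commutativity of \eqref{eqn:commutativeol} applied to $E\subseteq E'$. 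Continuity of $\eta^E$ ensures that by shrinking each $V_\p$ the resulting $\theta(E)$-coordinates land in any prescribed neighborhood of the corresponding coordinates of $\eta^E(x)$, modulo the finite $\mu_L$-ambiguity which I would handle by choosing compatible lifts (passing freely between lifts by $\mu_K$-action, using that $\mu_K$ is finite so the quotient maps are local homeomorphisms away from the zero coordinates). The same argument with the roles of $K$ and $L$ interchanged gives continuity of the inverse, so $\ol{\eta^F}$ is a homeomorphism.

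For $\Gamma_K$-equivariance, the action of $\Gamma_K$ on each stratum $X_K^E$ is by multiplication through the diagonal embedding $\Gamma_K\to X_K^E$. Since $\eta^E$ is a group homomorphism with $\eta^E|_{\Gamma_K}=\gamma$ (Proposition~\ref{prop:semi-localdata}), we have $\eta^E(g\cdot z)=\gamma(g)\cdot\eta^E(z)$ for every $g\in\Gamma_K$ and $z\in X_K^E$. The $\Gamma_K$-action on $\ol{X_K^F}$ preserves each stratum, so these stratum-wise equivariances assemble to the desired conjugacy $(\ol{\eta^F},\gamma)$.

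The principal obstacle I anticipate is the bookkeeping in the continuity argument at strata of strictly smaller dimension: one must simultaneously control the ``angular'' part via the compatibility diagram and the vanishing part via valuation-preservation, while being careful with the $\mu_K$- and $\mu_L$-ambiguities in the choices of lifts. Everything else is a clean assembly of the data provided by Proposition~\ref{prop:semi-localdata}.
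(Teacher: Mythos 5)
Your proof is correct and follows essentially the same route as the paper's: both arguments reduce continuity at a point of a lower stratum $X_K^E$ to (i) the valuation identity \eqref{eqn:commvalol}, which forces the coordinates at $\theta(\p)$ for $\p\in F\setminus E$ to be small, and (ii) the projection compatibility \eqref{eqn:commutativeol}, which controls the $\theta(E)$-coordinates via continuity of $\eta^E$; the inverse is handled by symmetry and equivariance stratum-wise from $\eta^E|_{\Gamma_K}=\gamma$. The only difference is presentational — the paper runs the argument with sequences and induction on $|F|$, while you use a neighborhood basis directly — and the $\mu_L$-lift bookkeeping you flag is harmless since all estimates take place in the (open) quotient topology.
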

\begin{proof}
    We first show continuity of $\ol{\eta^F}$ by induction on $|F|$. The claim is obvious if $|F|=0$. Let $x \in \ol{X_K^F}$, and let $\{x^{(n)}\}_{n=1}^\infty$ be a sequence in $\ol{X_K^F}$ converging to $x$. We first suppose $x \in X_K^F$. Then, $x^{(n)}$ eventually belongs to $X_K^F$ since $X_K^F$ is open in $\ol{X_K^F}$. Hence, $\ol{\eta^F}(x^{(n)})=\eta^F(x^{(n)})$ converges to $\ol{\eta^F}(x)=\eta^F(x)$ by continuity of $\eta^F$. 
    Next, we suppose $x \in X_K^E$ for some $E \subsetneq F$. Then, for $\p \in F \setminus E$, the sequence $\ol{\eta^F}(x^{(n)})_{\theta(\p)}$ converges to $0$ in $L_{\theta(\p)}$ since $v_{\theta(\p)}(\ol{\eta^F}(x^{(n)})_{\theta(\p)})=v_\p(x^{(n)}_\p) \to \infty$ as $n \to \infty$ by Equation~\eqref{eqn:commvalol}. Furthermore, the induction hypothesis and commutativity of the Diagram~\eqref{eqn:commutativeol} for $E$ and $F$ imply that $\ol{\eta^F}(x^{(n)})_{\theta(\p)}$ converges to $\ol{\eta^F}(x)_{\theta(\p)} = \eta^E(x)_{\theta(\p)}$ in $L_{\theta(\p)}$ for $\p \in E$. Therefore, $\ol{\eta^F}(x^{(n)})$ converges to $\ol{\eta^F}(x)$. 
    
    We see that the inverse of $\ol{\eta^F}$ is continuous by the same argument. Hence, $\ol{\eta^F}$ is a homeomorphism. 
    The second assertion follows from the fact that $\Gamma_K \acts X_K^F$ and $\Gamma_L \acts X_L^{\theta(F)}$ are conjugate via $(\eta^F,\gamma)$. 
\end{proof}

\begin{lemma} \label{lem:YKYL}
    For every finite subset $F \subseteq \cP_K$, we have $\ol{\eta^F}(\ol{Y_K^F}) = \ol{Y_L^{\theta(F)}}$. In particular, semigroup dynamical systems $\cO_K^\times/\mu_K \acts \ol{Y_K^F}$ and $\cO_L^\times/\mu_L \acts \ol{Y_L^{\theta(F)}}$ are conjugate via $(\ol{\eta^F},\gamma)$. 
\end{lemma}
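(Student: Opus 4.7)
The plan is to reduce both assertions to the stratification of $\ol{X_K^F}$ by the subsets $X_K^E$ for $E \subseteq F$, combined with the ingredients already assembled: the extension Diagram~\eqref{eqn:diagramextension} of Proposition~\ref{prop:semi-localdata}, Proposition~\ref{prop:propertygamma}, and the global conjugacy from Lemma~\ref{lem:homeo}. First I would observe that the set $\ol{Y_K^F}=(\prod_F \cO_{K,\p})/\mu_K$ decomposes as the disjoint union $\bigsqcup_{E \subseteq F} Y_K^E$, where $Y_K^E := (\prod_{\p \in E} \cO_{K,\p}^*)/\mu_K$ is identified with its image in $X_K^E$ via the embedding $X_K^E \subseteq \ol{X_K^F}$ used in the preamble. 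This is immediate from the identification $\cO_{K,\p}=\{0\} \sqcup \cO_{K,\p}^*$ for each $\p$, passed through the quotient by $\mu_K$.

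Next, for each $E \subseteq F$, the subgroup $Y_K^E$ is precisely the kernel of $\prod_E v_\p \colon X_K^E \to \Zz^E$, and the analogous statement holds on the $L$-side. Since $V^E$ is an isomorphism, the commutativity of Diagram~\eqref{eqn:diagramextension} forces $\eta^E$ to restrict to an isomorphism $Y_K^E \to Y_L^{\theta(E)}$, which by uniqueness must coincide with $\tilde{\eta}^E$ from Lemma~\ref{lem:conjugacy1}. Since $\ol{\eta^F} = \bigsqcup_{E \subseteq F}\eta^E$ by definition, taking the disjoint union over $E \subseteq F$ yields
\[
   \ol{\eta^F}(\ol{Y_K^F}) \;=\; \bigsqcup_{E\subseteq F} \eta^E(Y_K^E) \;=\; \bigsqcup_{E \subseteq F} Y_L^{\theta(E)} \;=\; \ol{Y_L^{\theta(F)}},
\]
which proves the first assertion. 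The restriction of $\ol{\eta^F}$ to $\ol{Y_K^F}$ is a homeomorphism onto $\ol{Y_L^{\theta(F)}}$, since $\ol{\eta^F}$ is a homeomorphism by Lemma~\ref{lem:homeo}.

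For the semigroup conjugacy, the key points are already in hand. The action of $\cO_K^\times/\mu_K$ on $\ol{Y_K^F}$ is the restriction of the $\Gamma_K$-action on $\ol{X_K^F}$, because $\cO_K \subseteq \cO_{K,\p}$ for all $\p \in F$ guarantees that the subsemigroup preserves $\ol{Y_K^F}$. By Proposition~\ref{prop:propertygamma}, $\gamma$ restricts to an isomorphism $\cO_K^\times/\mu_K \to \cO_L^\times/\mu_L$, and by Lemma~\ref{lem:homeo} the pair $(\ol{\eta^F},\gamma)$ conjugates $\Gamma_K \acts \ol{X_K^F}$ to $\Gamma_L \acts \ol{X_L^{\theta(F)}}$. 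Combining these restrictions with part one yields the conjugacy of $\cO_K^\times/\mu_K \acts \ol{Y_K^F}$ and $\cO_L^\times/\mu_L \acts \ol{Y_L^{\theta(F)}}$ via $(\ol{\eta^F},\gamma)$. I do not anticipate any real obstacle; the lemma is essentially a bookkeeping consequence of how $\ol{\eta^F}$ is assembled stratum-by-stratum, together with the valuation-preserving property of $\gamma$.
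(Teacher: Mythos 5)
Your overall strategy---stratifying $\ol{Y_K^F}$ by the set $E\subseteq F$ of non-vanishing coordinates and matching strata via the valuation-compatibility of $\eta^E$---is different from the paper's argument (the paper instead notes that $\ol{\eta^F}$ carries the subset $\cO_K/\mu_K=\cO_K^\times/\mu_K\cdot(1,\dots,1)\sqcup\{0\}$ onto $\cO_L/\mu_L$, because $\ol{\eta^F}$ fixes $(1,\dots,1)$ and intertwines the actions, and then concludes by density and continuity). However, your first step contains a genuine error: $\cO_{K,\p}$ is the full valuation ring $\{x\in K_\p: v_\p(x)\geq 0\}$, not $\{0\}\sqcup\cO_{K,\p}^*$; the units are only the elements of valuation exactly $0$. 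Consequently $\bigsqcup_{E\subseteq F}(\prod_{\p\in E}\cO_{K,\p}^*)/\mu_K$ is a \emph{proper} subset of $\ol{Y_K^F}$ (already for $F=\{\p\}$ it omits every nonzero $x$ with $v_\p(x)\geq 1$), so your concluding chain of equalities computes the image of that smaller set rather than of $\ol{Y_K^F}$.

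The repair is routine. The correct stratum is $X_K^E\cap\ol{Y_K^F}=\bigl(\prod_{\p\in F\setminus E}\{0\}\times\prod_{\p\in E}(\cO_{K,\p}\setminus\{0\})\bigr)/\mu_K$, which inside $X_K^E$ is the preimage $\bigl(\prod_{\p\in E}v_\p\bigr)^{-1}(\Nz^E)$ of the non-negative orthant, not the kernel of the valuation map. Since $V^E$ is induced by the bijection $\theta$, it carries $\Nz^E$ onto $\Nz^{\theta(E)}$, so commutativity of Diagram~\eqref{eqn:diagramextension} of Proposition~\ref{prop:semi-localdata} still shows that $\eta^E$ maps this stratum onto the corresponding stratum on the $L$-side; taking the union over $E\subseteq F$ then gives the first assertion. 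Your treatment of the semigroup conjugacy is correct as stated once the first assertion is in place.
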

\begin{proof}
    By construction of $\ol{\eta^F}$, the restriction of $\ol{\eta^F}$ to $X_K^F$ is the group isomorphism $\eta^F \colon X_K^F \to X_L^{\theta(F)}$. In particular, $\ol{\eta^F}$ sends $(1,\dots,1)$ to $(1,\dots,1)$. Hence, we have 
    \[\ol{\eta^F}(\cO_K/\mu_K) = \ol{\eta^F}(\cO_K^\times/\mu_K(1,\dots,1) \sqcup \{0\})
    = \cO_L^\times/\mu_L(1,\dots,1) \sqcup \{0\} = \cO_L/\mu_L. \]
    Now the assertion follows because $\ol{\eta^F}$ is continuous by Lemma~\ref{lem:homeo}.
\end{proof}

\begin{theorem}
Let $K$ and $L$ be number fields, and suppose that condition \hyperref[def:CN]{$(C_\infty)$} is satisfied. Then,
    the dynamical systems $K^*/\mu_K \acts \Az_{K,f}/\mu_K$ and $L^*/\mu_L \acts \Az_{L,f}/\mu_L$ are conjugate. 
\end{theorem}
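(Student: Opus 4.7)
The plan is to construct the conjugating homeomorphism $\ol{\eta}\colon \Az_{K,f}/\mu_K \to \Az_{L,f}/\mu_L$ by pointwise gluing of the compatible family $\{\ol{\eta^F}\}_F$ furnished by Lemma~\ref{lem:homeo}, then verify that it is bijective, continuous, and $\gamma$-equivariant. The key technical point is that each $\ol{\eta^F}$ is defined only modulo $\mu_K$ on the source and modulo $\mu_L$ on the target, so assembling them requires a coherent simultaneous choice of representatives across all finite levels.

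Concretely, for a nonzero class $[a] \in \Az_{K,f}/\mu_K$, I would fix any $\p_0 \in \cP_K$ with $a_{\p_0} \neq 0$ together with any lift $c \in L_{\theta(\p_0)}^*$ of $\eta^{\{\p_0\}}(a_{\p_0}\mu_K) \in X_L^{\{\theta(\p_0)\}}$. For every finite $F \subseteq \cP_K$ containing $\p_0$, the class $\ol{\eta^F}(\pi_F(a)\mu_K) \in \ol{X_L^{\theta(F)}}$ has a unique representative $y^F \in \prod_{\q \in \theta(F)} L_\q$ with $y^F_{\theta(\p_0)} = c$, using that $\mu_L$ acts freely on the nonzero $\theta(\p_0)$-coordinate. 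Commutativity of Diagram~\eqref{eqn:commutativeol} forces $\{y^F\}_{F \ni \p_0}$ to be strictly compatible under the restriction projections, so the family assembles into a tuple $b = (b_\q)_\q$ with $b_{\theta(\p)} := y^F_{\theta(\p)}$ for any $F$ containing both $\p_0$ and $\p$. I would set $\ol{\eta}([a]) := [b]$ and $\ol{\eta}([0]) := [0]$. Independence of the auxiliary data then follows: changing $c$ or the representative of $[a]$ merely multiplies $b$ by a single element of $\mu_L$, while independence of $\p_0$ is obtained by passing to $F'' = \{\p_0, \p_0', \p\}$ and using the strict compatibility. The identity $v_{\theta(\p)}(b_{\theta(\p)}) = v_\p(a_\p)$ coming from Equation~\eqref{eqn:commvalol} of Proposition~\ref{prop:semi-localdata} ensures $b_{\theta(\p)} \in \cO_{L,\theta(\p)}$ for almost all $\p$, so $b \in \Az_{L,f}$.

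Bijectivity will follow from the symmetric construction in the reverse direction producing a two-sided inverse; $\gamma$-equivariance is inherited coordinate-wise from the $\gamma$-equivariance of each $\ol{\eta^F}$ in Lemma~\ref{lem:homeo}. For continuity, I would cover $\Az_{K,f}$ by the directed family of open sets $V_F := \prod_{\p \in F} K_\p \times \prod_{\p \notin F} \cO_{K,\p}$, whose subspace topology from $\Az_{K,f}$ coincides with the product topology, and observe that $\ol{\eta}$ sends $V_F/\mu_K$ into $V_{\theta(F)}/\mu_L$ by Lemma~\ref{lem:YKYL} and is determined on each coordinate by the continuous maps $\ol{\eta^{F\cup\{\p\}}}$. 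The same reasoning applied to the inverse gives a homeomorphism. The principal obstacle is the coherent trivialisation of the $\mu_K$-indeterminacies across infinitely many finite-level quotients; the pinned base-point device resolves it, and once well-definedness is secured, the remaining assertions are formal consequences of Lemmas~\ref{lem:homeo} and~\ref{lem:YKYL}, Proposition~\ref{prop:semi-localdata}, and Equation~\eqref{eqn:commvalol}.
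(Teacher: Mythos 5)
Your construction is correct, but it takes a genuinely different route from the paper. The paper first restricts to the integral parts: by Lemma~\ref{lem:YKYL} the maps $\ol{\eta^F}$ restrict to a compatible system $\ol{Y_K^{F}} \to \ol{Y_L^{\theta(F)}}$, and since $\ol{\cO}_K/\mu_K \cong \plim_F \ol{Y_K^F}$ (using compactness and finiteness of $\mu_K$), one gets the conjugacy on $\ol{\cO}_K/\mu_K$ by simply passing to the projective limit --- continuity and well-definedness come for free from the universal property, with no need to manage $\mu$-ambiguities of representatives. The extension to all of $\Az_{K,f}/\mu_K$ is then done equivariantly: every class is $a^{-1}x$ with $a \in \cO_K^\times/\mu_K$ and $x \in \ol{\cO}_K/\mu_K$ by Lemma~\ref{lem:approx}, and one sets $\eta'(a^{-1}x) := \gamma(a)^{-1}\ol{\eta}(x)$. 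Your approach instead builds the map directly on the full (non-compact) space by pinning a representative at a nonzero coordinate $\p_0$ and gluing across all finite levels; the verification that the pinned families are strictly compatible, independent of the auxiliary choices up to a single element of $\mu_L$, and valuation-controlled is all sound. What the paper's route buys is that the two delicate points of your argument --- coherent trivialisation of the $\mu$-indeterminacy and continuity --- are absorbed into the projective limit and the group action; what your route buys is a completely explicit formula for the image of every class without factoring through the integral adeles. The one place where you should add detail is continuity: since the target is a quotient, ``determined on each coordinate by the continuous maps $\ol{\eta^{F\cup\{\p\}}}$'' is not literally meaningful (individual coordinates of the image are only defined up to $\mu_L$); you need to observe that your pinning device furnishes \emph{local continuous lifts} near any point with $a_{\p_0}\neq 0$ (because $L_{\theta(\p_0)}^* \to L_{\theta(\p_0)}^*/\mu_L$ is a covering), together with the uniform integrality of all but finitely many image coordinates coming from Equation~\eqref{eqn:commvalol}, and to treat continuity at $[0]$ separately via valuation preservation. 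With those points spelled out, your proof is complete.
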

\begin{proof}
    By commutativity of Diagram~\eqref{eqn:commutativeol} and Lemma~\ref{lem:YKYL}, the diagram
    \begin{equation}  \begin{tikzcd}
    \ol{Y_K^{F'}} \arrow[r, "\ol{\eta^{F'}}"] \arrow[d] & \ol{Y_L^{\theta(F')}} \arrow[d] \\
    \ol{Y_K^{F}} \arrow[r, "\ol{\eta^F}"] & \ol{Y_L^{\theta(F)}}
    \end{tikzcd} \end{equation}
    commutes for all finite subsets $F,F' \subseteq \cP_K$ with $F \subseteq F'$. Note that all maps in this diagram are equivariant with respect to $\cO_K^\times/\mu$ and $\cO_L^\times/\mu$ via $\gamma$. Taking the projective limit, we obtain an equivariant homeomorphism $\ol{\eta} \colon \ol{\cO}_K/\mu_K \to \ol{\cO}_L/\mu_L$. 
    
    To see the assertion, it suffices to show that $\ol{\eta}$ (uniquely) extends to a homeomorphism $\eta' \colon \Az_{K,f}/\mu_K \to \Az_{L,f}/\mu_L$ such that  \begin{equation} \label{eqn:equiv}
        \eta'(ax)=\gamma(a)\eta'(x)
    \end{equation}
    for every $a \in \Gamma_K$ and $x \in \Az_{K,f}$. For each $z \in \Az_{K,f}/\mu_K$, there exist $a \in \cO_K^\times/\mu_K$ and $x \in \ol{\cO}_K/\mu_K$ such that $z=a^{-1}x$ by Lemma~\ref{lem:approx}. We define $\eta' \colon \Az_{K,f}/\mu_K \to \Az_{L,f}/\mu_L$ by $\eta'(z)=\gamma(a)^{-1}\ol{\eta}(x)$ for such $z=a^{-1}x \in \Az_{K,f}/\mu_K$. Now it is straightforward to see that $\eta'$ is well-defined, homeomorphic, and satisfies Equation~\eqref{eqn:equiv}. 
\end{proof}

We can also reconstruct the dynamical system $K^* \acts \Az_{K,f}$ from the semi-local data, see Remark~\ref{rmk:recon}.

\section{Reconstruction of a number field from semi-local data}\label{sec:NT}

\subsection{Hoshi's theorem}
\label{sec:Hoshi}
Given a number field $K$ and $\p\in\cP_K$, we follow the notation in \cite{CdSLMS}, and let $\O_{K,[\p]}$ denote the localisation of $\O_K$ at $\p$. We shall need the following result by Hoshi.

\begin{theorem}[{\cite[Corollary~3.3]{Hoshi}}]\label{thm:Hoshi}
   Let $K$ and $L$ be number fields, and suppose $\kappa\colon K^*\to L^*$ is a surjective group homomorphism. Then, $\kappa$ is the restriction of an isomorphism of fields $K\cong L$ if and only if there exists a map $\theta\colon\cP_K\to\cP_L$ such that
   \begin{enumerate}[\upshape(1)]
       \item there exists $\p\in\cP_K$ and $n\in\Zz_{>0}$ with $n\cdot v_\p=v_{\theta(\p)}\circ\kappa$;
       \item we have $1+\p\O_{K,[\p]}=\kappa^{-1}(1+\theta(\p)\O_{L,[\theta(\p)]})$ for all but finitely many $\p\in\cP_K$.
   \end{enumerate}
\end{theorem}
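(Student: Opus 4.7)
The forward direction is immediate: given a field isomorphism $\sigma \colon K \to L$, taking $\theta(\p) := \sigma(\p)\O_L$ and $n := 1$ makes both conditions hold, as $\sigma$ preserves valuations and principal units at every prime.

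For the converse, the goal is to upgrade the multiplicative data carried by $\kappa$ into enough additive data to reconstruct the field structure. The first step is to promote condition (1), which concerns only a single prime, to a valuation-preservation statement at almost every prime. Starting from condition (2), the subgroup $1+\p\O_{K,[\p]}$ is intrinsically determined as $\kappa^{-1}(1+\theta(\p)\O_{L,[\theta(\p)]})$; iterating this description via the descending filtration $(1+\theta(\p)^k\O_{L,[\theta(\p)]})_{k\geq 1}$ should recover the corresponding higher unit filtration at $\p$ via $\kappa$. Coupled with the anchoring computation at the distinguished prime in condition (1) and the multiplicativity of $\kappa$, one would then conclude $v_{\theta(\p)}\circ\kappa = n\cdot v_\p$ on all of $K^*$ for almost every $\p$.

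With valuations preserved up to the global scalar $n$, the plan is then to reconstruct addition from local information. For $x,y \in K^*$ with $x+y \in K^*$, the identity $x+y = x(1+y/x)$ lets one use condition (2) to compute $\kappa(x+y)/\kappa(x)$ modulo $\theta(\p)^k$ whenever $y/x \in \p^k\O_{K,[\p]}$. Letting $\p$ range over a cofinite family of primes and invoking strong approximation (Lemma~\ref{lem:approx}) then pins down $\kappa(x+y)$ uniquely as a function of $\kappa(x)$ and $\kappa(y)$, and one verifies that the resulting formula is $\kappa(x+y) = \kappa(x)+\kappa(y)$. Extending $\kappa$ by $0\mapsto 0$ therefore yields a ring homomorphism $K \to L$, which is a field isomorphism since $\kappa \colon K^* \to L^*$ is already bijective.

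The hardest step is the promotion of condition (1) to valuation-preservation at all primes. It is not obvious that the scaling factors $m_\p$ with $v_{\theta(\p)}\circ\kappa = m_\p\cdot v_\p$ are independent of $\p$, let alone that they all equal the single value $n$ from condition (1); I expect this to require combining the intrinsic filtration analysis from condition (2) with a global compatibility argument (such as the product formula, or a class-group finiteness input). A secondary difficulty is that one must work around the finitely many primes where condition (2) may fail, and the flexibility of strong approximation is essential for this. In practice, because \cite{Hoshi} is already available, it should be cleaner to reduce the hypotheses to those of \cite[Corollary 3.3]{Hoshi} and invoke that result directly, rather than redoing the additive reconstruction from scratch.
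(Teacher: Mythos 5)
First, note that the paper does not actually prove this statement: it is \cite[Corollary~3.3]{Hoshi} quoted verbatim, and the citation is the entire ``proof.'' Your closing remark --- that one should reduce to Hoshi's hypotheses and invoke his result directly --- is therefore exactly what the paper does, and for the purposes of this paper that is the correct move; the statement is only ever consumed as a black box, via Proposition~\ref{prop:CdSLMS}.

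Read as an attempted independent proof, however, your sketch has genuine gaps at precisely the two points where Hoshi's paper does real work. First, condition (2) only identifies the first layer $1+\p\O_{K,[\p]}$ of the unit filtration; it does not follow formally that $\kappa$ matches the higher layers $1+\p^k\O_{K,[\p]}$ for $k\geq 2$, because these are not group-theoretically definable from the first layer inside $K^*$ in any obvious way (they are not simply its powers, and their position in the filtration depends on ramification and on the residue characteristic). So the ``iteration'' that is supposed to upgrade condition (1) at a single prime to valuation-preservation at almost all primes, and to make congruences modulo $\theta(\p)^k$ available, is a missing idea rather than a routine step. Second, even granting the full filtration correspondence, the containment $\kappa(1+y/x)\in 1+\theta(\p)^{k}\O_{L,[\theta(\p)]}$ only locates $\kappa(x+y)$ multiplicatively near $\kappa(x)$; it gives no a priori relation between the additive displacement $y/x$ and the additive displacement of its image, so ``pinning down $\kappa(x+y)=\kappa(x)+\kappa(y)$'' does not follow from strong approximation alone (note also that for fixed $x,y$ the exponent $k$ one can use at a given $\p$ is governed by $v_\p(y)-v_\p(x)$, which is $0$ for almost all $\p$, so the congruences obtained this way are weak). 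This additive reconstruction is the heart of \cite{Hoshi} and cannot be waved through; the honest course, as you say, is to cite the result.
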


We now collect two results around Hoshi's theorem which are essentially contained in \cite{CdSLMS}. The following observation is used in \cite{CdSLMS} without proof.

\begin{lemma}
\label{lem:one-units}
For all but finitely many $\p\in\cP_K$, we have $1+\p\O_{K,\p}=(\O_{K,\p}^*)^{N(\p)-1}$.
\end{lemma}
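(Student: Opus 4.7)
The plan is to use the structural description of $\O_{K,\p}^*$ provided by Lemma~\ref{lem:localstuff} and then show that raising to the $(N(\p)-1)$-th power kills the torsion part and is an automorphism of the one-units for almost all $\p$.

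First, I would exclude the finite set of primes $\p \in \cP_K$ whose underlying rational prime $p$ is either even or ramified in $K$. For any remaining $\p$, Lemma~\ref{lem:localstuff} gives a decomposition
\[
\O_{K,\p}^* \cong \bigl(\Zz/(N(\p)-1)\Zz\bigr) \times \O_{K,\p}^{(1)}, \qquad \O_{K,\p}^{(1)} \cong \Zz_p^{[K_\p:\Qz_p]},
\]
where the second isomorphism uses $a=0$ from (iii) of the lemma since $p$ is odd and unramified. In particular $\O_{K,\p}^{(1)} = 1 + \p\O_{K,\p}$ is a torsion-free pro-$p$ group.

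Next, I would analyse the map $x \mapsto x^{N(\p)-1}$ on each factor. On $\Zz/(N(\p)-1)\Zz$ it is clearly zero, so its image is contained in $\O_{K,\p}^{(1)}$. On $\O_{K,\p}^{(1)} \cong \Zz_p^{[K_\p:\Qz_p]}$ the operation corresponds to multiplication by the integer $N(\p)-1$; writing $N(\p) = p^f$ we have $N(\p) - 1 \equiv -1 \pmod p$, so $N(\p)-1$ is a unit in $\Zz_p$ and multiplication by it is a topological group automorphism of $\Zz_p^{[K_\p:\Qz_p]}$.

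Combining the two points, $(\O_{K,\p}^*)^{N(\p)-1} = \O_{K,\p}^{(1)} = 1 + \p\O_{K,\p}$, as required. There is no real obstacle here; the main thing to be careful about is making sure the exceptional finite set of primes is correctly identified (even primes and primes ramified in $K$) so that the hypothesis $a=0$ in Lemma~\ref{lem:localstuff}(iii) applies.
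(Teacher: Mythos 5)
Your proposal is correct and follows essentially the same route as the paper: both use the decomposition $\O_{K,\p}^*\cong\Zz/(N(\p)-1)\Zz\times\O_{K,\p}^{(1)}$ with $\O_{K,\p}^{(1)}\cong\Zz_p^{[K_\p:\Qz_p]}$ from Lemma~\ref{lem:localstuff}, observe that the $(N(\p)-1)$-th power map annihilates the cyclic factor, and conclude from the fact that $N(\p)-1$ is a unit in $\Zz_p$ that it restricts to an automorphism of $\O_{K,\p}^{(1)}$. Your extra care in spelling out the exceptional set (even or ramified primes) is a fine, if minor, refinement of what the paper leaves implicit.
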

\begin{proof}
By Lemma~\ref{lem:localstuff}, $\O_{K,\p}^*=\Zz/(N(\p)-1)\Zz\times \O_{K,\p}^{(1)}$ and $\O_{K,\p}^{(1)}\cong \Zz_p^{[K_\p:\Qz_p]}$ for all but finitely many $\p$, where $p$ is the rational prime lying under $\p$.
Fix such a prime $\p$, and let $f \colon \O_{K,\p}^*\to \O_{K,\p}^*$ be the map defined by $x \mapsto x^{N(\p)-1}$. Then, the image of $f$ is clearly contained in $\O_{K,\p}^{(1)}$. Since $N(\p)-1$ is a unit in $\Zz_p$, we see that the restriction of $f$ defines an automorphism of $\O_{K,\p}^{(1)}$. Hence, $\Im f = \O_{K,\p}^{(1)}$. 
\end{proof}

We state a consequence of Hoshi's theorem that can be extracted from the proof of \cite[Theorem~7.4]{CdSLMS}:

\begin{proposition}
\label{prop:CdSLMS}
Let $K$ and $L$ be number fields. Suppose we have a bijection $\theta \colon \cP_K \to \cP_L$, a group isomorphism $\kappa\colon K^* \to L^*$, and a family of topological group isomorphisms $\varphi_\p \colon K_\p^* \to L_{\theta(\p)}^*$ for all but finitely many $\p \in \cP_K$. If $v_{\theta(\p)}\circ\varphi_\p=v_\p$ and $\varphi_\p|_{K^*}=\kappa$ for all but finitely many $\p$, then $\kappa$ is the restriction of a field isomorphism.
\end{proposition}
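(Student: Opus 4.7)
The plan is to verify the two hypotheses of Hoshi's theorem (Theorem~\ref{thm:Hoshi}) for the given homomorphism $\kappa \colon K^* \to L^*$ and bijection $\theta \colon \cP_K \to \cP_L$. Condition (1) of Hoshi's theorem is immediate: for any $\p \in \cP_K$ in the cofinite set where $\varphi_\p$ exists and satisfies the compatibility hypotheses, one has $v_{\theta(\p)} \circ \kappa = v_{\theta(\p)} \circ \varphi_\p|_{K^*} = v_\p|_{K^*}$, so condition (1) holds with $n=1$. The work is in verifying condition (2), which asks that $1 + \p\O_{K,[\p]} = \kappa^{-1}(1 + \theta(\p)\O_{L,[\theta(\p)]})$ for all but finitely many $\p$.

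The key observation is that $\varphi_\p$ carries $\O_{K,\p}^*$ bijectively onto $\O_{L,\theta(\p)}^*$ (since $v_{\theta(\p)} \circ \varphi_\p = v_\p$), and that the restriction of $\varphi_\p$ to $K^*$ is $\kappa$; together these imply the ``transfer'' identity $\kappa(K^* \cap S) = L^* \cap \varphi_\p(S)$ for every subset $S \subseteq K_\p^*$. Applied to $S = 1 + \p\O_{K,\p}$, this gives $\kappa(1 + \p\O_{K,[\p]}) = L^* \cap \varphi_\p(1 + \p\O_{K,\p})$, so condition (2) reduces to showing $\varphi_\p(1 + \p\O_{K,\p}) = 1 + \theta(\p)\O_{L,\theta(\p)}$ for all but finitely many $\p$.

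To establish the latter, I would invoke Lemma~\ref{lem:one-units}, which expresses both one-unit groups as power groups: $1 + \p\O_{K,\p} = (\O_{K,\p}^*)^{N(\p)-1}$ and $1 + \theta(\p)\O_{L,\theta(\p)} = (\O_{L,\theta(\p)}^*)^{N(\theta(\p))-1}$, for cofinitely many $\p$. Since $\varphi_\p$ restricts to a topological group isomorphism $\O_{K,\p}^* \xrightarrow{\cong} \O_{L,\theta(\p)}^*$, the equality $\varphi_\p((\O_{K,\p}^*)^{m}) = (\O_{L,\theta(\p)}^*)^{m}$ is automatic for any integer $m$; hence everything follows once we know $N(\p) = N(\theta(\p))$ for all but finitely many $\p$. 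By Lemma~\ref{lem:localstuff}(ii)--(iii), for cofinitely many $\p$ (namely, those lying over odd rational primes unramified in $K$ and in $L$), the torsion subgroup of $\O_{K,\p}^*$ is precisely $\Zz/(N(\p)-1)\Zz$, and similarly on the $L$-side; comparing torsion subgroups under the isomorphism $\varphi_\p$ then forces $N(\p) = N(\theta(\p))$.

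The main point requiring care will be quantifying ``all but finitely many.'' We discard the finite exceptional set from the hypothesis, the finite exceptional sets for $K$ and $L$ in Lemma~\ref{lem:one-units} and in Lemma~\ref{lem:localstuff}, and the finite set where the torsion comparison might fail on either side; the union remains finite (using that $\theta$ is a bijection to push $L$-side exceptional sets back to $K$-side ones). After these reductions, Hoshi's theorem applies and yields the desired field isomorphism extending $\kappa$. No serious obstacle arises beyond assembling these bookkeeping steps, which is why this proposition is essentially a direct extraction from the arguments of \cite{CdSLMS}.
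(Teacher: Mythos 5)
Your proposal is correct and follows essentially the same route as the paper's proof: condition (1) of Hoshi's theorem with $n=1$, then condition (2) via $\varphi_\p(\cO_{K,\p}^*)=\cO_{L,\theta(\p)}^*$, the torsion comparison from Lemma~\ref{lem:localstuff} giving $N(\p)=N(\theta(\p))$, and Lemma~\ref{lem:one-units} to identify the one-unit groups as $(N(\p)-1)$-th powers. The only cosmetic difference is that you isolate the transfer identity $\kappa(K^*\cap S)=L^*\cap\varphi_\p(S)$ as a separate step, whereas the paper performs the same computation inline.
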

\begin{proof}
Let $F\subseteq\cP_K$ be a finite subset such that $v_{\theta(\p)}\circ\varphi_\p=v_\p$ and $\varphi_\p|_{K^*}=\kappa$ for all $\p\in\cP_K\setminus F$.
Clearly, condition (1) from Theorem~\ref{thm:Hoshi} is satisfied. Fix $\p\in\cP_K\setminus F$. We see that $\varphi_\p(\O_{K,\p}^*)=\O_{L,\theta(\p)}^*$. By Lemma~\ref{lem:localstuff}, we have, after possibly enlarging $F$, that $\tors(\O_{K,\p}^*)=\Zz/(N(\p)-1)\Zz$. Thus, $N(\p)=N(\theta(\p))$. 
By Lemma~\ref{lem:one-units}, we have
\[
\kappa(1+\p\O_{K,[\p]})=\kappa(K^*\cap (1+\p\O_{K,\p}))=\varphi_\p(K^*)\cap \varphi_\p((\O_{K,\p}^*)^{N(\p)-1})=L^*\cap (\O_{L,\theta(\p)}^*)^{N(\theta(\p))-1}=1+\theta(\p)\O_{L,[\theta(\p)]}.
\]
Thus, $\kappa$ also satisfies condition (2) from Theorem~\ref{thm:Hoshi}, so $\kappa$ is the restriction of a field isomorphism $K\cong L$.
\end{proof}

\subsection{A semi-local characterisation of number fields}
\label{sec:semi-localchar}
Using Hoshi's theorem, we now give a characterisation of number fields in terms of semi-local dynamical systems modulo roots of unity.

\begin{proposition}
\label{prop:nfchar}
Let $K$ and $L$ be number fields with $|\mu_K|=|\mu_L|$. Suppose we have a bijection $\theta\colon\cP_K\to \cP_L$, a group isomorphism $\gamma\colon\Gamma_K\xrightarrow{\cong}\Gamma_L$, and a family of topological group isomorphisms
\begin{equation}
    \label{eqn:etaF}
    \eta^F\colon \left(\textstyle{\prod}_{\p\in F} K_\p^*\right)/\mu_K\xrightarrow{\cong}\left(\textstyle{\prod}_{\p\in F} L_{\theta(\p)}^*\right)/\mu_L,
\end{equation}
where $F\subseteq\cP_K$ with $1\leq |F|\leq 3$, satisfying
\begin{enumerate}[\upshape(a)]
    \item $\eta^F\vert_{\Gamma_K}=\gamma$;
    \item $\left(\prod_{\p\in F}v_{\theta(\p)}\right)\circ \eta^F=\prod_{\p\in F}v_\p$.
\end{enumerate}
Then, there exists a unique field isomorphism $\sigma\colon K\xrightarrow{\cong} L$ such that $\sigma\vert_{K^*}\colon K^* \xrightarrow{\cong} L^*$ is a lift of $\gamma$.
\end{proposition}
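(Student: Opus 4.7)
The plan is to verify the hypotheses of Proposition~\ref{prop:CdSLMS}: I need to construct a group isomorphism $\kappa \colon K^* \to L^*$ lifting $\gamma$, together with a family of valuation-preserving topological group isomorphisms $\varphi_\p \colon K_\p^* \to L_{\theta(\p)}^*$ for all but finitely many $\p \in \cP_K$, satisfying $\varphi_\p|_{K^*} = \kappa$. The hypothesis supplies only the quotient data $\eta^{\{\p\}}, \eta^{\{\p,\q\}}, \eta^{\{\p,\q,\r\}}$ modulo roots of unity; the chief difficulty is to pin down the local lifts coherently across primes, and this is where the 2-prime and 3-prime data enter.

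First, I restrict attention to a cofinite subset $S \subseteq \cP_K$ of tame primes for which $\p \nmid |\mu_K|$ and the residue characteristic $p$ is odd with $p \nmid |\mu_K|$. For such $\p$, Lemma~\ref{lem:localstuff} yields the clean decomposition $K_\p^* \cong \pi_\p^\Zz \times \Zz/(N(\p)-1)\Zz \times \Zz_p^{[K_\p:\Qz_p]}$, with $\mu_K$ contained in the middle cyclic factor. By matching torsion orders and $\Zz_p$-ranks of $K_\p^*/\mu_K$ and $L_{\theta(\p)}^*/\mu_L$ via the valuation-preserving topological isomorphism $\eta^{\{\p\}}$, I deduce $N(\p) = N(\theta(\p))$ and $[K_\p:\Qz_p] = [L_{\theta(\p)}:\Qz_p]$, so that $K_\p^*$ and $L_{\theta(\p)}^*$ have the same topological group structure; combined with $|\mu_K| = |\mu_L|$, this allows me to lift $\eta^{\{\p\}}$ to a valuation-preserving topological group isomorphism $\varphi_\p \colon K_\p^* \to L_{\theta(\p)}^*$. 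The non-canonical ambiguity in each such lift is a continuous character $K_\p^* \to \mu_L$, which restricts to a group homomorphism $K^* \to \mu_L$.

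Next, I coordinate the local lifts. Condition (a) applied to $\eta^F$ ensures that any lift of $\eta^F$ sending the diagonal $\mu_K$ to the diagonal $\mu_L$, when restricted to the diagonal copy of $K^*$, yields a group homomorphism $K^* \to L^*$ lifting $\gamma$. Using the 2-prime data, for each pair $\p, \q \in S$ I lift $\eta^{\{\p,\q\}}$ to $\tilde\varphi_{\{\p,\q\}} \colon K_\p^* \times K_\q^* \to L_{\theta(\p)}^* \times L_{\theta(\q)}^*$ whose diagonal restriction defines a candidate lift $\hat\kappa_{\p,\q} \colon K^* \to L^*$ of $\gamma$, and then adjust the individual $\varphi_\p, \varphi_\q$ by appropriate $\mu_L$-valued correction characters so that their restrictions to $K^*$ both equal $\hat\kappa_{\p,\q}$. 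The 3-prime data $\eta^{\{\p,\q,\r\}}$ then provides the cocycle consistency, via condition (a) on the diagonal $K^*$, that forces $\hat\kappa_{\p,\q} = \hat\kappa_{\p,\r}$ for all triples in $S$. This yields a single well-defined $\kappa \colon K^* \to L^*$ lifting $\gamma$, together with compatible local lifts $\varphi_\p|_{K^*} = \kappa$ for every $\p \in S$.

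The main obstacle is precisely this coherence step, since the local $\mu_L$-ambiguities at different primes need not be jointly reconcilable using $\gamma$ alone; the 2-prime and 3-prime data are exactly what fill this gap, and this is why the hypothesis requires $|F| = 2$ and $|F| = 3$ rather than just $|F| = 1$. Once the family $(\kappa, \{\varphi_\p\}_{\p \in S})$ is in place, Proposition~\ref{prop:CdSLMS} produces a field isomorphism $\sigma \colon K \to L$ with $\sigma|_{K^*} = \kappa$, giving existence. Uniqueness of $\sigma$ follows from the observation that any field automorphism $\tau$ of $L$ with $\tau(x) x^{-1} \in \mu_L$ for every $x \in L^*$ must be the identity: writing $\tau(x) = \zeta_x x$ with $\zeta_x \in \mu_L$, the relation $\tau(x+1) = \tau(x) + 1$ translates into $x(\zeta_{x+1} - \zeta_x) = 1 - \zeta_{x+1}$, which forces $\zeta_x = 1$ on an infinite subset of $L$ closed under the relevant operations, and hence globally.
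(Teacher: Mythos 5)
Your overall strategy for existence matches the paper's (manufacture local isomorphisms $\varphi_\q\colon K_\q^*\to L_{\theta(\q)}^*$ restricting to a common lift $\kappa$ of $\gamma$ on $K^*$, then invoke Proposition~\ref{prop:CdSLMS}), but the mechanism you use to produce them has a genuine gap. You assert that $\eta^{\{\p\}}$ and, more seriously, $\eta^{\{\p,\q\}}$ lift to topological group isomorphisms of the products themselves. For $|F|=2$ the kernel of $K_\p^*\times K_\q^*\to (K_\p^*\times K_\q^*)/\mu_K$ is the \emph{diagonal} copy of $\mu_K$, and it is not automatic that an isomorphism of such quotients lifts to an isomorphism of the products; you give no argument, and the torsion parts $\mu_{N(\p)-1}\times\mu_{N(\q)-1}$ modulo a diagonal cyclic subgroup make this a nontrivial finite-abelian-group lifting problem. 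Your subsequent "adjustment" step is also problematic: the discrepancy between $\varphi_\p\vert_{K^*}$ and your candidate $\hat\kappa_{\p,\q}$ is a character $K^*\to\mu_L$ defined only on $K^*$, and to absorb it into $\varphi_\p$ you would need it to extend to a \emph{continuous} character of $K_\p^*$, which is not justified. The paper avoids all lifting: since $\mu_K$ is discrete in $K_\p^*$, the map $x\mapsto(1,x)\mu_K$ is a homeomorphism of $K_\q^*$ onto its image in $(K_\p^*\times K_\q^*)/\mu_K$, so one can \emph{define} $\varphi_\q(x)$ as the unique second coordinate of the representative of $\eta^{\{\p,\q\}}((1,x)\mu_K)$ with first coordinate $1$; the two- and three-prime compatibility diagrams (which follow from condition (a) and density of $\Gamma_K$, via Lemma~\ref{lem:FF'}) then give $\varphi_\q\vert_{K^*}=\kappa$ independent of $\q$ and the valuation condition. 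If you want to salvage your route, you should either prove the lifting statements for the specific groups at hand or switch to this slice construction.

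Your uniqueness argument, on the other hand, is correct and genuinely different from the paper's. The paper reduces to $\rho=\sigma^{-1}\circ\tau\in\Aut(K)$ fixing every prime ideal and then uses the Galois closure, a completely split rational prime, and triviality of the decomposition group. Your argument only uses that $\rho(x)/x\in\mu_K$ for all $x\in K^*$: the identity $(\zeta_{x+1}-\zeta_x)x=1-\zeta_{x+1}$ forces $\zeta_x=1$ for all but finitely many $x$ (at most $|\mu_K|^2$ exceptions), and additivity then propagates $\rho=\id$ everywhere. This is more elementary and avoids class field/Galois theory entirely; it is a nice simplification of that half of the proof.
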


Note that condition (b) for $|F|=2$ and $|F|=3$ follows from condition (a) and condition (b) for $|F|=1$. 

For the remainder of this section, we use the notation and assumptions from the statement of Proposition~\ref{prop:nfchar}. We also identify $\mu_K$ and $\mu_L$, and put $\mu:=\mu_K=\mu_L$. We first prove the uniqueness claim.
\begin{proposition}
There is at most one field isomorphism $\sigma\colon K\xrightarrow{\cong}L$ such that $\sigma\vert_{K^*}\colon K^*\xrightarrow{\cong} L^*$ is a lift of $\gamma$.
\end{proposition}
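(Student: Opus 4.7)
The strategy is to reduce the problem to an elementary claim about automorphisms of a single number field, which can then be dispatched by a short pigeonhole argument. Given two field isomorphisms $\sigma_1,\sigma_2\colon K\to L$ both lifting $\gamma$, I would set $\tau:=\sigma_2^{-1}\circ\sigma_1\in\Aut(K)$; it clearly suffices to show $\tau=\id$. Unravelling the lifting condition, for every $x\in K^*$ the element $\sigma_1(x)\sigma_2(x)^{-1}$ lies in $\mu_L$, and applying $\sigma_2^{-1}$ (which is a field isomorphism sending $\mu_L$ bijectively onto $\mu_K$) yields $\tau(x)/x\in\mu_K$. Thus the uniqueness is reduced to the following self-contained claim: any $\tau\in\Aut(K)$ satisfying $\tau(x)/x\in\mu_K$ for every $x\in K^*$ is the identity.

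To prove the reduced claim I would argue by contradiction. Assume $\tau\neq\id$ and pick $x\in K^*$ with $\tau(x)=x\zeta$ for some $\zeta\in\mu_K\setminus\{1\}$. For each $n\in\Zz$ with $x+n\neq 0$, write $\tau(x+n)=(x+n)\zeta_n$ with $\zeta_n\in\mu_K$. Additivity of $\tau$ together with $\tau|_{\Zz}=\id$ gives
\[
(x+n)\zeta_n = x\zeta + n, \qquad \text{i.e.,}\qquad x(\zeta_n-\zeta) = n(1-\zeta_n).
\]
Since $\mu_K$ is finite while the set of admissible integers $n$ is cofinite in $\Zz$, the pigeonhole principle produces distinct $n_1\neq n_2$ with $\zeta_{n_1}=\zeta_{n_2}=:\eta$. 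If $\eta\neq 1$, subtracting the two instances of the identity gives $n_1(1-\eta)=n_2(1-\eta)$, hence $n_1=n_2$, a contradiction; if $\eta=1$, either instance collapses to $x(1-\zeta)=0$, forcing $\zeta=1$, again a contradiction. Hence $\tau=\id$, so $\sigma_1=\sigma_2$.

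I do not foresee any real obstacle: the only structural input is the finiteness of $\mu_K$, and the rest is a short piece of elementary algebra. Notably, all of the semi-local data $(\theta,\eta^F)$ and the codomain field $L$ have been absorbed into the reduced statement, which is why the uniqueness part is considerably easier than the existence part that will follow.
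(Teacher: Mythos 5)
Your proof is correct, and it takes a genuinely different and more elementary route than the paper. The paper only extracts from the lifting hypothesis the \emph{valuation-level} consequence that $\rho:=\sigma^{-1}\circ\tau$ fixes every prime ideal $\p\in\cP_K$, and then invokes nontrivial algebraic number theory (Galois closure, existence of a rational prime splitting completely, triviality of the decomposition group) to conclude $\rho=\id_K$. You instead use the full strength of the hypothesis -- that $\tau(x)/x\in\mu_K$ for every $x\in K^*$ -- and dispatch the reduced claim by pure field algebra: $\tau$ fixes $\Qz$, so comparing $\tau(x+n)=(x+n)\zeta_n$ with $\tau(x)+n=x\zeta+n$ yields $x(\zeta_n-\zeta)=n(1-\zeta_n)$, and the pigeonhole on the finite group $\mu_K$ forces $\zeta=1$. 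Both reductions are legitimate (the identity $\sigma_1(x)\sigma_2(x)^{-1}\in\mu_L$ and $\sigma_2^{-1}(\mu_L)=\mu_K$ follow immediately from the commuting square defining a lift of $\gamma$), and your case analysis is airtight in characteristic $0$. What your approach buys is self-containedness: no splitting of primes, no decomposition groups, only finiteness of $\mu_K$. What the paper's approach buys is that it proves the formally stronger statement that an automorphism of $K$ fixing every $\p\in\cP_K$ is trivial, which needs only the valuation-preserving property of $\gamma$ rather than the full multiplicative congruence modulo roots of unity.
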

\begin{proof}
Suppose $\sigma,\tau\colon K\to L$ are field isomorphisms such that $\sigma\vert_{K^*}$ and $\tau\vert_{K^*}$ are lifts of $\gamma$. Put $\rho:=\sigma^{-1}\circ\tau\colon K\to K$. Since $\gamma$ is valuation-preserving by condition (b) and all valuations vanish on $\mu$, it follows that $\sigma(\p)=\tau(\p)=\theta(\p)$ for all $\p\in\cP_K$. Thus, $\rho(\p)=\p$ for all $\p\in\cP_K$. 
Let $N$ be the Galois closure of $K$, and let $\ol{\rho} \in \Gal(N/\Qz)$ be any extension of $\rho$ (such an extension exists by \cite[Lemma, p.227]{Jac}). Let $p$ be a rational prime which splits completely in $N$ (such a prime exists by \cite[Corollary~VII.13.6]{Neu}). Let $\q \in \cP_N$ be a prime above $p$, $\q_1 = \ol{\rho}(\q)$, and $\p = \q \cap K$. Since $\rho(\p)=\p$, we have $\q_1 \cap K=\p$. By \cite[Proposition~II.9.1]{Neu}, there exists $\delta \in \Gal(N/K)$ such that $\delta(\q_1)=\q$. Let $\rho_1=\delta \circ \ol{\rho}$. Then, we have $\rho_1(K)=K$, $\rho_1|_K=\rho$, and $\rho_1 \in D_\q$, where $D_\q$ is the decomposition group of $\q$. Since the inertia degree of $\q$ over $p$ is equal to $1$, $D_\q$ is the trivial group, which implies that $\rho_1=\id_N$. Hence, $\rho=\id_K$. 
\end{proof}

For all but finitely many $\p\in\cP_K$, we will prove that $\eta^{\{\p\}} \colon K^*_\p/\mu \to L_{\theta(\p)}^*/\mu$ lifts to a topological group isomorphisms $\varphi_\p\colon K_\p^*\to L_{\theta(\p)}^*$ satisfying the conditions in Proposition~\ref{prop:CdSLMS}.

\begin{lemma}\label{lem:FF'}
For all $F',F\subseteq \cP_K$ with $F'\subseteq F$ and $1\leq |F'|,|F|\leq 3$, the diagram
\begin{equation}
\label{eqn:etaFc}\tag{c}
\begin{tikzcd}
\left(\textstyle{\prod}_F K_\p^*\right)/\mu \arrow[d]\arrow[r,"\eta^F"] &\left(\textstyle{\prod}_F L_{\theta(\p)}^*\right)/\mu\arrow[d]\\
 \left(\textstyle{\prod}_{F'}K_\p^*\right)/\mu  \arrow[r,"\eta^{F'}"] &
 \left(\textstyle{\prod}_{F'} L_{\theta(\p)}^*\right)/\mu
\end{tikzcd}
\end{equation}
commutes, where the vertical maps are the canonical projections.
\end{lemma}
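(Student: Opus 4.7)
The plan is a straightforward continuity-plus-density argument, based on assumption (a) of Proposition~\ref{prop:nfchar}. Let me write $\pi_F^{F'}$ for the canonical projection $(\prod_{\p\in F} K_\p^*)/\mu \to (\prod_{\p\in F'} K_\p^*)/\mu$, and likewise $\pi^{F'}_F$ on the $L$-side. Both compositions
\[
\pi^{F'}_F\circ\eta^F\quad\text{and}\quad\eta^{F'}\circ\pi_F^{F'}
\]
are continuous group homomorphisms from $(\prod_{\p\in F} K_\p^*)/\mu$ to $(\prod_{\p\in F'} L_{\theta(\p)}^*)/\mu$, since each of $\eta^F$, $\eta^{F'}$, $\pi_F^{F'}$, and $\pi^{F'}_F$ is. Hence it suffices to show the two maps agree on a dense subgroup of the source.

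The natural candidate is $\Gamma_K$, embedded diagonally. By Lemma~\ref{lem:approx}, the image of $K^*$ is dense in $\prod_{\p\in F} K_\p^*$, so $\Gamma_K=K^*/\mu$ is dense in $(\prod_{\p\in F} K_\p^*)/\mu$. On $\Gamma_K$, the projection $\pi_F^{F'}$ is the identity, so by assumption~(a),
\[
\eta^{F'}\circ\pi_F^{F'}\bigr|_{\Gamma_K}=\eta^{F'}\bigr|_{\Gamma_K}=\gamma,
\]
while $\eta^F\bigr|_{\Gamma_K}=\gamma$ takes values in $\Gamma_L$, on which the $L$-side projection $\pi^{F'}_F$ is again the identity, giving
\[
\pi^{F'}_F\circ\eta^F\bigr|_{\Gamma_K}=\pi^{F'}_F\circ\gamma=\gamma.
\]
Thus the two continuous maps agree on the dense subgroup $\Gamma_K$, so they coincide everywhere, establishing commutativity of the diagram (c).

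There is no real obstacle here; the lemma is essentially a bookkeeping consequence of the compatibility condition (a) together with strong approximation (Lemma~\ref{lem:approx}). The content of the proposition will lie in the subsequent lemmas, where one must genuinely use the assumptions for $|F|=2$ and $|F|=3$ to lift $\eta^{\{\p\}}$ to a topological group isomorphism $K_\p^*\to L_{\theta(\p)}^*$ compatible with a single global group homomorphism $K^*\to L^*$, in order to feed into Proposition~\ref{prop:CdSLMS}.
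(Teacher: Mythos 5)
Your argument is correct and is exactly the paper's proof: the paper's one-line justification is that the lemma "follows from condition (a) and the fact that $\Gamma_K$ is dense in $\left(\prod_F K_\p^*\right)/\mu$ by Lemma~\ref{lem:approx}," which is precisely the continuity-plus-density argument you spell out. No issues.
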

\begin{proof}
This follows from condition (a) and the fact that $\Gamma_K$ is dense in $\left(\prod_F K_\p^*\right)/\mu$ by Lemma~\ref{lem:approx}.
\end{proof}

Fix a prime $\p \in \cP_K$, and for each $\q\in\cP_K$ with $\q\neq \p$, let $\varphi_\q\colon K_\q^*\to L_{\theta(\q)}^*$ be the map characterised by 
\[
\eta^{\{\p,\q\}}((1,x)\mu)=(1,\varphi_\q(x))\mu \in (L_{\theta(\p)}^* \times L_{\theta(\q)}^*)/\mu
\]for every $x \in K_\q^*$.

\begin{lemma}
\label{lem:varphip}
For all $\q\in\cP_K\setminus\{\p\}$, the map $\varphi_\q$ is a topological group isomorphism. Moreover, $\varphi_\q$ restricts to a group isomorphism $\varphi_\q\vert_{K^*}\colon K^*\xrightarrow{\cong} L^*$ that is a lift of $\gamma$. 
\end{lemma}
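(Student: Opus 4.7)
The plan is to proceed in three steps: (I) establish a topological embedding property for the maps $y \mapsto (1,y)\mu$; (II) use it to define $\varphi_\q$ and show it is a topological group isomorphism; and (III) analyse the diagonal embedding of $K^*$ to produce the lift of $\gamma$.

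First, I would consider the continuous injective group homomorphisms
\[
\iota_K \colon K_\q^* \to (K_\p^* \times K_\q^*)/\mu,\ x \mapsto (1,x)\mu,
\]
and analogously $\iota_L \colon L_{\theta(\q)}^* \to (L_{\theta(\p)}^* \times L_{\theta(\q)}^*)/\mu$, and prove that each is a topological embedding. If $(1,y_n)\mu \to (1,y_0)\mu$ in the quotient, then by definition of the quotient topology there are $\zeta_n \in \mu$ with $(\zeta_n, \zeta_n y_n) \to (1, y_0)$ in $L_{\theta(\p)}^* \times L_{\theta(\q)}^*$, so $\zeta_n \to 1$; since $\mu$ is finite and the ambient group is Hausdorff, $\mu$ is discrete as a subspace, forcing $\zeta_n = 1$ eventually, whence $y_n \to y_0$. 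The subtlety here is that $\mu$ is \emph{not} open in $L_{\theta(\p)}^*$, so a naive open-subgroup approach fails; the resolution is the weaker fact that $\mu$ is closed and discrete as a subspace.

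Next, I would apply Lemma~\ref{lem:FF'} with $F' = \{\p\}$ and $F = \{\p,\q\}$: every element of $\eta^{\{\p,\q\}}(\iota_K(K_\q^*))$ projects to $\eta^{\{\p\}}(1) = 1$ in $L_{\theta(\p)}^*/\mu$, hence lies in $\iota_L(L_{\theta(\q)}^*)$. Thus the equation $\eta^{\{\p,\q\}} \circ \iota_K = \iota_L \circ \varphi_\q$ uniquely determines a group homomorphism $\varphi_\q$, continuous by the embedding property of $\iota_L$; applying the symmetric argument to $(\eta^{\{\p,\q\}})^{-1}$ and $(\eta^{\{\p\}})^{-1}$ yields a continuous inverse, so $\varphi_\q$ is a topological group isomorphism.

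Finally, for $k \in K^*$, I would decompose $(1, k) = (k^{-1}, 1) \cdot (k, k)$ in $K_\p^* \times K_\q^*$. Fixing any lift $\tilde\gamma(k) \in L^*$ of $\gamma(k\mu) \in \Gamma_L$, condition~(a) of Proposition~\ref{prop:nfchar} gives $\eta^{\{\p,\q\}}((k,k)\mu) = (\tilde\gamma(k), \tilde\gamma(k))\mu$, while Lemma~\ref{lem:FF'} with $F' = \{\q\}$ shows $\eta^{\{\p,\q\}}((k^{-1}, 1)\mu) = (a, 1)\mu$ for some unique $a \in L_{\theta(\p)}^*$. Multiplying and comparing representatives yields $\varphi_\q(k) = \zeta\,\tilde\gamma(k) \in L^*$ for some $\zeta \in \mu$, so $\varphi_\q$ sends $K^*$ into $L^*$ and covers $\gamma$ modulo $\mu$. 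A short surjectivity check, using that $\varphi_\q$ bijects $\mu_K$ with $\mu_L$ (since $|\mu_K| = |\mu_L|$ and $\varphi_\q$ restricts to an injection of roots of unity) and that $\gamma \colon \Gamma_K \to \Gamma_L$ is onto, then upgrades the restriction to a group isomorphism $\varphi_\q|_{K^*} \colon K^* \xrightarrow{\cong} L^*$ lifting $\gamma$. The main obstacle is step~(I): one has to use carefully that $\mu$, while not open, is nevertheless a closed discrete subspace in order to make the ``un-quotient'' map selecting $\varphi_\q(x)$ continuous.
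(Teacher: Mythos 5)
Your proposal is correct and follows essentially the same route as the paper: the key points in both are that the finite group $\mu$ is discrete in $K_\p^*$ (so $x\mapsto(1,x)\mu$ is a topological embedding, which the paper shows via the explicit identity $\iota(U)=((V\times U)/\mu)\cap\iota(K_\q^*)$ rather than your sequential lifting argument), the compatibility diagrams of Lemma~\ref{lem:FF'}, and condition (a) to place $\varphi_\q(K^*)$ inside $L^*$. Your handling of the last two points differs only cosmetically — the paper projects $(1,x)\mu$ directly to the $\q$-coordinate to get $\varphi_\q(x)\mu=\gamma(x\mu)$ instead of using the decomposition $(1,k)=(k^{-1},1)(k,k)$, and it obtains bijectivity of $\varphi_\q|_{K^*}$ by constructing the symmetric map $\psi_\q$ from $(\eta^{\{\p,\q\}})^{-1}$ rather than by your counting argument on roots of unity — and both variants are sound.
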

\begin{proof}
It is straightforward to see that $\varphi_\q$ is a group homomorphism.
To show continuity of $\varphi_\q$, it suffices to show that 
\[ \iota \colon K_\q^* \to (K_\p^* \times K_\q^*)/\mu,\ x \mapsto (1,x)\mu \]
is a homeomorphism onto its range. Continuity of $\iota$ is clear, so it suffices to show that $K_\q^* \to \iota(K_\q^*)$ is an open map. Let $U$ be an open subset of $K_\q^*$. Since $\mu$ is discrete in $K_\p^*$, we can take an open neighbourhood $V \subseteq K_\p^*$ of $1$ such that $V \cap \mu = \{1\}$. Then, we see that $\iota(U) = ((V \times U)/\mu) \cap \iota(K_\q^*)$. 
Hence, the claim holds since $(V \times U)/\mu$ is the image of an open set by the open surjection $(K_\p^* \times K_\q^*) \to (K_\p^* \times K_\q^*)/\mu$. 

By Lemma~\ref{lem:FF'}, the diagram 
    \begin{equation}  
    \begin{tikzcd}
    (K_{\p}^* \times K_{\q}^*)/\mu \arrow[r, "\eta^{\{\p, \q\}}"] \arrow[d] & (L_{\theta(\p)}^* \times L_{\theta(\q)}^*)/\mu \arrow[d] \\
    K_{\q}^*/\mu \arrow[r, "\eta^{\{\q\}}"] & L_{\theta(\q)}^*/\mu
    \end{tikzcd} 
    \end{equation}
commutes. By condition (a), for every $x \in K^*$, we have $\varphi_\q(x)\mu=\gamma(x\mu) \in \Gamma_L$, so that $\varphi_\q(x) \in L^*$. Hence, $\varphi_\q$ restricts to a group homomorphism $K^* \to L^*$ that is a lift of $\gamma$. 
Similarly, we have a continuous group homomorphism $\psi_\q \colon L^*_{\theta(\q)} \to K^*_\q$ which is characterised by $(\eta^{\{\p, \q\}})^{-1}((1,y)\mu)=(1,\psi_\q(y))\mu$ for $y \in L^*_{\theta(\q)}$. Then, we can see that $\psi_\q$ is the inverse of $\varphi_\q$, and $\psi_\q|_{L^*}$ is the inverse of $\varphi_\q|_{K^*}$.
\end{proof}

\begin{lemma}
\label{lem:varphip2}
For all $\q_1,\q_2\in\cP_K\setminus\{\p\}$, we have $\varphi_{\q_1}\vert_{K^*}=\varphi_{\q_2}\vert_{K^*}$, that is, $\varphi_\q\vert_{K^*}$ does not depend on $\q$.
\end{lemma}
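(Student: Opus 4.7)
The plan is to exploit the three-fold isomorphism $\eta^F$ for $F = \{\p, \q_1, \q_2\}$ together with its compatibility with $\eta^{F'}$ for two-element subsets $F' \subseteq F$ via Lemma~\ref{lem:FF'}. By Lemma~\ref{lem:varphip}, both $\varphi_{\q_1}|_{K^*}$ and $\varphi_{\q_2}|_{K^*}$ are lifts of $\gamma \colon K^*/\mu \to L^*/\mu$, so for each $x \in K^*$ the elements $\varphi_{\q_1}(x)$ and $\varphi_{\q_2}(x)$ already differ by a root of unity; the task is to pin this root of unity down to be trivial.

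First, I would determine $\eta^F((1, x, 1)\mu)$ coordinate-by-coordinate by projection. Writing $\eta^F((1, x, 1)\mu) = (a_0, a_1, a_2)\mu$ and using Lemma~\ref{lem:FF'}, projection to $\{\p, \q_1\}$ yields $(a_0, a_1)\mu = \eta^{\{\p,\q_1\}}((1, x)\mu) = (1, \varphi_{\q_1}(x))\mu$, while projection to $\{\p, \q_2\}$ yields $(a_0, a_2)\mu = (1, 1)\mu$. Choosing the representative with $\p$-coordinate equal to $1$, these constraints force $\eta^F((1, x, 1)\mu) = (1, \varphi_{\q_1}(x), 1)\mu$. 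Symmetrically, $\eta^F((1, 1, x)\mu) = (1, 1, \varphi_{\q_2}(x))\mu$, and multiplying (since $\eta^F$ is a group homomorphism) gives $\eta^F((1, x, x)\mu) = (1, \varphi_{\q_1}(x), \varphi_{\q_2}(x))\mu$.

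Next, I would invoke condition (a) of Proposition~\ref{prop:nfchar}. Since $(x, x, x)\mu \in (K_\p^* \times K_{\q_1}^* \times K_{\q_2}^*)/\mu$ is the image of $x\mu \in \Gamma_K$ under the diagonal embedding, $\eta^F$ sends it to the diagonal image of $\gamma(x\mu) \in \Gamma_L$, which may be represented as $(\varphi_{\q_1}(x), \varphi_{\q_1}(x), \varphi_{\q_1}(x))\mu$ using the lift $\varphi_{\q_1}(x)$. Decomposing $(x, x, x) = (x, 1, 1) \cdot (1, x, x)$ and solving, I obtain $\eta^F((x, 1, 1)\mu) = (\varphi_{\q_1}(x), 1, \varphi_{\q_1}(x)\varphi_{\q_2}(x)^{-1})\mu$.

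Finally, I would project $\eta^F((x, 1, 1)\mu)$ onto $\{\q_1, \q_2\}$: on the source side $(x, 1, 1)\mu$ projects to $(1, 1)\mu$, so by Lemma~\ref{lem:FF'} the image projects to $\eta^{\{\q_1, \q_2\}}((1, 1)\mu) = (1, 1)\mu$. But the explicit projection gives $(1, \varphi_{\q_1}(x)\varphi_{\q_2}(x)^{-1})\mu$, and comparing these in $(L_{\theta(\q_1)}^* \times L_{\theta(\q_2)}^*)/\mu$ forces $\varphi_{\q_1}(x)\varphi_{\q_2}(x)^{-1} = 1$, completing the proof. The main bookkeeping concern throughout is the ambiguity modulo the diagonal $\mu$-action when moving between quotients, but this is resolved cleanly by consistently normalising representatives to have $\p$-coordinate equal to $1$.
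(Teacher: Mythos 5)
Your argument is correct and follows essentially the same route as the paper: both work with $F=\{\p,\q_1,\q_2\}$, use Lemma~\ref{lem:FF'} to compute $\eta^F((1,x,x)\mu)=(1,\varphi_{\q_1}(x),\varphi_{\q_2}(x))\mu$, and then use condition (a) together with the projection to $\{\q_1,\q_2\}$ to force $\varphi_{\q_1}(x)=\varphi_{\q_2}(x)$. The only cosmetic difference is that the paper concludes by noting that $(\varphi_{\q_1}(x),\varphi_{\q_2}(x))\mu$ must lie in the diagonal $\Gamma_L$, whereas you first peel off $\eta^F((x,1,1)\mu)$ and compare its $\{\q_1,\q_2\}$-projection with $(1,1)\mu$.
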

\begin{proof}
Let $\q_1,\q_2 \in \cP_K \setminus \{\p\}$. We show $\varphi_{\q_1}(x)=\varphi_{\q_2}(x)$ for $x \in K^*$. Let $F=\{\p,\q_1,\q_2\}$, $x \in K^*$ and let 
\[(y,z,w)\mu = \eta^F ((1,x,x)\mu) \in (L_{\theta(\p)}^* \times L_{\theta(\q_1)}^* \times L_{\theta(\q_2)}^*)/\mu.\] 
By Lemma~\ref{lem:FF'}, the diagram
    \begin{equation}  \begin{tikzcd}
    (K_{\p}^* \times K_{\q_1}^* \times K_{\q_2}^*)/\mu \arrow[r, "\eta^F"] \arrow[d] & (L_{\theta(\p)}^* \times L_{\theta(\q_1)}^* \times L_{\theta(\q_2)}^*)/\mu \arrow[d] \\
    K_{\p}^*/\mu \arrow[r, "\eta^{\{\p\}}"] & L_{\theta(\p)}^*/\mu
    \end{tikzcd} \end{equation}
commutes, so that $y \in \mu$. By replacing the representative $(y,z,w)$ appropriately, we may assume that $y=1$. Note that $z$ and $w$ are then uniquely determined. By Lemma~\ref{lem:FF'}, the diagram
    \begin{equation}  \begin{tikzcd}
    (K_{\p}^* \times K_{\q_1}^* \times K_{\q_2}^*)/\mu \arrow[r, "\eta^F"] \arrow[d] & (L_{\theta(\p)}^* \times L_{\theta(\q_1)}^* \times L_{\theta(\q_2)}^*)/\mu \arrow[d] \\
    (K_{\p}^* \times K_{\q_i}^*)/\mu \arrow[r, "\eta^{\{\p,\q_i\}}"] & (L_{\theta(\p)}^*\times L_{\theta(\q_i)}^*)/\mu
    \end{tikzcd} \end{equation}
commutes for $i=1,2$, so that $z=\varphi_{\q_1}(x)$ and $w=\varphi_{\q_2}(x)$. Consequently, we have 
\[ \eta^F((1,x,x)\mu)=(1,\varphi_{\q_1}(x),\varphi_{\q_2}(x))\mu. \]
Finally, by Lemma~\ref{lem:FF'}, the diagram
    \begin{equation}  \begin{tikzcd}
    (K_{\p}^* \times K_{\q_1}^* \times K_{\q_2}^*)/\mu \arrow[r, "\eta^F"] \arrow[d] & (L_{\theta(\p)}^* \times L_{\theta(\q_1)}^* \times L_{\theta(\q_2)}^*)/\mu \arrow[d] \\
    (K_{\q_1}^* \times K_{\q_2}^*)/\mu \arrow[r, "\eta^{\{\q_1,\q_2\}}"] & (L_{\theta(\q_1)}^* \times L_{\theta(\q_2)}^*)/\mu
    \end{tikzcd} \end{equation}
commutes, so that $(\varphi_{\q_1}(x),\varphi_{\q_2}(x))\mu \in \Gamma_L$. Thus, $(\varphi_{\q_1}(x),\varphi_{\q_2}(x)) =\zeta(y,y)$ for some $\zeta \in \mu$ and $y \in L^*$, so that $\varphi_{\q_1}(x)=\varphi_{\q_2}(x)$.
\end{proof}

\begin{proof}[Proof of Proposition~\ref{prop:nfchar}]
We have shown that for every $\q \in \cP_K \setminus \{\p\}$, there is a topological group isomorphism $\varphi_\q\colon K_\q^*\to L_{\theta(\q)}^*$ such that $\kappa:=\varphi_\q|_{K^*}\colon K^*\to L^*$ is a group isomorphism that does not depend on $\q$ and is a lift of $\gamma$.
By condition (b), we have $(v_{\theta(\p)}\times v_{\theta(\q)})\circ \eta^{\{\p,\q\}}=v_{\p}\times v_\q$. Thus, for $x\in K_\q^*$, we have 
\[
(v_{\theta(\p)}\times v_{\theta(\q)})((1,\varphi_\q(x))\mu)=(v_{\theta(\p)}\times v_{\theta(\q)})\circ \eta^{\{\p,\q\}}((1,x)\mu)=v_{\p}\times v_\q((1,x)\mu),
\]
so that $v_{\theta(\q)}\circ \varphi_\q(x)=v_\q(x)$. The result now follows from Proposition~\ref{prop:CdSLMS}.
\end{proof}

\begin{remark} \label{rmk:recon}
An argument similar to Lemma~\ref{lem:varphip2} shows that $\varphi_\q$ does not depend on the choice of $\p \in \cP_K$. Therefore, by considering another fixed prime, we obtain  valuation-preserving isomorphisms $\varphi_\q \colon K^*_\q \to L_{\theta(\q)}^*$ for all $\q \in \cP_K$ that restrict to the same group isomorphism $\gamma \colon K^* \to L^*$. Hence, by following the argument in \S~\ref{ssec:recondyn}, we obtain a conjugacy of $K^* \acts \Az_{K,f}^*$ and $L^* \acts \Az_{L,f}^*$. 
\end{remark}

\subsection{Proofs of the main theorems}
\label{sec:proofs}
We now conclude the proofs of our main theorems.
\begin{proof}[Proof of Theorem~\ref{thm:main2}]
Let $K$ and $L$ be number fields that satisfy condition \hyperref[def:CN]{$(C_3)$}. Then, the assumptions in Proposition~\ref{prop:nfchar} are satisfied by Lemma~\ref{lem:existgamma} and Proposition~\ref{prop:semi-localdata}. Hence, the unique field isomorphism $\sigma \colon K \to L$ that satisfies the conditions in Theorem ~\ref{thm:main2} exists. 
\end{proof}

\begin{proof}[Proof of Theorem~\ref{thm:mainA}]
Let $K$ and $L$ be number fields, and suppose that full corners of $\fA_K$ and $\fA_L$ are isomorphic. Then, by Proposition~\ref{prop:commutativeKL}, condition \hyperref[def:CN]{$(C_\infty)$} is satisfied. Hence, by Theorem~\ref{thm:main2}, $K$ and $L$ are isomorphic. The converse is obvious. 
\end{proof}

\section{Application: Topological full groups}
\label{sec:fullgroups}
We next prove Corollary~\ref{cor:fullgroup}. Our terminology for groupoids follows \cite{NO,SSW}, and all \'{e}tale groupoids are assumed to be locally compact and Hausdorff. An \'etale groupoid $\cG$ is said to be effective if $\Interior(\cG')=\cG^{(0)}$, where $\cG'$ denotes the isotropy bundle of $\cG$, and $\cG$ is said to be non-wandering if for every clopen subset $U \subseteq \cG^{(0)}$, there exists $\gamma \in \cG$ such that $s(\gamma) \neq r(\gamma)$ and $s(\gamma), r(\gamma) \in U$ (see \cite[Definition 7.8]{NO}). Let $\cR$ denote the full equivalence relation on a countably infinite set. Given an \'{e}tale groupoid $\cG$ whose unit space is a locally compact Cantor set, we let $\fg{\cG}$ denote the topological full group of $\cG$, as defined in \cite[Definition~3.2]{NO}.

\begin{lemma} \label{lem:stabilisation}
If $\cG$ is an effective non-wandering \'etale groupoid whose unit space is a locally compact Cantor set, then the \'{e}tale groupoid $\cG \times \cR$ also has these properties.  
\end{lemma}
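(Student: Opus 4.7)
The plan is to verify the three defining conditions separately for $\cG \times \cR$: the unit space is a locally compact Cantor set, the groupoid is effective, and it is non-wandering. Since $\cR$ is the full equivalence relation on a countably infinite set $X$, we equip $X$ with the discrete topology, so that $\cR = X \times X$ is a discrete groupoid with $\cR^{(0)} \cong X$ and trivial isotropy. The product topology on $\cG \times \cR$ makes it an étale locally compact Hausdorff groupoid, and its unit space is $\cG^{(0)} \times X$. Since $\cG^{(0)}$ is by assumption second countable, locally compact, Hausdorff, totally disconnected, and has no isolated points, and since $X$ is countable discrete, the product $\cG^{(0)} \times X$ inherits all of these properties, so it is again a locally compact Cantor set.

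For effectiveness, I would observe that the isotropy bundle of the product is the product of the isotropy bundles. Since $\cR$ has trivial isotropy, $(\cG \times \cR)' = \cG' \times \Delta$, where $\Delta \subseteq \cR$ is the diagonal, which is open in $\cR$ because $\cR$ is discrete. Hence
\begin{equation*}
\Interior\bigl((\cG \times \cR)'\bigr) = \Interior(\cG') \times \Delta = \cG^{(0)} \times \Delta = (\cG \times \cR)^{(0)},
\end{equation*}
using effectiveness of $\cG$, so $\cG \times \cR$ is effective.

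For the non-wandering property, let $U \subseteq \cG^{(0)} \times X$ be a nonempty clopen subset. Writing $U_n := \{x \in \cG^{(0)} : (x,n) \in U\}$ for each $n \in X$, each $U_n$ is clopen in $\cG^{(0)}$, and at least one $U_n$ is nonempty. Applying the non-wandering property of $\cG$ to this $U_n$ yields $\gamma \in \cG$ with $s(\gamma) \neq r(\gamma)$ and $s(\gamma), r(\gamma) \in U_n$. Then the element $(\gamma,(n,n)) \in \cG \times \cR$ has source $(s(\gamma),n)$ and range $(r(\gamma),n)$, both of which lie in $U$ and are distinct. This completes the non-wandering verification.

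There is no real obstacle here: once one unpacks what the product groupoid looks like and uses that $\cR$ contributes only a discrete ``coordinate'' with trivial isotropy, each of the three conditions reduces to its analogue for $\cG$. The only minor care required is to treat the non-wandering condition on a clopen set $U$ by slicing along the discrete factor and reducing to a single slice $U_n$, which is clopen in $\cG^{(0)}$.
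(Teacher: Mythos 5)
Your proof is correct and follows essentially the same route as the paper's: the unit space and effectiveness claims are handled identically (your $\cG'\times\Delta$ is the paper's $\cG'\times\cR^{(0)}$ under the usual identification of units with identity arrows), and your slicing of a clopen $U$ along the discrete factor to a nonempty clopen $U_n$ is the same reduction the paper makes by choosing $V\times\{y\}\subseteq U$.
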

\begin{proof}
The unit space $(\cG \times \cR)^{(0)} = \cG^{(0)} \times \cR^{(0)}$ is a locally compact Cantor set since $\cR^{(0)}$ is a countably infinite set. Since $\cG$ is effective, we have 
\[ \Interior((\cG \times \cR)') = \Interior(\cG' \times \cR^{(0)}) = \Interior(\cG' ) \times \cR^{(0)} = \cG^{(0)} \times \cR^{(0)}, \] 
so that $\cG \times \cR$ is effective. We show that $\cG \times \cR$ is non-wandering. 
Let $U$ be a non-empty clopen subset of $\cG^{(0)} \times \cR^{(0)}$. Then, there exists a non-empty clopen subset $V$ of $\cG^{(0)}$ and $y \in \cR^{(0)}$ such that $V \times \{y\} \subseteq U$. Since $\cG$ is non-wandering, there exists $\gamma \in \cG$ such that $s(\gamma) \neq r(\gamma)$ and $s(\gamma), r(\gamma) \in V$. Let $\tilde{\gamma} := (\gamma,y) \in \cG \times \cR$. Then, we have $s(\tilde{\gamma}) \neq r(\tilde{\gamma})$ and $s(\tilde{\gamma}), r(\tilde{\gamma}) \in V \times \{y\}$. Hence, $\cG \times \cR$ is non-wandering. 
\end{proof}

Given a number field $K$, we let $\cG_K:=K^*\ltimes \Az_{K,f}$ be the transformation groupoid associated with the action $K^*\acts\Az_{K,f}$. Then, $\cG_K$ is \'etale, and there is a canonical isomorphism $\fA_K\cong C^*(\cG_K)$. 

\begin{lemma} \label{lem:nonwand}
For any number field $K$, the \'etale groupoid $\cG_K$ is effective and non-wandering. In addition, the unit space $\cG_K^{(0)}$ is a locally compact Cantor set. 
\end{lemma}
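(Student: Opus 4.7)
The plan is to verify the three properties separately, treating the unit space condition first as it is the most straightforward, then effectiveness, and finally the non-wandering condition, which I expect to be the main obstacle because it requires combining continuity of the multiplicative action with density of $K^*$ in $\Az_{K,f}$.

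First I would check that $\cG_K^{(0)} = \Az_{K,f}$ is a locally compact Cantor set, meaning a locally compact, second-countable, Hausdorff, totally disconnected space with no isolated points. Local compactness, second countability, and the Hausdorff property are immediate from the definition of the restricted product topology, since every $K_\p$ is a second-countable, locally compact, Hausdorff field with compact open subring $\O_{K,\p}$. Total disconnectedness follows from the fact that each $K_\p$ is totally disconnected (an ultrametric field). No isolated points follows because any basic open neighborhood of a point $a \in \Az_{K,f}$ is of the form $a + \prod_\p V_\p$ with $V_\p = \O_{K,\p}$ for all but finitely many $\p$, hence contains uncountably many points.

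Next, for effectiveness, I would show the action $K^* \acts \Az_{K,f}$ is topologically free. This is the point where things are very clean: if $g \in K^* \setminus \{1\}$ and $a \in \Az_{K,f}$ satisfies $ga = a$, then $(g-1)a_\p = 0$ in $K_\p$ for every $\p$; since $K_\p$ is a field and $g - 1 \neq 0$ in $K \subseteq K_\p$, this forces $a_\p = 0$, so $a = 0$. Thus the fixed-point set of every non-identity $g$ is $\{0\}$, which has empty interior (indeed, as observed above, $0$ is not isolated). This gives $\Interior(\cG'_K) = \cG_K^{(0)}$.

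Finally, for the non-wandering property, given a non-empty clopen $U \subseteq \Az_{K,f}$ I would fix some $a \in U$ with $a \neq 0$; such an $a$ exists because $\{0\}$ is not open. I then need to produce $g \in K^* \setminus \{1\}$ with $ga \in U$, for then the element $(g,a) \in \cG_K$ has source $a \in U$, range $ga \in U$, and source $\neq$ range by the same argument used for effectiveness. The key observation is that multiplication $\Az_{K,f} \times \Az_{K,f} \to \Az_{K,f}$ is continuous, so there is an open neighborhood $W$ of $1 \in \Az_{K,f}$ with $Wa \subseteq U$. By Lemma~\ref{lem:approx}, $K^*$ is dense in $\Az_{K,f}$, so we may choose $g \in W \cap K^*$ with $g \neq 1$, completing the verification. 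The main subtlety here is ensuring that approximation at a cofinite set of primes produces a nontrivial element of $K^*$, but this is handled by applying Lemma~\ref{lem:approx} with $h := g - 1$ prescribed to have positive valuation at the finitely many relevant primes while being nonzero, and then setting $g = 1 + h$.
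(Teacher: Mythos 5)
Your proof is correct and follows essentially the same route as the paper: the unit space and effectiveness claims are handled identically (the only fixed point of a non-identity $g\in K^*$ is $0$), and your non-wandering argument, once unpacked in your final sentence, is exactly the paper's construction of $g=1+g_0$ with $g_0\in K^*$ of large valuation at the finitely many relevant primes via Lemma~\ref{lem:approx}. Phrasing the last step through continuity of multiplication plus density of $K^*$ in $\Az_{K,f}$ is a clean packaging of the same computation.
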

\begin{proof}
The unit space $\cG_K^{(0)} = \Az_{K,f}$ is clearly a locally compact Cantor set. We see that $\cG_K$ is effective, since $0 \in \Az_{K,f}$ is the unique point whose isotropy group is nontrivial with respect to the action $K^* \acts \Az_{K,f}$.  
We show that $\cG_K$ is non-wandering. 
Let $U \subseteq \Az_{K,f}$ be a compact open set and we show that there exists a $K^*$-orbit which meets $U$ at least twice. We may assume that $U$ is of the form 
\[ U = \prod_{\p \in F} (x_\p + \p^{k_\p} \cO_{K,\p}) \times \prod_{\p \not\in F} \cO_{K,\p}, \]
where $F \subseteq \cP_K$ is a finite subset, $k_\p \geq 1$, and $x_\p \in K_\p$ for $\p \in F$. By Lemma~\ref{lem:approx}, there exists $g_0 \in \cO_K \setminus\{0\}$ such that $v_\p(g_0) \geq k_\p-v_\p(x_\p)$ for $\p \in F$. Let $g=1+g_0 \in \cO_K$. Then, we have $g \neq 0,1$, and
\[ gx_\p = x_\p + g_0x_\p \in x_\p + \p^{k_\p}\cO_{K,\p}. \]
Let $x \in \Az_{K,f}$ be the adele whose $\p$-th coordinate is equal to $x_\p$ for $\p \in F$ and is equal to $1$ for $\p \not\in F$. Then, we have $x,gx \in U$ and $x\neq gx$, which complete the proof. 
\end{proof}

\begin{proof}[Proof of Corollary~\ref{cor:fullgroup}]
Suppose $\fg{\cG_K \times \cR}$ and $\fg{\cG_L \times \cR}$ are isomorphic. Since all orbits of  $\cG_K \times \cR$ and $\cG_L \times \cR$ are infinite, and the other assumptions in \cite[Theorem~7.10]{NO} are satisfied by Lemma~\ref{lem:stabilisation} and Lemma~\ref{lem:nonwand}, 
we have $\cG_K \times \cR \cong \cG_L \times \cR$. By \cite[Theorem~3.2]{CRS}, $\cG_K$ and $\cG_L$ are Kakutani equivalent, so that full corners of $C^*(\cG_K)$ and $C^*(\cG_L)$ are isomorphic. Now the claim follows from Theorem~\ref{thm:mainA}. 
\end{proof}

\begin{remark} \label{rmk:BC}
For a number field $K$, let $\cH_K$ denote the Bost--Connes groupoid of $K$. 
Based on a variant of the strong approximation \cite[Lemma~3.5]{Bruce}, the same proof as Lemma~\ref{lem:nonwand} shows that $\cH_K$ is non-wandering. Hence, combining arguments in this section and \cite[Theorem~1.1]{KT}, we conclude that the topological full group $\fg{\cH_K \times \cR}$ is also a complete invariant of the number field $K$. 
\end{remark}

\section{Explicit C*-algebraic descriptions of number-theoretic invariants}
\label{sec:explicit}
Throughout this section, let $K$ denote a number field.
We now show that several classical invariants of $K$ can be explicitly expressed in terms of C*-algebras.

\subsection{The ideal class group}\label{sec:classgroup}

Proposition~\ref{prop:propertygamma} provides a description of the ideal class group of $K$ in terms of C*-algebras. Let 
\[ D \colon \cU(B_K^\emptyset)/\cU_0(B_K^\emptyset) \to \Bigl( \Zz[1]_0 \Bigr)^{\oplus \cP_K},\ x \mapsto \Bigl( D_K^{\{\p\}}(x) \Bigr)_{\p \in \cP_K}.\]
\begin{proposition}
The ideal class group $\Cl_K$ is canonically isomorphic to $\Coker D$. 
\end{proposition}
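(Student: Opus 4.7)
The plan is to interpret $D$ as (a sign-twisted version of) the classical principal divisor map $\Gamma_K \to \mathrm{Div}(K)$ and then invoke the definition of the class group. All the genuine computation has already been done in the course of establishing Proposition~\ref{prop:propertygamma}; what remains is bookkeeping with canonical identifications.

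First, I would recall from the proof of Lemma~\ref{lem:existgamma} that the assignment $x \mapsto [u_x]_1$ realises a canonical isomorphism $\Gamma_K \cong \cU(B_K^\emptyset)/\cU_0(B_K^\emptyset)$, since $B_K^\emptyset = C^*(\Gamma_K) \cong C(\widehat{\Gamma_K})$ is unital abelian with path-connected unitary group modulo $\cU_0$ equal to $\K_1$. Next, by Equation~\eqref{eqn:DpK} in the proof of Proposition~\ref{prop:propertygamma}, we have, for every $\p \in \cP_K$ and $x \in \Gamma_K$,
\[
D_K^{\{\p\}}([u_x]_1) \;=\; -\, v_\p(x)\,[1]_0 \;\in\; \K_0(B_K^{\{\p\}}).
\]
Because $B_K^{\{\p\}}$ admits a faithful tracial state, $[1]_0$ generates a copy of $\Zz$ in $\K_0(B_K^{\{\p\}})$, so the target $(\Zz[1]_0)^{\oplus \cP_K}$ is canonically identified with $\bigoplus_{\p \in \cP_K} \Zz$, that is, with the group $\mathrm{Div}(K)$ of fractional divisors of $K$ (via $\p \mapsto$ the $\p$-th standard basis vector).

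Under these two canonical identifications, $D$ becomes the map
\[
\Gamma_K \;\longrightarrow\; \mathrm{Div}(K), \qquad x \;\longmapsto\; \bigl(-v_\p(x)\bigr)_{\p \in \cP_K},
\]
which differs only by an overall sign from the principal divisor map $x \mapsto (v_\p(x))_\p$. (The map is well-defined on $\Gamma_K$ because every valuation $v_\p$ vanishes on $\mu_K$, and the sum is finite for each $x \in K^*$.) Its image is therefore precisely the group $P_K$ of principal divisors of $K$, whence
\[
\Coker D \;\cong\; \mathrm{Div}(K)/P_K \;=\; \Cl_K,
\]
and this isomorphism is manifestly canonical since each ingredient above is.

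There is no real obstacle: the substantive input is Equation~\eqref{eqn:DpK}, which was already established. The only care needed is to make sure the sign convention and the identification $\Zz[1]_0 \cong \Zz$ are carried out canonically, and to note that the obvious surjection $K^* \twoheadrightarrow \Gamma_K$ does not affect the image in $\mathrm{Div}(K)$ since $\mu_K$ sits inside the kernel of $x \mapsto (v_\p(x))_\p$.
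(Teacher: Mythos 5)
Your proof is correct and follows essentially the same route as the paper's: identify $\Gamma_K$ with $\cU(B_K^\emptyset)/\cU_0(B_K^\emptyset)$, use the computation $D_K^{\{\p\}}([u_x]_1)=-v_\p(x)[1]_0$ from the proof of Proposition~\ref{prop:propertygamma}, and observe that the image of $D$ is exactly the group of principal fractional ideals under the identification of $(\Zz[1]_0)^{\oplus\cP_K}$ with the ideal group. No gaps.
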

\begin{proof}
We identify $\Gamma_K$ with $\cU(B_K^\emptyset)/\cU_0(B_K^\emptyset)$. As shown in the proof of Proposition~\ref{prop:propertygamma}, we have $D_K^{\{\p\}}(x)=-v_\p(x)[1]_0$ for all $x\in \Gamma_K$. Therefore, the image of $D$ coincides with $P:=\{(v_\p(x))_\p : x\in K^*\}$. Since we have the prime decomposition $(a)=\prod_\p \p^{v_\p(a)}$ for every $a \in K^*$, 
the subgroup $P$ is precisely the subgroup of principal (fractional) ideals of $K$ under the identification of $\Bigl( \Zz[1]_0 \Bigr)^{\oplus \cP_K}$ with the ideal group $\bigoplus_{\p\in\cP_K}\p^\Zz$ of $K$. Hence, $\Coker D$ is isomorphic to $\Cl_K$. 
\end{proof}

We point out that this is the first explicit C*-algebraic description of the ideal class group. In \cite{Li:Kt2}, the ideal class group is recovered from the $ax+b$-semigroup C*-algebra together with its canonical Cartan subalgebra. However, in both the semigroup C*-algebra and Bost--Connes C*-algebra cases, there is no known explicit description of the ideal class group in terms of the C*-algebra alone.

\subsection{The automorphism group}
\label{sec:AutK}
We prove that the automorphism group $\Aut(K)$ of $K$ is characterised as a quotient group of $\Aut(\fB_K)$. If $K$ is Galois over $\Qz$, then $\Aut(K)$ is the Galois group $\Gal(K/\Qz)$ by definition. Based on Lemma~\ref{lem:over2Sigma}, let 
\begin{align*}
    G_0 &= \{ \tilde{\alpha} \in \Aut(\fA_K) \colon \tilde{\alpha}^\emptyset \in \Aut(\fA_K^\emptyset) \mbox{ is homotopic to } \id\}, \\
    H_0 &= \{ \alpha \in \Aut(\fB_K) \colon \alpha^\emptyset \in \Aut(\fB_K^\emptyset) \mbox{ is homotopic to } \id\}. 
\end{align*}
Note that all approximately inner *-automorphisms of $\fA_K$ and $\fB_K$ belong to $G_0$ and $H_0$, respectively, since they induce the identity map on the commutative C*-algebras $\fA_K^\emptyset$ and $\fB_K^\emptyset$. 
\begin{proposition}
\label{prop:Aut(K)}
The group $\Aut(\fB_K)/H_0$ is canonically isomorphic to $\Aut(K)$. 
\end{proposition}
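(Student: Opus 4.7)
The plan is to use Theorem~\ref{thm:main2} (applied with $L=K$) to assign to each $\alpha \in \Aut(\fB_K)$ a field automorphism $\sigma_\alpha \in \Aut(K)$, and then to show that $\Phi \colon \alpha \mapsto \sigma_\alpha$ is a surjective group homomorphism with kernel exactly $H_0$.

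First I would verify that an arbitrary $\alpha \in \Aut(\fB_K)$ supplies the data of condition \hyperref[def:CN]{$(C_\infty)$}. Since $\fB_K$ is a C*-algebra over $2^{\cP_K}$ by Proposition~\ref{prop:quasi-orbits}, and since the argument of Proposition~\ref{prop:primideals} adapts to $\fB_K$ (the action $\Gamma_K \acts \Az_{K,f}/\mu_K$ has trivial isotropy away from $0$, so $\Prim(\fB_K)$ collapses onto $2^{\cP_K}$ after identifying its closed points), the self-homeomorphism of $\Prim(\fB_K)$ induced by $\alpha$ descends to a self-homeomorphism of $2^{\cP_K}$. By Lemma~\ref{lem:2Sto2T}, this comes from a unique bijection $\theta_\alpha \colon \cP_K \to \cP_K$, and $\alpha$ is $2^{\cP_K}$-equivariant under this identification. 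The induced *-isomorphisms $\alpha^F \colon \fB_K^F \to \fB_K^{\theta_\alpha(F)}$ are compatible with the boundary maps by naturality and automatically yield order isomorphisms on \K-theory. Theorem~\ref{thm:main2} then produces a unique $\sigma_\alpha \in \Aut(K)$ lifting the isomorphism $\gamma_\alpha$ of $\Gamma_K$ associated with $\alpha^\emptyset$ via Lemma~\ref{lem:existgamma}. The uniqueness clause of Theorem~\ref{thm:main2} forces $\sigma_{\alpha\beta} = \sigma_\alpha \sigma_\beta$, so $\Phi$ is a group homomorphism.

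Surjectivity of $\Phi$ is direct: a field automorphism $\sigma \in \Aut(K)$ extends on each completion to an isomorphism $K_\p \to K_{\sigma(\p)}$, which assembles to a self-homeomorphism of $\Az_{K,f}$ intertwining the diagonal action of $K^*$. Since $\sigma$ preserves $\mu_K$, the associated automorphism of $\fA_K$ descends to an $\alpha_\sigma \in \Aut(\fB_K)$, and $\gamma_{\alpha_\sigma}$ is $\sigma$ modulo $\mu_K$; the uniqueness clause of Theorem~\ref{thm:main2} then gives $\sigma_{\alpha_\sigma} = \sigma$.

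The kernel computation is the delicate step. One direction is immediate: if $\alpha \in H_0$, then $\alpha^\emptyset$ is homotopic to $\id$, hence $\alpha^\emptyset_*$ is the identity on $\K_1(\fB_K^\emptyset) \cong \Gamma_K$; by Lemma~\ref{lem:existgamma}, $\gamma_\alpha = \id$, so $\sigma_\alpha = \id$ by uniqueness of lifts. The converse is where I expect the main obstacle to lie. The plan is to identify $\fB_K^\emptyset$ with $C(\widehat{\Gamma_K})$, view $\alpha^\emptyset$ as a self-homeomorphism $f$ of the connected compact abelian group $\widehat{\Gamma_K}$, and invoke the reasoning of Remark~\ref{rmk:dual}: by \cite[Corollary 2]{Scheffer}, $f$ is homotopic (through homeomorphisms) to a continuous group automorphism $f_0$, whose Pontryagin dual equals $\gamma_\alpha$. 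If $\sigma_\alpha = \id$, then $\gamma_\alpha = \id$, forcing $f_0 = \id$, and therefore $\alpha^\emptyset$ is homotopic to $\id$, so $\alpha \in H_0$. The technical care required is to ensure that Scheffer's result is applicable to the possibly infinite-dimensional compact abelian group $\widehat{\Gamma_K}$ and that the homotopy produced is of *-endomorphisms in the sense intended by the definition of $H_0$; if the appeal to \cite{Scheffer} is not available in this generality, a fallback is to construct the homotopy explicitly by using a splitting of $\Gamma_K$ into cyclic summands and contracting each circle factor of $\widehat{\Gamma_K}$ individually, exploiting the fact that $\gamma_\alpha$ fixes the resulting basis.
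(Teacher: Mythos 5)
Your proposal is correct and follows essentially the same route as the paper: Theorem~\ref{thm:main2} supplies the homomorphism $\Aut(\fB_K)\to\Aut(K)$, Remark~\ref{rmk:dual} (via Scheffer) identifies its kernel with $H_0$, and the natural map $\Aut(K)\to\Aut(\fB_K)/H_0$ gives surjectivity. You supply somewhat more detail than the paper on two points it leaves implicit — the adaptation of Proposition~\ref{prop:primideals} and Lemma~\ref{lem:over2Sigma} from $\fA_K$ to $\fB_K$ so that every automorphism is $2^{\cP_K}$-equivariant, and the kernel computation — but the argument is the same.
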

\begin{proof}
 We have a natural homomorphism $\Aut(K) \to \Aut(\fB_K)/H_0$, and Theorem~\ref{thm:main2} provides a map $\psi_1 \colon \Aut(\fB_K) \to \Aut(K)$. By construction and Remark~\ref{rmk:dual}, $\psi_1$ is a homomorphism and $H_0=\Ker \psi_1$. We can see that the composition $\Aut(K) \to \Aut(\fB_K)/H_0 \to \Aut(K)$ is the identity. Hence, these maps are inverse to each other.
 \end{proof}
This is also the first explicit C*-algebraic description of a Galois group in the investigation of C*-algebras from number theory. 

The group $\Aut(K)/G_0$ also has an interesting description in terms of number-theoretic invariants. 
Let $\lambda \colon \Aut(K) \acts \widehat{\mu_K}$ be the canonical action, that is, for every $\sigma \in \Aut(K)$, $a \in K^*$, and $\chi \in \widehat{\mu_K}$, we have
\[ (\lambda_\sigma(\chi))(a) = \chi(\sigma^{-1}(a)).\]

\begin{proposition}
The group $\Aut(\fA_K)/G_0$ is canonically isomorphic to the semidirect product $\widehat{\mu_K} \rtimes_\lambda \Aut(K)$. 
\end{proposition}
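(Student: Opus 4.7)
The strategy parallels the proof of Proposition~\ref{prop:Aut(K)}: I would construct mutually inverse group homomorphisms between $\widehat{\mu_K} \rtimes_\lambda \Aut(K)$ and $\Aut(\fA_K)/G_0$, with the natural map built from the dual action $\tau$ and the canonical action of $\Aut(K)$ on $\fA_K$, and the reverse map extracted via Theorem~\ref{thm:main2} together with Lemma~\ref{lem:minproj}.

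First, I would define $\Psi \colon \widehat{\mu_K} \rtimes_\lambda \Aut(K) \to \Aut(\fA_K)/G_0$ by $(\chi, \sigma) \mapsto [\tau_{\tilde\chi} \circ \sigma_*]$, where $\tilde\chi \in \widehat{K^*}$ is any extension of $\chi$ and $\sigma_* \in \Aut(\fA_K)$ denotes the *-automorphism induced by the action of $\sigma$ on $\Az_{K,f}$ (permuting coordinates via the bijection on primes and applying the induced isomorphisms $K_\p \cong K_{\sigma(\p)}$) and on $K^*$. Independence modulo $G_0$ of the choice of $\tilde\chi$ follows from Proposition~\ref{prop:factorKK2}, since two extensions of $\chi$ differ by a character of $K^*$ trivial on $\mu_K$, which lies in the path-component of $1 \in \widehat{K^*}$; the corresponding translation of $\widehat{K^*} = \Prim(\fA_K^\emptyset)$ is therefore homotopic to $\id$. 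The intertwining identity $\sigma_* \circ \tau_{\tilde\chi} = \tau_{\tilde\chi \circ \sigma^{-1}} \circ \sigma_*$ on $\fA_K$ (easily checked on elements $fu_g$), combined with $(\tilde\chi \circ \sigma^{-1})|_{\mu_K} = \lambda_\sigma(\chi)$, yields the semidirect product multiplication rule $(\chi_1, \sigma_1)(\chi_2, \sigma_2) = (\chi_1 \lambda_{\sigma_1}(\chi_2), \sigma_1 \sigma_2)$, so that $\Psi$ is a homomorphism.

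Conversely, I would define $\psi_2 \colon \Aut(\fA_K) \to \widehat{\mu_K} \rtimes_\lambda \Aut(K)$ by $\tilde\alpha \mapsto (\chi(\tilde\alpha), \sigma(\tilde\alpha))$, where $\chi(\tilde\alpha) \in \widehat{\mu_K}$ is determined by $\tilde\alpha^\emptyset(p_1) = p_{\chi(\tilde\alpha)}$ (via Lemma~\ref{lem:minproj}), and $\sigma(\tilde\alpha) \in \Aut(K)$ is provided by Theorem~\ref{thm:main2} applied to the corrected automorphism $\tau_{\tilde\chi(\tilde\alpha)}^{-1} \circ \tilde\alpha$---which fixes $p_1$ and therefore restricts via the corner isomorphism $\phi_K \colon \fB_K \cong p_1 \fA_K p_1$ to an element of $\Aut(\fB_K)$. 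To see that $\psi_2$ descends to $\Aut(\fA_K)/G_0$, observe that distinct minimal projections of $C^*(K^*)$ are orthogonal and hence at norm distance $1$, so any continuous path of *-automorphisms of $\fA_K^\emptyset$ from $\id$ to $\tilde\alpha^\emptyset$ must fix $p_1$ pointwise; restricting such a path to $p_1 C^*(K^*) \cong C^*(\Gamma_K)$ shows that $\alpha^\emptyset \in \Aut(\fB_K^\emptyset)$ is homotopic to $\id$, whence Proposition~\ref{prop:Aut(K)} gives $\sigma(\tilde\alpha) = \id$ (and clearly $\chi(\tilde\alpha) = 1$). A direct computation then verifies $\psi_2 \circ \Psi = \id$, so $\Psi$ is injective and $\psi_2$ is surjective.

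The main obstacle is to prove the reverse inclusion $\ker \psi_2 \subseteq G_0$, which also establishes that $\psi_2$ is a group homomorphism. Both reduce to the intermediate claim that $\tilde\alpha^\emptyset(p_{\chi'}) = p_{\chi(\tilde\alpha) \cdot \lambda_{\sigma(\tilde\alpha)}(\chi')}$ for every $\chi' \in \widehat{\mu_K}$; I would establish this by using the multiplier unitaries $v_{\chi'} \in M(\fA_K)$ from the proof of Lemma~\ref{lem:full} (which satisfy $v_{\chi'} p_1 v_{\chi'}^* = p_{\chi'}$) to transport the level-$\emptyset$ analysis from the $p_1$-corner to each $p_{\chi'}$-corner. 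Once the formula is in hand, for $\tilde\alpha \in \ker \psi_2$ it forces $\tilde\alpha^\emptyset$ to fix every minimal projection, so the restriction of $\tilde\alpha^\emptyset$ to each corner $p_{\chi'} C^*(K^*) \cong C^*(\Gamma_K)$ has trivial induced group automorphism of $\Gamma_K$ and is therefore homotopic to $\id$ by Proposition~\ref{prop:Aut(K)}; these restricted homotopies then assemble into a homotopy of $\tilde\alpha^\emptyset$ to $\id$ on $C^*(K^*) = \bigoplus_{\chi'} p_{\chi'} C^*(K^*)$, proving $\tilde\alpha \in G_0$.
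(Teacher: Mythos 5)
Your overall architecture (dual action plus the canonical $\Aut(K)$-action on one side, extraction of $(\chi,\sigma)$ via Lemma~\ref{lem:minproj} and Theorem~\ref{thm:main2} on the other) is sound and close in spirit to the paper's proof, and you correctly isolate the crux: controlling $\tilde\alpha^\emptyset$ on \emph{all} the minimal projections $p_{\chi'}$, not just on $p_1$. The problem is that the tool you propose for this step does not exist. The unitaries $v_{\chi'}$ of Lemma~\ref{lem:full} live in $C_b(X)$ for a \emph{translation} action of a dense subgroup $G$ of a locally compact abelian group $X$; the relation $u_g v_{\chi'} u_g^*=\chi'(g)^{-1}v_{\chi'}$ requires $v_{\chi'}(gx)=\chi'(g)v_{\chi'}(x)$. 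For $\fA_K$ the action of $K^*$ on the additive group $\Az_{K,f}$ is multiplicative and has the global fixed point $0$, so any such eigenfunction would satisfy $v_{\chi'}(0)=\chi'(\zeta)v_{\chi'}(0)$ for all $\zeta\in\mu_K$, forcing $v_{\chi'}(0)=0$ when $\chi'\neq 1$ and contradicting unitarity. In fact no unitary in $M(\fA_K)$ can conjugate $p_1$ to $p_{\chi'}$: the images of these projections in the quotient $\fA_K^\emptyset=C^*(K^*)$ generate distinct ideals, so $p_1$ and $p_{\chi'}$ generate distinct ideals of $\fA_K$ and are not even Murray--von Neumann equivalent in $M(\fA_K)$ (for the same reason, $p_1\fA_Kp_1$ is not a full corner, which also undercuts your parenthetical use of Lemma~\ref{lem:full} here). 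With the transport mechanism gone, both the intermediate formula $\tilde\alpha^\emptyset(p_{\chi'})=p_{\chi(\tilde\alpha)\lambda_{\sigma(\tilde\alpha)}(\chi')}$ and the final assembly (triviality of the induced automorphism of $\Gamma_K$ on each corner $p_{\chi'}C^*(K^*)$, which you only actually know for $\chi'=1$) are unsupported, and this is precisely the kernel computation the whole argument rests on.

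The paper circumvents this by never proving that formula up front. It sets $G=\{\tilde\alpha:\tilde\alpha^\emptyset(p_1)=p_1\}$, shows $\Aut(K)\cong G/G_0$, and uses that the dual-action automorphisms $\tau_{\chi'}$ (which, unlike your $v_{\chi'}$, are genuine automorphisms of $\fA_K$ permuting the $p_{\chi'}$) act transitively on $\pi_0(\widehat{K^*})$; generation of $\Aut(\fA_K)/G_0$ by $\widehat{\mu_K}$ and $\Aut(K)$ then follows from transitivity, and the semidirect product structure from the conjugation relation $\tilde\sigma\tilde\tau_\chi\tilde\sigma^{-1}\sim\tilde\tau_{\chi\circ\sigma^{-1}}$ together with $\tau(\widehat{\mu_K})\cap(G/G_0)=\{1\}$. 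If you want to repair your argument, replace conjugation by $v_{\chi'}$ with composition by $\tau_{\chi'}$ and follow this group-theoretic route. A smaller point: an automorphism with $\tilde\alpha^\emptyset(p_1)=p_1$ need not fix $p_1$ in $M(\fA_K)$, so it does not literally restrict to $\Aut(\fB_K)$ via $\phi_K$; one should instead feed the $\KK$-data of Proposition~\ref{prop:commutativeKL} into Theorem~\ref{thm:main2}.
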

\begin{proof}
 Let $p \in \fA_K^\emptyset$ be the projection corresponding to the trivial character of $\mu_K$, and let $G$ be the subgroup of $\Aut(\fA_K)$ consisting of *-automorphisms $\tilde{\alpha}$ such that $\tilde{\alpha}^\emptyset$ fixes $p$. 
 Let $\Homeo(\widehat{K^*})$ be the group of self-homeomorphisms of $\widehat{K^*}$, let $\pi_0(\widehat{K^*})$ be the set of connected components of $\widehat{K^*}$, and let $\fS(\pi_0(\widehat{K^*}))$ be the permutation group of $\pi_0(\widehat{K^*})$. Let 
 \[ \tilde{\psi}_2 \colon \Aut(\fA_K) \to \Homeo(\widehat{K^*}) \to \fS(\pi_0(\widehat{K^*}))\]
 be the composition of the canonical homomorphisms, where the first map is obtained by sending $\tilde{\alpha} \in \Aut(\fA_K)$ to the Gelfand--Naimark dual of $\tilde{\alpha}^\emptyset \in \Aut(\fA_K^\emptyset)$, and the second map is obtained by sending a homeomorphism to the induced permutation of connected components. Since both $\Aut(\fA_K) \to \Homeo(\widehat{K^*})$ and $\Homeo(\widehat{K^*}) \to \pi_0(\widehat{K^*})$ are continuous, $\tilde{\psi}_2$ descends to a  homomorphism 
 \[ \psi_2 \colon \Aut(\fA_K)/G_0 \to \fS(\pi_0(\widehat{K^*})). \] 
 In addition, if $\{f_t\}_{t \in [0,1]} \subseteq \Homeo(\widehat{K^*})$ is a continuous path with $f_0=\id$, then $f_t$ acts trivially on $\pi_0(\widehat{K^*})$ for every $t \in [0,1]$. Hence, $G$ contains $G_0$. 
 
 By the the argument showing $\Aut(K) \cong \Aut(\fB_K)/H_0$ in the proof of Proposition~\ref{prop:Aut(K)}, combined with Proposition~\ref{prop:commutativeKL}, the natural homomorphism $\Aut(K) \to G/G_0$ is an isomorphism. For $\sigma \in \Aut(K)$, let $\tilde{\sigma} \in G$ denote the corresponding *-automorphism. Then, $\tilde{\sigma}$ is characterised by the property that $\tilde{\sigma}^\emptyset \in \Aut(C^*(K^*))$ satisfies $\tilde{\sigma}^\emptyset(u_a)=u_{\sigma(a)}$ for $a \in K^*$.

By Lemma~\ref{lem:over2Sigma}, if $\tilde{\alpha}_1, \tilde{\alpha}_2 \in \Aut(\fA_K)$ are homotopic, then $\tilde{\alpha}_1^\emptyset, \tilde{\alpha}_2^\emptyset \in \Aut(\fA_K)$ are homotopic. Hence, we have a canonical surjective homomorphism $\Aut(\fA_K)/\sim \to \Aut(\fA_K)/G_0$, where $\sim$ denotes the homotopy equivalence relation.
By Proposition~\ref{prop:factorKK2}, the dual action $\tilde{\tau} \colon \widehat{K^*} \to \Aut(\fA_K)$ factors through $\tau \colon \widehat{\mu_K} \to \Aut(\fA_K)/G_0$. The composition $\psi_2 \circ \tau \colon \widehat{\mu_K} \to \fS(\pi_0(\widehat{K^*}))$ coincides with the action by multiplication $\widehat{\mu_K} \acts \pi_0(\widehat{K^*})=\widehat{\mu_K}$, and thus it is faithful. Hence, the homomorphism $\tau$ is injective. We identify $\widehat{\mu_K}$ with a subgroup of $\Aut(\fA_K)/G_0$ via $\tau$.

Since the action $\psi_2 \circ \tau$ of $\widehat{\mu_K}$ is transitive, the group $\Aut(\fA_K)/G_0$ is generated by $\widehat{\mu_K}$ and $\Aut(K)$. In order to see the claim, it suffice to show that for every $\chi \in \widehat{K^*}$ and $\sigma \in \Aut(K)$,  $\tilde{\sigma}^\emptyset \tilde{\tau}_\chi^\emptyset (\tilde{\sigma}^\emptyset)^{-1}$ is homotopic to $\tilde{\tau}_{\chi'}^\emptyset$ in $\Aut(C^*(K^*))$, where $\chi' \in \widehat{K^*}$ is an extension of $\lambda_\sigma(\chi) \in \widehat{\mu_K}$. In fact, a direct computation shows $\tilde{\sigma}^\emptyset \tilde{\tau}_\chi^\emptyset (\tilde{\sigma}^\emptyset)^{-1}(u_a) = \chi(\sigma^{-1}(a))u_a$ for every $a \in K^*$, which completes the proof by Proposition~\ref{prop:factorKK2}. 
\end{proof}

\section{Concluding remarks}
\label{sec:remark}

Throughout this section, let $K$ denote a number field.

\subsection{Splitting numbers of rational primes and the Dedekind zeta function}\label{sec:zeta}
Following the strategy of \cite{Tak2}, we obtain the Dedekind zeta function as an invariant of the C*-algebra $\fA_K$ as follows:
Let $\p \in \cP_K$. We have seen that $\fA_K^{\{\p\}}$ is Morita equivalent to $B_K^{\{\p\}}$ (see Remark~\ref{rmk:varphiKF}). Using this, 
it follows that $\fA_K^{\{\p\}}$ has a unique (unbounded) trace $T_\p$, up to scaling. Then, we see that $(T_\p)_*(\K_0(\fA_K^{\{\p\}}))\cong \Zz[1/p]$, where $p$ is the rational prime lying under $\p$. Recall that the splitting number of a rational prime $p$ is the number of prime ideals in $\O_K$ lying above $p$. Therefore, for each rational prime $p$, the splitting number of $p$ is equal to the number of first level composition factors of $\fA_K$ for which the image of their $\K_0$-group under the unique map induced by the trace is isomorphic to $\Zz[1/p]$. Now the claim follows by \cite[Main Theorem]{SP}.

The splitting numbers of rational primes in \S~\ref{sec:zeta} and ideal class group in \S~\ref{sec:classgroup} are given entirely in terms of the zeroth and first levels, that is, they only require condition \hyperref[def:CN]{$(C_1)$}. On the other hand, the description of $\Aut(K)$ in \S~\ref{sec:AutK} involves the zeroth level, but is given in terms of the whole C*-algebra $\fA_K$, which requires \hyperref[def:CN]{$(C_3)$}. 
None of these invariants are complete invariants in general. If $K$ is Galois over $\Qz$, then $K$ is characterised by its Dedekind zeta function by \cite{P}, so in this case $K$ is (indirectly) characterized by the information in \S~\ref{sec:zeta}. However, for non-Galois extensions, the situation is much more complicated, see \cite[Theorems~3]{Pra} and the references therein. 
By \cite{P2}, the Dedekind zeta function and the ideal class group of a number field do not determine each other.

\subsection{Rigidity for group C*-algebras}
\label{sec:C*-rigidity}
We briefly explain an interpretation of our results as rigidity results for group C*-algebras of certain locally compact groups. Let $\Az_{K,f}\rtimes K^*$ be the semi-direct product with respect to the canonical action $K^*\acts\Az_{K,f}$. Then, using \cite[Example~3.16]{Will:book} and a choice of self-duality $\Az_{K,f}\cong \widehat{\Az_{K,f}}$ (as in \cite[Theorem~2.4]{CL:11}, but for the finite adele ring), one obtains a non-canonical isomorphism $C^*(\Az_{K,f}\rtimes K^*)\cong \fA_K$. Now, Theorem~\ref{thm:mainA} implies that the class of group C*-algebras $C^*(\Az_{K,f}\rtimes K^*)$ is rigid in the sense that $C^*(\Az_{K,f}\rtimes K^*)\cong C^*(\Az_{L,f}\rtimes L^*)$ if and only if $\Az_{K,f}\rtimes K^*\cong \Az_{L,f}\rtimes L^*$.

\subsection{Continuous orbit equivalence rigidity}
Our results imply that the dynamical systems $K^*\acts \Az_{K,f}$ are rigid in the sense that any two such systems $K^*\acts \Az_{K,f}$ and $L^*\acts \Az_{L,f}$ are continuously orbit equivalent if and only if they are conjugate. See \cite{Li:coe} for background on continuous orbit equivalence.

\subsection{Complete \K-theoretic invariants}
\label{sec:completeinvariants}
Similarly to the Bost--Connes C*-algebras \cite[Theorem~1.1 (6)]{KT}, we can construct a complete \K-theoretic invariant of number fields. Consider the invariant consisting of \K-groups $\K_*(\fB_K^F)$ for every $F \subseteq \cP_K$ with $0 \leq |F| \leq 3$, the boundary maps $\partial_K^{F,\p} \colon \K_*(\fB_K^F) \to \K_*(\fB_K^{F_\p})$ for every $F \subseteq \cP_K$ with $0 \leq |F| \leq 2$ and $\p \in \cP_K \setminus F$, the positive cones $\K_0(\fB_K^{\{\p\}})_+ \subseteq \K_0(\fB_K^{\{\p\}})$ for every $\p \in \cP_K$, and the unitary group modulo its connected component $\cU(\fB_K^\emptyset)/\cU_0(\fB_K^\emptyset) \subseteq \K_1(\fB_K^\emptyset)$. By the proof of Theorem~\ref{thm:main2}, this invariant is a complete invariant of the number field $K$. In \cite{KT}, we need \K-groups of composition factors for all finite subsets $F \subseteq \cP_K$. In our case, we only need \K-groups for finite subsets $F \subseteq \cP_K$ with cardinality less than $4$. Hence, 
one can say that the complexity of the invariant here is reduced. However, we need to specify the position of the unitary group in $\K_1(\fB_K^\emptyset)$. 

\subsection{C*-dynamical systems from totally positive elements}
\label{sec:totallypos}
The arguments until \S~\ref{sec:recon} can be also applied to various finite index subgroups of $K^*$. For example, let $\fC_K := C_0(\Az_{K,f})\rtimes K^*_+$, where $K_+^*$ is the subgroup of $K^*$ consisting of totally positive elements. 
If $K=\Qz$, then $\fC_\Qz$ is essentially the same as the original Bost--Connes C*-algebra for $\Qz$---the Bost--Connes C*-algebra for $\Qz$ is a full corner of $\fC_K$. In addition, if $K$ is totally imaginary, then we have $\fC_K=\fA_K$. 

Let $K$ and $L$ be number fields, and suppose full corners of $\fC_K$ and $\fC_L$ are isomorphic. By Lemma~\ref{lem:minproj}, either $K$ and $L$ are both totally imaginary, or both non-totally imaginary, since the torsion group of $K^*_+$ is trivial if and only if $K$ admits a real place. If we have the former case, then $K$ and $L$ are isomorphic by Theorem~\ref{thm:mainA}. If we have the latter case, the same argument until \S~\ref{sec:recon} shows that the dynamical systems $K^*_+ \acts \Az_{K,f}$ and $L^*_+ \acts \Az_{K,f}$ are conjugate. However, it remains open whether this conjugacy implies an isomorphism of $K$ and $L$, since the arguments in \S~\ref{sec:NT} do not work in this case. 

\subsection{Classifiability of the unital part of the composition factors}
\label{sec:classification}
Fix a nonempty finite subset $F \subseteq \cP_K$. 
 Since $B_K^F$ is simple and has a unique tracial state, modifying the proof of \cite[Theorem~4.1]{BLPR}, one sees that $B_K^F$ is an AH-algebra of slow dimension growth. By \cite[Theorem~2]{BDR}, $B_K^F$ has real rank zero. Therefore, $B_K^F$ is classified by its Elliott invariant by \cite[Theorem~9.4]{DG}. See \cite[Chapter~3,~\S~3.3]{RSt} for more on the classification of AH-algebras. We leave it as an open problem to calculate the K-groups of $B_K^F$.

\end{document}